\documentclass[a4paper,
pagesize,
oneside, 				
numbers=noenddot,
DIV=12, 
headinclude=false, 
bibtotoc, 
fontsize=10pt, 
headsepline=true, 
open=right,
abstracton,
cleardoublepage=empty, 
footinclude=false,
english
]{scrartcl}

\usepackage{setspace}
\singlespacing
\usepackage{standalone}

\usepackage{authblk}

\usepackage{mathtools}

\usepackage{stmaryrd}

\usepackage{pdflscape}

\usepackage{subfig}

\usepackage{pst-pdf}

\usepackage{url}

\usepackage{upgreek}

\usepackage{sidecap}

\usepackage{xcolor}
\definecolor{darkred}{rgb}{0.9,0.1,0.1}
\definecolor{darkblue}{rgb}{0,0,0.7}
\definecolor{darkgreen}{rgb}{0,0.5,0}
\definecolor{bluegray}{rgb}{0.4, 0.6, 0.8}
\definecolor{cadmiumorange}{rgb}{0.93, 0.53, 0.18}
\definecolor{darkcerulean}{rgb}{0.03, 0.27, 0.49}
\usepackage[pagebackref, colorlinks=false, linkcolor=darkblue, linktocpage=true, linkbordercolor=red, citecolor=darkblue]{hyperref} %

\usepackage[english]{babel}
\usepackage[utf8]{inputenc}
\usepackage[T1]{fontenc} 
\usepackage{lmodern} 
\usepackage{amsmath, amsfonts, amssymb}
\usepackage{mathtools}
\usepackage{mathrsfs}
\usepackage{verbatim}
\usepackage{stmaryrd}
\usepackage[framed,amsmath,thmmarks,hyperref]{ntheorem}
\usepackage{commath}

\usepackage[labelfont=bf]{caption}

\usepackage{longtable}

\allowdisplaybreaks

\usepackage[english=american]{csquotes}

\usepackage{graphicx}

\usepackage{comment}

\usepackage{framed}

\usepackage[]{listofsymbols}

\setkomafont{sectioning}{\bfseries} 

\setkomafont{pagenumber}{\bfseries}
\setkomafont{pageheadfoot}{\normalfont}
\setkomafont{pagenumber}{\normalfont}

\usepackage{enumitem} 

\usepackage{ifthen}

\usepackage{tikz,pgfplots}
\pgfplotsset{compat=newest}
\usetikzlibrary{cd}
\usetikzlibrary{calc}
\usetikzlibrary{arrows, scopes}
\usetikzlibrary{decorations.pathmorphing}
\usetikzlibrary{positioning}
\usetikzlibrary{shapes.geometric}
\usetikzlibrary{backgrounds}
\usetikzlibrary{arrows,chains,matrix,positioning,scopes}

\usepackage[multiple,hang,flushmargin]{footmisc}

\usepackage{graphicx}

\usepackage[colorinlistoftodos, disable]{todonotes}  
\setuptodonotes{inline}

\bibliographystyle{alpha}

\usepackage{marginnote}

\usepackage[framemethod=TikZ]{mdframed}

\mdfdefinestyle{temporary}{linewidth=3pt, linecolor=red}
\mdfdefinestyle{onlyframe}{middlelinecolor=black!100,roundcorner=1ex, skipbelow=15pt}
\mdfdefinestyle{theorem}{middlelinecolor=black!25, backgroundcolor=black!15,roundcorner=1ex, skipbelow=15pt}
\mdfdefinestyle{onlyback}{middlelinecolor=white!100, backgroundcolor=black!10,roundcorner=1ex, skipbelow=15pt}
\mdfdefinestyle{main}{middlelinecolor=darkblue!25, backgroundcolor=darkblue!15,roundcorner=1ex, skipbelow=15pt}

\usepackage{commath}

\usepackage{enumitem}

\usepackage{nicefrac}

\usepackage{rotating}

\usepackage{relsize}

\usepackage{ctable}
\usepackage{multicol}
\usepackage{multirow}

\usepackage{accents}

\usepackage{ucs}

\usepackage{fixltx2e} 
\usepackage{lmodern} 

\usepackage{setspace,graphicx,tikz,tabularx} 
\usepackage[draft=false,babel,tracking=true,kerning=true,spacing=true]{microtype}

\usepackage{private}

\DeclareSymbolFont{extraup}{U}{zavm}{m}{n}
\DeclareMathSymbol{\varclub}{\mathalpha}{extraup}{84} 
\DeclareMathSymbol{\varspade}{\mathalpha}{extraup}{85}

\usepackage{tikz,pgfplots}
\pgfplotsset{width=\textwidth,height=5cm}
\usepackage{mhequ}

\usepackage{mathrsfs}

\usepackage{multirow}

\usepackage[framemethod=TikZ]{mdframed}

\mdfdefinestyle{temporary}{linewidth=3pt, linecolor=red}
\mdfdefinestyle{theorem}{middlelinecolor=black!25, backgroundcolor=black!15,roundcorner=1ex, skipbelow=15pt}
\mdfdefinestyle{main}{middlelinecolor=darkblue!25, backgroundcolor=darkblue!15,roundcorner=1ex, skipbelow=15pt}

\def\CB{\mathcal{B}}
\def\CC{\mathcal{C}}

\def\CE{\mathcal{E}}

\def\CH{\mathcal{H}}
\def\CI{\mathcal{I}}

\def\CL{\mathcal{L}}
\def\CM{\mathcal{M}}

\def\CS{\mathcal{S}}
\def\CT{\mathcal{T}}

\def\CW{\mathcal{W}}



\def\EE{\mathscr{E}}

\def\II{\mathscr{I}}
\def\JJ{\mathscr{J}}

\def\MM{\boldsymbol{\CM}}

\def\VV{\mathscr{V}}




\def\fy{\mathfrak{y}}
\def\fz{\mathfrak{z}}

\newcommand{\mbE}{\mathbb{E}}

\newcommand{\mbI}{\mathbb{I}}

\newcommand{\mbN}{\mathbb{N}}

\newcommand{\mbR}{\mathbb{R}}

\newcommand{\mbT}{\mathbb{T}}

\newcommand{\mbZ}{\mathbb{Z}}

\newcommand{\msA}{\mathscr{A}}

\newcommand{\msI}{\mathscr{I}}

\newcommand{\msL}{\mathscr{L}}

\newcommand{\msS}{\mathscr{S}}

\newcommand{\msV}{\mathscr{V}}

\newcommand{\mcB}{\mathcal{B}}
\newcommand{\mcC}{\mathcal{C}}

\newcommand{\mcE}{\mathcal{E}}
\newcommand{\mcF}{\mathcal{F}}

\newcommand{\mcL}{\mathcal{L}}

\newcommand{\mcS}{\mathcal{S}}

\newcommand{\mfB}{\mathfrak{B}}


\newcommand{\mfy}{\mathfrak{y}}
\newcommand{\mfz}{\mathfrak{z}}



\newcommand{\bz}{\boldsymbol{Z}}



\newcommand{\cembed}{\hookrightarrow\mathrel{\mspace{-15mu}}\rightarrow}

\DeclareMathOperator*{\esssup}{ess\,sup}
\newcommand{\supp}{\text{supp}}

\newcommand{\tzero}{|_{t=0}}

\newcommand{\RN}[1]{%
\textup{\uppercase\expandafter{\romannumeral#1}}%
}

\newcommand{\scal}[1]{\langle #1 \rangle}


\newcommand{\dd}{\mathop{}\!\mathrm{d}}

\colorlet{testcolor}{green!60!black}
\colorlet{testcolor2}{blue!100!black}
\colorlet{lightblue}{darkblue!80}
\colorlet{grayblue}{bluegray!100}
\colorlet{orange}{cadmiumorange!100}

\usetikzlibrary{calc}
\usetikzlibrary{shapes.misc}
\usetikzlibrary{shapes.symbols}
\usetikzlibrary{shapes.geometric}
\usetikzlibrary{snakes}
\usetikzlibrary{decorations}
\usetikzlibrary{decorations.markings}

\makeatletter
\pgfdeclareshape{crosscircle}
{
\inheritsavedanchors[from=circle] 
\inheritanchorborder[from=circle]
\inheritanchor[from=circle]{north}
\inheritanchor[from=circle]{north west}
\inheritanchor[from=circle]{north east}
\inheritanchor[from=circle]{center}
\inheritanchor[from=circle]{west}
\inheritanchor[from=circle]{east}
\inheritanchor[from=circle]{mid}
\inheritanchor[from=circle]{mid west}
\inheritanchor[from=circle]{mid east}
\inheritanchor[from=circle]{base}
\inheritanchor[from=circle]{base west}
\inheritanchor[from=circle]{base east}
\inheritanchor[from=circle]{south}
\inheritanchor[from=circle]{south west}
\inheritanchor[from=circle]{south east}
\inheritbackgroundpath[from=circle]
\foregroundpath{
\centerpoint%
\pgf@xc=\pgf@x%
\pgf@yc=\pgf@y%
\pgfutil@tempdima=\radius%
\pgfmathsetlength{\pgf@xb}{\pgfkeysvalueof{/pgf/outer xsep}}%
\pgfmathsetlength{\pgf@yb}{\pgfkeysvalueof{/pgf/outer ysep}}%
\ifdim\pgf@xb<\pgf@yb%
\advance\pgfutil@tempdima by-\pgf@yb%
\else%
\advance\pgfutil@tempdima by-\pgf@xb%
\fi%
\pgfpathmoveto{\pgfpointadd{\pgfqpoint{\pgf@xc}{\pgf@yc}}{\pgfqpoint{-0.707107\pgfutil@tempdima}{-0.707107\pgfutil@tempdima}}}
\pgfpathlineto{\pgfpointadd{\pgfqpoint{\pgf@xc}{\pgf@yc}}{\pgfqpoint{0.707107\pgfutil@tempdima}{0.707107\pgfutil@tempdima}}}
\pgfpathmoveto{\pgfpointadd{\pgfqpoint{\pgf@xc}{\pgf@yc}}{\pgfqpoint{-0.707107\pgfutil@tempdima}{0.707107\pgfutil@tempdima}}}
\pgfpathlineto{\pgfpointadd{\pgfqpoint{\pgf@xc}{\pgf@yc}}{\pgfqpoint{0.707107\pgfutil@tempdima}{-0.707107\pgfutil@tempdima}}}
}
}
\makeatother

\colorlet{symbols}{blue!90!black}
\colorlet{trees}{black!90!black}
\colorlet{cameron}{darkgreen!90!black}
\colorlet{testcolor}{green!60!black}
\colorlet{darkblue}{blue!60!black}
\colorlet{darkgreen}{green!60!black}

\def\barnorm#1{\lfloor \hspace{-0.29em} \rceil #1 \lfloor \hspace{-0.29em} \rceil}
\def\sbarnorm#1{\lfloor \hspace{-0.24em} \rceil #1 \lfloor \hspace{-0.24em} \rceil}
\def\threebars{|\!|\!|}

\def\1{\mathbf{1}}

\usetikzlibrary{calc}
\usetikzlibrary{shapes.misc}
\usetikzlibrary{shapes.symbols}
\usetikzlibrary{shapes.geometric}
\usetikzlibrary{snakes}
\usetikzlibrary{decorations}
\usetikzlibrary{decorations.markings}

\def\drawx{\draw[-,solid] (-3pt,-3pt) -- (3pt,3pt);\draw[-,solid] (-3pt,3pt) -- (3pt,-3pt);}
\tikzset{
root/.style={draw=symbols,circle,inner sep=0pt,minimum size=0.5mm,fill=white},	
smalldot/.style={circle,fill=symbols,draw=symbols,inner sep=0pt,minimum size=0.5mm},
dot/.style={circle,fill=black,inner sep=0pt,minimum size=1mm},
bigdot/.style={circle,fill=black,inner sep=0pt,minimum size=4mm},
noiseedge/.style={black,semithick,decorate, decoration={snake,segment length=4pt,amplitude=1pt}},
smallestdot1/.style={circle,fill=symbols,draw=symbols,inner sep=0pt,minimum size=0.2mm},
smallestdot2/.style={circle,fill=cameron,draw=cameron,inner sep=0pt,minimum size=0.2mm},
smallnoiseedge1/.style={draw=symbols,decorate, decoration={snake,segment length=1.75pt,amplitude=0.55pt}},
smallnoiseedge2/.style={draw=cameron,decorate, decoration={snake,segment length=1.75pt,amplitude=0.55pt}},
noiseedge1/.style={draw=symbols,decorate, decoration={snake,segment length=0.9pt,amplitude=0.55pt}},
noiseedge2/.style={draw=cameron,decorate, decoration={snake,segment length=0.9pt,amplitude=0.55pt}},
smallnoisenode1/.style={draw=symbols,circle,inner sep=0pt,minimum size=0.5mm,fill=white},	
smallnoisenode2/.style={draw=cameron,circle,inner sep=0pt,minimum size=0.5mm,fill=white},
poly/.style={draw=symbols,circle,inner sep=0pt,minimum size=0.5mm,fill=black},
dot/.style={circle,fill=black,inner sep=0pt,minimum size=1mm},
int/.style={circle,fill=black,draw=black,inner sep=0pt,minimum size=0.7mm},
circ/.style={circle,draw=black,inner sep=0pt, minimum size=1mm},
var/.style={circle,fill=black!10,draw=black,inner sep=0pt, minimum size=2mm},
dotred/.style={circle,fill=black!50,inner sep=0pt, minimum size=2mm},
generic/.style={semithick,shorten >=1pt,shorten <=1pt},
oddfunc/.style={generic, dotted},
dist/.style={ultra thick,draw=testcolor,shorten >=1pt,shorten <=1pt},
testfcn/.style={ultra thick,testcolor,shorten >=1pt,shorten <=1pt,<-},
testfunction/.style={ultra thick,testcolor,shorten >=1pt,shorten <=1pt},
testfcnx/.style={ultra thick,testcolor,shorten >=1pt,shorten <=1pt,<-,
postaction={decorate,decoration={markings,mark=at position 0.6 with {\drawx}}}},
kprime/.style={semithick,shorten >=1pt,shorten <=1pt,densely dashed,->},
kprimex/.style={semithick,shorten >=1pt,shorten <=1pt,densely dashed,->,
postaction={decorate,decoration={markings,mark=at position 0.4 with {\drawx}}}},
kernel/.style={semithick,shorten >=1pt,shorten <=1pt,->,draw=black},
Pkernel/.style={ultra thick,shorten >=1pt,shorten <=1pt,->,draw=blue},
PkernelBig/.style={very thick,shorten >=1pt,shorten <=1pt,decorate, draw=blue, decoration={zigzag,amplitude=1.5pt,segment length = 3pt,pre length=2pt,post length=2pt}},
multx/.style={shorten >=1pt,shorten <=1pt,
postaction={decorate,decoration={markings,mark=at position 0.5 with {\drawx}}}},
kernelx/.style={semithick,shorten >=1pt,shorten <=1pt,->,
postaction={decorate,decoration={markings,mark=at position 0.4 with {\drawx}}}},
kernel1/.style={->,semithick,shorten >=1pt,shorten <=1pt,postaction={decorate,decoration={markings,mark=at position 0.45 with {\draw[-] (0,-0.1) -- (0,0.1);}}}},
kernel2/.style={->,semithick,shorten >=1pt,shorten <=1pt,postaction={decorate,decoration={markings,mark=at position 0.45 with {\draw[-] (0.05,-0.1) -- (0.05,0.1);\draw[-] (-0.05,-0.1) -- (-0.05,0.1);}}}},
kernelBig/.style={semithick,shorten >=1pt,shorten <=1pt,decorate, decoration={zigzag,amplitude=1.5pt,segment length = 3pt,pre length=2pt,post length=2pt}},
kernelBigg/.style={thick,shorten >=1pt,shorten <=1pt,decorate, decoration={zigzag,amplitude=3.5pt,segment length = 7pt,pre length=2pt,post length=2pt}},
kernelBigg1/.style={thick,shorten >=1pt,shorten <=1pt,decorate, decoration={saw,amplitude=3.5pt,segment length = 7pt,pre length=2pt,post length=2pt}},
kernelBigg2/.style={thick,shorten >=1pt,shorten <=1pt,decorate, decoration={bumps,amplitude=3.5pt,segment length = 7pt,pre length=2pt,post length=2pt}},
rho/.style={dotted,semithick,shorten >=1pt,shorten <=1pt},
k/.style={color=trees, snake=zigzag, segment length=1pt, segment amplitude=0.3pt},
r/.style={color=trees, double, double distance=0.1pt},
renorm/.style={shape=circle,fill=white,inner sep=1pt},
labl/.style={shape=rectangle,fill=white,inner sep=1pt},
xi/.style={circle,fill=symbols!30,draw=symbols,inner sep=0pt,minimum size=0.8mm},
xibig/.style={circle,fill=symbols!30,draw=symbols,inner sep=0pt,minimum size=1.1mm},
cmn/.style={circle,fill=cameron!50,draw=symbols,inner sep=0pt,minimum size=1.2mm},
xix/.style={crosscircle,fill=symbols!10,draw=symbols,inner sep=0pt,minimum size=1.2mm},
xib/.style={circle,fill=symbols!30,draw=symbols,inner sep=0pt,minimum size=1.4mm},
xiblack/.style={circle,fill=trees,draw=trees,inner sep=0pt,minimum size=0.7mm},
xigray/.style={circle,fill=symbols!30,draw=symbols,inner sep=0pt,minimum size=0.8mm},
xibx/.style={crosscircle,fill=symbols!10,draw=symbols,inner sep=0pt,minimum size=1.6mm},
not/.style={circle,fill=symbols,draw=symbols,inner sep=0pt,minimum size=0.5mm},
>=stealth,
}
\makeatletter
\def\DeclareSymbol#1#2#3{\expandafter\gdef\csname MH@symb@#1\endcsname{\tikz[baseline=#2,scale=0.13,draw=symbols]{#3}}\expandafter\gdef\csname MH@symb@#1s\endcsname{\scalebox{0.6}{\tikz[baseline=#2,scale=0.15,draw=symbols]{#3}}}}
\def\<#1>{\csname MH@symb@#1\endcsname}
\makeatother

\DeclareSymbol{xi}{-3}{\node[xibig] {};}
\DeclareSymbol{wn}{-3}{\node[xibig] {};}
\DeclareSymbol{xigray}{-3}{\node[xigray] {};}
\DeclareSymbol{X}{-2.4}{\node[dot] {};}
\DeclareSymbol{1}{0}{\draw[white] (-.4,0) -- (.4,0); \draw (0,0)  -- (0,1.2) node[xi] {};}
\DeclareSymbol{2}{0}{\draw (-0.5,1.2) node[xi] {} -- (0,0) -- (0.5,1.2) node[xi] {};}
\DeclareSymbol{3}{0}{\draw (0,0) -- (0,1.2) node[xi] {}; \draw (-.7,1) node[xi] {} -- (0,0) -- (.7,1) node[xi] {};}
\DeclareSymbol{31}{-3}{\draw (0,0) -- (0,-1) -- (1,0) node[xi] {}; \draw (0,0) -- (0,1.2) node[xi] {}; \draw (-.7,1) node[xi] {} -- (0,0) -- (.7,1) node[xi] {};}
\DeclareSymbol{30}{-3}{\draw (0,0) -- (0,-1); \draw (0,0) -- (0,1.2) node[xi] {}; \draw (-.7,1) node[xi] {} -- (0,0) -- (.7,1) node[xi] {};}
\DeclareSymbol{10}{-3}{\draw (0,0) -- (0,-1); \draw (-.8,1) node[xi] {} -- (0,0);}
\DeclareSymbol{32}{-3}{\draw (0,0) -- (0,-1) -- (1,0) node[xi] {}; \draw (0,0) -- (0,-1) -- (-1,0) node[xi] {}; \draw (0,0) -- (0,1.2) node[xi] {}; \draw (-.7,1) node[xi] {} -- (0,0) -- (.7,1) node[xi] {};}
\DeclareSymbol{22}{-3}{\draw (0,0.3) -- (0,-1) -- (1,0) node[xi] {}; \draw (0,0.3) -- (0,-1) -- (-1,0) node[xi] {};\draw (-.7,1) node[xi] {} -- (0,0.3) -- (.7,1) node[xi] {};}
\DeclareSymbol{12}{-3}{\draw (0,0) -- (0,-1) -- (1,0) node[xi] {}; \draw (0,0) -- (0,-1) -- (-1,0) node[xi] {}; \draw (-.8,1) node[xi] {} -- (0,0);}
\DeclareSymbol{20}{-3}{\draw (0,0) -- (0,-1);\draw (-.7,1) node[xi] {} -- (0,0) -- (.7,1) node[xi] {};}

\DeclareSymbol{xib}{-3}{\node[xiblack] {};}
\DeclareSymbol{X}{-2.4}{\node[dot] {};}
\DeclareSymbol{1b}{0}{\draw[white] (-.4,0) -- (.4,0); \draw[trees] (0,0)  -- (0,1.2) node[xiblack] {};}
\DeclareSymbol{2b}{0}{\draw[trees] (-0.5,1.2) node[xiblack] {} -- (0,0) -- (0.5,1.2) node[xiblack] {};}
\DeclareSymbol{3b}{0}{\draw[trees] (0,0) -- (0,1.2) node[xiblack] {}; \draw[trees] (-.7,1) node[xiblack] {} -- (0,0) -- (.7,1) node[xiblack] {};}
\DeclareSymbol{31b}{-3}{\draw[trees] (0,0) -- (0,-1) -- (1,0) node[xiblack] {}; \draw[trees] (0,0) -- (0,1.2) node[xiblack] {}; \draw[trees] (-.7,1) node[xiblack] {} -- (0,0) -- (.7,1) node[xiblack] {};}
\DeclareSymbol{30b}{-3}{\draw[trees] (0,0) -- (0,-1); \draw[trees] (0,0) -- (0,1.2) node[xiblack] {}; \draw[trees] (-.7,1) node[xiblack] {} -- (0,0) -- (.7,1) node[xiblack] {};}
\DeclareSymbol{10b}{-3}{\draw[trees] (0,0) -- (0,-1); \draw[trees] (-.8,1) node[xiblack] {} -- (0,0);}
\DeclareSymbol{32b}{-3}{\draw[trees] (0,0) -- (0,-1) -- (1,0) node[xiblack] {}; \draw[trees] (0,0) -- (0,-1) -- (-1,0) node[xiblack] {}; \draw[trees] (0,0) -- (0,1.2) node[xiblack] {}; \draw[trees] (-.7,1) node[xiblack] {} -- (0,0) -- (.7,1) node[xiblack] {};}
\DeclareSymbol{22b}{-3}{\draw[trees] (0,0.3) -- (0,-1) -- (1,0) node[xiblack] {}; \draw[trees] (0,0.3) -- (0,-1) -- (-1,0) node[xiblack] {};\draw[trees] (-.7,1) node[xiblack] {} -- (0,0.3) -- (.7,1) node[xiblack] {};}
\DeclareSymbol{12b}{-3}{\draw[trees] (0,0) -- (0,-1) -- (1,0) node[xi] {}; \draw[trees] (0,0) -- (0,-1) -- (-1,0) node[xiblack] {}; \draw[trees] (-.8,1) node[xiblack] {} -- (0,0);}
\DeclareSymbol{20b}{-3}{\draw[trees] (0,0) -- (0,-1);\draw[trees] (-.7,1) node[xiblack] {} -- (0,0) -- (.7,1) node[xiblack] {};}


\DeclareSymbol{G}{1}{\draw[white] (-.4,0) -- (.4,0); \draw[trees, semithick] (0,0)  -- (0,2.5) {};}
\DeclareSymbol{K}{1}{\draw[white] (-.4,0) -- (.4,0); \draw[trees, semithick, snake=zigzag, segment length=2pt, segment amplitude=0.5pt] (0,0)  -- (0,2.5)  {};}
\DeclareSymbol{R}{1}{\draw[white] (-.4,0) -- (.4,0); \draw[color=trees, double, double distance=0.4pt] (0,0)  -- (0,2.5) {};}
\DeclareSymbol{I}{1}{\draw[white] (-.4,0) -- (.4,0); \draw[symbols, semithick] (0,0)  -- (0,2.5) {};}

\DeclareSymbol{1r}{0}{\draw[white] (-.4,0) -- (.4,0); \draw[r] (0,0)  -- (0,1.2) node[xiblack] {};}
\DeclareSymbol{1k}{0}{\draw[white] (-.4,0) -- (.4,0); \draw[k] (0,0)  -- (0,1.2) node[xiblack] {};}

\DeclareSymbol{2kr}{0}{\draw[k] (-0.5,1.2) node[xiblack] {} -- (0,0); \draw[r] (0,0) -- (0.5,1.2) node[xiblack] {};}
\DeclareSymbol{2rk}{0}{\draw[k] (-0.5,1.2) node[xiblack] {} -- (0,0); \draw[r] (0,0) -- (0.5,1.2) node[xiblack] {};}
\DeclareSymbol{2kk}{0}{\draw[k] (-0.5,1.2) node[xiblack] {} -- (0,0); \draw[k] (0,0) -- (0.5,1.2) node[xiblack] {};}
\DeclareSymbol{2rr}{0}{\draw[r] (-0.5,1.2) node[xiblack] {} -- (0,0); \draw[r] (0,0) -- (0.5,1.2) node[xiblack] {};}

\DeclareSymbol{3rrr}{0}{\draw[r] (0,0) -- (0,1.2) node[xiblack] {}; \draw[r] (-.7,1) node[xiblack] {} -- (0,0) -- (.7,1) node[xiblack] {};}
\DeclareSymbol{3kkk}{0}{\draw[k] (0,0) -- (0,1.2) node[xiblack] {}; \draw[k] (-.7,1) node[xiblack] {} -- (0,0) -- (.7,1) node[xiblack] {};}
\DeclareSymbol{3kkr}{0}{\draw[k] (0,0) -- (0,1.2) node[xiblack] {}; \draw[k] (-.7,1) node[xiblack] {} -- (0,0); \draw[r] (0,0) -- (.7,1) node[xiblack] {};}
\DeclareSymbol{3krr}{0}{\draw[r] (0,0) -- (0,1.2) node[xiblack] {}; \draw[k] (-.7,1) node[xiblack] {} -- (0,0); \draw[r] (0,0) -- (.7,1) node[xiblack] {};}

\DeclareSymbol{3rrr}{0}{\draw[r] (0,0) -- (0,1.2) node[xiblack] {}; \draw[r] (-.7,1) node[xiblack] {} -- (0,0) -- (.7,1) node[xiblack] {};}
\DeclareSymbol{3kkk}{0}{\draw[k] (0,0) -- (0,1.2) node[xiblack] {}; \draw[k] (-.7,1) node[xiblack] {} -- (0,0) -- (.7,1) node[xiblack] {};}
\DeclareSymbol{3kkr}{0}{\draw[k] (0,0) -- (0,1.2) node[xiblack] {}; \draw[k] (-.7,1) node[xiblack] {} -- (0,0); \draw[r] (0,0) -- (.7,1) node[xiblack] {};}
\DeclareSymbol{3krr}{0}{\draw[r] (0,0) -- (0,1.2) node[xiblack] {}; \draw[k] (-.7,1) node[xiblack] {} -- (0,0); \draw[r] (0,0) -- (.7,1) node[xiblack] {};}

\DeclareSymbol{30k}{-3}{\draw[k] (0,0) -- (0,-1); \draw[trees] (0,0) -- (0,1.2) node[xiblack] {}; \draw[trees] (-.7,1) node[xiblack] {} -- (0,0) -- (.7,1) node[xiblack] {};}
\DeclareSymbol{30r}{-3}{\draw[r] (0,0) -- (0,-1); \draw[trees] (0,0) -- (0,1.2) node[xiblack] {}; \draw[trees] (-.7,1) node[xiblack] {} -- (0,0) -- (.7,1) node[xiblack] {};}
\DeclareSymbol{3rrr0k}{-3}{\draw[k] (0,0) -- (0,-1); \draw[r] (0,0) -- (0,1.2) node[xiblack] {}; \draw[r] (-.7,1) node[xiblack] {} -- (0,0) -- (.7,1) node[xiblack] {};}
\DeclareSymbol{3kkk0k}{-3}{\draw[k] (0,0) -- (0,-1);\draw[k] (0,0) -- (0,1.2) node[xiblack] {}; \draw[k] (-.7,1) node[xiblack] {} -- (0,0) -- (.7,1) node[xiblack] {};}
\DeclareSymbol{3kkr0k}{-3}{\draw[k] (0,0) -- (0,-1);\draw[k] (0,0) -- (0,1.2) node[xiblack] {}; \draw[k] (-.7,1) node[xiblack] {} -- (0,0); \draw[r] (0,0) -- (.7,1) node[xiblack] {};}
\DeclareSymbol{3krr0k}{-3}{\draw[k] (0,0) -- (0,-1);\draw[r] (0,0) -- (0,1.2) node[xiblack] {}; \draw[k] (-.7,1) node[xiblack] {} -- (0,0); \draw[r] (0,0) -- (.7,1) node[xiblack] {};}

\DeclareSymbol{3rrr0r}{-3}{\draw[r] (0,0) -- (0,-1); \draw[r] (0,0) -- (0,1.2) node[xiblack] {}; \draw[r] (-.7,1) node[xiblack] {} -- (0,0) -- (.7,1) node[xiblack] {};}
\DeclareSymbol{3kkk0r}{-3}{\draw[r] (0,0) -- (0,-1);\draw[k] (0,0) -- (0,1.2) node[xiblack] {}; \draw[k] (-.7,1) node[xiblack] {} -- (0,0) -- (.7,1) node[xiblack] {};}
\DeclareSymbol{3kkr0r}{-3}{\draw[r] (0,0) -- (0,-1);\draw[k] (0,0) -- (0,1.2) node[xiblack] {}; \draw[k] (-.7,1) node[xiblack] {} -- (0,0); \draw[r] (0,0) -- (.7,1) node[xiblack] {};}
\DeclareSymbol{3krr0r}{-3}{\draw[r] (0,0) -- (0,-1);\draw[r] (0,0) -- (0,1.2) node[xiblack] {}; \draw[k] (-.7,1) node[xiblack] {} -- (0,0); \draw[r] (0,0) -- (.7,1) node[xiblack] {};}

\DeclareSymbol{30b}{-3}{\draw[trees] (0,0) -- (0,-1); \draw[trees] (0,0) -- (0,1.2) node[xiblack] {}; \draw[trees] (-.7,1) node[xiblack] {} -- (0,0) -- (.7,1) node[xiblack] {};}

\DeclareSymbol{xib}{-3}{\node[xiblack] {};}
\DeclareSymbol{X}{-2.4}{\node[dot] {};}
\DeclareSymbol{1b}{0}{\draw[white] (-.4,0) -- (.4,0); \draw[trees] (0,0)  -- (0,1.2) node[xiblack] {};}
\DeclareSymbol{2b}{0}{\draw[trees] (-0.5,1.2) node[xiblack] {} -- (0,0) -- (0.5,1.2) node[xiblack] {};}
\DeclareSymbol{3b}{0}{\draw[trees] (0,0) -- (0,1.2) node[xiblack] {}; \draw[trees] (-.7,1) node[xiblack] {} -- (0,0) -- (.7,1) node[xiblack] {};}
\DeclareSymbol{31b}{-3}{\draw[trees] (0,0) -- (0,-1) -- (1,0) node[xiblack] {}; \draw[trees] (0,0) -- (0,1.2) node[xiblack] {}; \draw[trees] (-.7,1) node[xiblack] {} -- (0,0) -- (.7,1) node[xiblack] {};}
\DeclareSymbol{30b}{-3}{\draw[trees] (0,0) -- (0,-1); \draw[trees] (0,0) -- (0,1.2) node[xiblack] {}; \draw[trees] (-.7,1) node[xiblack] {} -- (0,0) -- (.7,1) node[xiblack] {};}
\DeclareSymbol{10b}{-3}{\draw[trees] (0,0) -- (0,-1); \draw[trees] (-.8,1) node[xiblack] {} -- (0,0);}
\DeclareSymbol{32b}{-3}{\draw[trees] (0,0) -- (0,-1) -- (1,0) node[xiblack] {}; \draw[trees] (0,0) -- (0,-1) -- (-1,0) node[xiblack] {}; \draw[trees] (0,0) -- (0,1.2) node[xiblack] {}; \draw[trees] (-.7,1) node[xiblack] {} -- (0,0) -- (.7,1) node[xiblack] {};}
\DeclareSymbol{22b}{-3}{\draw[trees] (0,0.3) -- (0,-1) -- (1,0) node[xiblack] {}; \draw[trees] (0,0.3) -- (0,-1) -- (-1,0) node[xiblack] {};\draw[trees] (-.7,1) node[xiblack] {} -- (0,0.3) -- (.7,1) node[xiblack] {};}
\DeclareSymbol{12b}{-3}{\draw[trees] (0,0) -- (0,-1) -- (1,0) node[xi] {}; \draw[trees] (0,0) -- (0,-1) -- (-1,0) node[xiblack] {}; \draw[trees] (-.8,1) node[xiblack] {} -- (0,0);}
\DeclareSymbol{20b}{-3}{\draw[trees] (0,0) -- (0,-1);\draw[trees] (-.7,1) node[xiblack] {} -- (0,0) -- (.7,1) node[xiblack] {};}

\usepackage{caption}

\usepackage{lipsum}

\usepackage{cite}


\setkomafont{dictumauthor}{\normalfont}
\setkomafont{dictumtext}{\normalfont}

\usepackage{colortbl}

\setlength{\columnsep}{2cm}

\allowdisplaybreaks[1]

\usepackage{scalerel}

\makeatletter
\newcommand\bigwp{\mathop{\mathpalette\bigDi@mond\relax}}
\newcommand\bigDi@mond[2]{%
\vcenter{\hbox{\m@th
\scalebox{\ifx#1\displaystyle 2\else1.2\fi}{$#1\diamond$}%
}}%
}
\makeatother

\author{Tom Klose and Avi Mayorcas}

\date{\today}

\title{Large Deviations of the $\boldsymbol{\Phi^4_3}$ Measure \\ via Stochastic Quantisation }

\numberwithin{equation}{section}

\newcommand{\bbzm}{\bar{\Pi}}
\newcommand{\Phim}{\Phi}

\newcommand{\bbzmeps}{\bar{\Pi}^\eps}
\newcommand{\bzm}{\Pi}
\newcommand{\MMm}{\boldsymbol{\CM}^{\minus}}

\begin{document}

\maketitle	

\begin{abstract}
The~$\Phi^4_3$ measure is one of the easiest non-trivial examples of a Euclidean quantum field theo\-ry~(EQFT) whose rigorous construction in the 1970's has been one of the ce\-le\-bra\-ted achievements of constructive quantum field theory.
In recent years, progress in the field of singular stochastic~PDEs, initiated by the theory of regularity structures, has allowed for a new construction of the~$\Phi^4_3$ EQFT as the invariant measure of a previously ill-posed Langevin dynamics---a strategy originally proposed by Parisi and Wu~(`81) under the name \emph{stochastic quantisation}.
We apply the same methodology to obtain a large deviation principle (LDP) for the family of periodic $\Phi^4_3$ measures at varying temperature. In addition, we show that the rate functional of the LDP and the $\Phi^4_3$ action functional coincide up to a constant. 
We wish to highlight that while our main result had previously been obtained by Barashkov~(2022), the main focus of this work is on the approach through the stochastic quantisation equation.
\end{abstract}

\noindent
\emph{Key words and phrases.} \\
\noindent
Euclidean Quantum Field Theory, $\Phi^4_3$ Measure, Large Deviations, Stochastic Quantisation 

\vspace{0.5em}
\noindent
\emph{2020 Mathematics Subject Classification.} \\
\noindent
Primary: 81S20, Secondary: 60F10, 60H17, 60L30


\setcounter{tocdepth}{3} 		

\tableofcontents

\section{Introduction}\label{sec:intro}
The $\Phi^4_3$ measure on the $3$-dimensional torus~$\mbT^3$ at temperature~$\eps > 0$ is given by the \emph{formal} expression
\begin{equation}\label{eq:phi43_formal_measure}
	\mu_\eps  \,\propto \,  \exp\left(-\frac{1}{\eps^2}\int_{\mbT^3} \left(\frac{1}{2}|\nabla \phi(x)|^2 + \frac{1}{4}\phi(x)^4 + \frac{m^2}{2}\phi(x)^2  \right) \dd x \right)  \prod_{x\in \mbT^3} \dd \phi(x).
\end{equation}
where~\enquote{$\propto$} denotes  a suitable normalisation so as to give a probability measure. We refer the reader to Section~\ref{sec:tail_bounds} for a more rigorous description of this object.
Taking this definition for granted, the measure defined in \eqref{eq:phi43_formal_measure} gives a Euclidean version of a quartic, Bosonic, relativistic quantum field theory. In the programme of constructive quantum field theory, Euclidean quantum field theory (EQFT) measures are related to relativistic theories through verification of the Osterwalder--Schrader~(OS) axioms~\cite{osterwalder_schrader_I, osterwalder_schrader_II}; see Glimm and Jaffe~\cite[Sec.~6.1]{glimm_jaffe} for a pedagogical account and the introduction by Gubinelli and Hofmanova~\cite{gubinelli_hofmanova} for a more detailed overview. 
Informally, the integration variable $\phi$ in \eqref{eq:phi43_formal_measure} is to be understood as an element of~$\mcS'(\mbT^3)$, the space of distributions on the three dimensional torus. As such, the expression \eqref{eq:phi43_formal_measure} is purely formal for two reasons: firstly, the product measure over $\prod_{x\in \mbT^3}\dd \phi(x)$ is not well-defined and secondly, even if it were, the quartic non-linear terms are generically ill-defined for $\phi\in \mcS'(\mbT^3)$.

\vspace{-0.5em}
\paragraph{Rigorous Constructions of the Measure.}
Despite these challenges, rigorous meaning can be given to a renormalised modification of \eqref{eq:phi43_formal_measure}; this was first achieved in both finite and infinite volumes in the series of works~\cite{glimm_68_boson,eckmann_osterwalder_71_uniqueness,glimm_jaffe_73_positivity,feldman_74_finite_vol,feldman_osterwalder_76_wightman}. A major outcome of this programme was the verification of the (OS) axioms for the Euclidean $\Phi^4$ measure at small couplings, i.e. small relative strength of the non-linear term, mediated through a parameter absent in our presentation. However, modern approaches have recently succeeded in constructing the same measure at all coupling strengths, as well as showing promise for more systematic approaches to a wider range of physical theories. One of these approaches, which is also the approach of this paper, is the programme of \emph{stochastic quantisation}, proposed originally by Nelson~\cite{nelson_66_derivation} in the context of quantum mechanics and then by Parisi and Wu~\cite{parisi-wu} in the context of EQFT; see Damgaard and H\"uffel~\cite{damgaard_hueffel_1987_stoch_quant} for a review article on the subject. 
In the case of the Euclidean $\Phi^4_3$ measure, the approach of Parisi and Wu leverages the Gibbsian nature of (the renormalised version of) \eqref{eq:phi43_formal_measure} to view it as the invariant measure of the (renormalised) Langevin dynamics
\begin{equation}\label{eq:phi43_formal_SPDE}
	\partial_t \bar{u}_\eps - \Delta \bar{u}_\eps = - \bar{u}_\eps^3 - m^2 \bar{u}_\eps + \infty \, \bar{u}_\eps + \eps \xi, \quad \bar{u}_\eps(0,\,\cdot\,) = \fz,
\end{equation}
where $\xi$ is a space-time white noise, $\fz$ is a suitably regular initial condition, and the term $\infty \bar{u}_\eps$ signifies a diverging counter-term which must be included in the equation to balance the singularity of the non-linear term. We give rigorous meaning to this equation in Definition~\ref{def:phi43_rigorous_SPDE} below. 

A local solution theory for \eqref{eq:phi43_formal_SPDE} on $\mbT^3$ was first obtained by Hairer~\cite{hairer_rs} via the novel theory of regularity structures, then by Catellier and Chouk~\cite{catellier_chouk_paracontrolled_phi43} via the theory of paracontrolled calculus as developed in~\cite{gip}, and subsequently via renormalisation group methods by Kupiainen and Duch, see for example~\cite{kupiainen_renorm_group_spde,duch_22_flow,duch_23_renormalisation}.
Global well-posedness and existence of an invariant measure for the dynamical equation,
on the finite volume, was then shown by Mourrat and Weber~\cite{mourrat_weber_infinity} while uniqueness of the measure, for all coupling strengths, in the finite volume case,\footnote{In~$2$D, global well-posedness on the plane was shown by the same authors in~\cite{mourrat_weber_17_GWP}.} was obtained in combination by Hairer and Mattingly~\cite{hairer-mattingly} and Hairer and Schönbauer~\cite{hairer_schoenbauer_support}. In \cite{hairer_matetski_18_discretisation}, Hairer and Matetski further showed that this measure, $\mu_\eps$, agrees with the one obtained in the programme of CQFT, for example by \cite{feldman_74_finite_vol}, for coupling strengths where the latter is defined.

Alternative constructions of the same measure were also recently given by Barashkov and Gubinelli via a variational approach~\cite{barashkov_gubinelli_20_variational} and Girsanov's theorem~\cite{barashkov_gubinelli_21_girsanov} as well as by Gubinelli and Hofmanova~\cite{gubinelli_hofmanova} via an approximating family of PDEs on the lattice. 
In the latter, it was also shown that the measure satisfies all OS axioms in the form of~\cite{eckmann_osterwalder_71_uniqueness}, apart from rotation invariance and the clustering property which are only relevant on the infinite volume. 

As described above, a consequence of the stochastic quantisation approach is that, in certain circumstances, it allows for the transference of properties obtained for dynamical solutions, where tools of stochastic analysis and PDE theory may be applied, to the invariant Euclidean QFT measure.
One pertinent example of this is the work by Hairer and Steele~\cite{hairer_steele_22} where sub-Gaussian tail estimates are obtained for 
(a modified version of) the dynamical $\Phi^4_3$ equation.
These tail bounds for the dynamics are their main technical contribution which, by invariance of the EQFT measure, quite simply carry over to the renormalised version of \eqref{eq:phi43_formal_measure}.
The present work is another example of a similar me\-tho\-do\-lo\-gy; we transfer the large deviation principle for the periodic $\Phi^4_3$ dynamics, obtained by Hairer and Weber \cite{hairer_weber_ldp}, onto the EQFT measure as informally described by~\eqref{eq:phi43_formal_measure}. 
However, in our case the transference, compared to \cite{hairer_steele_22}, is more technical and builds on analytic arguments developed by Sowers~\cite{sowers_ldp_measure} and Cerrai and R\"ockner~\cite{cerrai_roeckner_ldp_measure} in the non-singular SPDE setting.
We believe that the method developed in our work may readily be applied to other physically relevant models for which the theory of regularity structures applies, see the paragraph on future directions at the end of the next section.

\subsection{Main Result and Discussion.} \label{sec:main_result_discussion}
In order to state our main result, we introduce the  $\Phi^4_3$ \emph{action functional}~$\msS: \mcS'(\mbT^3)\to [0,+\infty]$ by setting 
\begin{equation}\label{eq:phi43_action}
	\msS(\phi) 
	\coloneqq  
	\begin{cases}
		\int_{\mbT^3} \left(\frac{1}{2} |\nabla \phi(x)|^2 + \frac{1}{4} |\phi(x)|^4 +\frac{m^2}{2} |\phi(x)|^2 \right)\dd x, & \phi \in H^1(\mbT^3),\\
		+\infty, & \phi \in \mcC^{\alpha}(\mbT^3)\setminus H^1(\mbT^3).
	\end{cases}
\end{equation}
The following theorem is the main result of this paper.\footnote{Note that the factor~\enquote{$2$} in the rate function~$2\msS$ can simply be removed by rescaling~$\eps \rightsquigarrow \sqrt{2}\eps$ or by considering an SPDE like~\eqref{eq:phi43_formal_SPDE} with formal gradient~$-\frac{1}{2} D\msS(u_\eps)$ rather than~$-D\msS(u_\eps)$. 
}
\begin{theorem}[LDP for the~$\boldsymbol{\Phi^4_3}$ measure]\label{th:main_result}
	Let $\alpha \in (-\nicefrac{2}{3},-\nicefrac{1}{2})$ be fixed. Then, the measures~$(\mu_\eps)_{\eps > 0}$, formally given by \eqref{eq:phi43_formal_measure}, satisfy a large deviation principle on $\mcC^\alpha(\mbT^3)$ with good rate function~$2 \msS$ and speed $\eps^2$. That is,
	\begin{enumerate}[label=(\roman*)]
		\item \label{it:LDP_compactness} the map~$\msS$ introduced in~\eqref{eq:phi43_action} is lower-semicontinuous, has compact sub-level sets, and is not identical to~$+ \infty$,
		\item \label{it:LDP_lower} for any open set $\mathring{A} \subset \mcC^\alpha(\mbT^3)$, we have
		\begin{equation}\label{eq:main_result_lower}
			\liminf_{\eps \to 0} \eps^2 \ln (\mu_\eps(\mathring{A}))\geq 	-\inf_{\mfz\in \mathring{A}} 2\msS(\mfz),
		\end{equation}
		\item \label{it:LDP_upper} for any closed set $\bar{A}\subset \mcC^\alpha(\mbT^3)$, we have
		\begin{equation}\label{eq:main_result_upper}
			\limsup_{\eps \to 0} \eps^2 \ln (\mu_\eps(\bar{A}))\leq -\inf_{\mfz\in \bar{A}} 2\msS(\mfz). 
		\end{equation}
	\end{enumerate}
\end{theorem}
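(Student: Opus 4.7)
The plan is to transfer the Freidlin--Wentzell-type large deviation principle of Hairer--Weber for the dynamical $\Phi^4_3$ equation~\eqref{eq:phi43_formal_SPDE} to its invariant measure $\mu_\eps$, adapting the non-singular methodology of Sowers and Cerrai--R\"ockner. Part~\ref{it:LDP_compactness} is standard variational calculus: if $\phi_n \to \phi$ in $\mcC^\alpha(\mbT^3)$ with $\sup_n \msS(\phi_n) < \infty$, then $(\phi_n)$ is bounded in $H^1(\mbT^3)$ and hence, along a subsequence, weakly convergent in $H^1$ to some $\psi$ which must coincide with $\phi$ by the $\mcC^\alpha$-convergence; weak lower-semicontinuity of $\|\nabla\cdot\|_{L^2}^2$ together with strong $L^4$-convergence via the Rellich compact embedding (in dimension~$3$, $H^1 \hookrightarrow L^4$ is compact) then gives $\msS(\phi) \leq \liminf \msS(\phi_n)$. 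Compactness of sublevel sets follows from the compact embedding $H^1 \hookrightarrow \mcC^\alpha$ for $\alpha < -\nicefrac{1}{2}$, and $\msS(0) = 0$ supplies non-triviality.

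For the lower bound~\ref{it:LDP_lower}, I would exploit invariance of $\mu_\eps$: for any $T > 0$ and open $\mathring{A}$,
\begin{equation*}
\mu_\eps(\mathring{A}) = \int \mbP\bigl(\bar{u}_\eps^\fy(T) \in \mathring{A}\bigr)\,\mu_\eps(d\fy) \geq \mu_\eps(B_\delta(0))\cdot\inf_{\fy \in B_\delta(0)} \mbP\bigl(\bar{u}_\eps^\fy(T) \in \mathring{A}\bigr),
\end{equation*}
where $\bar{u}_\eps^\fy$ solves~\eqref{eq:phi43_formal_SPDE} from initial datum $\fy$. Given $\mfz \in \mathring{A}$ with $\msS(\mfz) < \infty$, I would construct a deterministic control path joining $0$ to $\mfz$ in time $T$ with dynamical action equal to $2\msS(\mfz)$, exploiting the formal gradient structure of~\eqref{eq:phi43_formal_SPDE}; the Hairer--Weber lower bound applied uniformly over $\fy \in B_\delta(0)$---which demands an initial-condition-uniform reconstruction in the regularity-structures framework---then yields the factor $\exp(-(2\msS(\mfz) + o_\delta(1))/\eps^2)$. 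The second ingredient, $\liminf_\eps \eps^2 \ln \mu_\eps(B_\delta(0)) \geq 0$, follows from combining the sub-Gaussian tails of Section~\ref{sec:tail_bounds} with a short deterministic relaxation that brings bounded initial data close to $0$ at no exponential cost.

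The upper bound~\ref{it:LDP_upper} begins with exponential tightness of $(\mu_\eps)$ on $\mcC^\alpha$, again via Section~\ref{sec:tail_bounds}: for each $L > 0$ there is a compact $K_L$ with $\limsup_\eps \eps^2 \ln \mu_\eps(K_L^c) \leq -L$. For closed $\bar{A}$ and any $T > 0$, invariance then gives
\begin{equation*}
\mu_\eps(\bar{A}) \leq \mu_\eps(K_L^c) + \sup_{\fy \in K_L} \mbP\bigl(\bar{u}_\eps^\fy(T) \in \bar{A}\bigr),
\end{equation*}
and the Hairer--Weber upper bound applied uniformly over $K_L$ controls the second term by the level-$T$ quasipotential $V_T(\fy, \bar{A}) := \inf\{\msI_T(\phi) : \phi(0) = \fy,\, \phi(T) \in \bar{A}\}$. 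Sending $T \to \infty$, then $L \to \infty$, and invoking the Freidlin--Wentzell identity $V(\fy, \mfz) = 2[\msS(\mfz) - \msS(\fy)]_+$ for the reversible gradient flow closes the estimate.

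The principal obstacle is this final identification of the infinite-time quasipotential with $2\msS$: the gradient structure of~\eqref{eq:phi43_formal_SPDE} is only formal because of the diverging counter-term, so the identity must be established on a regularised, counter-termed level and then passed to the limit while keeping the rate function stable. Moreover, the naive bound $V(\fy, \mfz) \geq 2\msS(\mfz) - 2\msS(\fy)$ fails on compact sets $K_L$ containing non-$H^1$ elements (where $\msS = +\infty$), so a more refined argument---exploiting that the long-time dynamics bring $\fy$ into a set of bounded action before the LDP is applied---is needed. A secondary subtlety is that the Hairer--Weber LDP has to be proved uniformly in $\fy$ over compact subsets of $\mcC^\alpha$, which requires revisiting the continuity of the reconstruction map on initial data.
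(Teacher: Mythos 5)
Your high-level architecture---transfer the dynamical LDP of Hairer--Weber to the invariant measure via the Markov/invariance identity, with a Freidlin--Wentzell quasipotential identified up to a constant with the $\Phi^4_3$ action---matches the paper's. However, two of the three points you flag as ``obstacles'' are in fact where your proposal has genuine gaps, and you miss a crucial simplification on the first one.

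\smallskip
\noindent\emph{Identification of the quasipotential.} You write that the gradient structure of~\eqref{eq:phi43_formal_SPDE} is ``only formal because of the diverging counter-term, so the identity must be established on a regularised, counter-termed level and then passed to the limit while keeping the rate function stable.'' This is more complicated than necessary. The rate function $\msI_T$ in~\eqref{eq:rate_function_phi43} is \emph{independent} of the renormalisation, and is therefore the action functional of the \emph{skeleton equation}~\eqref{eq:nl_skeleton_intro}, which is classically well-posed on $H^1(\T^3)$ and carries an honest, non-formal gradient-flow structure $\partial_t w = -D\msS(w) + h$. The identity $\msV(\mfz) = 2\msS(\mfz)$ is proved entirely at this classical level (Theorem~\ref{th:V_equal_2S}): one direction follows from the chain rule identity $\msS(w(T)) - \msS(w(0)) = -\int_0^T\|D\msS(w_s)\|^2\,ds + \int_0^T\langle D\msS(w_s), h_s\rangle\,ds$ plus Cauchy--Schwarz/Young; the other by reversing the deterministic flow and using $2D\msS(v)$ as the control, with a short patching step near time $0$ that relies on exact controllability (Proposition~\ref{prop:nl_skeleton_control}) since the initial datum $\mfz$ sits in $\mcC^\alpha \setminus H^1$ generically. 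No regularisation-and-limit of counter-terms is required, and your classical formula $V(\fy,\mfz)=2[\msS(\mfz)-\msS(\fy)]_+$ is not invoked at all---only the quasipotential $\msV(\cdot)$ from $0$ is.

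\smallskip
\noindent\emph{The upper bound gap.} You correctly observe that the naive argument fails because $\msS(\fy)=+\infty$ for $\mu_\eps$-a.e.\ $\fy\in K_L$, and say ``a more refined argument---exploiting that the long-time dynamics bring $\fy$ into a set of bounded action before the LDP is applied---is needed.'' This is precisely where the real work is, and your proposal leaves it unresolved. The paper's resolution (Lemma~\ref{lem:energy_bounds} plus the three-scenario decomposition in Section~\ref{sec:LDP_upper}) is not a minor refinement but the technical heart of the upper bound. The key mechanism: (a) decompose the event $\{\dist(\bar{u}^\fy_\eps(t),\msV[\theta])\geq\delta\}$ according to whether the initial data lies in $B_\rho$, and whether the path performs an ``excursion'' (stays outside $B_\lambda$ at all integer times up to $\bar{n}$) or re-enters $B_\lambda$ at some integer time; (b) the excursion scenario is penalised by a \emph{Dembo--Zeitouni} uniform LDP (Corollary~\ref{cor:DZULDP}, which here requires additional argument---Lemma~\ref{lem:hausdorff_convergence}---since FWULDP does not imply DZULDP in general); (c) in the non-excursion scenario, restart via the Markov property from the first integer time the path enters $\bar{B}_\lambda$, then apply Lemma~\ref{lem:energy_bounds}, whose proof hinges on the skeleton's instantaneous $H^1$-regularisation in the weighted norm scale $C_{\eta;T}$. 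This is what substitutes for the ill-behaved term $\msS(\fy)$; your proposal does not contain a replacement. Also note the paper never needs exponential tightness as you propose: the tail bound $\mu_\eps(B_\rho^{\texttt{c}})\leq K\,e^{-\theta/\eps^2}$ on (non-compact) balls suffices, precisely because the excursion analysis, not a compact-set reduction, handles the rest.

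\smallskip
\noindent\emph{The lower bound.} Your sketch is essentially correct in spirit, but the claim that one can simply shrink $\delta$ to absorb the dependence on $\fy\in B_\delta(0)$ is too quick: for a single fixed control $h$, the terminal point $w^{h,\fy}(T)$ only depends continuously, not uniformly convergent-to-$\mfz$, on $\fy$. The paper makes this precise in Lemma~\ref{lem:quant_lwr_bnd_control} by pre-pending a pure relaxation phase of length $T_0$ (control $\equiv 0$), during which $w^{0,\fy}(T_0)\to 0$ in $L^{3q}$ uniformly over $\fy\in B_\rho$ by the decay estimates of Theorem~\ref{th:nl_skeleton_gwp}~\ref{it:nl_skeleton_global_decay}, and only then concatenating the optimal control $\bar{h}$; this relaxation also costs nothing in the $L^2$-budget.

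\smallskip
\noindent\emph{Part~(i).} Your argument is a standard direct proof and is fine; the paper derives the same facts as consequences of $\msV=2\msS$ and the compact embedding $H^1\cembed\mcC^\alpha$ in Corollary~\ref{coro:cpc_sublevel_sets}. Either route is acceptable.
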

\noindent
\paragraph{Intuition for LDPs and Semi-Classical Physics.}
One may naturally ask what additional information a large deviation principle provides for the family $(\mu_\eps)_{\eps > 0}$. Mathematically speaking, it expresses the idea that as random fluctuations are removed, the measure $\mu_{\eps}$ should concentrate on classical minimisers of the action functional $\msS$. From a physical point of view, up to normalising other dimensionless constants, we may identify $\hbar = \eps^2$ where $\hbar$ is Planck's constant. 
Thus, we may think of $\eps>0$ in \eqref{eq:phi43_formal_measure} as either a measure of temperature, or the discrepancy between the classical theory and its quantum analogue. We expect it to quantify the amplitude of expected fluctuations of the quantum system around its natural, classical analogue.

The large deviation principle captures the size of these fluctuations on a logarithmic scale. Informally, it expresses the probability of witnessing \emph{rare events}~$A$ as $\eps\to 0$, that is
\begin{equation*}
	\mu_\eps(A) \simeq_{\tiny \log} \exp\del[2]{- \frac{1}{\eps^2} \inf_{\fz \in A} 2\msS(\fz)} \quad \text{for} \quad \eps \ll 1,
\end{equation*}
where the notation~$\simeq_{\tiny \log}$ is a shorthand for  \emph{logarithmic} equivalence in the sense of the bounds~\eqref{eq:main_result_lower} and~\eqref{eq:main_result_upper}.
It is one of the main results in the theory of large deviations that an LDP is equivalent to a~\emph{Laplace principle}, namely the approximation
\begin{equation} \label{eq:laplace_principle}
	\int e^{-\frac{1}{\eps^2} (F + \msS)(\phi)} \dif \phi
	\propto
	\mbE_{\mu_\eps}\sbr[1]{ e^{-\frac{1}{\eps^2} F(\phi)}} 
	\simeq_{\tiny \log} \exp\del[2]{-\frac{1}{\eps^2} \inf_{\phi} (F + 2\msS)(\phi)} 	
\end{equation}
for suitable functionals~$F: \supp(\mu_\eps) \to \mbR$. In fact, this is the starting point for the so-called \emph{weak convergence approach} to large deviations developed by Dupuis and Ellis~\cite{dupuis_ellis_97_weak} and applied to stochastic PDEs by Budhiraja, Dupuis, and Maroulas~\cite{budhiraja_dupuis_maroulas_2008_ldp_weak_conv}.

In the context of Euclidean QFT, the Laplace principle as expressed by~\eqref{eq:laplace_principle} is an example of~\emph{semi-classical} limits. The study of these limits generalises the idea described above, i.e. they are concerned with the discrepancy, due to thermal fluctuations, between quantum field theoretic descriptions and their classical counterpart. The LDP is only one example of such a limit. A tighter relationship between the quantum and classical theories is given by the notion of \emph{sharp Laplace asymptotics} or \emph{semi-classical expansions}, sometimes also referred to as \emph{low temperature expansions}.
For the~$\Phi^4_2$ measure on~$\T^2$, such an expansion has recently been obtained by Gess, Seong, and Tsatsoulis~\cite{gess_seong_tsatsoulis_24}. 
For an example of how to obtain \emph{sharp Laplace asymptotics} for solutions to a family of singular SPDEs we refer to \cite{friz_klose_22_precise, klose_2025}. 

We also mention that LDPs have been studied in the context of Euclidean gauge theories by Lévy and Norris \cite{levy_norris_06}, obtaining the first rigorous connection between the previously constructed $2$D Yang--Mills measure and the Yang--Mills action functional. In the direction of semi-classical limits, large deviations and sharp Laplace asymptotics have been studied in the context of Louville quantum field theories by Lacoin, Rhodes, and Vargas \cite{lacoin_rhodes_vargas_17_semiclassical_liouville,lacoin_rhodes_vargas_22_semiclassical_conformal}. In the context of relativistic QFT, the asymptotics described by LDP and sharp Laplace asymptotics on the Euclidean side are related to the \emph{approximations of stationary phase} on the relativistic side. 
For finite-dimensional SDE dynamics, this connection has been explored by Ben Arous \cite{benArous_88_stationary}, motivated by earlier works of Azencott and Doss~\cite{doss_1985, azencott_doss_1985} on the semi-classical limits of Schr\"odinger's equation.

\vspace{-0.5em}
\paragraph{Large Deviation Principles for SPDEs and their Invariant Measures.}
As described earlier in the introduction, the dynamical $\Phi^4_3$ equation is not naively well-posed, due to the irregularity of space-time white noise. However, if one considers a sufficiently regular approximation to the noise, for example, stetting~$\xi_\kappa := \xi * \rho_\kappa$ with $\rho_\kappa$  a standard mollifier at scale~$\kappa>0$, then the equation
\begin{equation} \label{eq:phi43_SPDE_mollified_unrenormalised}
	\begin{cases}
		\partial_t u_{\eps,\kappa}(t,x) - \Delta u_{\eps,\kappa}(t,x) 
		= - u_{\eps,\kappa}(t,x)^3 - m^2 u_{\eps,\kappa}(t,x) + \eps \xi_\kappa(t,x),&  \\
		u_{\eps,\kappa}(0,\,\cdot\,) = \fz, \quad (t,x) \in [0,T] \x \T^d,
	\end{cases}
\end{equation}
has a classically well-defined, unique solution for all~$d \geq 1$. Cerrai and Röckner analysed such equations in a series of works~\cite{cerrai_2003, cerrai_roeckner_ldp_dynamics} and,
in fact, obtained an LDP for a large class of reaction-diffusion systems, with non-Lipschitz reaction term and multiplicative, regular noise, even in the case of unbounded, degenerate diffusion coefficient. In the subsequent work~\cite{ cerrai_roeckner_ldp_measure} the same authors demonstrated that under suitable structural assumptions on the coefficients, the LDP can be transferred to the unique invariant measure of the system; this generalised earlier work by Sowers~\cite{sowers_ldp_dynamics,sowers_ldp_measure} which treated the case of a single equation with more restrictive conditions on both the reaction and the diffusion terms.
For a succinct, intuitive description of the relation between LDPs for stochastic PDEs and those for their invariant measures, we direct the reader to the introduction of~\cite{freidlin88}.
More recently, the works \cite{cerrai_paskal_22_NS_LDP,bai_feng_zhao_24_burgers} have applied the same methodology to stochastic fluid equations and their invariant measures.

When~$d = 1$, the equation~\eqref{eq:phi43_SPDE_mollified_unrenormalised} is well-posed even in the limit~$\kappa \to 0$ and an LDP (also in the case of a double-well potential, i.e. making the replacement~$-m^2\mapsto +m^2$ in~\eqref{eq:phi43_SPDE_mollified_unrenormalised}) was obtained by Faris and Jona-Lasinio~\cite{faris-jona-lasinio}.
In contrast, equation~\eqref{eq:phi43_SPDE_mollified_unrenormalised} with fixed~$\kappa = 0$ is naively ill-posed for any~$d \geq 2$. 
In fact, when~$d = 2$ and~$\eps > 0$ is fixed, Hairer, Ryser, and Weber~\cite{hairer-ryser-weber} showed that~$u_{\eps,\kappa}$ converges to $0$ in a space of distributions as~$\kappa \to 0$.
In contrast, it had already been proved by Cerrai and Freidlin~\cite{cerrai_freidlin_11_rde_quasi} on the level of large deviations that the rate function~$\msI_{0,T}^{(\kappa)}$ which governs the LDP of the random variables~$(u_{\eps,\kappa})_{\kappa > 0}$ $\Gamma$-converges \emph{in any dimension}~$d \geq 1$ to a limiting rate function\footnote{This is with the appropriate understanding that $\msI_{0,T}(v)$ is infinite whenever the integral on the right hand side is not finite, see~\eqref{eq:uniform_ldp_rf} in Appendix~\ref{sec:uniform_ldp} for details.}
\begin{equation} \label{eq:rate_function_phi43}
	\msI_{0,T}(v) = \frac{1}{2} \int_0^T \int_{\T^d} (\partial_t v - \Delta v + v^3 + m^2 v)^2 \dif x \dif t
\end{equation}  
as~$\kappa \to 0$.
For~$d = 1$, this functional does indeed coincide with the rate function obtained by Faris and Jona-Lasinio \cite{faris-jona-lasinio}, but when~$d \geq 2$, the problem of finding a suitable stochastic dynamics~$\bar{u}_{\eps,\kappa}$, whose limit~$\bar{u}_\eps$ (as $\kappa \to 0$) is both non-trivial and whose large deviations are governed by~$\msI_{0,T}$, had eluded progress at the time. 
In dimension~$d \in \{2,3\}$, this problem was overcome by Hairer and Weber~\cite{hairer_weber_ldp} using the novel theory of regularity structures.
Indeed, they showed that~$\msI_{0,T}$ as given by \eqref{eq:rate_function_phi43} is the rate function for an LDP governing solutions~$(\bar{u}_{\eps,\kappa})_{\eps > 0}$ to a suitably \emph{renormalised} version of \eqref{eq:phi43_SPDE_mollified_unrenormalised},
\begin{equation}  \label{eq:phi43_SPDE_mollified_renormalised}
	\begin{cases}
		\partial_t \bar{u}_{\eps,\kappa}(t,x) - \Delta \bar{u}_{\eps,\kappa}(t,x) 
		= - \bar{u}_{\eps,\kappa}(t,x)^3 - \del[1]{m^2  -3 \eps^2 C_{\kappa}^{(1)} +9 \eps^4 C_{\kappa}^{(2)}} \bar{u}_{\eps,\kappa}(t,x) + \eps \xi_\kappa(t,x),&  \\
		\bar{u}_{\eps,\kappa}(0,\,\cdot\,) = \fz,
		\quad (t,x) \in [0,T] \x \T^d,
	\end{cases}
\end{equation}
where the smoothing parameter $\kappa$ is chosen as a non-negative function of the amplitude $\eps$ so that $\kappa(\eps) \to 0$ as $\eps \to 0$. 
Importantly, their result includes the case~$\kappa = 0$, for which the process~$\bar{u}_{\eps}$ only formally solves the stochastic PDE given by~\eqref{eq:phi43_formal_SPDE}.\footnote{This rigorous limiting procedure is more fully described in Appendix~\ref{sec:technical_proofs}.}
In the case~$d=2$, this extends earlier partial results by Jona-Lasinio and Mitter~\cite{jona-lasinio_mitter_ldp_phi42}.
For context, we point out that the article~\cite{hairer_weber_ldp} also considers the situation when, additionally to~$\kappa(\eps) \to 0$, one has~$\eps \kappa(\eps)^{-1} \to \lambda^2 \in [0,\infty)$ as~$\eps \to 0$; in that case, the \emph{un}renormalised dynamics~$(u_{\eps,\kappa(\eps)})_{\eps > 0}$ given in~\eqref{eq:phi43_SPDE_mollified_unrenormalised} satisfy an LDP with good rate function similar to~\eqref{eq:rate_function_phi43}, but with some explicit effective mass~$m_\lambda$.
Cerrai and Debussche~\cite{cerrai_debussche_19_ldp_phi2nd} have recently generalised this result to the case of arbitrary polynomial non-linearities in any dimension~$d \geq 1$.

One way to view the result of \cite{hairer_weber_ldp} is to show that the $\Gamma$-convergence result of~\cite{cerrai_freidlin_11_rde_quasi}  correctly predicted that~$\msI_{0,T}$ should be the rate function of a limiting dynamics~$\bar{u}_\eps$ that is independent of~$\kappa$. The current article demonstrates that a similar result holds at the level of the invariant measure. Starting from the dynamic rate functions $\msI^{(\kappa)}_{[0,T]}$ (resp. $\msI_{[0,T]}$) the natural candidate rate function for an LDP governing the invariant measure of $u_{\eps,\kappa}$ solving \eqref{eq:phi43_SPDE_mollified_unrenormalised} (resp. $\bar{u}_\eps \coloneqq \lim_{\kappa \to 0} \bar{u}_{\eps,\kappa}$ solving \eqref{eq:phi43_SPDE_mollified_renormalised}) are the \emph{quasi-potentials}\footnote{Note that we do not impose any regularity constraints on the functions~$v$ in~\eqref{eq:kappa_quasi_potential} and~\eqref{eq:quasi_potential}: Simply requiring~$\msV^{(\kappa)}(v)$ resp. $\msV(v)$ to be finite already ensures the requisite regularity.}
\begin{align}\label{eq:kappa_quasi_potential}
	\msV^{(\kappa)} (\fz) \coloneqq 
	\inf \cbr[1]{\msI^{(\kappa)}_{0,T}(v)\,:\, T > 0, \, v:[0,T]\times \mbT^3 \to \mbR, \, v(0) = 0, \, v(T) = \mfz} 
\end{align}
and 
\begin{align}\label{eq:quasi_potential_intro}
	\msV(\mfz) \coloneqq 
	\inf \cbr[1]{\msI_{0,T}(v)\,:\, T > 0, \, v:[0,T]\times \mbT^3 \to \mbR, \, v(0) = 0, \, v(T) = \mfz}.
\end{align}

Given~\cite[Thm.~6.1]{cerrai_freidlin_11_rde_quasi}, in which the authors show that $\msV^{(\kappa)}$ converges pointwise to~$\msV$ in~$H^1(\mbT^3)$, it is natural to conjecture that~$\msV$ governs the large deviations of the family of measures $\mu_\eps$ which are invariant for the renormalised dynamics~$\bar{u}_\eps$ (i.e.  \eqref{eq:phi43_SPDE_mollified_renormalised} with $\kappa=0$).
A consequence of our work, similar in this regard to \cite{hairer_weber_ldp}, is to show that is indeed the case.
Furthermore, we identify $\msV$ as being proportional to the Euclidean $\Phi^4_3$ action~$\msS$ defined in~\eqref{eq:phi43_action} above; 
this is natural to expect since it has already been shown that the invariant measure~$\mu_\eps$ for the dynamics~$\bar{u}_\eps$ is exactly the Euclidean $\Phi^4_3$ measure.
We give more details of our proof strategy and this identification, in particular, in the next paragraph.

Let us further mention that the same LDP for the periodic $\Phi^4_3$ measure\footnote{In two dimensions, the analogous result on the infinite volume is proved in~\cite{barashkov_gubinelli_22_variational_volume}.} has been obtained in~\cite[Thm.~2.40]{barashkov_phd_thesis} in which it is phrased equivalently as a Laplace principle, see~\eqref{eq:laplace_principle} above;
the proof then uses weak convergence arguments in the spirit of Dupuis and Ellis~\cite{dupuis_ellis_97_weak} which are based on the Boué--Dupuis formula as in~\cite{barashkov_gubinelli_20_variational}.
We maintain two motivations for re-deriving the result via stochastic quantisation: Firstly, our method builds on the systematic approach of regularity structures~\cite{hairer_rs, bhz, chandra_hairer, rs_renorm} that is applicable to a wide class of singular SPDEs and requires only a small number of additional, model-specific ingredients. 
In principle, this would make it possible to obtain analogous results for a large class of models using a similar approach. Secondly, a major goal of constructive QFT is to give rigorous meaning to gauge theories such as Yang--Mills(--Higgs) theories which form an integral part in the standard model of particle physics. 
At present, a variational formulation similar to~\cite{barashkov_gubinelli_20_variational} is not available for gauge theories of this type. In contrast, however, the approach via stochastic quantisation has shown success in applications to gauge theories, see Shen~\cite{shen_abelian}, Chandra et al.~\cite{chandra_chevyrev_hairer_shen_22_2dYM,chandra_chevyrev_hairer_shen_22_3dYMH} (and Chevyrev's article~\cite{chevyrev_review_ym23d} reviewing the two works) as well as the recent work of Chevyrev and Shen~\cite{chevyrev_shen_23}.
This, and other possible future directions are further discussed in the final paragraph of the introduction.

\vspace{-0.5em}
\paragraph{Strategy of Proof.} 
We give a short description of our proof strategy, highlighting in particular where we appeal to model specific properties and where we make use of more general tools. 
For the most part, our strategy is inspired by the above-mentioned works of Cerrai and R\"ockner~\cite{cerrai_2003,cerrai_roeckner_ldp_dynamics,cerrai_roeckner_ldp_measure} in which the authors deal with structurally more general but non-singular equations.
One of the main challenges in their setting, and in ours, is to find a good representation of the quasi-potential~$\msV$ that is easier to control than the expression in~\eqref{eq:quasi_potential_intro}.
In a first step, we observe that the definition of~$\msI_{0,T}$ immediately implies that one can represent the quasi-potential~$\msV$ as
\begin{equation} \label{eq:quasi_potential_skeleton}
	\msV(\fz) \coloneqq \inf \cbr[3]{\frac{1}{2}\norm[0]{h}^2_{L^2_TL^2_x}\,:\, T > 0, \, w^h(T) = \mfz}
\end{equation}
where $\fz$ is in the H\"older--Besov space $\mcC^\alpha(\mbT^3)$ for $\alpha < -1/2$, the support of the invariant measure~$\mu_\eps$, and $w^{h}$ solves the \emph{skeleton equation},
\begin{equation}\label{eq:nl_skeleton_intro}
	\partial_t w^{h} - \Delta w^{h} = -(w^{h})^3-m^2 w^{h} + h,\qquad 
	w^{h}\tzero = 0,
\end{equation}
on $[0,T]$ where the driver, $h\in L^2_{[0,T]}L^2(\mbT^3)$, is an element of the Cameron--Martin space associated to the space-time white noise.
Crucially for our purposes, the rate function~$\II_T$ in~\eqref{eq:rate_function_phi43} is independent of the requisite infinite renormalisation; 
in turn, this makes \eqref{eq:nl_skeleton_intro} the correct skeleton equation, even for the renormalised dynamics~\eqref{eq:phi43_formal_SPDE}. 
In~\cite{cerrai_roeckner_ldp_measure}, the authors further simplify the expression for~$\msV$ by appealing to so-called ancient solutions to~\eqref{eq:nl_skeleton_intro}; broadly speaking, they fix the common time horizon~$(-\infty,0]$ for the infimisation problem in~\eqref{eq:quasi_potential_skeleton} by demanding that~$w^h$ hits~$\fz$ at time~$0$ and decays to zero as~$t \to -\infty$. 

In our case, we eschew this argument and instead make use of the additive noise and gradient flow structure of~\eqref{eq:phi43_formal_SPDE} to more directly show that the \emph{quasi-potential} and \emph{action functional} coincide, i.e.
\begin{equation}\label{eq:action_quasipotential_intro}
	\msV(\mfz) = 2 \msS(\mfz)\quad \text{for all}\quad \mfz \in \CC^\alpha(\mbT^3).
\end{equation}
We expect that a similar strategy will apply for other physically relevant models, see the paragraph on future directions below and Section~\ref{sec:rate_functional} for details.
We point out that~\eqref{eq:action_quasipotential_intro} is the direct analogue of the finite-dimensional result by Freidlin and Wentzell~\cite[Chap.~4.3, Thm.~3.1]{freidlin-wentzell} but, to the best of our knowledge, our method of proof seems new, even in that case. 
Since the action functional $\msS$ is only finite when restricted to the Sobolev space~$H^1(\mbT^3)$, a direct consequence of \eqref{eq:action_quasipotential_intro} is that the quasi-potential~$\msV$ has compact sub-level sets in $\mcC^{\alpha}(\mbT^3)$.
That is, $\msV =2\msS$ is a \emph{good} rate functional, a fact that is required in the proof of the LDP upper bound, see Section~\ref{sec:LDP_upper}.

We then combine the techniques of Cerrai and Röckner~\cite{cerrai_roeckner_ldp_measure} with the LDP for the $\Phi^4_3$ equation as obtained by Hairer and Weber~\cite{hairer_weber_ldp}, see Section~\ref{sec:uniform_ldp} for a recap.
In fact, we need a mild generalisation of their result which is locally uniform in the initial condition, which we prove in Proposition~\ref{prop:uniform_ldp} in Appendix~\ref{sec:uniform_ldp}.
We view it as an advantage of this approach that the techniques in~\cite{hairer_weber_ldp} are applicable to a large class of singular stochastic dynamics with minimal change, essentially all equations which are amenable to the theory of regularity structures. We have aimed to continue this philosophy in our article, appealing to as few model specific facts as possible. 
Essentially, our analysis relies only on two conditions, both of which are reasonable to expect from other physical models: 
\begin{enumerate}[label=(\arabic*)]
	\item \label{model_specific:1} Sufficient \emph{tail estimates on the invariant measure}. 
	In our case, we appeal to the work of Moinat and Weber~\cite{moinat_weber_20_phi43Loc} which gives stretched exponential moments~(see Propositions~\ref{prop:cdfi_model} and~\ref{prop:measure_tail_bounds} below for a recap).
	We stress that their work \emph{only} relies on analysing the dynamics and does not appeal to any a priori information on the invariant measure. While these stretched exponential moments were upgraded to sub-Gaussian tail bounds in  \cite{hairer_steele_22}, also appealing to analysis of the dynamics, we point out that these stronger tail estimates are not required for our method. In fact, these sub-Gaussian tail bounds are also stronger than those required to verify the OS axioms. 
	In our proof, the stretched exponential tail estimates are required both in the proof of the LDP lower bound, see Section~\ref{sec:LDP_lower}, and that of the LDP upper bound, see Section~\ref{sec:ldp_upper_bound_proof}.
	\item \label{model_specific:2} \emph{Asymptotic stability of the deterministic dynamics}. In our case, we are able to show decay of the classical, deterministic dynamics (i.e. \eqref{eq:phi43_SPDE_mollified_unrenormalised} with $\eps =0$) to a unique, asymptotically stable solution as well as long time decay in $H^1$ of solutions to the associated skeleton equation, \eqref{eq:nl_skeleton_intro}. The former is guaranteed by the complimentary signs of the non-linear and linear terms in \eqref{eq:phi43_formal_SPDE} (see Lemma~\ref{lem:action_zeros}) while we only rely on the positive mass parameter to obtain long time decay of the skeleton equation, see Remark~\ref{rem:nl_skeleton_CDFI}. With regard to generalisations of the former we expect that if minimisers of the classical dynamics form a single connected component in the state space then our approach should be extendable without major change. In the case of non-unique, disconnected minimisers  some additional arguments may be required. The second property, of skeleton solution decay in $H^1$, should be similarly adaptable to other relevant cases.
\end{enumerate}
We close this paragraph by highlighting the main ways in which the \emph{singular} nature of \eqref{eq:phi43_formal_SPDE} requires us to improve upon the methods of \cite{cerrai_roeckner_ldp_dynamics,cerrai_roeckner_ldp_measure}. The main point of departure from those works stems from the low regularity of the invariant measure support, which is the natural state space for our LDP, $\mcC^{\alpha}(\mbT^3)$. A key, related issue is that the LDP rate functional and $\Phi^4_3$ action functional are both infinite on this state space, and only finite on the solution space of the skeleton equation, i.e. paths taking values in $H^1(\mbT^3)$. This causes technical challenges, for example, in the proof of the LDP upper bound, Theorem~\ref{thm:ldp_upper_bound} and requires more careful analysis of the skeleton equation in scale of weighted, path space norms, allowing for prescribed blow-up as $t\to 0$. See the proof of Lemma~\ref{lem:energy_bounds} and attendant discussions. On the other hand, it is to our benefit that despite the singular nature of the stochastic equation, the skeleton equation remains classically well-posed. Appealing to suitable Sobolev embeddings one sees that the cubic non-linearity is well defined for weak solutions to \eqref{eq:nl_skeleton_intro} in $C_T H^1(\mbT^3)$. A final, major, difference between ours and the previous work \cite{cerrai_roeckner_ldp_measure} is that, taking advantage of the gradient flow structure at hand, we are able to identify the limiting rate function for the invariant measures as the $\Phi^4_3$ action, Section~\ref{sec:rate_functional}. As discussed above, this identification greatly simplifies later proofs, in particular, since it immediately implies compactness of sublevel sets for the quasipotential. We point out here that our proof of this identification requires global, exact controllability of the skeleton equation, see Appendix~\ref{sec:skeleton_controllability} and the proof of Theorem~\ref{th:V_equal_2S}.
\paragraph{Future Directions.}

As described above, we expect the strategy developed in the current paper to be  applicable, at least in principle, to a number of other physically relevant theories beyond $\Phi^4_3$ and also open the way to address more challenging questions.

\begin{itemize}
	\item A natural extension of our work would be to consider the~$\Phi^4$ model in the \emph{full subcritical regime}, that is, the family of dynamic $\Phi^4_{4-\iota}$ models for any~$\iota > 0$. Two potential approaches to studying analogues of \eqref{eq:phi43_formal_measure} for $\iota\in (0,1)$ (i.e. $d\in (3,4)$), through the stochastic quantisation equation, are to either consider \eqref{eq:phi43_formal_SPDE} with $\xi$ replaced by $\Delta^{s} \xi$ for suitably chosen $s >0$ or to replace the Laplacian on the left hand side with a fractional Laplacian of suitable order. By tuning either the regularity of the noise or the linear differential operator one can capture the equivalent ultra-violet singularities present in the model as $d\to 4$. Note that in the case of modified Gaussian noise the appropriate local theory is covered by \cite{rs_renorm}. This approach was also taken in \cite{chandra_moinat_weber_23}, obtaining suitable a priori bounds on solutions to an appropriate modification of \eqref{eq:phi43_formal_SPDE}. More recently \cite{duch_gubinelli_rinaldi_24_parabolic,esquivel_weber_24_apriori} studied the case of modified differential operator, which leads to a more physically relevant construction of the measure in fractional dimensions. Taking an approach via the flow equation, \cite{duch_gubinelli_rinaldi_24_parabolic} showed existence but not uniqueness of the appropriate measure on the whole volume and obtained reflection positivity, translation invariance and quartic tail bounds following the core ideas of \cite{hairer_steele_22}. Meanwhile, \cite{esquivel_weber_24_apriori} appeals to a regularity structures formulation to handle the ultra-violet singularities and similar methods as \cite{chandra_moinat_weber_23} to obtain global, a priori estimates. Note that the case of fractional Laplacian studied by \cite{esquivel_weber_24_apriori}, the local theory is not exactly covered by existing regularity structures frameworks due to the fractional Laplacian. In summary, while many of the necessary ingredients are already available for the subcritical $\Phi^4$ equation, much work would remain to apply our strategy in full.
	´´
	\item A related, potential extension would be to the Euclidean $\Phi^4_3$ measure constructed on a compact Riemannian manifold, see the recent series of works by Bailleul et al.~\cite{bailleul_dang_ferdinand_to_23_phi43_harmonic,bailleul_dang_ferdinand_to_23_manifold_measures,bailleul_23_phi43_uniqueness}. In the same direction we also refer to Hairer and Singh~\cite{hairer_singh_23_RegStruct_manifold} for a treatment of more general singular SPDEs on compact, Riemannian manifolds. Here, we expect the main challenges to be largely technical rather than fundamental.
	\item
	More challenging further extensions one might consider are to the sine-Gordon and exponential models, \cite{albeverio_deVecchi_gubinelli_21_elliptic,gubinelli_hofmanova_rana_23_exp_decay,hairer_shen_16_dyn_sineGordon,chandra_hairer_shen_18_sineGordon_subCrit} as well as the recently studied Langevin dynamics for \emph{tensor field theories}, see Chandra and Ferdinand~\cite{chandra_ferdinand_23_tensor}. With regards the sine-Gordon model, we mention that large deviation principles as well as other important properties have been derived for these models on the infinite volume $\mbR^2$ via an FBSDE, stochastic control approach, first by Barashkov \cite{barashkov_22_stochcontrol_sine_gordon} in the parameter regime $\beta^2\in (0,4\pi)$ and recently by Gubinelli and Meyer \cite{gubinelli_meyer_24_sineGordon} in the regime $\beta^2\in (0,6\pi)$.  Concerning approaches through the stochastic quantisation equation, \cite{bringmann_cao_24_sineGordon,chandra_feltes_weber_24_apriori} recently obtained suitable a priori estimates and global well-posedness of the dynamic sine-Gordon equation on $\mbT^2$ first in the regime $\beta^2 \in (0,16\pi/3)$ by \cite{chandra_feltes_weber_24_apriori} then in the regime $\beta^2 \in (0,6\pi)$ by \cite{bringmann_cao_24_sineGordon}. Note that local existence is provided in the full sub-critical regime $\beta^2 \in (0,8\pi)$ by the theory of regularity structures, see \cite{chandra_hairer_shen_18_sineGordon_subCrit}.

	\item As originally suggested by Parisi and Wu~\cite{parisi-wu}, the method of stochastic quantisation has recently lead to progress in the rigorous construction and analysis of~\emph{gauge theories}, see the above-mentioned series of works~\cite{shen_abelian, chandra_chevyrev_hairer_shen_22_2dYM,chandra_chevyrev_hairer_shen_22_3dYMH, chevyrev_review_ym23d, chevyrev_shen_23} and also Chatterjee and Cao's independent approach~\cite{cao_chatterjee_1, cao_chatterjee_2} that lead to similar results. 
	A fundamental obstacle, at the time of writing, to applying the methodology of the current paper to semi-classical limits for gauge theories is the lack of suitable a piori global estimates for the associated stochastic quantisation equations. In \cite{chevyrev_shen_23} the authors consider the $2$D Yang--Mills measure and construct a suitable Markov process, which exists globally in time and is invariant under the same measure. However, this procedure does not construct the measure by flowing the stochastic quantisation equation to $t=+\infty$. Note also that a large deviation principle for the pure $2$D Yang--Mills measure on a compact manifold has already been obtained by \cite{levy_norris_06}, albeit on a less refined state space than that constructed in \cite{chevyrev_19_YM} on $\mbT^2$. On the other hand \cite{bringmann_cao_24_abelianHiggs} has recently obtained suitable a priori estimates to show global well-posedness of the stochastic quantisation equation associated to the Euclidean Abelian Yang--Mills--Higgs equation.
	\item The programme of stochastic quantisation has also shown success  in the construction of Euclidean, Fermionic theories, see recent work by Chandra, Hairer and Peev \cite{chandra_hairer_peev_23_yukawa} giving a stochastic quantisation approach to the construction of a $1$-dimensional, mixed Fermionic-Bosonic quantum field theory. While we do not expect any simple modification of the present arguments to be directly applicable to Fermionic theories, we simply wish to highlight the general success of stochastic quantisation in constructing physical quantum field theories, as well as other, rapidly developing methodologies,  \cite{devecchi_fresta_gubinelli_22_fermionic,albeverio_borasi_deVecchi_gubinelli_22_grassmannian}.
\end{itemize}		
\paragraph{Organisation of this Article.}
In the remainder of this section we describe some frequently used notation and useful preliminaries, Section~\ref{sec:notation}. The following three sections of the paper are dedicated to the proof of Theorem~\ref{th:main_result}. In Section~\ref{sec:rate_functional} we show that the quasi-potential and $\Phi^4_3$ action coincide. Section~\ref{sec:LDP_lower} gives a self-contained proof of the LDP lower bound while Section~\ref{sec:LDP_upper} does the same for the LDP upper bound. This concludes the proof of our main result. These sections are supplemented by two appendices. In Appendix~\ref{sec:skeleton_equation} we provide necessary analysis of the skeleton equation; the main points being global well-posedness, long time decay and global, exact controllability. Appendix~\ref{sec:technical_proofs} provides a brief recap of the necessary aspects of regularity structures for the dynamical $\Phi^4_3$ equation as well as proofs of necessary ancillary results; a locally uniform version of Hairer--Weber's dynamic LDP, \cite{hairer_weber_ldp}, and suitable asymptotic tail bounds on $\mu_{\eps}$ in terms of $\eps\in (0,1)$.

\subsection{Notations, Conventions and Preliminaries} \label{sec:notation}
In this subsection, we collect notational conventions and preliminaries.
\paragraph{Sets and Sequences} We write $\mbZ$ for the set of integers, $\mbN \coloneqq  \{1,\ldots\}$ for the strictly positive integers, $\mbN_0 \coloneqq  \{0\}\cup \mbN$ for the non-negative integers, $\mbR$ for the real numbers and $\mbR_+ \coloneqq  [0,\infty)$ for the non-negative, real numbers. We set $\mbT \coloneqq  \mbR/\mbZ$ and define the three dimensional torus $\mbT^3 \coloneqq  (\mbR/\mbZ)^3$. Given a Banach space $E$ and an index set $I\subseteq \mbN$ we use the (slight) abuse of notation $(a_n)_{n\in I}\subset E$ to denote a sequence of $E$ valued elements. When the context is clear we simply write $(a_n)_{n\in I}$. Given a real number $a\in \mbR$, we occasionally use the shorthands
\begin{equation*}
	a- \coloneqq \{ b\in \mbR\,:\, b<a\}\quad \text{and}\quad a+ \coloneqq \{b\in \mbR\,:\, b>a\}.
\end{equation*}
\paragraph{Inequalities, Embeddings and Operators} When stating an inequality we either write $A \lesssim _{a,\,b,\ldots} B$ to indicate that the inequality holds up to a constant depending on the parameters $a,\,b,\,\ldots$ or write that there exists a $C\coloneqq  C(a,b,\ldots)>0$ such that $A\leq CB$. If we write $A\lesssim B$ without explicit dependents we mean that the inequality holds up to a constant depending on parameters that we do not keep track of. These constants may always change in absolute size from line to line. If we write $A \simeq B$ we mean that there exists a constant $c\geq 1$ such that
\begin{equation*}
	\frac{1}{c} B \leq A \leq c  B.
\end{equation*}
Given two topological spaces, $X,\,Y$, we write $X \embed Y$ to denote that $X$ embeds continuously into $Y$ and $X \cembed Y$ to denote that $X$ embeds compactly into $Y$.

Given a metric space $(X,d_X)$, $x \in X$ and $\lambda \in \mbR$ we write,
\begin{equation*}
	B^X_\lambda(x) \coloneqq \{ y \in X\,:\, d_X(x,y) <\lambda\}\quad \text{and}\quad \bar{B}^X_\lambda(x)\coloneqq \{y\in X\,:\, d_X(x,y)\leq \lambda\}.
\end{equation*}
When the context is clear we will simplify our notation to $B_\lambda(x),\,\bar{B}_\lambda(x)$ and when $x=0$ we simplify even further, writing, $B_\lambda,\, \bar{B}_\lambda$.

Given two Banach spaces $X,\,Y$, we write $\mcL(X,Y)$ for the set of bounded, linear operators from $X$ to $Y$. As is standard we write $X^* \coloneqq \mcL(X,\mbR)$.

\paragraph{Lebesgue Spaces on $\mbT^3$} For $p\in[1,\infty)$ (respectively $p=\infty$) we write $L^p(\mbT^3)$ for the space of $p$-integrable (respectively essentially bounded) functions $f:\mbT^3 \to \mbR$, equipped with the norms,
\begin{equation*}
	\|f\|_{L^p_x} \coloneqq  \begin{cases}
		\left(\int_{\mbT^3}|f(x)|^p \dd x\right)^{\nicefrac{1}{p}}, & p\in [1,\infty),\\
		\esssup_{x\in \mbT^3} |f(x)|,&p=\infty.
	\end{cases}
\end{equation*}
Correspondingly, for $p\in [1,\infty)$ (respectively $p=\infty$) we write $\ell^p(\mbZ^3)$ for the space of $p$-summable (respectively bounded), real valued sequences, indexed by $\mbZ^3$.
\paragraph{Differentiable Functions and Distributions} Given $k\in \mbN_0$ we write $C^k(\mbT^3)$ for the functions $f:\mbT^3\to \mbR$ which are $k$-times continuously differentiable, we define $C^\infty(\mbT^3) \coloneqq  \cap_{k \in \mbN_0} C^k(\mbT^3)$ and write $\mcS'(\mbT^3)$ for the set of distributions on $\mbT^3$, identified as the dual of $C^\infty(\mbT^3)$. For $0\leq s<t<\infty$, the distributions $\mcS'([s,t]\times \mbT^3)$ are defined analogously.
\paragraph{Fourier Transform on $\mbT^3$} For $f\in L^1(\mbT^3)$ and $g\in \ell^1(\mbZ^3)$ we define the Fourier transform 
\begin{equation*}
	\mcF(f)(k) \coloneqq   \int_{\mbT^3} f(x)e^{-2\pi i k \cdot x}\dd x \quad \text{and inverse}\quad \mcF^{-1}g(x) \coloneqq  \sum_{k\in \mbZ^3} g(k)e^{2\pi i k\cdot x}. 
\end{equation*}
For concision, where appropriate, we write $\hat{f}(k)\coloneqq  \mcF(f)(k)$ and recall the well-known isometry,
\begin{equation*}
	\|f\|_{L^2_x} = \|\hat{f}\|_{\ell^2_k} \coloneqq  \left(\sum_{k\in \mbZ^3} |\hat{f}(k)|^2\right)^{\nicefrac{1}{2}},
\end{equation*}
The Fourier transform naturally extends to $\mcS'(\mbT^3)$ by density and duality.
\paragraph{Sobolev Spaces on $\mbT^3$} Given $\alpha\in \mbR$, we let $H^\alpha(\mbT^3)$ denote the functions $f:\mbT^3\to \mbR$ (respectively distributions $f\in \msS'(\mbT^3)$ when $\alpha<0$) which are finite under the norm,
\begin{equation*}
	\|f\|_{H^\alpha_x} \coloneqq  \left\|(1+|\,\cdot\,|^2)^{\nicefrac{\alpha}{2}} \hat{f}(\,\cdot\,) \right\|_{\ell^2_k} = \left(\sum_{k\in \mbZ^3}(1+|k|^2)^{\alpha} \hat{f}(k) \right)^{\frac{1}{2}}.
\end{equation*}
For all $\alpha\in \mbR$, the space $H^{\alpha}(\mbT^3)$ is Hilbert, with the natural inner product and one  has the duality
\begin{equation*}
	\left(H^{\alpha}(\mbT^3)\right)^* = H^{-\alpha}(\mbT^3).
\end{equation*}
When $\alpha=0$, one has $H^\alpha(\mbT^3)=L^2(\mbT^3)$ and when $\alpha \in \mbN$, one has the equivalence,
\begin{equation*}
	\|f\|^2_{H^\alpha_x} \simeq \sum_{k=0}^\alpha \|\partial^k_x f\|^2_{L^2_x}.
\end{equation*}
Finally, for $\alpha' \geq \alpha$ one directly has
\begin{equation}\label{eq:sob_reg_embed}
	\|f\|^2_{H^\alpha_x} 
	\leq \|f\|^2_{H^{\alpha'}_x}.
\end{equation}
As is standard we use angle brackets $\langle\,\cdot\,,\,\cdot\,\rangle$ to denote both the inner product associated to a given Hilbert space as well as the canonical pairing between a Hilbert space and is dual. Where necessary, to avoid confusion we will decorate the brackets with suitable subscripts; for example
\begin{equation*}
	\langle f,g\rangle_{H^1_x;H^{-1}_x} \quad \text{or}\quad \langle f,g\rangle_{L^2_x;L^2_x}.
\end{equation*}
\paragraph{Besov Spaces on $\mbT^3$} For $\alpha \in \mbR$ and $p,\,q \in[1,\infty]$ we define the Besov space, $B^\alpha_{p,q}(\mbT^3)$ as the completion of the smooth functions, $C^\infty(\mbT^3)$, under the topology induced by the norm
\begin{equation*}
	\|f\|_{\mcB^\alpha_{p,q}} \coloneqq  \begin{cases}
		\left(\sum _{j\geq-1} 2^{jq\alpha}\|\Delta_j f\|_{L^p}^q\right)^{\frac{1}{q}}, & q \in [1,\infty),\\
		\sup_{j\geq -1} 2^{jq\alpha}\|\Delta_j f\|_{L^p},& q =\infty.
	\end{cases}
\end{equation*}
where $\Delta_j$ denotes the $j^{\text{th}}$ Littlewood--Paley projector, we refer to \cite[Ch.~2]{bahouri_chemin_danchin_11_fourier} for details. Note that in contrast to the Sobolev spaces, $H^\alpha(\mbT^3)$, we define the Besov spaces as \emph{completions under the norm} rather than \emph{finite under the norm}. This ensures that the Besov spaces are Polish. When $p=q=\infty$ we use the shorthand $\mcC^{\alpha}(\mbT^3) \coloneqq  \mcB^{\alpha}_{\infty,\infty}(\mbT^3)$.

For all $p\in [1,\infty]$ we have the compact embedding,
\begin{align}
	&\mcB^{\alpha'}_{p,q'}(\mbT^3) \cembed \mcB^{\alpha}_{p,q}(\mbT^3) &&\,\,\Rightarrow && \|f\|_{\mcB^{\alpha}_{p,q}}\leq \|f\|_{\mcB^{\alpha'}_{p,q'}},\quad \alpha'>\alpha\,\,\&\,\, \& \,\, q' \geq q,\label{eq:besov_reg_embed}
\end{align}
Recall that as a result, given an $f\in \mcS'(\mbT^3)$, it holds that
\begin{equation}\label{eq:besov_finite_norm_embed}
	\|f\|_{\mcB^{\alpha}_{p,q}}<\infty \quad \Rightarrow \quad f\in \mcB^{\alpha'}_{p,q}(\mbT^3) \quad \text{for all}\quad  \alpha' <\alpha. 
\end{equation}
Conversely, $f\in \mcB^\alpha_{p,q}(\mbT^3)$ implies $\|f\|_{\mcB^{\alpha}_{p,q}} <\infty$. 

We also have the following continuous embeddings, for all $\alpha,\,\alpha'\in \mbR$ $p,\,p'\in [1,\infty]$ and $q,\,q' \in [1,\infty]$,
\begin{align}
	&\mcB^{\alpha}_{p,q}(\mbT^3) \embed \mcB^{\alpha}_{p,q'}(\mbT^3) &&\iff && \|f\|_{\mcB^{\alpha}_{p,q'}}\lesssim \|f\|_{\mcB^{\alpha}_{p,q}}, &&q'>q, \label{eq:besov_micro_embed}\\
	&\mcB^{\alpha}_{p,q}(\mbT^3)  \embed \mcB^{\alpha-3\left(\nicefrac{1}{p}-\nicefrac{1}{p'}\right)}_{p',q}(\mbT^3)&& \iff && \|f\|_{\mcB^{\alpha-3\left(\nicefrac{1}{p}-\nicefrac{1}{p'}\right)}_{p',q}}\lesssim \|f\|_{\mcB^{\alpha}_{p,q}}, && p'>p, \label{eq:besov_integrabillity_embed}\\
	&\mcB^0_{p,1}(\mbT^3)\embed L^p(\mbT^3) \embed \mcB^{0-}_{p,\infty}(\mbT^3)&& \iff&&  \|f\|_{\mcB^{0-}_{p,\infty}}\lesssim \|f\|_{L^p_x} \lesssim \|f\|_{\mcB^{0}_{p,\infty}}, && p\in [1,\infty]. \label{eq:besov_Lp_embed}
\end{align}
It is readily checked through Plancherel's theorem that
\begin{equation*}
	\|f\|_{\mcB^{\alpha}_{2,2}} = \|f\|_{H^\alpha_x}\quad \text{for all}\quad \alpha \in \mbR.
\end{equation*} 
Hence, considering \eqref{eq:besov_finite_norm_embed}, \eqref{eq:besov_integrabillity_embed} and \eqref{eq:besov_reg_embed}, we have the embeddings
\begin{equation*}
	\mcB^\alpha_{2,2}(\mbT^3) \embed H^\alpha(\mbT^3)\embed \mcC^{\alpha^\prime}(\mbT^3)\quad \text{for all}\quad \alpha^{\prime} <\alpha-\frac{3}{2}
\end{equation*}
and
\begin{equation*}
	H^{\alpha}(\mbT^3) \embed B^{\alpha^{\prime}}_{2,2}(\mbT^3)\embed \mcC^{\alpha^\prime -\frac{3}{2}}(\mbT^3)\quad \text{for all}\quad \alpha^\prime <\alpha.
\end{equation*}
Our most common application of these embeddings will be to combine them with \eqref{eq:besov_Lp_embed} and \eqref{eq:besov_reg_embed} to see that for $\alpha \in (-\nicefrac{2}{3},-\nicefrac{1}{2})$, there exists a $q\in (\nicefrac{3}{2},2)$ such that
\begin{equation}\label{eq:H1_to_L3q_to_Calpha_embed}
	H^1(\mbT^3)\cembed L^{3q}(\mbT^3) \embed \mcC^{\alpha}(\mbT^3).
\end{equation}
Furthermore, one has the classical, continuous Sobolev embedding $H^1(\mbT^3)\embed L^6(\mbT^3) $ which has the important consequence,
\begin{equation*}
	f\in H^1(\mbT^3)  \quad \Rightarrow \quad f^3 \in L^2(\mbT^3).
\end{equation*} 
We refer to \cite{bahouri_chemin_danchin_11_fourier,gip,triebel_92_theory} for details and proofs of the above.
\paragraph{Ornstein--Uhlenbeck Semi-Group} For $m>0$, we define the action of the Ornstein--Uhlenbeck semi-group on $L^1(\mbT^3)$ by setting,
\begin{equation*}
	e^{t(\Delta-m^2)}f\coloneqq  \mcF^{-1}\left(e^{-2\pi it(|\,\cdot\,|^2+m^2) } \hat{f}(\,\cdot\,)\right),\quad \text{for all } t\geq 0.
\end{equation*}
The following two inequalities hold for distributions $f\in \mcS'(\mbT^3)$ and $m \geq 0$,
\begin{align}
	\|e^{t(\Delta-m^2)}f\|_{\mcB^{\beta}_{p,q}} \lesssim &\,e^{-tm^2}(1\vee t)^{-\frac{\beta-\alpha}{2}-\frac{3}{2}\left(\frac{1}{p'}-\frac{1}{p}\right)}\|f\|_{\mcB^{\alpha}_{p',q'}}, &&\beta\geq \alpha,\, p'\geq p,\, q\geq q',\label{eq:ou_reg}\\
	\|(1-e^{t(\Delta-m^2)})f\|_{\mcB^{\alpha}_{p,q}} \lesssim &\,\left((1\vee t)^{\frac{\beta-\alpha}{2}} + m^2(1\vee t)\right) \|f\|_{\mcB^{\beta}_{p,q}}, && \beta\geq\alpha.\label{eq:ou_identity_approx}
\end{align}
\paragraph{Banach Space Valued Paths} Given a Banach space, $E$ and a finite interval $[s,t]\subset \mbR$ for $0\leq s<t <\infty$, we define the space of continuous, $E$ valued paths, $C_{[s,t]}E$, equipped with the supremum norm,
\begin{equation*}
	\|f\|_{C_{[s,t]}E} \coloneqq  \sup_{r\in [s,t]} \|f(r)\|_E.
\end{equation*}
When, $s=0$, we simplify notation by writing $C_tE$. Given a weight $\eta>0$, and horizon $T>0$, we define the family of weighted, path spaces by setting,
\begin{equation*}
	\|f\|_{C_{\eta;T}E}\coloneqq  \sup_{t\in (0,T]} (t\wedge 1)^{\eta}\|f(s)\|_E, \quad	C_{\eta;t}E\coloneqq  \{f\in C_T E\,:\, \|f\|_{C_{\eta;T}E}<\infty \,\}.
\end{equation*}
Note, that given $\eta_1 <\eta_2$,
\begin{equation}\label{eq:weight_space_embed}
	C_{\eta_1;T}E \embed C_{\eta_2;T}E\quad \iff \quad	\|f\|_{C_{\eta_2;T}E}\,\lesssim_T\,\|f\|_{C_{\eta_1;T};E}.	\end{equation}
\paragraph{Bochner--Sobolev Spaces} We work with a pair of Bochner--Sobolev spaces. For $0\leq s <t <\infty$ and $\alpha \in \mbR$ we set,
\begin{equation*}
	\|f\|^2_{L^2_{[s,t]} H^\alpha(\mbT^3)} \coloneqq \int_{s}^t \|f(r)\|^2_{H^\alpha_x} \dd r,\quad 	L^2_{[s,t]}H^\alpha(\mbT^3) \coloneqq  \left\{f:[s,t]\to \mbT^3\,:\, 	\|f\|_{H^1_{[s,t]}H^\alpha_x}  <\infty\right\}
\end{equation*}
and
\begin{equs}
	\|f\|^2_{H^1_{[s,t]} H^{-1}(\mbT^3)} & \coloneqq \|f\|^2_{L^2_{[s,t]}H^{-1}_x} + \|\partial_t f\|_{L^2_{[s,t]} H^{-1}_x }, \\ 	H^1_{[s,t]}H^1(\mbT^3) & \coloneqq  \left\{f:[s,t]\to \mbT^3\,:\, 	\|f\|_{L^2_{[s,t]}H^1_x}  <\infty\right\},
\end{equs}
where $\partial_t f$ is understood in the weak sense.

As usual, when $s=0$ we simplify notation by writing $L^2_T H^\alpha(\mbT^3)$ and  $H^1_T H^1(\mbT^3)$.

Note that we have the trivial embedding $C_TH^1(\mbT^3) \embed L^2_TH^1(\mbT^3)$ as well as  the following interpolation result
\begin{equation*}
	f\in L^2_T H^1(\mbT^3)\cap H^1_T H^{-1}(\mbT^3) \quad \Rightarrow \quad f \in C_TL^2(\mbT^3)
\end{equation*}
and generalisation of the chain rule,
\begin{equation}\label{eq:hilbert_chain}
	\frac{\dd}{\dd t} \|f(t)\|^2_{L^2(\mbT^3)} = 2\langle  f(t),\partial_t f(t)\rangle_{L^2_x},\quad \text{for a.e.} \quad t\in [0,T].
\end{equation}
See \cite[Sec.~5.9.2, Thm.~3]{evans_10_partial}.
\paragraph{Rate Functions} Three rate functions appear (with some mild variations) in the main body of the paper,
\begin{itemize}
	\item $\msI_{[s,t]}:C_{[s,t]}\mcC^{\alpha}(\mbT^3)\to [0,\infty]$, for $0\leq s<t<\infty$,  is the rate function on path space associated to the $\Phi^{4}_3$ equation; see Appendix~\ref{sec:uniform_ldp} for a detailed definition.
	\item $\msV:\mcC^{\alpha}(\mbT^3)\to [0,\infty]$ is the associated \emph{quasipotential}; see  \eqref{eq:quasi_potential} for a detailed definition.
	\item $\msS:\mcC^{\alpha}(\mbT^3)\to [0,\infty]$ is the (natural extension of) the $\Phi^4_3$ action to a rate function; see \eqref{eq:phi43_action} for a detailed definition.
\end{itemize}
Given a Polish space $E$, a rate function $\msI : E\to [0,\infty]$ and $\theta \in \mbR$, we define the sub-level set,
\begin{equation*}
	\msI[\theta] \coloneqq  \{ f\in E\,:\, \msI(f) \leq \theta\}.
\end{equation*}
For example, we will write $\msI_{[s,t]}[\theta],\, \msV[\theta]$ and $\msS[\theta]$ for the respective sub-level sets. In the case of the dynamic rate function, when $s=0$ we simplify notation by writing, $\msI_t(f)$ and $\msI_t[\theta]$.

\paragraph{Generalised Gr\"onwall Inequality} We make use of the following generalisation of Gr\"onwall's inequality, which can be found with proof as \cite[Lem.~B.1]{tsatsoulis_weber_20_exponential}. Let $f:[0,T]\to \mbR$ be a measurable function, $m^2>0$, $a,\,b\in \mbR$ and $\sigma_1+\sigma_2<1$ be such that
\begin{equation*}
	f(t)\leq e^{-m^2 t}a + b \int_0^t e^{-m^2(t-s)}(t-s)^{-\sigma_1}s^{-\sigma_2} f(s)\, \dd s.
\end{equation*}
Then, there exist constants $c,\,C>0$ such that
\begin{equation}\label{eq:gen_gronwall}
	f(t)\leq C\exp\left(-m^2 t + c b^{\frac{1}{1-\sigma_1-\sigma_2}} t\right) a.
\end{equation}
\section{Quasipotential and $\boldsymbol{\Phi^4_3}$ Action Coincide} 
\label{sec:rate_functional}

Let us introduce re-introduce the quasi-potential, $\msV:\mcC^{\alpha}(\mbT^3)\to [0,\infty]$ associated to the $\Phi^4_3$ dynamics, 
\begin{equation} \label{eq:quasi_potential}
	\msV(\mfz) \coloneqq  \inf \left\{\msI_T(v)\,:\, T>0,\, v \in C_T\mcC^{\alpha}(\mbT^3), \, v(0)=0, \, v(T)=\fz\right\},
\end{equation}
Recalling the definition of the $\Phi^4_3$ action, $\msS$, given by \eqref{eq:phi43_action}, the main result of this section is to show that the functionals $\msV$ and $\msS$ agree up to a constant and have compact sublevel sets in $\mcC^{\alpha}(\mbT^3)$.

\begin{theorem} \label{th:V_equal_2S}
	For any $\mfz\in \mcC^{\alpha}(\mbT^3)$ one has
	\begin{equation}\label{eq:V_equal_2S}
		\msV(\mfz) = 2\msS(\mfz).
	\end{equation}
\end{theorem}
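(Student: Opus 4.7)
The plan is to establish the two inequalities $\msV(\mfz) \leq 2\msS(\mfz)$ and $\msV(\mfz) \geq 2\msS(\mfz)$ by exploiting the gradient-flow structure of the deterministic dynamics. The key observation is that, as an $L^2$-gradient, one has $D\msS(\phi) = -\Delta \phi + \phi^3 + m^2 \phi$, so that the skeleton equation~\eqref{eq:nl_skeleton_intro} reads
\begin{equation*}
\partial_t w^h + D\msS(w^h) = h,\qquad w^h(0) = 0,
\end{equation*}
and the polarisation identity in $L^2(\mbT^3)$ yields, pointwise in $t$,
\begin{equation*}
\|h\|_{L^2_x}^2 = \|\partial_t w^h + D\msS(w^h)\|_{L^2_x}^2 = \|\partial_t w^h - D\msS(w^h)\|_{L^2_x}^2 + 4 \scal{\partial_t w^h, D\msS(w^h)}_{L^2_x}.
\end{equation*}
Provided the chain rule $\tfrac{\dd}{\dd t}\msS(w^h) = \scal{\partial_t w^h, D\msS(w^h)}$ applies (which I expect to follow from the regularity $w^h \in L^2_T H^1 \cap C_T L^2$ and $\partial_t w^h \in L^2_T H^{-1}$ established for the skeleton equation in Appendix~\ref{sec:skeleton_equation}, using~\eqref{eq:hilbert_chain} together with the Sobolev embedding $H^1 \hookrightarrow L^6$ to handle the cubic term), integrating in $t$ gives the master identity
\begin{equation*}
\int_0^T \|h(t)\|_{L^2_x}^2 \dd t = \int_0^T \|\partial_t w^h - D\msS(w^h)\|_{L^2_x}^2 \dd t + 4\bigl(\msS(\mfz) - \msS(0)\bigr).
\end{equation*}

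\paragraph{Lower bound.} For any admissible $(h, w^h)$ with $w^h(T) = \mfz$, the master identity immediately gives $\tfrac{1}{2}\|h\|_{L^2_T L^2_x}^2 \geq 2\msS(\mfz)$, hence $\msV(\mfz) \geq 2\msS(\mfz)$. In particular, if $\msV(\mfz) < \infty$, some admissible $w^h$ reaches $\mfz$ at time $T$ while remaining in $H^1$, ensuring $\mfz \in H^1$ so that the definition of $\msS$ in~\eqref{eq:phi43_action} is consistent.

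\paragraph{Upper bound.} If $\msS(\mfz) = +\infty$ there is nothing to prove, so assume $\mfz \in H^1(\mbT^3)$. The identity is saturated precisely when $\partial_t w = D\msS(w)$, i.e.\ along the \emph{anti-gradient} flow, which is the time-reversal of the deterministic dynamics. My construction is therefore:
\begin{enumerate}[label=(\roman*)]
\item Let $\bar u$ be the deterministic solution $\partial_t \bar u = -D\msS(\bar u)$ started from $\mfz$, which exists globally and, by the gradient structure combined with the strict convexity of $\msS$ near its unique minimiser $0$ (Lemma~\ref{lem:action_zeros}) and the positive mass term, converges to $0$ in $H^1$ as $t \to \infty$.
\item Fix $T_1 \gg 1$ and set $v_1(t) := \bar u(T_1 - t)$ on $[0,T_1]$. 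Then $v_1$ is an anti-gradient flow from $\mfy := \bar u(T_1)$ to $\mfz$, and the master identity gives
\begin{equation*}
\tfrac{1}{2}\|\partial_t v_1 + D\msS(v_1)\|_{L^2_{T_1} L^2_x}^2 = 2\bigl(\msS(\mfz) - \msS(\mfy)\bigr).
\end{equation*}
\item By the global exact controllability of the skeleton equation (Appendix~\ref{sec:skeleton_controllability}), choose $T_0 > 0$ and $h_0 \in L^2_{[0,T_0]} L^2_x$ such that $w^{h_0}(0) = 0$, $w^{h_0}(T_0) = \mfy$ and, crucially, $\|h_0\|_{L^2 L^2}^2 \to 0$ as $\|\mfy\|_{H^1} \to 0$.
\item Concatenate $w^{h_0}$ on $[0,T_0]$ with $v_1$ on $[T_0,T_0+T_1]$ to produce an admissible path ending at $\mfz$. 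Its total cost is bounded by $\tfrac{1}{2}\|h_0\|_{L^2 L^2}^2 + 2(\msS(\mfz) - \msS(\mfy))$, which tends to $2\msS(\mfz)$ as $T_1 \to \infty$ since $\mfy \to 0$ in $H^1$ and $\msS$ is continuous at $0$.
\end{enumerate}
Taking the infimum over $T_1$ yields $\msV(\mfz) \leq 2\msS(\mfz)$.

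\paragraph{Main obstacle.} The hardest ingredients are (a) the asymptotic stability of the gradient flow at $0$ in the $H^1$-topology (so that $\msS(\mfy) \to 0$), which I anticipate proving from the Lyapunov inequality $\tfrac{\dd}{\dd t}\msS(\bar u) = -\|D\msS(\bar u)\|_{L^2_x}^2$ combined with a Poincaré-type estimate exploiting the mass $m^2 > 0$; and (b) the global exact controllability of the skeleton equation to arbitrary $H^1$-targets with controllable cost, which is delicate due to the cubic nonlinearity and is handled separately in Appendix~\ref{sec:skeleton_controllability}. The chain-rule justification in the lower bound is also a technicality worth checking carefully given the mere $L^2_T H^1$ regularity of $w^h$.
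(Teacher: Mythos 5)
Your proposal is correct and takes essentially the same route as the paper: the lower bound follows from a chain-rule identity for $\msS$ along the skeleton flow (you complete the square via the $L^2$ polarisation identity, the paper via a Peter--Paul inequality --- a cosmetic difference), and the upper bound from time-reversing the gradient flow and concatenating with a short controllability path near the origin. The technical points you flag --- parabolic regularity justifying the chain rule, asymptotic $H^1$-stability of the gradient flow, and cost control for the controllability near $0$ --- are exactly the ingredients the paper supplies in Appendix~\ref{sec:skeleton_equation} (Theorem~\ref{th:nl_skeleton_gwp} and Proposition~\ref{prop:nl_skeleton_control}).
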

While this result is a central to our strategy, its proof is somewhat technical and therefore, we recommend that on a first reading, provided the result is appreciated, the proof can be safely skipped.

Before proving Theorem~\ref{th:V_equal_2S}, we state and prove a technical lemma regarding properties of the $\Phi^4_3$ action.
\begin{lemma} \label{lem:action_zeros}
	For $\msS:\mcS'(\mbT^3)\to [0,+\infty]$ the $\Phi^4_3$ action as defined by \eqref{eq:phi43_action} the following all hold,
	\begin{enumerate}[label = \roman*)]
		\item \label{it:S_pos_def} $\msS(\phi) = 0$ if and only if $H^1(\mbT^3)\ni\phi =0$ and $\msS(\phi) > 0$ for all~$\phi \neq 0 \in H^1(\mbT^3)$. 
		\item \label{it:S_Frech_diff} The restriction of $\msS$ to $H^1(\mbT^3)$ is Fr\'echet differentiable and at each $\phi \in H^1_x$, for any $\varphi \in H^1(\mbT^3)$ one has 
		\begin{equation}\label{eq:S_Frech_deriv}
			D\msS(\phi)(\varphi) = \langle D\msS(\phi),\varphi \rangle_{H^{-1}_x,H^1_x}
			= -\langle\Delta \phi,\varphi \rangle_{H^{-1}_x;H^{1}_x} + m^2 \langle \phi,\varphi\rangle_{L^2_x;L^2_x}+  \langle \phi^3,\varphi \rangle_{L^2_x;L^2_x}.
		\end{equation}
		As a result, we may use the shorthand,
		\begin{equation*}
			D\msS(\phi) = -\Delta \phi + m^2  \phi+   \phi^3 .
		\end{equation*}
		\item \label{it:S_global_min} In the operator theoretic sense, $D\msS(\phi) \equiv 0$ if and only if $\phi =0$ in $H^1(\mbT^3)$. I.e. $\phi= 0$ is the global minimizer of $\msS$.
	\end{enumerate}
\end{lemma}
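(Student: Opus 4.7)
The plan is to handle the three items in turn using only elementary calculus of variations on $H^1(\mbT^3)$, with the Sobolev embedding $H^1(\mbT^3) \embed L^6(\mbT^3)$ (recorded in the preliminaries) being the only non-trivial ingredient, needed to make sense of the cubic non-linearity. For~\ref{it:S_pos_def}, I would simply observe that the integrand defining $\msS$ is pointwise non-negative on $\mbT^3$, so $\msS \geq 0$ on $H^1(\mbT^3)$. If $\msS(\phi) = 0$, each of the three non-negative summands vanishes almost everywhere, so in particular $|\phi|^4 \equiv 0$ a.e., whence $\phi = 0$ in $H^1(\mbT^3)$. The converse $\msS(0) = 0$ is immediate.

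For~\ref{it:S_Frech_diff}, I would expand $\msS(\phi + \varphi) - \msS(\phi)$ algebraically for $\phi, \varphi \in H^1(\mbT^3)$, isolating the linear-in-$\varphi$ contribution
\begin{equation*}
L(\varphi) \coloneqq \int_{\mbT^3}\bigl(\nabla\phi\cdot\nabla\varphi + m^2\phi\varphi + \phi^3\varphi\bigr)\dd x,
\end{equation*}
which, after integration by parts in the first term on the closed manifold $\mbT^3$, matches~\eqref{eq:S_Frech_deriv}. Boundedness of $L$ on $H^1(\mbT^3)$ follows from Cauchy--Schwarz together with the estimate $\|\phi^3\|_{L^2_x} = \|\phi\|_{L^6_x}^3 \lesssim \|\phi\|_{H^1_x}^3$. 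The remainder $R(\varphi) \coloneqq \msS(\phi+\varphi) - \msS(\phi) - L(\varphi)$ consists of five terms involving $|\nabla\varphi|^2$, $\varphi^2$, $\phi^2\varphi^2$, $\phi\varphi^3$ and $\varphi^4$; each is bounded by $C(\phi)\|\varphi\|_{H^1_x}^k$ for some $k \in \{2,3,4\}$ via H\"older and the Sobolev embedding, e.g.~$\int\phi^2\varphi^2\,\dd x \leq \|\phi\|_{L^4_x}^2\|\varphi\|_{L^4_x}^2 \lesssim \|\phi\|_{H^1_x}^2\|\varphi\|_{H^1_x}^2$. Thus $R(\varphi) = o(\|\varphi\|_{H^1_x})$ as $\|\varphi\|_{H^1_x}\to 0$, giving Fr\'echet differentiability with derivative $L$.

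For~\ref{it:S_global_min}, the identity~\eqref{eq:S_Frech_deriv} evaluated at $\phi = 0$ gives $D\msS(0) \equiv 0$ directly. Conversely, suppose $\phi \in H^1(\mbT^3)$ satisfies $D\msS(\phi) \equiv 0$ in $H^{-1}(\mbT^3)$. Since $\phi \in H^1(\mbT^3)$ is itself an admissible test function, testing~\eqref{eq:S_Frech_deriv} against $\varphi = \phi$ yields
\begin{equation*}
0 \,=\, D\msS(\phi)(\phi) \,=\, \|\nabla\phi\|_{L^2_x}^2 + m^2\|\phi\|_{L^2_x}^2 + \|\phi\|_{L^4_x}^4,
\end{equation*}
which forces $\phi = 0$, using $m > 0$ and the non-negativity of each term on the right. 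The main (and rather minor) obstacle is the Fr\'echet computation in~\ref{it:S_Frech_diff}, essentially a book-keeping exercise in ensuring that each of the five remainder terms is controlled by a suitable power of $\|\varphi\|_{H^1_x}$ through the critical Sobolev embedding in dimension three.
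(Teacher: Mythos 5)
Your proposal is correct. Parts~\ref{it:S_pos_def} and~\ref{it:S_Frech_diff} match the paper's argument in substance: for~\ref{it:S_pos_def} the paper phrases things via positive definiteness of the $L^2$, $L^4$, and $\dot{H}^1$ norms whereas you invoke pointwise non-negativity of the integrand, but these are the same observation; for~\ref{it:S_Frech_diff} the paper splits $\msS$ into a quadratic form plus a quartic part and you expand directly, but the estimates (Cauchy--Schwarz, Sobolev embedding $H^1 \hookrightarrow L^6$) and the conclusion are identical.

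For part~\ref{it:S_global_min} you take a genuinely different and shorter route. The paper first reduces to uniqueness of weak solutions of the elliptic problem $-\Delta\phi = -\phi^3 - m^2\phi$, introduces the strictly monotone function $b(x) = x^3 + m^2 x$, and tests the difference of two candidate solutions against itself to deduce $\nabla(\phi - \psi) = 0$, then concludes via Poincaré. You instead test $D\msS(\phi) = 0$ directly against $\varphi = \phi$ to obtain $\|\nabla\phi\|_{L^2_x}^2 + m^2\|\phi\|_{L^2_x}^2 + \|\phi\|_{L^4_x}^4 = 0$, forcing $\phi = 0$ at once. Your argument is more elementary and avoids both the monotonicity lemma and the Poincaré inequality; it exploits the same sign structure (positive mass, attractive nonlinearity) that underlies the monotonicity of $b$, but in the most direct possible way. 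The only thing the paper's route would buy is robustness: the monotonicity argument is the natural one if one later wanted to handle a nonlinearity without this exact sign-definite structure (e.g.\ comparing two nonzero stationary points). For the statement as written, your approach is strictly simpler.
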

\begin{proof}
	To prove \ref{it:S_pos_def} it suffices to recall the embeddings~$H^1(\mbT^3) \embed L^6(\mbT^3) \embed L^4(\mbT^3) \embed L^2(\mbT^3)$ so that for all~$\phi \in H^1_x$ one has
	\begin{equation*}
		\msS(\phi) = \frac{1}{2}\norm[0]{\nabla \phi}_{L^2_x}^2+ \frac{m^2}{2} \norm[0]{\phi}_{L^2_x}^2 + \frac{1}{4} \norm[0]{\phi}_{L^4_x}^4 <\infty.
	\end{equation*}
	The claim then follows directly from the positive definiteness property of norms. 
	
	To show \ref{it:S_Frech_diff}, let~$\phi, \psi \in H^1(\mbT^3)$, define~$A_\phi(\psi) \coloneqq  A^0_\phi(\psi) + A^1_\phi(\psi)$ by 
	\begin{equation*}
		A_\phi^0(\psi) 
		\coloneqq  \scal{-\Delta \phi + m^2 \phi,\psi}_{H^{-1}_x \x H^1_x}, \qquad 
		A^1_\phi(\psi) \coloneqq  \scal{\phi^3, \psi}_{L^2_x}
	\end{equation*}
	and then we claim that
	\begin{equation}
		\abs[0]{\msS(\phi + \psi) - \msS(\phi) - A_\phi(\psi)} = o\del[1]{\norm[0]{\psi}_{H^1_x}}\quad \text{as} \quad \norm[0]{\psi}_{H^1_x} \to 0.
		\label{lem:frech_diff_s_pf_claim}
	\end{equation}
	To this end, note that
	\begin{equation*}
		A_\phi^0(\psi) 
		= \scal{\nabla \phi, \nabla \psi}_{L^2_x} + m^2 \scal{\phi, \psi}_{L^2_x}
		= 2 B(\phi,\psi)
	\end{equation*}
	where the \emph{bilinear} form~$B: H^1_x \x H^1_x \to \R$ corresponds to the \emph{quadratic} part~$S_0$ in~$\msS$, i.e.
	\begin{equation*}
		B(\phi,\psi) = \frac{1}{2} \scal{\nabla \phi, \nabla \psi}_{L^2_x} + \frac{m^2}{2} \scal{\phi,\psi}_{L^2_x}, \qquad S_0(\psi) = B(\psi,\psi).
	\end{equation*}
	Therefore, we have
	\begin{equs}[][lem:frech_diff_s_pf_quad]
		\abs[0]{S_0(\phi + \psi) - S_0(\phi) - A^0_\phi(\psi)}
		& =
		\abs[0]{B(\phi + \psi, \phi + \psi) - B(\phi,\phi) - 2 B(\phi,\psi)} \\
		& =
		B(\psi,\psi) 
		= 
		S_0(\psi)
		\simeq
		\norm[0]{\psi}_{H^1_x}^2
	\end{equs}
	For the \emph{quartic} part,~$S_1$ of~$\msS$, we find that 
	\begin{equs}
		S_1(\phi + \psi) - S_1(\phi) - A^1_\phi(\psi)
		& =
		\frac{1}{4} \int_{\T^3} 6 \phi(x)^2 \psi(x)^2 + 4 \phi(x) \psi(x)^3 + \psi(x)^4 \dif x \\
		& \simeq 
		\scal{\phi^2, \psi^2}_{L^2_x} + \scal{\phi, \psi^3}_{L^2_x} + \norm[0]{\psi}^4_{L^4_x}
	\end{equs}
	and thus 
	\begin{equation}
		\abs[0]{S_1(\phi + \psi) - S_1(\phi) - A^1_\phi(\psi)}
		\leq
		\norm[0]{\phi}_{L^4_x}^2 \norm[0]{\psi}_{L^4_x}^2 + \norm[0]{\phi}_{L^2_x} \norm[0]{\psi}_{L^6_x}^3 + \norm[0]{\psi}_{L^4_x}^4 
		= 
		o\del[1]{\norm[0]{\psi}_{H^1_x}}, 
		\label{lem:frech_diff_s_pf_quart}
	\end{equation}
	as~$\norm[0]{\psi}_{H^1_x} \to 0$. Note that we have used the Cauchy--Schwarz inequality and the same embeddings as above.
	The claim in~\eqref{lem:frech_diff_s_pf_claim} now follows from~\eqref{lem:frech_diff_s_pf_quad} and~\eqref{lem:frech_diff_s_pf_quart}.
	
	Finally, to obtain \ref{it:S_global_min}, we begin with an auxiliary observation.
	Let~$b: \R \to \R$ be given by~$b(x) = x^3 + m^2 x$ and observe that
	\begin{equation*}
		\frac{\dif}{\dif x} b(x) = 3 x^2 + m^2 \geq 0, \quad x \in \R. 
	\end{equation*}
	Thus,~$b$ is (strictly) increasing and we see that
	\begin{equation}
		\del[0]{b(x) - b(y)}\del[0]{x-y} \geq 0, \quad x,y \in \R. 
		\label{lem:elliptic_eq_unique_sol_pf_monotonicity}
	\end{equation} 
	From \ref{it:S_Frech_diff} it follows that the zeros of $D\msS$ in $H^1(\mbT^3)$ are characterised by weak solutions of the elliptic PDE problem,
	\begin{equation}
		-\Delta \phi = -\phi^3 - m^2 \phi.
		\label{eq:elliptic_eq}
	\end{equation}
	It is obvious that~$\phi \equiv 0$ is a solution to~\eqref{eq:elliptic_eq}, so we only have to prove uniqueness.
	For two, distinct, solutions~$\phi, \psi \in H^1(\mbT^3)$ of \eqref{eq:elliptic_eq} we have
	\begin{equs}
		\norm[0]{\nabla(\phi-\psi)}^2_{L^2_x}
		& = 
		\scal{\nabla(\phi-\psi),\nabla(\phi-\psi)}_{L^2_x} \\
		& = -\scal{\Delta(\phi-\psi),\phi-\psi}_{H^{-1}_x \x H^1_x} \\
		& = - \scal{\phi^3 - \psi^3 + m^2(\phi-\psi), \phi-\psi}_{L^2_x} \\
		& = - \int_{\T^3} \del[1]{b(\phi(x)) - b(\psi(x))} \del[1]{\phi(x) - \psi(x)} \dif x \\
		& \leq 0,
	\end{equs}
	where we have used integration by parts on~$\T^3$ in the second line, eq.~\eqref{eq:elliptic_eq} in the third line and eq.~\eqref{lem:elliptic_eq_unique_sol_pf_monotonicity} in the last line. It thus follows that~$\norm[0]{\nabla(\phi-\psi)}^2_{L^2_x} = 0$.
	By the Poincaré inequality, we finally have
	\begin{equation*}
		0 \leq \norm[0]{\phi-\psi}^2_{L^2_x} 
		\lesssim \norm[0]{\nabla(\phi-\psi)}^2_{L^2_x} 
		= 0
	\end{equation*}
	and therefore~$\phi = \psi$ in~$H^1(\mbT^3)$.
\end{proof}

We are now ready to prove Theorem~\ref{th:V_equal_2S}, which claims that the quasipotential~$\msV$ associated to the $\Phi^4_3$ dynamics and the $\Phi^4_3$ action~$\msS$ coincide up to a constant.

\begin{proof}[Proof of Theorem~\ref{th:V_equal_2S}]
	By definition $\msS(\phi)=+\infty$ if $\phi \in \CC^\alpha(\T^3) \setminus H^1(\T^3)$, while it follows from Theorem~\ref{th:nl_skeleton_gwp} that the same holds for $\msV$. Hence, if~$\phi \in \CC^\alpha(\T^3) \setminus H^1(\T^3)$, 
	\begin{equation*}
		\msV(\phi) = +\infty = \msS(\phi)
	\end{equation*}
	and there is nothing to show. Therefore, we need only consider ~$\phi \in H^1(\T^3)$. 	
	Firstly, we observe that given~$T > 0$ and any~$v \in L^2_TH^1(\mbT^3)\cap H^1_TH^{-1}(\mbT^3)$, it follows the chain rule in Hilbert spaces (c.f. \cite[Sec.~5.9.2, Thm.~3]{evans_10_partial} and \eqref{eq:hilbert_chain}), along with the explicit expression for the Fr\'echet derivative of the action, \eqref{eq:S_Frech_deriv}, that 
	\begin{equs}[][prop_rf_action_pf_eq1]
		\msS(v(t))-\msS(v(0)) & = \int_0^t \langle D\msS(v(s)),\partial_t v(s)\rangle_{H^{-1}_x;H^{-1}_x}\dd s \\
	\end{equs}
	
	From Proposition~\ref{prop:nl_skeleton_control} there exists an~$h  \in L^2_T L^2(\mbT^3)$ such that, $w^{h,0}$ solving
	\begin{equation*}
		\partial_t w^{h,0} =  \Delta w^{h,0} - \del[0]{w^{h,0}}^3  - m^2 w^{h,0} + h = -D\msS(w_t^{h,0}) + h, \quad w^{h,0}(0) = 0
	\end{equation*}
	satisfies~$w^{h,0}(T) = \phi$.

	Then, by Theorem~\ref{th:nl_skeleton_gwp} it holds that $w^{h,0} \in C_TH^1(\mbT^3)\cap H^1_T H^{-1}(\mbT^3) \embed L^2_TH^1(\mbT^3)\cap H^1_T H^{-1}(\mbT^3) $ and by~\eqref{prop_rf_action_pf_eq1} along with suitably integrating by parts one finds
	\begin{equs}[][prop_rf_action_pf_eq2]
		\msS(\phi)
		=
		\msS(\phi) - \msS(0)
		&	=
		\msS(w^{h,0}(T)) - \msS(w^{h,0}(0)) \\
		& =
		- \int_0^T \norm[0]{D \msS(w^{h,0}_t)}_{H^{-1}_x}^2 \dif t + \int_0^T \scal{D \msS(w^{h,0}_t),h_t}_{H_x^{-1};H^1_x} \dif t.
	\end{equs}
	\paragraph*{$\boldsymbol{\triangleright}$ \textbf{Proof that} $\boldsymbol{2S \leq V}$.}
	By Cauchy--Schwarz, Young's product inequality,\footnote{We use the inequality in the form~$ab \leq a^2 + \frac{b^2}{4}$ for~$a, b \geq 0$ which is sometimes also referred to as Peter--Paul inequality with parameter~$\nicefrac{1}{2}$.}~\eqref{prop_rf_action_pf_eq2} and the ordering of Sobolev spaces, \eqref{eq:sob_reg_embed}, we find
	\begin{equs}[][eq:SPhi_bound_1]
		\msS(\phi) 
		& \leq 
		- \int_0^T \norm[0]{D \msS(w^{h,0}_t)}_{H^{-1}_x}^2 \dif t + \int_0^T \norm[0]{D \msS(w^{h,0}_t)}_{H_x^{-1}} \norm[0]{h_t}_{H_x^{-1}} \dif t 
		\leq
		\frac{1}{4} \int_0^T \norm[0]{h_t}_{H_x^{-1}}^2 \dif t \\
		& \leq
		\frac{1}{4} \int_0^T \norm[0]{h_t}_{L^2_x}^2 \dif t.
	\end{equs}
	Since~$h$ and~$T$ were arbitrary, recalling the definition of $\msV$ as given by \eqref{eq:quasi_potential}, the claim follows directly.
	\paragraph*{$\boldsymbol{\triangleright}$ \textbf{Proof that} $\boldsymbol{2S \geq V}$.}
	
	The proof of the reverse direction is supplemented by Figure~\ref{fig_prop_rf_action}, which schematically describes the main ideas.
	At first, we recall that we may write any solution to the equation,
	\begin{equation*}
		\partial_t w^{0,\phi}(t) - \Delta w^{0,\phi}(t) = - w^{0,\phi}(t)^3 -m^2 w^{0,\phi}(t),\qquad w^{0,\phi}(0)= \phi \in H^1(\T^3), 
	\end{equation*}
	as a solution to the equation,
	\begin{equation*}
		\partial_t w^{0,\phi}(t) = - D\msS(w^{0,\phi}(t)),\qquad w^{0,\phi}(0) = \phi \in H^1(\T^3),
	\end{equation*}
	which is properly understood as an identity in $H^{-1}(\mbT^3)$. 
	\begin{figure}
		\begin{minipage}{.48\textwidth}
			\centering
			\subfloat[]{%
				\centering
				\includegraphics[scale=0.05]{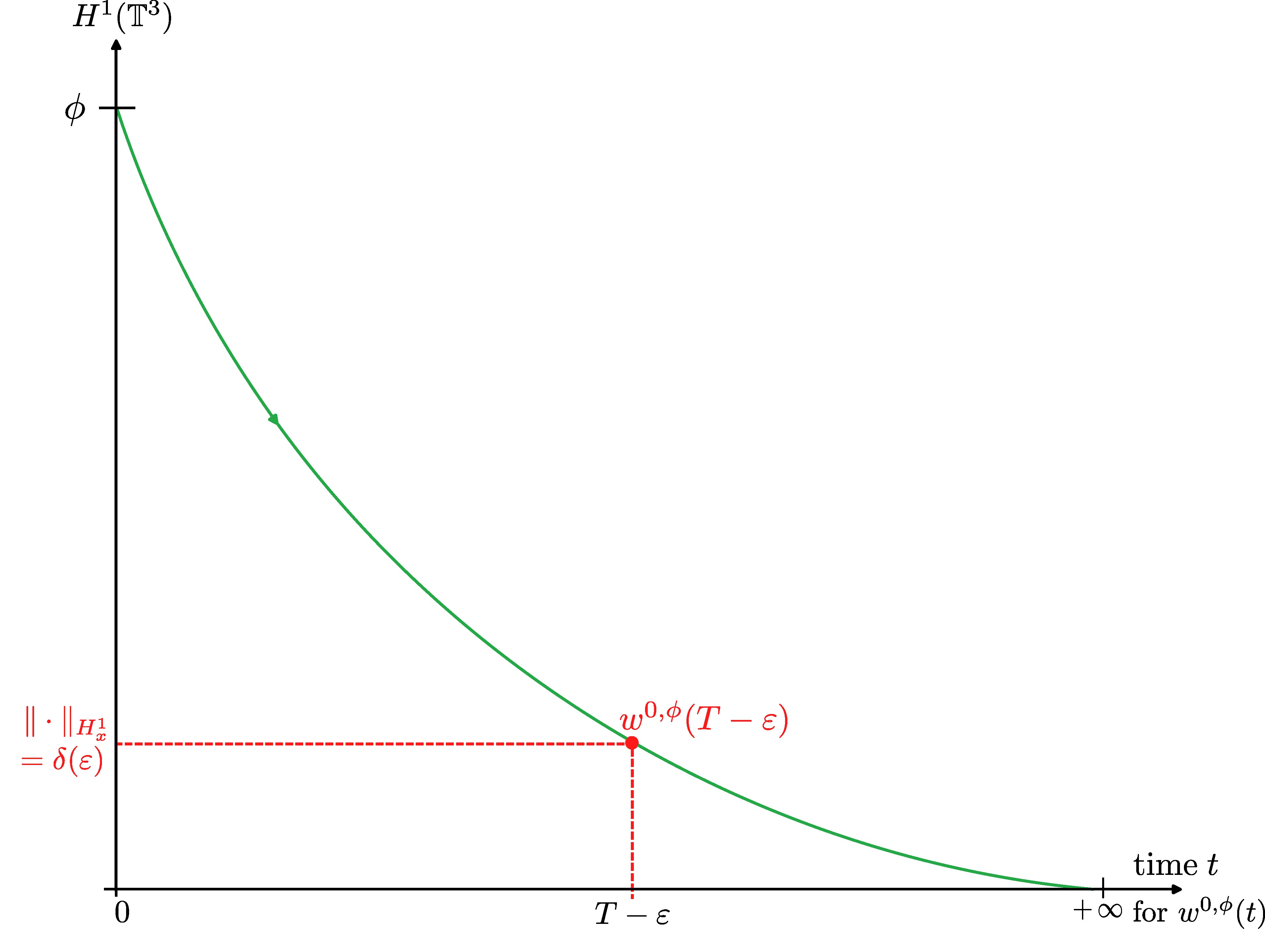}
				\label{fig_1_left}
			}
		\end{minipage}\hfill
		\hspace{2em}
		\begin{minipage}{.48\textwidth}
			\subfloat[]{%
				\centering
				\includegraphics[scale=0.05]{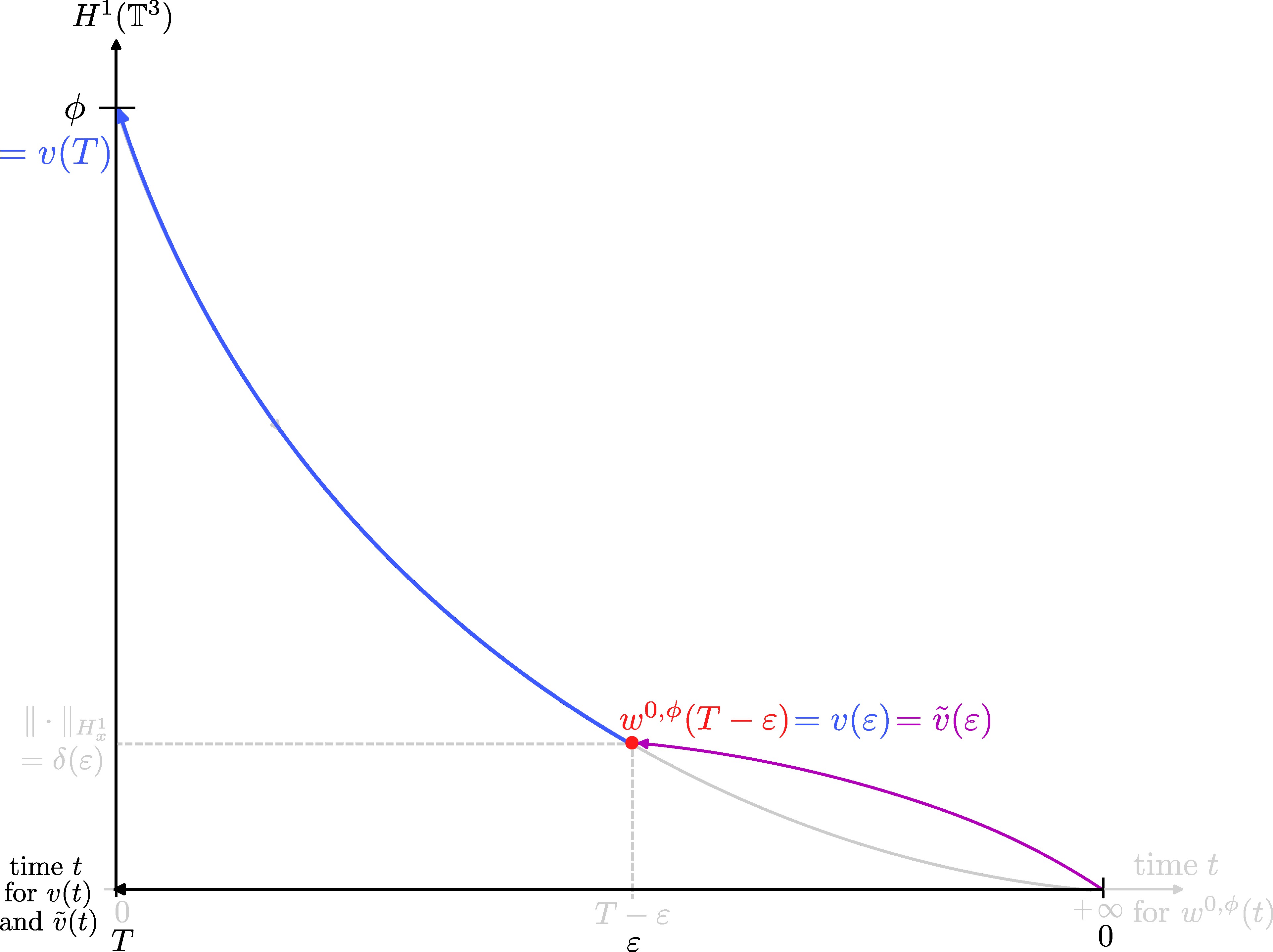}
				\label{fig_1_right}
			}
		\end{minipage}
		\captionsetup{format=plain, width=0.8\textwidth}
		\caption{\textbf{Schematic overview of the proof idea}. \\[0.5em]
			\protect\subref{fig_1_left} In green, the flow~$t \mapsto w^{0,\phi}(t)$ \emph{down the gradient} of~$\msS$ is depicted. The time~$T(\eps)$ is chosen such that~$w^{0,\phi}(T-\eps) \in \partial B_{\delta(\eps)}(0) \subseteq H^1(\T^3)$. \\
			\protect\subref{fig_1_right} In purple, the flow~$t \mapsto \tilde{v}(t) = w^{\tilde{h}^{\eps,T},0}(t)$ for~$t \in [0,\eps]$ is depicted, where---by local exact controllability---the element~$\tilde{h}^{\eps,T} \in L^2_{[0,\eps]}L^2(\T^3)$ is chosen such that~$\tilde{v}(\eps) = w^{0,\phi}(T-\eps)$. In blue, the flow~$t \mapsto v(t) = w^{0,\phi}(T-t)$ for~$t \in [\eps,T]$ \emph{up the gradient} of~$\msS$ is pictured. The path~$v_\star$ is the concatenation of~$\tilde{v}$ (on~$[0,\eps]$) and~$v$ (on~$[\eps,T]$).
		}
		\label{fig_prop_rf_action}
	\end{figure}
	In the remainder of the proof, we make use of \emph{global, exact controllability} of the non-linear skeleton equation, c.f. Proposition~\ref{prop:nl_skeleton_control}.
	In particular, for~$\eps > 0$ and $\psi \in H^1(\mbT^3)$, there exists an $h\in L^2_\eps L^2(\mbT^3)$ such that
	\begin{equs}[][prop_rf_action_pf_localexactctrl]
		w^{h,0}(\eps) = \psi. 
	\end{equs}
	Setting~$\psi \coloneqq  w^{0,\phi}(T - \eps) \in H^1(\mbT^3)$, we denote the corresponding~$h \in L^2_\eps L^2(\mbT^3)$ from~\eqref{prop_rf_action_pf_localexactctrl} by~$\tilde{h}_{\eps, T} \coloneqq  h$. That is,
	\begin{equation*}
		w^{\tilde{h}^{\eps,T},0}(\eps) = w^{0,\phi}(T - \eps).
	\end{equation*}
	We now consider the \emph{backwards flow} up the gradient of $\msS$,
	\begin{equation*}
		\partial_t v(t) = D\msS(v(t)), \quad v(\eps) = w^{0, \phi}(T-\eps).
	\end{equation*}
	Note that by construction, the reverse gradient flow $v$ satisfies the identity
	\begin{equation*}
		v(t) = w^{0,\phi}(T-t)\quad \text{for all} \quad t \in [\eps,T].
	\end{equation*}
	As a result, we also have $v(T) = w^{0,\phi}(0) = \phi$ and so we may write
	\begin{equation*}
		\partial_t v(t) = -D\msS(v(t)) + 2D\msS(v(t)),\quad v(\eps)=w^{0,\phi}(T-\eps),\quad v(T)=\phi.
	\end{equation*}
	In this way, we see the term $t\mapsto 2D\msS(v(t))$ is the necessary control to drive the \emph{forward gradient flow}, started from the point $w^{0,\phi}(T-\eps)$ at time~$\eps$, to equal $\phi$ at time $T$. Using the same identities as in~\eqref{prop_rf_action_pf_eq2} and the equation solved by $v(t)$, we find that
	\begin{equs} \label{prop_rf_action_pf_DSctrl}
		\frac{1}{2}\|D\msS(v(\,\cdot\,))\|^2_{L^2_{[\eps,T]}L^2_x} = 2	\int_{\eps}^T \|D\msS(v(s))\|_{L^2_x}^2  \dd s 	= & 2 \sbr[1]{\msS(v(T))-\msS(v(\eps))}	\\
		= &2 \sbr[1]{\msS(\phi) - \msS(v(\eps))} \\
		< &\infty.
	\end{equs}
	In other words,~$2D\msS(v) \in L^2_{[\eps,T]} L^2(\mbT^3)$ and so it is a member of the Cameron--Martin space of the space-time white noise and is therefore a suitable control. 
	
	Next, recalling that~$\tilde{v} \coloneqq  w^{\tilde{h}_{\eps,T},0}$  by definition solves
	\begin{equation*}
		\partial_t \tilde{v}(t) = - D\msS(\tilde{v}(t)) + \tilde{h}^{\eps;T}(t),
		\qquad 
		t \in [0,\eps],
		\quad
		\tilde{v}(0)=0,
	\end{equation*}
	by global, exact controllability and our choice of $\tilde{h}_{\eps,T}$ we have
	\begin{equation*}
		\tilde{v}(\eps) = w^{0,\phi}(T-\eps) = v(\eps).
	\end{equation*}
	Although this fact is not required directly by our proof here; note that by Theorem~\ref{th:nl_skeleton_gwp}, point \ref{it:nl_skeleton_global_decay}, we have
	\begin{equation*}
		\lim_{t\rightarrow \infty} \|w^{0,\phi}(t)\|_{H^1_x} =0 \quad \Rightarrow \quad \lim_{T \to \infty} \|w^{0,\phi}(T-\eps)\|_{H^1_x} =0
	\end{equation*} 
	which demonstrates that this target point can be chosen arbitrarily small, by taking $T$ arbitrarily large.

	Setting~$v_\star(t) \coloneqq   \tilde{v}(t) \mathbf{1}_{[0,\eps]}(t) + v(t) \mathbf{1}_{[\eps,T]}(t)$ for~$t \in [0,T]$, we see that~$v_\star = w^{h_\star,0}$ for the control
	\begin{equation*}
		h_\star \coloneqq  \tilde{h}^{\eps;T}\1_{[0,\eps]} + D\msS(v(\,\cdot\,))\1_{[\eps,T]} \in L^2_{[0,T]} L^2(\mbT^3),		
	\end{equation*}
	i.e.
	\begin{equation*}
		\partial_t v_\star(t) = -D\msS(v_\star(t)) + h_\star(t), \quad
		v_\star(0) = \tilde{v}(0) = 0, \quad
		v_\star(T) = v(T) = \phi.
	\end{equation*}	
	Finally, the definition of~$h_\star$ and the calculation in~\eqref{prop_rf_action_pf_DSctrl} imply that
	\begin{equation*}
		\int_0^T \norm[0]{h_\star(s)}^2_{L^2_x}\dif s 
		= 4\sbr[1]{\msS(\phi)-\msS(v_\star(\eps))} + \int_0^\eps \|\tilde{h}^{\eps,T}(s)\|^2_{L_x^2}\dd s.	
	\end{equation*}
	Rearranging this equality, recalling that~$\msS$ takes values in~$[0,\infty]$ and so taking a lower bound gives
	\begin{equation*} \label{prop_rf_action_pf_S_lowerbd}
		\msS(\phi) \,\geq \, 
		\frac{1}{4}	\int_0^T \norm[0]{h_\star(s)}^2_{L_x^2}\dd s  - \frac{1}{4}\int_0^\eps \|\tilde{h}^{\eps;T}(s)\|^2_{L_x^2} \dif s.
	\end{equation*}
	Since~$T = T(\eps) > 0$ and 
	\begin{equs}
		\frac{1}{2}	\int_0^T \norm[0]{h_\star(s)}^2_{L^2}\dd s
		& =
		\II_{0,T}(v_\star) \\
		& \geq
		\inf\left\{\II_{0,T'}(v'): \ T' > 0, \ v' \in C_{T'} \CC^\alpha(\T^3), \ v'(0) = 0, \ v'(T') = \phi\right\} \\[0.5em]
		& =
		\msV(\phi),
	\end{equs}
	we can further bound~$\msS$ from below by
	\begin{equation*}
		\msS(\phi) \,\geq \, 
		\frac{1}{2}	\msV(\phi)  
		- \frac{1}{4}\int_0^\eps \|\tilde{h}_{\eps;T}(s)\|^2_{L_x^2} \dif s.
	\end{equation*} 
	Since~$\eps > 0$ was arbitrary, the claim follows and the proof is finished.
\end{proof}
We have the following corollary of Theorem~\ref{th:V_equal_2S}.
\begin{corollary} \label{coro:cpc_sublevel_sets}
	The set
	\begin{equation*}
		\left\{ \mfz \in \mcC^{\alpha}(\mbT^3)\,:\, \msV(\mfz) \leq \theta \right\} = \left\{ \mfz \in \mcC^{\alpha}(\mbT^3)\,:\, \msS(\mfz) \leq \frac{\theta}{2} \right\} 
	\end{equation*}
	is a compact subset of $\mcC^{\alpha}(\mbT^3)$.
\end{corollary}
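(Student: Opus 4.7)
By Theorem~\ref{th:V_equal_2S}, the two sets in the statement coincide, so it suffices to show that the sublevel set $K_\theta \coloneqq \{\mfz \in \mcC^\alpha(\mbT^3) : \msS(\mfz) \leq \theta/2\}$ is compact in $\mcC^\alpha(\mbT^3)$. My plan is the standard two-step argument: establish boundedness in $H^1$ (which gives precompactness in $\mcC^\alpha$ via a compact embedding), and then show closedness in $\mcC^\alpha$ via a lower-semicontinuity argument.

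For the first step, note that if $\msS(\mfz) \leq \theta/2$ then in particular $\mfz \in H^1(\mbT^3)$ and, dropping the non-negative quartic term,
\begin{equation*}
    \tfrac{1}{2}\|\nabla \mfz\|_{L^2_x}^2 + \tfrac{m^2}{2}\|\mfz\|_{L^2_x}^2 \leq \tfrac{\theta}{2},
\end{equation*}
so $\|\mfz\|_{H^1_x} \leq C(m)\sqrt{\theta}$. By the compact embedding $H^1(\mbT^3) \cembed \mcC^\alpha(\mbT^3)$ recorded in \eqref{eq:H1_to_L3q_to_Calpha_embed}, the set $K_\theta$ is therefore precompact in $\mcC^\alpha(\mbT^3)$.

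For closedness, take a sequence $(\mfz_n) \subset K_\theta$ with $\mfz_n \to \mfz$ in $\mcC^\alpha(\mbT^3)$; I would show $\mfz \in K_\theta$, which amounts to showing $\msS$ is lower-semicontinuous at $\mfz$. The sequence is bounded in $H^1(\mbT^3)$ by the first step, so some subsequence converges weakly in $H^1$ to a limit $\psi$; since both the $\mcC^\alpha$ and weak $H^1$ limits agree as distributions on $\mbT^3$, one has $\psi = \mfz \in H^1(\mbT^3)$. By Rellich--Kondrachov, the same subsequence converges strongly in $L^4(\mbT^3)$, so $\|\mfz_{n_k}\|_{L^4_x}^4 \to \|\mfz\|_{L^4_x}^4$; weak lower semicontinuity of the Hilbert norms $\|\nabla \cdot\|_{L^2_x}$ and $\|\cdot\|_{L^2_x}$ on the quadratic terms then gives
\begin{equation*}
    \msS(\mfz) \leq \liminf_{k \to \infty} \msS(\mfz_{n_k}) \leq \theta/2,
\end{equation*}
so $\mfz \in K_\theta$. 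Combined with precompactness, this yields compactness of $K_\theta$ in $\mcC^\alpha(\mbT^3)$.

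I expect no serious obstacle here: the only subtle point is identifying the weak $H^1$ limit of a subsequence with the $\mcC^\alpha$ limit, which is straightforward once one tests against smooth functions and uses that $\mcC^\alpha \hookrightarrow \mcS'(\mbT^3)$. The argument is essentially a rerun of the direct method in the calculus of variations adapted to the ambient topology $\mcC^\alpha$.
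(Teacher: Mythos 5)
Your argument is correct and follows the same route as the paper's one-line proof, which simply asserts the claim follows from the compact embedding $H^1(\mbT^3) \cembed \mcC^\alpha(\mbT^3)$; you have supplied the details the paper leaves implicit, namely that the quadratic part of $\msS$ (with $m^2>0$) bounds the $H^1$-norm on sublevel sets, and that closedness in $\mcC^\alpha$ follows from weak $H^1$-compactness together with weak lower semicontinuity of the quadratic terms and strong $L^4$-convergence for the quartic term.
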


\begin{proof}
	By definition of~$\msS$, this follows from the compact embedding~$H^{1}(\T^3) \cembed \CC^\alpha(\T^3)$, c.f. \eqref{eq:H1_to_L3q_to_Calpha_embed}.
\end{proof}

\section{LDP Lower Bound for the $\boldsymbol{\Phi^4_3}$ Measure}\label{sec:LDP_lower}
In this section we establish the large deviation lower bound, i.e.  Theorem~\ref{th:main_result}\;\ref{it:LDP_lower}.
By~Remark~\ref{rmk:locally_uniform_ldp}~\ref{rmk:locally_uniform_ldp:i}, this statement is equivalent to the following theorem:\footnote{Note that in contrast to the analogous equivalence for the LDP upper bound, this does \emph{not} require that~$\msS$ has compact sublevel sets, see Section~\ref{sec:LDP_upper} for further discussion.}
\begin{theorem}[LDP lower bound] \label{thm:ldp_lower_bound}
	For any~$\gamma, \delta > 0$ and~$\fz \in \mcC^{\alpha}(\mbT^3)$, there exists an~$\eps_0 \coloneqq \eps_0 (\gamma,\delta,\fz)> 0$ such that for all $\eps \leq \eps_0$,
	\begin{equation*}
		\mu_\eps\del[1]{\fy \in \mcC^{\alpha}: \ \norm{\fy-\fz}_{\mcC^{\alpha}} < \delta}
		\geq 
		\exp\del[3]{-\frac{2\msS(\fz) + \gamma}{\eps^2}}.
	\end{equation*}
\end{theorem}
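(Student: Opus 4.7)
If $\msS(\fz)=+\infty$ the bound is trivial, so I restrict to $\fz \in H^1(\mbT^3)$. The overall strategy, in the spirit of Sowers and Cerrai--R\"ockner, is to transfer the dynamic lower bound of the Hairer--Weber LDP (in its locally uniform form, Proposition~\ref{prop:uniform_ldp}) to the invariant measure via the invariance identity
\[
\mu_\eps(B_\delta(\fz)) \,=\, \int_{\mcC^\alpha(\mbT^3)} \mbP\bigl(\bar u_\eps(T,\phi) \in B_\delta(\fz)\bigr)\,\mu_\eps(\dd\phi), \qquad T>0,
\]
where $\bar u_\eps(\,\cdot\,,\phi)$ denotes the renormalised $\Phi^4_3$ dynamics started at $\phi$. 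I will bound the right-hand side from below by restricting the integral to a $\mu_\eps$-essentially-full bounded set $K$, and then applying a single path construction uniformly over $\phi\in K$.

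The construction rests on four ingredients. \textbf{(i)} By Theorem~\ref{th:V_equal_2S} and the definition of the quasipotential~\eqref{eq:quasi_potential}, for each $\gamma'>0$ I fix a horizon $T_2>0$ and a control $h_0\in L^2_{T_2}L^2(\mbT^3)$ such that $w^{h_0,0}(T_2)=\fz$ and $\tfrac12\|h_0\|_{L^2_{T_2}L^2}^2 \le 2\msS(\fz)+\gamma'$. \textbf{(ii)} Using the stretched-exponential tail estimates on $\mu_\eps$ (Proposition~\ref{prop:measure_tail_bounds}) I choose a bounded set $K\subset \mcC^\alpha(\mbT^3)$ with $\mu_\eps(K)\ge \tfrac12$ for all small $\eps$. \textbf{(iii)} By the $H^1$-decay of the deterministic flow $w^{0,\phi}$ (the $h\equiv 0$ case of Theorem~\ref{th:nl_skeleton_gwp}, combined with parabolic smoothing to upgrade $\phi\in \mcC^\alpha$ to $H^1$ instantly), for any $\kappa>0$ there is $T_0=T_0(K,\kappa)$ with $\|w^{0,\phi}(T_0)\|_{H^1}\le \kappa$ uniformly in $\phi\in K$. \textbf{(iv)} By continuous dependence of the skeleton solution on its initial datum, combined with the embedding $H^1\embed \mcC^\alpha$, $\kappa$ can be taken small enough that $\|w^{h_0,\psi}(T_2)-\fz\|_{\mcC^\alpha}<\delta/2$ whenever $\|\psi\|_{H^1}\le\kappa$.

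With these ingredients at hand, to each $\phi\in K$ I associate the concatenated path
\[
v_\phi(t)\coloneqq
\begin{cases}
w^{0,\phi}(t), & t\in[0,T_0],\\[2pt]
w^{h_0,\,w^{0,\phi}(T_0)}(t-T_0), & t\in[T_0,T_0+T_2],
\end{cases}
\]
which satisfies $v_\phi(0)=\phi$, $\|v_\phi(T_0+T_2)-\fz\|_{\mcC^\alpha}<\delta/2$, and whose dynamic rate function is $\msI_{T_0+T_2}(v_\phi)=\tfrac12\|h_0\|_{L^2_{T_2}L^2}^2 \le 2\msS(\fz)+\gamma'$, since the deterministic segment contributes zero. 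Applying Proposition~\ref{prop:uniform_ldp} to the locally uniform family $(\bar u_\eps(\,\cdot\,,\phi))_{\phi\in K}$ (with slack parameter $\gamma/4$) yields an $\eps_0>0$ such that, for every $\eps\le\eps_0$ and every $\phi\in K$,
\[
\mbP\bigl(\|\bar u_\eps(\,\cdot\,,\phi)-v_\phi\|_{C_{T_0+T_2}\mcC^\alpha}<\delta/2\bigr) \,\ge\, \exp\!\Bigl(-\tfrac{2\msS(\fz)+\gamma'+\gamma/4}{\eps^2}\Bigr).
\]

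On this event $\bar u_\eps(T_0+T_2,\phi)\in B_\delta(\fz)$; inserting the bound into the invariance identity and combining with $\mu_\eps(K)\ge \tfrac12 \ge \exp(-\gamma/(4\eps^2))$ (valid for $\eps$ small) and the choice $\gamma'\le \gamma/2$, one obtains $\mu_\eps(B_\delta(\fz))\ge \exp(-(2\msS(\fz)+\gamma)/\eps^2)$, as claimed. The main obstacle is the \emph{locally uniform} form of the dynamic LDP required above: because the initial datum $\phi$ is averaged against $\mu_\eps$ rather than held fixed, the threshold $\eps_0$ must be controllable uniformly over $\phi\in K$, and this is precisely the role of Proposition~\ref{prop:uniform_ldp}. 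A secondary but crucial point is that I avoid re-solving a controllability problem for each $\phi$; instead I re-use the \emph{single} control $h_0$ from the perturbed starting point $w^{0,\phi}(T_0)$, exploiting continuous dependence of the skeleton equation on its initial datum to keep both the rate-function estimate and the endpoint within $\delta/2$ of $\fz$ uniformly in $\phi\in K$.
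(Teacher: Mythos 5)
Your proposal is correct and follows essentially the same route as the paper's own proof: restrict the invariance integral to a bounded ball via the tail estimate, let the deterministic flow relax the initial datum toward~$0$, concatenate with a near-optimal control from $0$ to $\fz$ (whose cost is $\msV(\fz)+\gamma'=2\msS(\fz)+\gamma'$ by Theorem~\ref{th:V_equal_2S}), and then apply the locally uniform dynamic LDP of Proposition~\ref{prop:uniform_ldp}. The only implementation difference is cosmetic: where the paper's Lemma~\ref{lem:quant_lwr_bnd_control} runs a Gr\"onwall comparison in~$L^{3q}$ between~$w^{h_0,\fy}(T_0+\cdot)$ and~$w^{\bar h,0}(\cdot)$, you instead invoke $H^1$-smallness of $w^{0,\phi}(T_0)$ plus continuous dependence of the skeleton solution on its initial datum (Lemma~\ref{lem_uniform_lipschitz} via the canonical lift~$\Pi^{h_0}$), which is an equally valid and arguably cleaner way to obtain the same uniform endpoint control.
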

In order to prove Theorem~\ref{thm:ldp_lower_bound}, we will require the following lemma which amounts to a uniform, quantitative estimate on the control required to drive the solution of the skeleton equation~\eqref{eq:phi43_formal_SPDE},  started from a ball of radius $\rho$ to a neighbourhood of any terminal state, in terms of the quasi-potential at that terminal state. Note that the lemma assumes we take an $\mfz \in \mcC^{\alpha}(\mbT^3)$ such that $\msV(\mfz)<\infty$. While the statement is true under this, possibly vacuous, assumption, the global, exact controllability shown for the skeleton equation by Proposition~\ref{prop:nl_skeleton_control} demonstrates that the set infimised over in the definition of $\msV$ is non-empty, hence such a $\mfz\in \mcC^{\alpha}(\mbT^3)$ exists.
\begin{lemma} \label{lem:quant_lwr_bnd_control}
	Let  $\mfz\in \mcC^{\alpha}(\mbT^3)$ be such that $\msV(\mfz)<\infty$,
	and $\gamma>0$.
	Then, there exists a $\bar{T}>0$ such that for any~$\rho,\,\delta > 0$, there exist $T_0 > 0$ and $h_0 \in L^2_{\bar{T} + T_0}L^2(\mbT^3)$ with the property that
	\begin{equation}\label{eq:quant_lwr_bnd_control}
		\frac{1}{2}\norm{h_0}^2_{L^2_{T_0+\bar{T} }L^2_x} \leq \msV(\mfz) + \frac{\gamma}{2}\quad \text{and}\quad 
		\sup_{\norm[0]{\fy}_{\mcC^{\alpha}} \leq \rho} \norm[1]{w^{h_0,\fy}(T_0+\bar{T} ) - \fz}_{\mcC^{\alpha}} \leq \frac{\delta}{2}.
	\end{equation}
\end{lemma}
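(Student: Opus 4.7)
The plan is to construct $h_0$ by concatenating two controls in the obvious way: first, let the system decay freely (with zero control) from an arbitrary initial condition in the ball of radius~$\rho$, then apply a near-optimal control that drives~$0$ to a neighbourhood of~$\mfz$. The length of the relaxation phase absorbs the $\rho$-dependence while the length of the steering phase depends only on~$\mfz$ and~$\gamma$.

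\textbf{Step 1: Near-optimal driver from $0$ to $\mfz$.} Since $\msV(\mfz) < \infty$, the definition~\eqref{eq:quasi_potential} together with the characterisation of $\msI_T$ in terms of the skeleton equation (cf.~\eqref{eq:quasi_potential_skeleton}) yields some $\bar{T} > 0$ (which will be fixed for the remainder of the proof) and $\bar{h} \in L^2_{\bar{T}} L^2(\mbT^3)$ such that $w^{\bar{h},0}(\bar{T}) = \mfz$ and $\tfrac{1}{2}\|\bar{h}\|_{L^2_{\bar{T}} L^2_x}^2 \leq \msV(\mfz) + \gamma/2$. This $\bar{T}$ depends only on $\mfz$ and $\gamma$.

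\textbf{Step 2: Uniform relaxation of the free dynamics.} I would next invoke the long-time decay statement from Theorem~\ref{th:nl_skeleton_gwp}\,\ref{it:nl_skeleton_global_decay}: for each $\fy \in \mcC^{\alpha}(\mbT^3)$, $\|w^{0,\fy}(t)\|_{H^1_x} \to 0$ as $t\to\infty$. The key point is to upgrade this to \emph{uniform} decay over $\|\fy\|_{\mcC^{\alpha}} \leq \rho$. This is the main technical step. Using the parabolic smoothing of the heat semigroup (estimate~\eqref{eq:ou_reg}) together with a short-time fixed-point argument (which is standard for the classical skeleton equation, since the cubic is controlled via the Sobolev embeddings described around~\eqref{eq:H1_to_L3q_to_Calpha_embed}), one obtains that $w^{0,\fy}(1) \in H^1(\mbT^3)$ with norm controlled by a constant depending only on $\rho$. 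From that point one combines the gradient-flow energy identity (Lemma~\ref{lem:action_zeros}), which gives
\begin{equation*}
\tfrac{\dd}{\dd t}\msS(w^{0,\fy}(t)) = -\|D\msS(w^{0,\fy}(t))\|^2_{L^2_x},
\end{equation*}
with the Poincaré-type bound $\msS(\phi) \lesssim \|D\msS(\phi)\|_{H^{-1}_x}^2$ valid near~$0$ (or, more robustly, with the generalised Gr\"onwall inequality~\eqref{eq:gen_gronwall} applied to a mild formulation of the skeleton equation with $h=0$), to obtain a quantitative, uniform decay: there is a function $\eta(T_0) \downarrow 0$ as $T_0 \to \infty$ with
\begin{equation*}
\sup_{\|\fy\|_{\mcC^{\alpha}} \leq \rho} \|w^{0,\fy}(T_0)\|_{H^1_x} \leq \eta(T_0).
\end{equation*}

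\textbf{Step 3: Continuity of the controlled solution map at~$0$.} By Theorem~\ref{th:nl_skeleton_gwp}, the solution map $\eta \mapsto w^{\bar{h},\eta}(\bar{T})$ associated to the skeleton equation on $[0,\bar{T}]$ is continuous from $\mcC^{\alpha}(\mbT^3)$ (or from $H^1$, via the embedding $H^1 \hookrightarrow \mcC^{\alpha}$, see~\eqref{eq:H1_to_L3q_to_Calpha_embed}) into $\mcC^{\alpha}(\mbT^3)$. In particular, since $w^{\bar{h},0}(\bar{T}) = \mfz$, there exists $\eta^* = \eta^*(\delta, \bar{h}, \bar{T}) > 0$ such that
\begin{equation*}
\|\eta\|_{H^1_x} \leq \eta^* \;\Longrightarrow\; \|w^{\bar{h},\eta}(\bar{T}) - \mfz\|_{\mcC^{\alpha}} \leq \delta/2.
\end{equation*}

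\textbf{Step 4: Assembly.} Pick $T_0$ so large that $\eta(T_0) \leq \eta^*$, and define
\begin{equation*}
h_0(t) \coloneqq \begin{cases} 0, & t \in [0,T_0], \\ \bar{h}(t-T_0), & t \in (T_0, T_0 + \bar{T}]. \end{cases}
\end{equation*}
The energy bound in~\eqref{eq:quant_lwr_bnd_control} is immediate since $\|h_0\|_{L^2 L^2}^2 = \|\bar{h}\|_{L^2 L^2}^2$. For the second bound, for any $\fy$ with $\|\fy\|_{\mcC^{\alpha}} \leq \rho$, the flow property of the skeleton equation together with Step~2 and Step~3 gives
\begin{equation*}
\|w^{h_0,\fy}(T_0 + \bar{T}) - \mfz\|_{\mcC^{\alpha}} = \|w^{\bar{h}, w^{0,\fy}(T_0)}(\bar{T}) - \mfz\|_{\mcC^{\alpha}} \leq \delta/2,
\end{equation*}
as required.

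\textbf{Main obstacle.} The crux is Step~2: upgrading the pointwise decay in Theorem~\ref{th:nl_skeleton_gwp}\,\ref{it:nl_skeleton_global_decay} to a $\rho$-uniform quantitative decay rate starting from merely $\mcC^{\alpha}$ data. Parabolic smoothing at short times and the dissipativity of the cubic nonlinearity (together with the positive mass~$m^2$, as highlighted in Remark~\ref{rem:nl_skeleton_CDFI}) should make this tractable; indeed, a weighted energy estimate exploiting~\eqref{eq:hilbert_chain} combined with Gr\"onwall~\eqref{eq:gen_gronwall} should give exponential decay of $\|w^{0,\fy}(t)\|_{H^1_x}$ once the solution has entered $H^1$, with a constant depending only on~$\rho$.
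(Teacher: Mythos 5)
Your proposal follows essentially the same route as the paper: fix a near-optimal control $\bar{h}$ on $[0,\bar{T}]$ from the definition of $\msV$, concatenate a zero-control relaxation phase of length $T_0$ in front of it, observe that the $L^2$-norm of $h_0$ is unchanged, and then show that the relaxation phase brings $w^{0,\fy}(T_0)$ close enough to $0$ that the subsequent controlled flow lands near $\mfz$. You correctly identify the quantitative heart of the argument (uniform decay over the ball $\bar{B}_\rho$, via short-time parabolic smoothing to enter a regular space and then an explicit exponential decay estimate) and the relevant tools. Two minor differences worth noting: the paper works in $L^{3q}(\mbT^3)$ rather than $H^1(\mbT^3)$, appealing to the explicit $L^p$-decay estimate of Proposition~\ref{prop:nl_skeleton_Lp_apriori} combined with the short-time smoothing estimate~\eqref{eq:nl_skeleton_Lp_growth} (which is a slightly more direct route than re-deriving $H^1$ energy decay via the gradient-flow identity as you sketch); and your Step~3 invokes continuity of the solution map $\eta\mapsto w^{\bar{h},\eta}(\bar{T})$ as a black box, citing Theorem~\ref{th:nl_skeleton_gwp}, which does not actually state such a continuity result---the paper instead proves the needed stability estimate directly via a mild-formulation comparison and the generalised Gr\"onwall inequality. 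That Gr\"onwall computation is precisely the content your abstract continuity claim hides, so you would need to carry it out (or reference an explicit local Lipschitz statement for the skeleton solution map) to make Step~3 rigorous.
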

To give an intuition for the result, we can think of the required control on $\|h_0\|_{L^2_{T_0+\bar{T}}L^2_x}$ as a budget to spend and the distance $\delta/2$ as the accuracy we wish to achieve.
\begin{proof}[Proof of Lemma~\ref{lem:quant_lwr_bnd_control}]
	To prove the statement, first recall that using Theorem~\ref{th:nl_skeleton_gwp} and the assumption that $\msV(\mfz)<\infty$ it must hold that in fact $\mfz \in H^1(\mbT^3) \hookrightarrow L^{3q}(\mbT^3)$. Furthermore, it follows from the assumption that $\msV(\mfz)<\infty$ that there must exist a $\bar{T}>0$ and $\bar{h}\in L^2_{\bar{T}}L^2(\mbT^3)$ such that
	\begin{equation*}
		w^{\bar{h};0}(\bar{T})=\mfz.
	\end{equation*}
	Therefore, up to modifying the path if necessary, it is always possible to choose $\bar{h}$ as above and additionally such that
	\begin{equation}
		\frac{1}{2}\norm[0]{\bar{h}}_{L^2_{\bar{T}}L^2_x}^2 
		\leq \msV(\mfz) + \frac{\gamma}{2}.
		\label{lem:ldp_lower_bound:hbar}
	\end{equation}
	We now take a~$T > 0$, which will chosen appropriately later, and define~$h_0:[0,T+\bar{T}]\to L^2(\mbT^3)$ by setting 
	\begin{equation*}
		h_0(t) \coloneqq  
		\begin{cases}
			0 & \text{if} \quad t \in [0,T], \\
			\bar{h}(t-T) & \text{if} \quad t \in (T,T+\bar{T}].
		\end{cases}
	\end{equation*}
	We note that in particular,
	\begin{equation}\label{eq:h0_hBar_shift}
		\norm[0]{h_0}_{L^2_{T+\bar{T} }L^2_x} = \norm[0]{\bar{h}}_{L^2_{\bar{T}}L^2_x}.
	\end{equation}
	Thus the first bound claimed in \eqref{eq:quant_lwr_bnd_control} follows from \eqref{lem:ldp_lower_bound:hbar} and \eqref{eq:h0_hBar_shift}, with $T>0$ to be fixed later as $T_0$.
	
	To prove the second bound of \eqref{eq:quant_lwr_bnd_control} we recall that we have chosen $\bar{T}>0$ and $\bar{h}\in L^2_{\bar{T}}L^2(\mbT^3)$ such that $w^{\bar{h},0}(\bar{T})=\mfz$. Furthermore, by definition of $h_0$, for any $\mfy\in \mcC^{\alpha}(\mbT^3)$ and $t\in [0,T+\bar{T}]$ one has that
	\begin{equation*}
		w^{h_0,\fy}(t) = 
		\begin{cases}
			w^{0,\fy}(t) & \text{if} \quad t \in [0,T], \\
			w_T^{h_0,w^{0,\fy}(T)}(t) & \text{if} \quad t \in (T,T+\bar{T}]. 
		\end{cases}
	\end{equation*}
	Then, given $t\in [0,\bar{T}]$, by the semi-group property of the skeleton equation (see for example the proof of Proposition~\ref{prop:nl_skeleton_LWP}) and a simple change of variables, for all $t\in [0,\bar{T}]$ we have,
	\begin{equs}
		\thinspace
		& w^{h_0,\fy}(T+t) \\
		= & \ e^{t(\Delta-m^2)} w^{0,\fy}(T) - \int_{T}^{T+t}e^{(T+t-s)(\Delta-m^2)} w^{h_0,\fy}(s)^3 \dd s + \int_{T}^{T+t} e^{(T+t-s)(\Delta-m^2)} h_0(s) \dd s\\
		= & \ e^{t(\Delta-m^2)}  w^{0,\fy}(T)
		- \int_0^{t} e^{(t-s)(\Delta-m^2)}w^{h_0,\fy}(T+s)^{3} \dif s + \int_0^t e^{(t-s)(\Delta-m^2)}  h_0(T+s) \dif s.
	\end{equs}
	Using the fact that for all~$s\in [0,\bar{T}]$, $h_0(s+T) = \bar{h}(s)$; comparing $w^{h_0,\fy}(T+\,\cdot\,)$ to $w^{\bar{h},0}(\,\cdot\,)$ we find
	\begin{equation*}
		w^{h_0,\fy}(T+t) - w^{\bar{h},0}(t) 
		=
		e^{t(\Delta-m^2)} w^{0,\fy}(T)
		- \int_0^{t} e^{(t-s)(\Delta-m^2)}  \left(w^{h_0,\fy}(T+s)^3 - w^{\bar{h},0}(s)^3\right) \dif s.
	\end{equation*}
	By \eqref{eq:nl_skeleton_global_increment_bound_0_initial_2} of Theorem~\ref{th:nl_skeleton_gwp}, item~\ref{it:nl_skeleton_quant_bounds} in combination with \eqref{eq:nl_skeleton_Lp_growth} of Proposition~\ref{prop:nl_skeleton_Lp_apriori}, for $q \in (\nicefrac{3}{2},2)$ such that the embedding \eqref{eq:H1_to_L3q_to_Calpha_embed} holds, one sees that,
	\begin{equs}[][eq:lower_bnd_lem_apriori]
		\thinspace & 
		\sup_{t \in [0,\bar{T}]} \norm[0]{w^{\bar{h},0}(t)}_{L^{3q}_x} \lesssim_{q,m}  \norm[0]{\bar{h}}_{L^2_{\bar{T}}L^2_x} \\  
		\text{and} \quad & 
		\sup_{t \in[0,\bar{T}]} \norm[0]{w^{h_0,\fy}(T+s)}_{L^{3q}_x} \lesssim_{T,q,m,\alpha}  \norm[0]{\fy}_{\mcC^{\alpha}}+\norm[0]{h_0}_{L^2_{\bar{T}+T}L^2_x}.
	\end{equs}
	So making use of the Besov embedding, \eqref{eq:besov_Lp_embed} 
	along with the regularising effect of the Ornstein--Uhlenbeck semi-group~\eqref{eq:ou_reg} and the fact that 
	$$\norm[0]{h_0}_{L^2_{\bar{T} + T}L^2_x} = \norm[0]{\bar{h}}_{L^2_{\bar{T}}L^2_x}$$
	one sees that for all $t\in [0,\bar{T}]$ 
	\begin{equs}
		\thinspace &
		\norm[0]{w^{h_0,\fy}(T+t) - w^{\bar{h},0}(t)}_{L^{3q}_x} \\ 
		\leq \ & \norm[0]{	e^{t(\Delta-m^2)} w^{0,\fy}(T)}_{L^{3q}_x}
		+ \int_0^{t} \norm[2]{ e^{(t-s)(\Delta-m^2)}\left(w^{h_0,\fy}(T+s)^3 - w^{\bar{h},0}(s)^3\right)}_{L^{3q}_x} \dif s\\ 
		\lesssim \ & \norm[0]{	e^{t(\Delta-m^2)} w^{0,\fy}(T)}_{L^{3q}_x}
		+ \int_0^{t} \norm[2]{ e^{(t-s)(\Delta-m^2)}\left(w^{h_0,\fy}(T+s)^3 - w^{\bar{h},0}(s)^3\right)}_{B^0_{3q,\infty} \dif s}\\
		\lesssim \ &
		\norm[0]{w^{0,\fy}(T)}_{L^{3q}_x}
		+ \int_0^{t} (t-s)^{-\frac{1}{q}} \norm[1]{w^{h_0,\fy}(T+s)^3 - w^{\bar{h},0}(s)^3}_{B_{3q,3q}^{-2/q}} \dif s \\
		\lesssim \ &
		\norm[0]{w^{0,\fy}(T)}_{L^{3q}_x}
		+ \int_0^{t} (t-s)^{-\frac{1}{q}} \norm[1]{w^{h_0,\fy}(T+s)^3 - w^{\bar{h},0}(s)^3}_{L^q_x} \dif s \\
		\lesssim \ & \norm[0]{w^{0,\fy}(T)}_{L^{3q}_x} \\
		& \quad + \left(\|w^{h_0,\fy}\|^2_{C_{T+\bar{T}}L^{3q}_x} \vee \|w^{\bar{h},0}\|^2_{C_{\bar{T}}L^{3q}_x} \vee 1 \right) \int_0^t (t-s)^{-\frac{1}{q}} \norm[1]{w^{h_0,\fy}(T+s) - w^{\bar{h},0}(s)}_{L^{3q}_x} \dd s\\
		\lesssim \ & 
		\norm[0]{w^{0,\fy}(T)}_{L^{3q}_x}+ \int_0^{t} (t-s)^{-\frac{1}{q}} \norm[1]{w^{h_0,\fy}(T+s) - w^{\bar{h},0}(s)}_{L^{3q}_x} \dif s,
	\end{equs}
	where the final proportionality constant depends only on $q,m,\|\fy\|_{\mcC^{\alpha}},\norm[0]{\bar{h}}_{L^2_{\bar{T}}L^2_x}$, due to the  a priori bounds, \eqref{eq:lower_bnd_lem_apriori}.  Therefore, by Gr\"onwall's inequality, there exists a possibly different constant $C>0$, but with the same dependencies as above such that
	\begin{equation*}
		\norm[0]{w^{h_0,\fy}(T+t) - w^{\bar{h},0}(t)}_{L^{3q}_x}
		\lesssim C
		\exp\del[1]{t^{1-\frac{1}{q}}} \norm[0]{w^{0,\fy}(T)}_{L^{3q}_x},
		\qquad
	\end{equation*}
	In particular, choosing~$t = \bar{T}$ leads to
	\begin{equation*}
		\norm[1]{w^{h_0,\fy}(T+\bar{T}) - \fz}_{L^{3q}_x}
		\leq 
		c(\bar{T}) \norm[0]{w^{0,\fy}(T)}_{L^{3q}_x}
	\end{equation*}
	and the claim then follows from item~\ref{it:nl_skeleton_global_decay} of Theorem~\ref{th:nl_skeleton_gwp} after choosing $T=T_0\coloneqq T_0(\rho,\,\delta)>0$ appropriately large and applying the embedding \eqref{eq:H1_to_L3q_to_Calpha_embed}. 
\end{proof}
We are now in a position to prove the main result of this section, Theorem~\ref{thm:ldp_lower_bound}.
\begin{proof}[Proof of Theorem~\ref{thm:ldp_lower_bound}]
	Recalling the notation $u^\fy_\eps$ for the solution to the renormalised, dynamic $\Phi^4_3$ equation with temperature $\eps>0$ and started from $\fy\in \mcC^{\alpha}(\mbT^3)$ as described rigorously by Definition~\ref{def:phi43_rigorous_SPDE},
	and given $w^{h_0,\fy}$ as in Lemma~\ref{lem:quant_lwr_bnd_control}, one has 
	\begin{align*}
		\norm[0]{\bar{u}^\fy_\eps(\bar{T} + T_0) - \fz}_{\mcC^{\alpha}}
		&\leq \norm[0]{\bar{u}^\fy_\eps(\bar{T} + T_0) - w^{h_0,\fy}(\bar{T} + T_0)}_{\mcC^{\alpha}}  + \norm[0]{w^{h_0,\fy}(\bar{T} + T_0)-\mfz}_{\mcC^{\alpha}}\\
		&\leq 
		\norm[0]{\bar{u}^\fy_\eps(\bar{T} + T_0) - w^{h_0,\fy}(\bar{T} + T_0)}_{\mcC^{\alpha}} + \frac{\delta}{2}.
	\end{align*}
	Since~$\mu_\eps$ is invariant for~$u_\eps$, we apply this bound to see that
	\begin{align}
		\mu_\eps\del[1]{\fy \in \mcC^{\alpha}(\mbT^3): \ \norm{\fy-\fz}_{\mcC^{\alpha}} < \delta} 
		& =
		\int_{\mcC^{\alpha}} \P\del[1]{\norm[0]{\bar{u}^\fy_\eps(\bar{T} + T_0) - \fz}_{\mcC^{\alpha}} < \delta} \mu_\eps(\dif \fy) \notag \\
		& \geq
		\int_{\mcC^{\alpha}} \P\del[1]{\norm[0]{\bar{u}^\fy_\eps(\bar{T} + T_0) - w^{h_0,\fy}(\bar{T} + T_0)}_{\mcC^{\alpha}} < \nicefrac{\delta}{2}} \mu_\eps(\dif \fy) \label{pf:ldp_lower_bound:eq1}\\
		& \geq
		\int_{\|\eta\|_{\mcC^{\alpha}}\leq \rho} \P\del[1]{\norm[0]{\bar{u}^\fy_\eps - w^{h_0,\fy}}_{\CC_{\bar{T} + T_0}\mcC^{\alpha}} < \nicefrac{\delta}{2}} \mu_\eps(\dif \fy) \notag
	\end{align} 
	The locally, uniform LDP, obtained for~$(u^{\fy}_\eps)_{\eps > 0}$ by Proposition~\ref{prop:uniform_ldp}, implies that for~$\rho> 0$ there exists an~$\eps_0 > 0$ such that for any~$\eps \leq \eps_0$ and~$\norm[0]{\fy}_{\mcC^{\alpha}} \leq \rho$, one has
	\begin{align*}
		\P\del[1]{\norm[0]{\bar{u}^\fy_\eps - w^{h_0,\fy}}_{\CC_{\bar{T} + T_0}\mcC^{\alpha}} < \nicefrac{\delta}{2}}
		& \geq
		\exp\del[4]{-\frac{\II_{\bar{T}+T_0}\del[1]{w^{h_0,\fy}} + \gamma}{2\eps^2}}\\
		&=
		\exp\del[4]{-\frac{\norm[0]{h_0}_{L^2_{\bar{T}+T_0}L^2_x}^2 + \gamma}{2\eps^2}} \\
		& \geq
		\exp\del[4]{-\frac{\msV(\mfz) + \gamma}{\eps^2}},
	\end{align*} 
	where the last inequality is due to the first bound of~\eqref{eq:quant_lwr_bnd_control}. Hence, it follows from \eqref{pf:ldp_lower_bound:eq1} that
	\begin{equation*}
		\mu_\eps\del[1]{\fy \in \mcC^{\alpha}(\mbT^3): \ \norm{\fy-\fz}_{\mcC^{\alpha}} < \delta}  \geq 	\mu_{\eps}\del[1]{B_\rho}
		\exp\del[4]{-\frac{\msV(\mfz) + \gamma}{\eps^2}}
	\end{equation*}
	In order to conclude therefore, we only need establish that there exists some~$\rho_\star > 0$ such that
	\begin{equation*}
		\lim_{\eps \to 0} \mu_{\eps}\del[1]{B_{\rho_\star}} = 1.
	\end{equation*}
	This, in turn, is a straightforward consequence of Proposition~\ref{prop:measure_tail_bounds}. With the notations therein, let~$\theta \coloneqq  1$ and $\rho \coloneqq \rho(\theta)$. Then, for each~$\eps \in (0,1]$, one has
	\begin{equation*}
		1 \geq
		\mu_\eps(B_{\rho_\star})
		= 
		1 - \mu_\eps(B_{\rho_\star}^{\texttt{c}})
		\geq
		1 - K\exp\del[3]{-\frac{1}{\eps^{2}}} 
	\end{equation*} 
	Since the right hand side goes to~$1$ as~$\eps \to 0$ the proof is complete.
\end{proof}

\section{LDP Upper Bound for the $\boldsymbol{\Phi^4_3}$ Measure}\label{sec:LDP_upper}
In this section we establish the large deviation upper bound, Theorem~\ref{th:main_result}~\ref{it:LDP_upper}.
By Remark~\ref{rmk:locally_uniform_ldp}~\ref{rmk:locally_uniform_ldp:i}, this statement is equivalent to the following theorem since we have already shown that~$\msS$  has compact sublevel sets, see Corollary~\ref{coro:cpc_sublevel_sets}.
\begin{theorem}[LDP upper bound] \label{thm:ldp_upper_bound}
	For any~$\theta,\,\delta,\,\gamma > 0$ there exists an~$\eps_0 \coloneqq(\delta,\, \theta ,\gamma)> 0$ such that for all $\eps \leq \eps_0$,
	\begin{equation*}
		\mu_\eps\del[1]{\fz \in \mcC^{\alpha}(\mbT^3): \ \operatorname{dist}_{\mcC^{\alpha}}(\fz,\msS[\theta]) \geq \delta}
		\leq
		\exp\del[3]{-\frac{\theta - \nicefrac{\gamma}{2}}{\eps^2}}.
	\end{equation*}
\end{theorem}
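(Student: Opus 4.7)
The plan is to follow the invariance-based approach of Sowers and Cerrai--R\"ockner, suitably adapted to the singular setting. Writing $A_\delta \coloneqq \{\fz \in \mcC^{\alpha}(\mbT^3) : \operatorname{dist}_{\mcC^\alpha}(\fz,\msS[\theta]) \geq \delta\}$, the starting point is the invariance identity
\begin{equation*}
\mu_\eps(A_\delta) = \int_{\mcC^{\alpha}} \P\bigl(\bar{u}_\eps^{\fy}(T) \in A_\delta\bigr)\, \mu_\eps(d\fy), \qquad T > 0 \text{ arbitrary.}
\end{equation*}
I would split the integration at $\{\|\fy\|_{\mcC^\alpha} \leq R\}$; choosing $R = R(\theta,\gamma)$ large enough, the stretched exponential tail bound from Proposition~\ref{prop:measure_tail_bounds} dominates the complement by $\tfrac12 \exp(-(\theta - \gamma/2)/\eps^2)$ for every $\eps \leq 1$.

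For the bounded piece, the locally uniform dynamical LDP (Proposition~\ref{prop:uniform_ldp}) reduces the task to producing a uniform lower bound
\begin{equation*}
\Lambda \coloneqq \inf\bigl\{\msI_T(v) : v \in C_T \mcC^{\alpha},\ v(0) \in \bar{B}_R^{\mcC^\alpha},\ v(T) \in A_\delta\bigr\} \,\geq\, \theta - \tfrac{\gamma}{4}.
\end{equation*}
Two ingredients would be combined here. First, a \emph{strict cost gap}: using lower semicontinuity of $\msS$ together with compactness of its sublevel sets (Corollary~\ref{coro:cpc_sublevel_sets}), a minimising-sequence argument shows that $\inf_{A_\delta} \msS \geq \theta + \eta$ for some $\eta = \eta(\theta,\delta) > 0$, hence $\inf_{A_\delta} \msV \geq 2\theta + 2\eta$ by Theorem~\ref{th:V_equal_2S}. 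Second, a \emph{concatenation} argument: given a competing path $v$ in the infimum defining $\Lambda$, prepending it with some auxiliary path $v_0 \colon [0,T_0] \to \mcC^{\alpha}$ joining $0$ to $v(0) = \fy$ yields a competitor for $\msV(v(T))$, so $\msV(v(T)) \leq \msI_{T_0}(v_0) + \msI_T(v)$. If the prepending cost is controlled by a small amount $\eta/2$, this gives $\Lambda \geq 2\theta + \eta$, with ample room to spare.

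The principal obstacle is the prepending step: since $\mu_\eps$ is supported on the rough space $\mcC^{\alpha}$ whereas $\msV$ and $\msS$ are finite only on $H^1$, no uniformly cheap path from $0$ to a generic $\fy \in \bar{B}_R^{\mcC^\alpha}$ exists. My remedy is to first regularise $\fy$ using the $H^1$-smoothing and long-time $H^1$-decay of the deterministic skeleton flow (Theorem~\ref{th:nl_skeleton_gwp}): running the free flow $w^{0,\fy}$ up to a uniformly chosen time $T_0 = T_0(R)$ lands, independently of $\fy$, in an arbitrarily small $H^1$-ball around $0$, after which the global exact controllability result of Proposition~\ref{prop:nl_skeleton_control} furnishes a cheap path from $0$ to $w^{0,\fy}(T_0)$. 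This is cleanest to implement by splitting $T = T_0 + T_1$ and applying the Markov property at time $T_0$: with probability $1 - O(\exp(-M/\eps^2))$ (again by the dynamic LDP), the stochastic trajectory closely tracks the free flow on $[0,T_0]$, so at time $T_0$ we effectively restart from an $H^1$-small point, for which the cheap concatenation is available. The weighted path-space analysis of Appendix~\ref{sec:skeleton_equation} is precisely what is needed to make this quantitative and to control the skeleton near $t=0$. The generous $\theta - \gamma/2$ budget in the exponent then absorbs all error contributions (tail, LDP slack, prepending cost, and the small probability of leaving the tracking event).
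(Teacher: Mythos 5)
Your global architecture---invariance of~$\mu_\eps$, the ball split, controlling the unbounded piece by Proposition~\ref{prop:measure_tail_bounds}, and combining the locally uniform dynamical LDP with the identification~$\msV = 2\msS$---is the same as the paper's, and your strict cost-gap argument ($\inf_{A_\delta}\msS > \theta$ via compactness of sublevel sets and lower semicontinuity) is correct. However, there is a genuine gap in the concatenation step, and it is precisely the gap that the paper's Lemma~\ref{lem:energy_bounds} is designed to close. The Markov restart at~$T_0$ does not produce an $H^1$-small starting point: $\bar{u}^\fy_\eps(T_0)$ is almost surely a distribution in~$\mcC^\alpha\setminus H^1$, and being $\mcC^\alpha$-close to the $H^1$-small point~$w^{0,\fy}(T_0)$ is of no help since~$\msS$ and~$\msV$ are~$+\infty$ off~$H^1$; the cheap prepending path still cannot be built from the true restart point. (A further imprecision: the dynamical LDP at a cost budget~$M>0$ does not imply that the path \emph{tracks the free flow}; it only places the path near \emph{some} element of the sublevel set~$\II^\fy_{T_0}[M]$, many of which are far from~$w^{0,\fy}$.)

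The paper bridges this gap with a substantially more technical device. In Lemma~\ref{lem:energy_bounds}, for competitor paths~$v_n$ with initial data~$\fz_n\to 0$ in~$\mcC^\alpha$ \emph{only}, one constructs modified paths~$\tilde v_n$ started from~$0$ with the same forcing delayed by a carefully tuned time~$s_n$, and then uses the~$H^1$-smoothing of the skeleton equation (Proposition~\ref{prop:nl_skeleton_LWP}) together with a generalized Gr\"onwall estimate in weighted path-space norms to prove~$\tilde v_n(T_n+s_n)-v_n(T_n)\to 0$ in~$\mcC^\alpha$; only then does the ``concatenate and apply lower semicontinuity'' argument close, applied to~$\tilde v_n$ rather than~$v_n$. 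Without this (or an equivalent replacement), the cost estimate on the prepending path~$v_0$ in your proposal is not controlled uniformly, because~$\|v(\tau)\|_{H^1}$ for a finite-energy path~$v$ started in~$\mcC^\alpha\setminus H^1$ blows up as~$\tau\to 0$. You also do not address a secondary issue: the ball radius~$\lambda$ at which Lemma~\ref{lem:energy_bounds} applies is obtained by a non-constructive compactness argument and need not match the tail-bound radius~$\rho$ (see Remark~\ref{rmk:lambda_rho}); this forces the paper to introduce the excursion set~$\CE_{\rho,\delta,\theta}(\bar n)$, bound the probability of never entering~$B_\lambda$ within~$\bar n$ integer times (Lemma~\ref{lem:bound_prob_H}, Proposition~\ref{prop:bound_prob_H}), and for that purpose upgrade the FWULDP to a DZULDP (Corollary~\ref{cor:DZULDP}). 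A restart at a single deterministic~$T_0$ does not guarantee entry into the required small ball.
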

The proof of Theorem~\ref{thm:ldp_upper_bound} requires several preliminary lemmas and propositions; the final proof is concluded in Section~\ref{sec:ldp_upper_bound_proof}. The first required result, is the following lemma, which allows us to define the set of paths~$t\mapsto v(t)\in\mcC^{\alpha}(\mbT^3)$ which begin in a given ball but take values outside a second ball at all integer time points.
\begin{lemma}[Dynamic energy bounds eventually imply static energy bounds] \label{lem:energy_bounds}
	Let~$\theta,\,\delta > 0$ and for any~$\lambda>0$ recall the definition of the following set for any $t>0$, 
	\begin{equation}
		\II^{\bar{B}_\lambda}_t[\theta]
		\coloneqq  
		\left\{
		v \in C_t\mcC^{\alpha}(\mbT^3): \ v(0) \in \bar{B}_{\lambda}, 
		\ v \in \II_{t}[\theta] \right\}
		\label{lem:energy_bounds:aux}
	\end{equation}
	Then, there exists some~$\lambda(\delta,\,\theta) =: \lambda > 0$ and~$\bar{T}(\delta,\,\theta) =: \bar{T} > 0$ such that for all~$t \geq \bar{T}$, we have
	\begin{equation}
		\left\{v(t) \in \mcC^\alpha(\mbT^3): \ v \in \II^{\bar{B}_\lambda}_t[\theta]\right\} \subseteq  
		\left\{\fz \in \mcC^{\alpha}(\mbT^3): \ \operatorname{dist}_{\mcC^{\alpha}}(\fz,\msV[\theta]) < \frac{\delta}{2}\right\}.
		\label{lem:energy_bounds:eq}
	\end{equation}
	where $\msV$ is the quasi-potential defined by \eqref{eq:quasi_potential} and $\msV[\theta]$ denotes the associated sub-level set.
\end{lemma}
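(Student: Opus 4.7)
The plan is to exploit the fact that the rate function $\II_t$ depends only on the control driving the skeleton equation and is blind to the initial condition, so any path in $\II^{\bar{B}_\lambda}_t[\theta]$ can be compared to a nearby path starting from $0$ of \emph{the same} cost. More precisely, given $v \in \II^{\bar{B}_\lambda}_t[\theta]$ solving~\eqref{eq:nl_skeleton_intro} with some control $h\in L^2_tL^2(\mbT^3)$ (so that $\tfrac{1}{2}\|h\|^2_{L^2_tL^2_x} = \II_t(v) \leq \theta$) and with $v(0)\in \bar{B}_\lambda$, I let $\tilde v$ be the solution of the same skeleton equation driven by the same $h$ but with $\tilde v(0)=0$. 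Then, directly from the definition~\eqref{eq:quasi_potential},
\begin{equation*}
  \msV(\tilde v(t)) \,\leq\, \II_t(\tilde v) \,=\, \tfrac{1}{2}\|h\|^2_{L^2_tL^2_x} \,=\, \II_t(v) \,\leq\, \theta,
\end{equation*}
so $\tilde v(t) \in \msV[\theta]$ automatically, and since $\operatorname{dist}_{\mcC^\alpha}(v(t),\msV[\theta]) \leq \|v(t)-\tilde v(t)\|_{\mcC^\alpha}$, the lemma will follow once I prove that the right hand side tends to $0$ as $t\to\infty$ uniformly over $v(0)\in \bar{B}_\lambda$ and over admissible controls with $\|h\|^2_{L^2_tL^2_x}\leq 2\theta$.

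For this uniform decay, subtracting the two skeleton equations makes $h$ cancel and the difference $z \coloneqq v - \tilde v$ solves the \emph{linear}, dissipative equation
\begin{equation*}
  \partial_t z - \Delta z + m^2 z + q\,z = 0,\qquad z(0) = v(0),\qquad q \coloneqq v^2 + v\tilde v + \tilde v^2 \geq 0,
\end{equation*}
where non-negativity of $q$ is crucial and follows from $q = (v+\tfrac{1}{2}\tilde v)^2 + \tfrac{3}{4}\tilde v^2$. Testing against $z$ and using the chain rule~\eqref{eq:hilbert_chain} yields $\tfrac{1}{2}\tfrac{\dd}{\dd t}\|z(t)\|_{L^2_x}^2 \leq -m^2 \|z(t)\|_{L^2_x}^2$; combining this with parabolic smoothing of the skeleton equation (Theorem~\ref{th:nl_skeleton_gwp}) to ensure $z(t_0)\in L^2(\mbT^3)$ at some fixed $t_0 > 0$ gives $\|z(t)\|_{L^2_x}\leq C(\theta)\, e^{-m^2 t} \lambda$ uniformly in the data. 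To upgrade to decay in $\mcC^\alpha(\mbT^3)$ I apply Duhamel on $[t-1,t]$ together with the regularising bound~\eqref{eq:ou_reg} for the Ornstein--Uhlenbeck semigroup, using the a priori control of $v,\tilde v$ (and hence of $q$) in $C_{[t-1,t]}L^{3q}(\mbT^3)$ provided by Proposition~\ref{prop:nl_skeleton_Lp_apriori}; this produces $\|z(t)\|_{\mcC^\alpha} \leq C(\theta)\, e^{-m^2 t}\lambda$. Choosing first $\lambda=\lambda(\delta,\theta)$ so small and then $\bar{T} = \bar{T}(\delta,\theta)$ so large that this expression is below $\delta/2$ gives the required inclusion~\eqref{lem:energy_bounds:eq}.

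The main obstacle lies in carrying out the last step uniformly over the admissible class of controls. The $L^2$-energy identity for $z$ is itself entirely independent of $h$---this is the key cancellation enabled by the additive noise and gradient-flow structure of~\eqref{eq:phi43_formal_SPDE}---but the multiplicative coefficient $q$ entering the mild formulation for $z$ depends on $v$ and $\tilde v$, and hence on $h$. Controlling $q$ in a useful Lebesgue norm, uniformly over $\|h\|^2_{L^2_tL^2_x}\leq 2\theta$ and $v(0)\in \bar{B}_\lambda$, amounts to invoking the weighted-in-time a priori bounds of Section~\ref{sec:skeleton_equation} that allow for prescribed blow-up of higher-regularity path norms as $t\to 0$, and checking that all proportionality constants are uniform in the admissible data; the ingredients are in place, but the book-keeping requires care.
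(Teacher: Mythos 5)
Your approach is genuinely different from the paper's and, to my eye, cleaner in structure. The paper argues by contradiction: it extracts sequences $\lambda_n \searrow 0$, $T_n \nearrow \infty$, $v_n = w^{h_n,\fz_n}$, and then builds an auxiliary path $\tilde{v}_n$ that \emph{shares the initial condition} $\fz_n$ but uses a \emph{time-shifted} control $\tilde{h}_n$ that is zero on $[0,s_n)$. The short free evolution forces $\tilde{v}_n(s_n)$ into $H^1$ with $\norm[0]{\tilde{v}_n(s_n)}_{H^1_x}$ small, so the gradient-flow argument from Theorem~\ref{th:V_equal_2S} applies to $\tilde v_n$ restarted at time $s_n$; the remaining task is to show $\tilde{v}_n(T_n+s_n)-v_n(T_n)\to 0$, for which the paper applies the generalised Gronwall bound~\eqref{eq:gen_gronwall}. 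Because that Gronwall bound allows a positive growth rate $e^{(c_2-m^2)t}$, the time-shift $s_n$ must be chosen super-exponentially small (cf.~\eqref{eq:sn_def}) to dominate it, and that in turn forces $\lambda_n\to 0$, hence the contradiction structure.

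You instead compare $v$ to the path $\tilde v = w^{h,0}$ started from $0$ with the \emph{same} control. This makes $\tilde v(t)\in\msV[\theta]$ automatic from the very definition of the quasipotential, so the whole gradient-flow detour is unnecessary. The crucial extra observation that the paper does not use is that the multiplicative coefficient $q = v^2 + v\tilde v + \tilde v^2$ is a \emph{non-negative} potential; testing the difference equation against $z=v-\tilde v$ then yields pure dissipation $\frac{\dd}{\dd t}\norm[0]{z}_{L^2_x}^2 \leq -2m^2\norm[0]{z}_{L^2_x}^2$ with no competing growth, so no super-exponentially-small parameter is needed and the argument becomes direct rather than a proof by contradiction. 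This is a real improvement in clarity, and it also exposes more transparently that the only model-specific inputs are the gradient-flow structure with positive mass and the monotonicity of $x\mapsto x^3+m^2 x$.

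What remains to be filled in is exactly what you flag: (a) the bootstrap that converts $z(0)=v(0)\in\bar B_\lambda\subset\mcC^\alpha(\T^3)$ into $\norm[0]{z(t_0)}_{L^2_x}\lesssim_{\theta,t_0}\lambda$ for some fixed $t_0>0$, uniformly over controls with $\norm[0]{h}^2_{L^2_tL^2_x}\leq 2\theta$. This is not a quotable fact from Theorem~\ref{th:nl_skeleton_gwp}; it is a Lipschitz-in-initial-data estimate for the skeleton map measured from $\mcC^\alpha$ to $L^p$ at positive times, and it requires running the same weighted-path-space fixed-point/Gr\"onwall argument as in Proposition~\ref{prop:nl_skeleton_LWP}, now for the difference $z$: the free term is handled by $\norm[0]{e^{s(\Delta-m^2)}\fz}_{L^p_x}\lesssim s^{5\alpha/12}\norm[0]{\fz}_{\mcC^\alpha}$, and the nonlinear term is absorbed after shrinking $t_0$ depending on $\theta$, using the a priori bounds $\norm[0]{v(r)}_{L^p_x}\vee\norm[0]{\tilde v(r)}_{L^p_x}\lesssim r^{-\eta}(\lambda+\norm[0]{h}_{L^2L^2})$; the exponent condition $2\eta+3/p<1$ is exactly what~\eqref{eq:energy_lem_parameters} guarantees. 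And (b) the upgrade from $L^2$-decay of $z$ to $\mcC^\alpha$-decay will not in general preserve the rate $e^{-m^2 t}$ as written: the nonlinear Duhamel contribution on $[t-1,t]$ involves $\norm[0]{q(s)z(s)}$ in a Lebesgue norm where you only control $z$ in $L^2$ and $q$ in $L^3$, so one interpolates against the (time-uniform, non-decaying) $H^1$ or $L^\infty$ bounds of $v,\tilde v$, which trades the exponent of the exponential for a power. This does not threaten the conclusion—any decay suffices to choose $\bar T$—but the final estimate is more honestly $\norm[0]{z(t)}_{\mcC^\alpha}\lesssim_\theta \lambda^{\gamma}e^{-\gamma m^2 t}$ for some $\gamma\in(0,1]$ rather than the clean exponential you wrote. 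With those two items spelled out, your proof closes and is, in my view, preferable to the paper's.
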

The claim made in Lemma~\ref{lem:energy_bounds} is illustrated in Figure~\ref{fig_lem_energy_bounds}. Intuitively, it says that a path which starts close to equilibrium~$0$ and has bounded dynamic energy will eventually have bounded static energy.
\begin{figure}
	\begin{minipage}{.48\textwidth}
		\centering
		\subfloat{%
			\centering
			\includegraphics[scale=0.85]{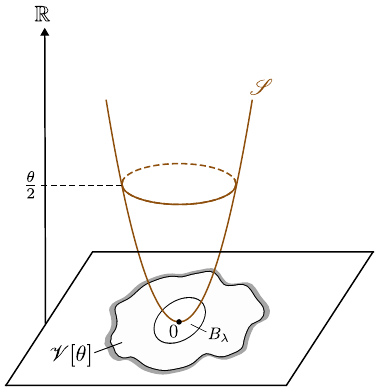} 
		}
	\end{minipage}\hfill
	\hspace{2em}
	\begin{minipage}{.48\textwidth}
		\subfloat{%
			\centering
			\includegraphics[scale=0.7]{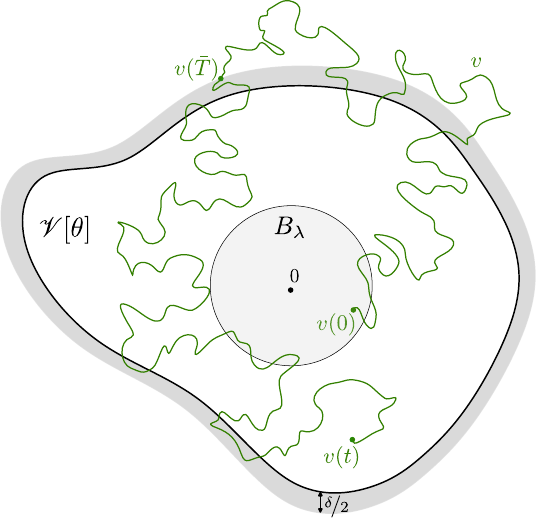} 
		}
	\end{minipage} 
	\captionsetup{format=plain, width=0.8\textwidth}
	\caption{\textbf{Schematic overview of the statement in Lemma~\ref{lem:energy_bounds}.} \\
		The green path~$v$ is an element in~$\II_t^{\bar{B}_\lambda}[\theta]$ and, according to the set inclusion in~\eqref{lem:energy_bounds:eq}, therefore never leaves the $\nicefrac{\delta}{2}$-neighbourhood of~$\VV[\theta]$ during the time interval~$[\bar{T},t]$. 
	}
	\label{fig_lem_energy_bounds}
\end{figure}

\begin{proof}
	Let us assume that the claimed inclusion does not hold, in other words, there exist $\delta,\,\theta>0$ such that for all $T,\,\lambda>0$ there exists a $t\geq T$ such that,
	\begin{equation*}
		\left\{v(t) \in \mcC^\alpha(\mbT^3): \ v \in \II^{\bar{B}_\lambda}_t[\theta]\right\} \not\subseteq  
		\left\{\fz \in \mcC^{\alpha}(\mbT^3): \ \operatorname{dist}_{\mcC^{\alpha}}(\fz,\msV[\theta]) < \frac{\delta}{2}\right\}.
	\end{equation*}
	As a consequence, given some $\theta,\,\delta>0$, there exist sequences $\lambda_n \searrow 0$, $T_n\nearrow +\infty$ and $v_n \in \II^{\bar{B}_{\lambda_n}}_{T_n}[\theta]$ for $n \in \N$, such that
	\begin{equation}
		\inf_{n\geq 0} \dist_{\mcC^\alpha} \left(v_n(T_n\right), \msV[\theta])\geq \frac{\delta}{2}.
		\label{lem:energy_bounds_pf_aux1}
	\end{equation}
	Since $v_n \in \II^{\bar{B}_{\lambda_n}}_{T_n}[\theta]$, using~\eqref{lem:energy_bounds:aux}, it holds in particular that~$\II_{T_n}(v_n) \leq \theta < \infty$.
	Thus, by definition of the rate function, there exists an $h_n \in L^2_{T_n}L^2_x$ such that~$v_n = w^{h_n,\fz_n}$ for $\fz_n \coloneqq  v_n(0)$, i.e.
	\begin{equation} \label{lem:energy_bounds_pf_aux2}
		\begin{aligned}
			\partial_t v_n - \Delta v_n &= - v_n^3 - m^2 v_n + h_n,\\
			\fz_n &= v_n(0)  \in \bar{B}_{\lambda_n}(0) \subseteq \CC^\alpha(\mbT^3)
		\end{aligned}%
		\qquad
		\text{and}\qquad \frac{1}{2}\|h_n\|^2_{L^2_{T_n}L^2_x} \leq \theta.
	\end{equation}
	In particular, as $\lambda_n \searrow 0$, it holds that 
	\begin{equation}
		\fz_n \to 0 \quad \text{in} \quad \CC^\alpha(\T^3) \quad \text{as} \quad n \to \infty.
		\label{lem:energy_bounds:pf_conv_IC}
	\end{equation}
	Since the rest of the proof is quite technical, at this stage we pause the formal proof to give an intuitive idea behind the rest of the strategy, which relies essentially on the gradient flow structure of the dynamic equation.
	\paragraph*{--- Intuition of the Strategy:} 
	Suppose we knew instead that~$\fz_n \in H^1(\mbT^3)$ and the convergence in~\eqref{lem:energy_bounds:pf_conv_IC} actually took place in~$H^1(\mbT^3)$.
	Then, arguing as in the proof of Theorem~\ref{th:V_equal_2S} (in particular using analogues of \eqref{prop_rf_action_pf_eq2} and  \eqref{eq:SPhi_bound_1}) we would be able to arrive at the estimate
	\begin{equation*}
		\msS(v_n(T_n)) - \msS(\fz_n) \leq \frac{1}{4} \norm[0]{h_n}_{L^2_{T_n} L^2_x}^2 \leq \frac{1}{2} \theta.
	\end{equation*}	
	By continuity of~$\msS$ on~$H^1(\mbT^3)$ and the fact that~$\msS(0) = 0$, this would then imply that~$\msS(v_n(T_n)) \leq \theta$ for~$n$ large enough.
	Because the sublevel set~$\msV[2\theta] = \msS[\theta]= \{\phi \in H^1(\mbT^3): \msS(\phi) \leq \theta\}$ is compact in~$\CC^{\alpha}(\mbT^3)$ (c.f. Corollary~\ref{coro:cpc_sublevel_sets}), there would exist a subsequence~$(n_k)_{k\in \mbN}$ and $\fy\in \mcC^{\alpha}(\mbT^3)$ such that
	\begin{equation}
		\lim_{k \to \infty} \|v_{n_k}(T_{n_k}) -\fy\|_{\mcC^{\alpha}}=0.
		\label{lem:energy_bounds_pf_aux3}
	\end{equation}
	By lower semicontinuity of~$\msS$, we could then obtain the estimate
	\begin{equation}
		\msS(\fy) \leq \liminf_{k \to \infty} \msS(v_{n_k}(T_{n_k}) \leq \lim_{k \to \infty} \msS(\fz_n) + \frac{1}{2} \theta = \msS(0) + \frac{1}{2} \theta =  \frac{1}{2} \theta
		\label{lem:energy_bounds_pf_aux4}
	\end{equation}
	from which, in combination with Theorem~\ref{th:V_equal_2S}, we would see that~$\fy$ is an element of the sublevel set~$\msV[\theta]$, a contradiction to~\eqref{lem:energy_bounds_pf_aux1}. ---
	
	The obstacle to applying this relatively simple strategy is that we do not necessarily have convergence of the sequence $\mfz_n\to 0$ with respect to the $H^1(\mbT^3)$ topology.  The remainder of the proof, however, demonstrates that one can make technical amendments to the previous argument to arrive at the same conclusion only using that $\mfz_n\to 0$ in the weaker $\mcC^{\alpha}(\mbT^3)$ topology.  
	
	\paragraph*{Step~$\boldsymbol{1}$: } The key idea is to define a family of solutions to approximate equations, similar to \eqref{lem:energy_bounds_pf_aux2} but with zero initial data and forcing that only takes effect after a short positive time. Let $(\lambda_n)_{n\in \mbN},\, (T_n)_{n\in \mbN}$ be the respectively decreasing and increasing sequences from previously, which we may assume are respectively such that $\lambda_n\in (0,1]$ and $T_n\in (1,\infty]$. We then define a strictly decreasing sequence $(s_n)_{n\in \mbN}$ by setting
	\begin{equation}\label{eq:sn_def}
		s_n \coloneqq  \lambda_n^{\frac{1}{2}} \wedge e^{-\frac{T^2_n}{\eps}}\lambda_n^{-\frac{1}{\eps}}\wedge 1, 
	\end{equation}
	where the parameters $\eta,\, \eps$ and $\alpha$ are chosen, as in Proposition~\ref{prop:nl_skeleton_LWP}, to satisfy
	\begin{equation}\label{eq:energy_lem_parameters}
		\alpha  \in \del[2]{-\frac{2}{3},-\frac{1}{2}},\quad\eta \in \del[2]{-\frac{5\alpha}{12},-\frac{\alpha}{3}} \quad \text{\&}\quad \eps \in \del[1]{0,-\alpha-3\eta}.
	\end{equation}
	We also fix $p=\frac{1}{1+\alpha}\in (6,9)$ so that applying \eqref{eq:besov_reg_embed} we have $L^p(\mbT^3)\cembed \mcC^{\alpha}(\mbT^3)$.
	
	We then define a sequence of forcing elements,
	\begin{equation*}
		\tilde{h}_n(t) 
		\coloneqq  
		\begin{cases}
			0 & \text{if} \ t \in [0,s_n), \\
			h_n(t-s_n) & \text{if} \ t \in [s_n, s_n + T_n].
		\end{cases}
	\end{equation*}
	and a family of paths $(\tilde{v}_n)_{n\in \mbN}$, solving the analogous equation to~\eqref{lem:energy_bounds_pf_aux2} with $h_n$ replaced by $\tilde{h}_n$, explicitly given by setting
	\begin{equation}
		\tilde{v}_n(t) 
		\coloneqq  w^{\tilde{h}_n,\fz_n}(t)
		=
		\begin{cases}
			w^{0,\fz_n}(t) & \text{if} \ t \in [0,s_n), \\
			w_{s_n}^{\tilde{h}_n,w^{0,\fz_n}(s_n)}(t) & \text{if} \ t \in [s_n, s_n + T_n].
		\end{cases}
		\label{lem:energy_bounds_pf_aux4a}
	\end{equation}
	The notation $w^{h,\mfz}_s$ here indicates that $w^{h,\mfz}_s(t)\big|_{t=s}=\mfz$.
	
	Since $\tilde{v}_n(s) = w^{0,\fz_n}(s)$ for $s\in [0,s_n)$ and $s_n\in (0,1]$, we may apply Proposition~\ref{prop:nl_skeleton_LWP}, specifically~\eqref{eq:nl_skeleton_H1_growth}, to see that there exists an~$\eta \coloneqq 1+\eta_0 \in \left(1-\frac{5\alpha}{12},1-\frac{\alpha}{3}\right)\subset \left(1+\frac{10}{36},1+\frac{2}{9}\right)$ and $\delta \coloneqq  \delta(\alpha,\eta) > 0$ such that
	\begin{equation*}
		\sup_{s \in (0,s_n]} (s \wedge 1)^{\eta} \|\tilde{v}_n(s)\|_{H^1_x} \lesssim 
		s_n^\delta \|\fz_n\|_{\mcC^{\alpha}}
	\end{equation*}
	From which it follows that,
	\begin{equation}\label{lem:energy_bounds_pf_aux4b}
		\|\tilde{v}_n(s_n)\|_{H^1_x} 
		\lesssim s_n^{\delta-\eta} \norm[0]{\fz_n}_{\CC^\alpha}
		\leq s_n^{\delta-\eta} \lambda_n \leq \lambda_n^{\frac{\delta}{2} + 1-\frac{\eta}{2}}.
	\end{equation}
	where we have used the assumption that~$\fz_n \in B_{\lambda_n}(0) \subseteq \CC^\alpha(\T^3)$ along with the definition of the sequence $(s_n)_{n\in \mbN}$ given by \eqref{eq:sn_def}.
	Since $\frac{\delta}{2}+1-\frac{\eta}{2}>0$ we find,
	\begin{equation*}
		\tilde{v}_n(s_n) \to 0 \quad \text{in} \quad H^1(\T^3) \quad \text{as} \quad n \to \infty
	\end{equation*}
	and so $\tilde{v}_n(s_n)$ provides an sequence to which we can apply our intuitive strategy presented previously. In particular, 
	making the replacements
	\begin{equation*}
		v_n(T_n) \rightsquigarrow \tilde{v}_n(s_n + T_n), \quad
		v_{n_k}(T_{n_k}) \rightsquigarrow v_{n_k}(s_{n_k} + T_{n_k}), \quad
		\fz_n \rightsquigarrow \tilde{v}_n(s_n).
	\end{equation*}
	We obtain versions of \eqref{lem:energy_bounds_pf_aux3} and~\eqref{lem:energy_bounds_pf_aux4} which now read, for some $\mfy \in \mcC^{\alpha}(\mbT^3)$,
	\begin{equation*}
		\lim_{k \to \infty} \|\tilde{v}_{n_k}(T_{n_k} + s_{n_k}) -\mfy\|_{\mcC^{\alpha}}=0\quad \text{and} \quad
		\msS(\fy) \leq \frac{1}{2} \theta.
		\label{lem:energy_bounds_pf_aux5}
	\end{equation*}
	Therefore, in order to arrive at a contradiction to~\eqref{lem:energy_bounds_pf_aux1} and complete the proof, it remains to show
	\begin{equation}\label{lem:energy_bounds_pf_aux6}
		\lim_{n \to \infty} \tilde{v}_n(T_n + s_n) = \lim_{n \to \infty} v_n(T_n) \quad \text{in} \quad \CC^\alpha(\T^3)
	\end{equation}
	if both of these limits exist. 
	Note that we have re-labelled the subsequences to depend on~$n$ only for the sake of presentation.
	We establish the equality in the limit of \eqref{lem:energy_bounds_pf_aux6} in \textbf{Step~$\boldsymbol{2}$} below.
	\paragraph*{Step~$\boldsymbol{2}$: }
	We write the mild formulation for~$\tilde{v}_n(\,\cdot\, + s_n)$ and~$v_n$, starting with the former.
	Let~$t \in (0,T_n]$ so that using~\eqref{lem:energy_bounds_pf_aux4a}, we obtain 
	\begin{equs}
		\tilde{v}_n(t + s_n) 
		=&\, e^{t(\Delta - m^2)} w^{0,\fz_n}(s_n) - \int_{s_n}^{t+s_n} e^{(t + s_n - r)(\Delta - m^2)}w_{s_n}^{\tilde{h}_n,w^{0,\fz_n}(s_n)}(r)^3 \dif r \\
		& + 
		\int_{s_n}^{t+s_n} e^{(t + s_n - r)(\Delta - m^2)}\tilde{h}_n(r) \dif r \\
		=&\, e^{t(\Delta - m^2)} w^{0,\fz_n}(s_n)
		- \int_{0}^{t} e^{(t  - r)(\Delta - m^2)}\tilde{v}_n(r + s_n)^3 \dif r  + 
		\int_{0}^{t} e^{(t - r)(\Delta - m^2)} h_n(r) \dif r \\
	\end{equs}
	and also
	\begin{equation*}
		v_n(t) 
		= 
		e^{t(\Delta - m^2)} \fz_n - \int_0^{t} e^{(t - r)(\Delta - m^2)} v_n(r)^3 \dif r 
		+ \int_0^{t} e^{(t - r)(\Delta - m^2)} h_n(r) \dif r 
	\end{equation*}
	so that we obtain the following equation for their difference,
	\begin{equs}[][lem:energy_bounds_pf_aux7]
		\tilde{v}_n(t + s_n) - v_n(t) 
		= &\,
		e^{t(\Delta - m^2)} \del[1]{w^{0,\fz_n}(s_n) - \fz_n}\\
		& -
		\int_{0}^{t} e^{(t  - r)(\Delta - m^2)}\del[1]{\tilde{v}_n(r + s_n)^3 - v_n(r)^3} \dif r.
	\end{equs}
	Note that the integral term with the forcing~$h_n$ has cancelled out exactly.
	We aim to apply (a generalised version of) Gronwall's inequality and, to this end, analyse the two terms on the right hand side of~\eqref{lem:energy_bounds_pf_aux7} separately.
	
	\paragraph*{Step~$\boldsymbol{2a}$: Comparing Initial Data}
	Using the mild formulation of the skeleton equation we directly have,
	\begin{align*}
		\|e^{t(\Delta-m^2)}(w^{0,\fz_n}(s_n) - \fz_n)\|_{L^p_x} \leq  & \, e^{-m^2 t} \|e^{t\Delta} (e^{s_n(\Delta-m^2)} \fz_n - \fz_n)\|_{L^p} \\
		&+ e^{-m^2 t} \left\|e^{t\Delta}\int_0^{s_n} e^{-(s_n-r) (\Delta-m^2)} w^{0,\fz_n}(r)^3\,\dd s\right\|_{L^p_x}\\
		& \eqqcolon (\RN{1}) + (\RN{2}).
	\end{align*}
	
	To bound the first term we use the regularising effect of the heat semi-group, \eqref{eq:ou_reg}, along with the continuity at zero of the Ornstein--Uhlenbeck semi-group, \eqref{eq:ou_identity_approx}, to see that for any $t>1$, $s_n\in (0,1)$ and $\eps \in (0,-\alpha-3\eta)$
	\begin{equation*}
		(\RN{1}) \lesssim  e^{-t m^2}
		\left\|\del[2]{e^{s_n(\Delta -m^2)}  -1}\mfz_n\right\|_{B^{\nicefrac{5\alpha}{6}-\eps}_{p,p}}  \lesssim (s_n^{\eps} +m^2 s_n)\|\mfz_n\|_{B^{\nicefrac{5\alpha}{12}}_{p,p}} \lesssim  s_n^\eps \|\mfz_n\|_{\mcC^\alpha}\leq s_n^\eps \lambda_n.
	\end{equation*}
	For the second term we proceed similarly, first using that $e^{t(\Delta-m^2)}$ is a uniformly bounded family of operators from $L^p(\mbT^3)$ to itself and then applying \eqref{eq:ou_reg} to give
	\begin{align*}
		(\RN{2}) \lesssim & \,\int_0^{s_n} e^{-(s_n-r)m^2}(s_n-r)^{-\frac{3}{p}} \|w^{0,\mfz_n}(r)^3\|_{L^{\nicefrac{p}{3}}_x} \dd r\\
		\lesssim &\, \|w^{0,\mfz_n}\|^3_{C_{\eta;s_n}L^p_x} \int_0^{s_n} e^{-(s_n-r)m^2}(s_n-r)^{-\frac{3}{p}} r^{-3\eta} \dd r\\
		\lesssim & \, \|w^{0,\mfz_n}\|^3_{C_{\eta;s_n}L^p_x}  s_n^{1-\frac{3}{p}-3\eta}.
	\end{align*}
	So applying estimate \eqref{eq:nl_skeleton_Lp_growth} of Proposition~\ref{prop:nl_skeleton_LWP} and using the assumptions that $\lambda _n \leq 1$, $s_n\in (0,1)$ and $0<\eps  <-\alpha-3\eta_0 = 1-\frac{3}{p}-3\eta_0$ we find
	\begin{align}
		\|e^{t(\Delta-m^2)}(w^{0,\fz_n}(s_n) - \fz_n)\|_{L^p_x}  &\lesssim s_n^\eps \lambda_n +s_n^{1-\frac{3}{p}-3\eta_0} \|w^{0,\mfz_n}\|^3_{C_{\eta_0;s_n}L^p}   \notag\\
		&\leq s_n^\eps \lambda_n + s_n^{1-\frac{3}{p}-3\eta_0} \|\mfz_n\|^3_{\mcC^{\alpha}}\notag\\
		&\lesssim s_n^{\eps}\lambda_n.\label{eq:energy_bounds_pf_initial_data_time_one}
	\end{align}
	\paragraph*{Step~$\boldsymbol{2b}$: Comparing the Non-Linear Term}
	Recall that~$L^{\nicefrac{p}{3}}(\mbT^3) \embed B_{p,p}^{-\nicefrac{6}{p}}(\mbT^3)$ by Besov embedding and let~$t \in [0,T_n]$.
	For the second term in~\eqref{lem:energy_bounds_pf_aux7}, we find 
	\begin{align}
		\thinspace \ 
		\int_{0}^{t} &\norm[1]{e^{(t  - r)(\Delta - m^2)}\sbr[1]{\tilde{v}_n(r + s_n)^3 - v_n(r)^3}}_{L^p_x} \dif r \notag \\
		&\leq \ 
		\int_0^{t} e^{- (t - r) m^2} (t - r)^{-\nicefrac{3}{p}} \norm[0]{\tilde{v}_n(r + s_n)^3 - v_n(r)^3}_{L^{\nicefrac{p}{3}}_x} \dif r \label{lem:energy_bounds_pf_cubic_term}\\
		&\lesssim \ 
		\int_0^{t} e^{- (t - r) m^2} (t - r)^{-\nicefrac{3}{p}} \del[1]{\norm[0]{\tilde{v}_n(r + s_n)}^2_{L^{p}_x} + \norm[0]{v_n(r)}^2_{L^{p}_x}} \norm[0]{\tilde{v}_n(r + s_n) - v_n(r)}_{L^{p}_x} \dif r \notag\\
		&\leq \ 
		C(t) \int_0^{t} e^{- (t - r) m^2} (t - r)^{-\nicefrac{3}{p}} (r \wedge 1)^{-2\eta_0} \norm[0]{\tilde{v}_n(r + s_n) - v_n(r)}_{L^{p}_x} \dif r  \notag
	\end{align}
	where we have set~
	\begin{equation*}
		C(t) \coloneqq  \sup_{r \in [0,t]} (r \wedge 1)^{2\eta_0}  \del[1]{\thinspace\norm[0]{\tilde{v}_n(r + s_n)}^2_{L^{p}_x} + \norm[0]{v_n(r)}^2_{L^p_x}}.
	\end{equation*}

	\paragraph*{Step~$\boldsymbol{2c}$: Conclusion and Gr\"onwall Estimate}
	Since~the family~$\{C(T_n)\}_{n \in \N}$ is uniformly bounded (see Theorem~\ref{th:nl_skeleton_gwp}, item~\ref{it:nl_skeleton_quant_bounds}) we combine~\eqref{lem:energy_bounds_pf_aux7}, \eqref{eq:energy_bounds_pf_initial_data_time_one}, and~\eqref{lem:energy_bounds_pf_cubic_term} to give the following, estimate which holds for all~$t \in [1,T_n]$:
	\begin{equs} \label{lem:energy_bounds_pf_gronwall_1}
		\thinspace &
		\norm[0]{\tilde{v}_n(t + s_n) - v_n(t)}_{L^p_x} \\
		\lesssim & \ 
		e^{-tm^2} s_n^\eps \lambda_n 
		+ \int_0^{t} e^{- (t - r) m^2} (t - r)^{-\nicefrac{3}{p}} (r \wedge 1)^{-2\eta_0} \norm[0]{\tilde{v}_n(r + s_n) - v_n(r)}_{L^p_x} \dif r
	\end{equs}
	Further, observe that the choice of parameters in~\eqref{eq:energy_lem_parameters} guarantees that
	\begin{equation*}
		\frac{3}{p} + 2 \eta_0 < 1.
	\end{equation*}
	Therefore, we may apply the generalised Gr\"onwall inequality~\eqref{eq:gen_gronwall} to see that there exist constants $c_1\coloneqq c_1(\alpha,\eta_0),\, c_2\coloneqq c_2(\alpha,\eta_0)>0$ such that for all $t\in [1,T_n]$ such that,
	\begin{equation} \label{lem:energy_bounds_pf_gronwall_2}
		\norm[0]{\tilde{v}_n(t + s_n) - v_n(t)}_{L^p} 
		\leq c_1 s_n^\eps \lambda_n \exp\del[3]{\del[1]{c_2-m^2  }t}
	\end{equation}
	Taking~$t = T_n$ and using that 
	\begin{equation*}
		s_n \leq   e^{-\frac{T^2_n}{\eps}}\lambda_n^{-\frac{1}{\eps}}
	\end{equation*}
	we find,
	\begin{equation*}
		\norm[0]{\tilde{v}_n(T_n + s_n) - v_n(T_n)}_{L^p_x} \leq c_1 e^{(c_2-m^2)T_n-T^2_n}
	\end{equation*}
	Since the RHS goes to~$0$ as~$n \to \infty$ (and $T_n\nearrow \infty$), the claim of~\eqref{lem:energy_bounds_pf_aux6} follows by Besov embedding $L^p(\mbT^3)\cembed \mcC^{\alpha}(\mbT^3)$, c.f. \eqref{eq:besov_reg_embed}.
\end{proof}
\begin{remark}[Relation between~$\lambda$ and~$\rho$.] \label{rmk:lambda_rho}
	If one could quantify the dependence of~$\lambda$ on~$\delta$ and~$\theta$ in Lemma~\ref{lem:energy_bounds}, such that it matched that of~$\rho = \rho(\theta)$ from Proposition~\ref{prop:measure_tail_bounds} (cf. also Remark~\ref{rmk:quantitative_tail_bound}), then it would be possible to drastically simplify the proof of the LDP upper bound.
	To wit, choosing~$\rho = \lambda$, one could directly apply the estimate~\eqref{coro:energy_bounds:eq1} below to the situation that~$\bar{u}^\fy_\eps$ begins from~$\fy \in B_\lambda(0)$.
	As a consequence, we would not have to introduce the set~$\CE_{\rho,\delta,\theta}(n)$ below and could conclude without considering the, proximately introduced, scenarios~$\RN{2}$ and~$\RN{3}$~separately. However, since our proof of Lemma~\ref{lem:energy_bounds} is non-constructive, this simpler proof does not seem immediately available.
\end{remark}

We are now able to define the \emph{excursion set} $\CE_{\rho,\delta,\theta}(n)$ which will play an important role in the arguments of this section.
\begin{definition}[The excursion set~$\CE_{\rho,\delta,\theta}$]\label{def:excursion}
	Let~$\delta,\, \theta > 0$ and~$\lambda \coloneqq  \lambda(\delta,\, \theta ) > 0$ be the associated parameter obtained in Lemma~\ref{lem:energy_bounds}. 
	Then, for any~$n \in \N$ and~$\rho > 0$, we define the set
	\begin{equation*}
		\CE_{\rho,\delta,\theta}(n)
		\coloneqq 
		\left\{
		v \in C_n\mcC^\alpha(\mbT^3): \ \|v(0)\|_{\mcC^{\alpha}}\leq \rho , \ \|v(j)\|_{\mcC^{\alpha}} \geq  \lambda, \ \text{for all} \ j \in \{ 1,\ldots,n \}
		\right\}.
	\end{equation*}
\end{definition}
The purpose of defining this set is to give us access to a useful decomposition of the event
\begin{equation}\label{eq:dynamic_large_energy}
	\{\operatorname{dist}_{\mcC^{\alpha}(\mbT^3)}(\bar{u}_\eps^{\mfy}(t),\msV[\theta]) \geq \delta\}.
\end{equation}
By statistical invariance of the dynamics with respect to the measure $\mu_{\eps}$, provided $\fy$ is sampled from $\mu_{\eps}$, exponential control on the event described by \eqref{eq:dynamic_large_energy} is equivalent to the  LDP upper bound claimed by Theorem~\ref{thm:ldp_upper_bound}, c.f. Remark~\ref{rmk:locally_uniform_ldp}, item~\ref{rmk:locally_uniform_ldp:i}. Hence, a proof of Theorem~\ref{thm:ldp_upper_bound} will follow from sufficient exponential control on \eqref{eq:dynamic_large_energy}, averaged over $\fy \sim \mu_{\eps}$. To this end we decompose  the event described by \eqref{eq:dynamic_large_energy} into three disjoint scenarios:
\begin{enumerate}[label=\bfseries \Roman*{:}]
	\item \label{scenario_1} when $\mfy \in B_\rho(0)^{\texttt{c}}\subset \mcC^{\alpha}(\mbT^3)$ and $\operatorname{dist}_{\mcC^{\alpha}(\mbT^3)}(\bar{u}_\eps^{\mfy}(t),\msV[\theta]) \geq \delta$,
	\item \label{scenario_2} when $\mfy \in B_{\rho}(0) \subset  \mcC^{\alpha}(\mbT^3)$, $\bar{u}_\eps^{\mfy} \in \CE_{\rho,\delta,\theta}(\bar{n})$ for some $\bar{n} \in \mbN, \bar{n} \leq t$, \\[0.5em] and $\operatorname{dist}_{\mcC^{\alpha}(\mbT^3)}(\bar{u}_\eps^{\mfy}(t),\msV[\theta]) \geq \delta$,
	\item \label{scenario_3} when $\mfy \in B_{\rho}(0) \subset  \mcC^{\alpha}(\mbT^3)$, $\bar{u}_\eps^{\mfy} \in \CE_{\rho,\delta,\theta}(\bar{n})^{\texttt{c}}$ for the same $\bar{n}\in \N$, \\[0.5em] and $\operatorname{dist}_{\mcC^{\alpha}(\mbT^3)}(\bar{u}_\eps^{\mfy}(t),\msV[\theta]) \geq \delta$.
\end{enumerate}
\begin{remark}[Choice of~$t$ and~$\bar{n}$.] \label{rmk:choice_t_nbar}
	We emphasise that, at this stage, both~$t$ and~$\bar{n}$ are \emph{parameters}.	The latter will be determined in Lemma~\ref{lem:bound_prob_H} below whereas the former picks up the constraint~$t \geq \bar{n}$ in~\eqref{pf:ldp_up:outside_H:eq3} below and~$t \geq \bar{T}$ in Lemma~\ref{lem:energy_bounds} above. Consequently, we later choose~$t := \bar{T} + \bar{n}$, see the proof of Proposition~\ref{prop:ldp_upper_outside_H}.
\end{remark} 
The three scenarios are graphically represented in Figure~\ref{fig_scenarios} and the strategies to controlling their probabilities, informally, described below. At the end of each paragraph, we direct the reader to the relevant rigorous results which make these heuristics precise.

\begin{figure}
	\begin{minipage}{.5\textwidth}
		\centering
		\subfloat[\textbf{Scenario I} \label{fig:scenario_1}]{%
			\centering
			\raisebox{1.2em}{
				\includegraphics[scale=0.7]{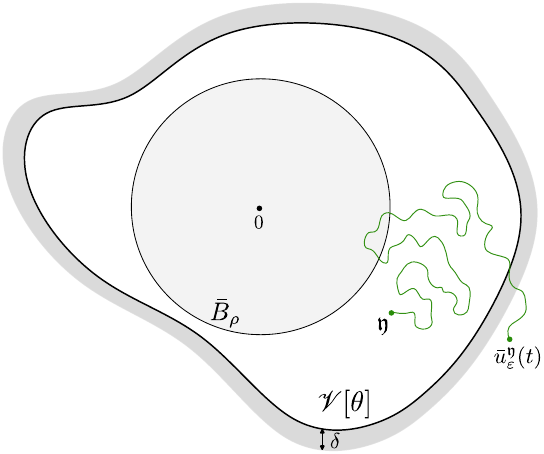} 
			}
		}
	\end{minipage}%
	\hfill %
	\begin{minipage}{.5\textwidth}
		\centering
		\subfloat[\textbf{Scenario II} \label{fig:scenario_2}]{%
			\centering
			\raisebox{0ex}{
				\includegraphics[scale=0.7]{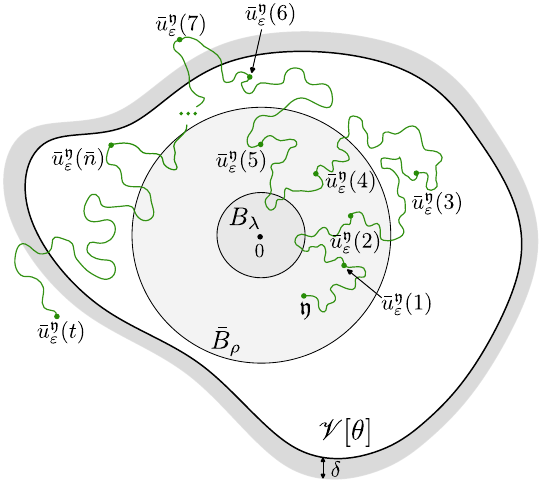} 
			}	
		}
	\end{minipage} \hfill%
	\hspace{3em}
	\centering
	\subfloat[\textbf{Scenario III} \label{fig:scenario_3}]{%
		\centering
		\includegraphics[scale=0.7]{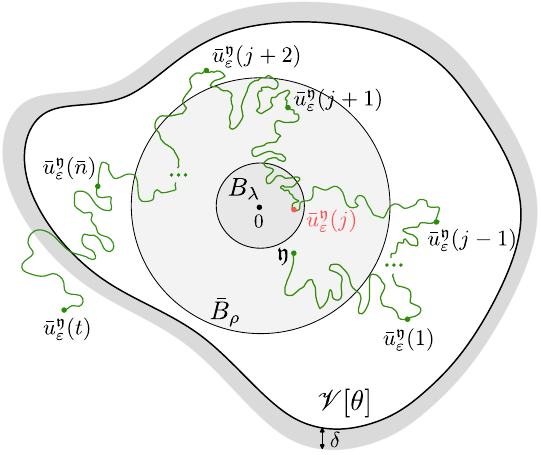} 
	}
	\captionsetup{format=plain, width=0.8\textwidth}
	\caption{\textbf{Schematic overview of the different scenarios.} \\[0.5em]
		The scenarios I, II, and III depicted in Figures~\ref{fig_scenarios}\protect\subref{fig:scenario_1}, \ref{fig_scenarios}\protect\subref{fig:scenario_2}, and~\ref{fig_scenarios}\protect\subref{fig:scenario_3} above are explained in detail in the paragraphs below.
	} 
	\label{fig_scenarios}
\end{figure}

\paragraph*{Figure~\ref{fig_scenarios}\subref{fig:scenario_1} -- Scenario~$\boldsymbol{\RN{1}}$: Initial condition \emph{outside} a ball~$\boldsymbol{B_\rho}$.} 
This is the easiest scenario to control, we simply discard the constraint  $\operatorname{dist}_{\mcC^{\alpha}(\mbT^3)}(\bar{u}_\eps^{\mfy}(t),\msV[\theta]) \geq \delta$ and use the fact that $\fy \sim \mu_\eps$ to obtain a bound on the event $\fy \in B_\rho(0)^{\texttt{c}}$ via the appropriate \emph{tail bounds} on the invariant measure, c.f. Proposition~\ref{prop:measure_tail_bounds} and~\eqref{eq:ldp_upper_step1}.

\paragraph*{Figure~\ref{fig_scenarios}\subref{fig:scenario_2} -- Scenario~$\boldsymbol{\RN{2}}$: Initial condition \emph{inside} a ball~$\boldsymbol{B_\rho}$, \emph{excursions} from~$\boldsymbol{B_\lambda}$ at all integer times up to~$\boldsymbol{\bar{n}}$.}  Similarly to Scenario~I, we discard the second condition, $\operatorname{dist}_{\mcC^{\alpha}(\mbT^3)}(\bar{u}_\eps^{\mfy}(t),\msV[\theta]) \geq \delta$, on the distance between the dynamic path $\bar{u}^{\mfy}_\eps$ and the sub-level set of the quasi-potential,  $\msV[\theta]$. Instead, to control Scenario~II we focus on the condition $\bar{u}_\eps^{\mfy} \in \CE_{\rho,\delta,\theta}(\bar{n})$, see Definition~\ref{def:excursion}. By definition, if the realised path lies in this excursion set then at each integer time $j\in \{1,\ldots,\bar{n}\}$ we must have that $\bar{u}^\fy_\eps(j) \in B^{\texttt{c}}_\lambda$. Since $0$ is the unique, asymptotically stable attractor of the deterministic dynamics, these $\lambda$-\emph{excursions} require repeated inputs of energy from the noise, which the locally uniform LDP for the dynamics (see Proposition~\ref{prop:uniform_ldp}) guarantees is an exponentially vanishing event as $\eps\to 0$. This argument is carried out in Lemma~\ref{lem:bound_prob_H} and Proposition~\ref{prop:bound_prob_H}, which are made use of in~\eqref{eq:ldp_upper_step2} below.

\paragraph*{Figure~\ref{fig_scenarios}\subref{fig:scenario_3} --  Scenario~$\boldsymbol{\RN{3}}$: Initial condition \emph{inside} a ball~$\boldsymbol{B_\rho}$, \emph{entrance} into~$\boldsymbol{B_\lambda}$ between time~$\boldsymbol{1}$ and~$\boldsymbol{\bar{n}}$.}
To control this scenario we finally make use of the assumption that $\operatorname{dist}_{\mcC^{\alpha}(\mbT^3)}(\bar{u}_\eps^{\mfy}(t),\msV[\theta]) \geq \delta$. The core idea is to appeal to Lemma~\ref{lem:energy_bounds} which, informally, tells us that, taking $t>0$ sufficiently large, realisations of the path which lie at distance $\delta>0$ or more from the sub-level set $\msV[\theta]$ must also have have dynamic energy larger than $\theta$. The locally uniform LDP for the dynamics, given by Proposition~\ref{prop:uniform_ldp},  ensures that this is an exponentially vanishing event, uniformly in the initial data $\fy \in B_\rho$, as $\eps\to 0$. Technically, we make use of the additional assumption that $\bar{u}^\fy_\eps\in \CE_{\rho,\delta,\theta}(\bar{n})^{\texttt{c}}$ to guarantee that for at least some integer time $j \in \{1,\ldots,\bar{n}\}$ one has $\bar{u}^\fy_\eps(j) \in B_\lambda$. This allows us to leverage the \emph{Markov property} (c.f. Figure~\ref{fig_Markov_property}) in order to properly apply Lemma~\ref{lem:energy_bounds} for the equation restarted from $B_\lambda$. This argument is made rigorous in Lemma~\ref{lem:prob_energy_bounds} and Proposition~\ref{prop:ldp_upper_outside_H} which are made use of in~\eqref{eq:ldp_upper_step3} below

\subsection[Control on the Dynamic Remaining in $\CE_{\rho,\delta,\theta}$]{Control on the Dynamic Remaining in $\boldsymbol{\CE_{\rho,\delta,\theta}}$}\label{sec:control_inside_excursion}
The following lemma identifies a time horizon~$\bar{n} \in \N$ (cf. Remark~\ref{rmk:choice_t_nbar}) for which the set $\CE_{\rho,\delta,\theta}(\bar{n})$ is in the complement of the $\theta$-sub-level set of the dynamic rate function~$\II_{\bar{n}}$.
In other words: Excursion paths~$v \in \CE_{\rho,\delta,\theta}(\bar{n})$ have energy~$\II_{\bar{n}}(v)$ larger than~$\theta$.
\begin{lemma} \label{lem:bound_prob_H}
	For each~$\rho,\,\delta,\, \theta > 0$,  there exists some~$\bar{n} = \bar{n}(\rho,\,\delta,\, \theta ) \in \N$ such that 
	\begin{equation*}
		\beta_{\bar{n}} \coloneqq  \inf_{v \in \CE_{\rho,\delta,\theta}(\bar{n})} \II_{\bar{n}}(v) > \theta.
	\end{equation*}
	Since, by definition, all paths~$v \in \CE_{\rho,\delta,\theta}(\bar{n})$ start in~$\bar{B}_\rho$, the previous estimate is, in fact, locally uniform in the initial data, i.e. it is equivalent to
	\begin{equation*}
		\inf_{\fz \in \bar{B}_\rho} \inf_{v \in \CE_{\rho,\delta,\theta}(\bar{n})} \II^\fz_{\bar{n}}(v) > \theta.
	\end{equation*}
\end{lemma}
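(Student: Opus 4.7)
The plan is to argue by contradiction: suppose no such $\bar n$ exists, so that for every $n\in\mbN$ we can pick $v_n \in \CE_{\rho,\delta,\theta}(n)$ with $\II_n(v_n) \leq \theta+1$. Writing $v_n = w^{h_n, v_n(0)}$ yields a driver $h_n \in L^2_n L^2_x$ bounded uniformly in $n$, namely $\|h_n\|_{L^2_nL^2_x}^2 \leq 2(\theta+1)$. For each fixed $N\in\mbN$, the restrictions $v_n|_{[0,N]}$ lie in the sub-level set $\II_N[\theta+1]$, which is compact in $C_N\mcC^\alpha(\mbT^3)$ by the goodness of the rate function (Proposition~\ref{prop:uniform_ldp}). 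A diagonal extraction then produces a subsequence---still denoted $v_n$---and a path $v_\star \in C([0,\infty);\mcC^\alpha(\mbT^3))$ with $v_n \to v_\star$ in $C_N\mcC^\alpha$ for every $N$. Lower semicontinuity of $\II_N$ together with continuity of the skeleton solution map ensure that $v_\star = w^{h_\star, v_\star(0)}$ globally, with $h_\star \in L^2([0,\infty);L^2(\mbT^3))$ satisfying $\|h_\star\|^2_{L^2_\infty L^2_x} \leq 2(\theta+1)$. Passing to the limit in $\|v_n(j)\|_{\mcC^\alpha} \geq \lambda$ (valid once $j \leq n$) yields $\|v_\star(j)\|_{\mcC^\alpha} \geq \lambda$ for every integer $j \geq 1$.

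The second step is to show $\|v_\star(t)\|_{L^2_x}\to 0$ as $t\to\infty$, exploiting the coercivity of the linear part. By the short-time smoothing of the skeleton equation (Proposition~\ref{prop:nl_skeleton_LWP}), one has $v_\star(t) \in H^1(\mbT^3)$ for $t\geq 1$, so one may test the equation $\partial_t v_\star - \Delta v_\star + v_\star^3 + m^2 v_\star = h_\star$ against $v_\star$. Discarding the non-negative contributions of $\|\nabla v_\star\|_{L^2_x}^2$ and $\|v_\star\|_{L^4_x}^4$ and bounding $\langle h_\star, v_\star\rangle_{L^2_x}$ by Young's inequality gives
\begin{equation*}
\tfrac{d}{dt}\|v_\star(t)\|_{L^2_x}^2 + m^2 \|v_\star(t)\|_{L^2_x}^2 \leq \tfrac{1}{m^2}\|h_\star(t)\|_{L^2_x}^2.
\end{equation*}
Gr\"onwall combined with the tail bound $\int_{t_0}^\infty\|h_\star(s)\|_{L^2_x}^2\dd s \to 0$ as $t_0\to\infty$ then forces $\|v_\star(t)\|_{L^2_x}\to 0$.

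Finally, the plan is to promote this $L^2$-decay to $\mcC^\alpha$-decay via interpolation against a uniform $H^1$ bound. The smoothing bound controls $\|v_\star(1)\|_{H^1_x}$ in terms of $(\rho,\theta)$, and the gradient-flow energy estimate $\tfrac{d}{dt}\msS(v_\star) \leq \tfrac{1}{4}\|h_\star\|_{L^2_x}^2$ (cf.~\eqref{eq:SPhi_bound_1}) extends this to a uniform bound $\|v_\star(t)\|_{H^1_x} \leq M(\rho,\theta)$ for all $t\geq 1$. Fixing $\sigma = \alpha + \tfrac{3}{2} \in (\tfrac{5}{6},1)$, the embedding $H^\sigma(\mbT^3) \hookrightarrow \mcC^\alpha(\mbT^3)$ together with the interpolation inequality $\|v\|_{H^\sigma_x} \leq \|v\|_{L^2_x}^{1-\sigma}\|v\|_{H^1_x}^\sigma$ yields
\begin{equation*}
\|v_\star(j)\|_{\mcC^\alpha} \lesssim M^\sigma\,\|v_\star(j)\|_{L^2_x}^{1-\sigma} \longrightarrow 0
\end{equation*}
as the integer $j\to\infty$, contradicting the lower bound $\lambda > 0$ established earlier. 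Local uniformity in the initial data (the second display of the statement) is automatic, since $\CE_{\rho,\delta,\theta}(\bar n)$ already restricts the initial data to $\bar B_\rho$. The most delicate step I anticipate is the compactness argument at the start: one must verify that the skeleton solution map is sequentially closed under the joint convergence $v_n\to v_\star$ in $C_N\mcC^\alpha$ for every $N$ and weak convergence of the drivers, so that $v_\star$ is a genuine global skeleton solution with a single $L^2$-summable driver $h_\star$.
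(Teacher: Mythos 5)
Your argument follows a genuinely different route from the paper. The paper argues by chopping the time interval~$[0,n\bar k]$ into~$n$ pieces of length~$\bar k$, invoking additivity of the rate functional over disjoint time intervals to bound~$\II_{n\bar k}(v_{n\bar k}) \geq n\iota_{\bar k}$, and then establishing~$\iota_{\bar k} > 0$ via a finite-time compactness argument together with the deterministic decay of Theorem~\ref{th:nl_skeleton_gwp}\ref{it:nl_skeleton_global_decay}. You instead construct a single limiting path on an infinite time horizon and show it decays, which is conceptually closer to the ``ancient solutions'' strategy of~\cite{cerrai_roeckner_ldp_measure} that the paper deliberately avoids.

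However, there is a genuine gap in your opening compactness claim. You assert that~$v_n\vert_{[0,N]}$ lies in the sublevel set~$\II_N[\theta+1]$ which is compact ``by goodness of the rate function (Proposition~\ref{prop:uniform_ldp})''. That proposition only asserts compactness of~$\II_N^{\fz}[\theta+1]$ for a \emph{fixed} initial condition~$\fz$; the set~$\II_N[\theta+1]$ with~$v(0)$ free to range over~$\CC^{\alpha}(\T^3)$ is \emph{not} compact. Indeed, taking~$h = 0$ it contains the image of the injective and continuous map~$\fz \mapsto w^{0,\fz}$, which is a closed, non-compact copy of~$\CC^{\alpha}(\T^3)$ inside~$C_N\CC^{\alpha}(\T^3)$. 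Your initial conditions only vary over the closed ball~$\bar B_{\rho}\subset\CC^{\alpha}(\T^3)$, which is also not compact in~$\CC^{\alpha}(\T^3)$, so you cannot extract a subsequence of the initial data and then invoke the fixed-$\fz$ sublevel compactness.

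The fix, which is exactly what the paper does, is to abandon control at~$t=0$ and exploit the parabolic smoothing. Theorem~\ref{th:nl_skeleton_gwp}\ref{it:nl_skeleton_quant_bounds} gives~$\sup_{t\in[1,n]}\norm[0]{v_n(t)}_{L^p_x}\leq c_{\rho,\theta}$ for some~$p>6$, and~$L^p(\T^3)\cembed\CC^{\alpha}(\T^3)$ is a compact embedding. You should therefore restrict to~$[1,N]$ rather than~$[0,N]$, extract a convergent subsequence of~$v_n(1)$ in~$\CC^{\alpha}(\T^3)$ (with a weak-$H^1$ limit and hence~$\norm[0]{v_\star(1)}_{H^1_x}\leq c_{\rho,\theta}$ by weak lower semicontinuity, which you later need for the interpolation step), and then propagate via continuity of the solution map as in Lemma~\ref{lem:hausdorff_convergence} combined with a diagonal argument. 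With this modification your subsequent steps --- the~$L^2$-decay via Gr\"onwall and the interpolation against the uniform~$H^1$ bound from the gradient flow estimate~\eqref{eq:SPhi_bound_1} --- are sound, and the overall alternative proof goes through.
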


\begin{proof}
	Let us suppose for a contradiction that~$\sup_{n \in \N} \beta_n \leq \theta$.
	Then, for each~$n \in \N$, we can fix a path~$v_n \in \CE_{\rho,\delta,\theta}(n)$ such that
	\begin{equation}
		\II_n(v_n) \leq \beta_n + 1 \leq \theta + 1.
		\label{lem:bound_prob_H:pf_contradiction}
	\end{equation}
	By Theorem~\ref{th:nl_skeleton_gwp}, item~\ref{it:nl_skeleton_quant_bounds}, applied with~$p >6$ (such that by \eqref{eq:besov_reg_embed} one has~$L^p(\T^3) \cembed \CC^\alpha(\T^3)$), and recalling the definition of $\II_t(v_t)$ given in~\eqref{eq:uniform_ldp_rf}, for any~$n \in \N$ we have
	\begin{equation}\label{pf_lem_bound_prob_H_eq1}
		\sup_{t \in [1,n]} \norm[0]{v_n(t)}_{\mcC^{\alpha}} \leq 	\sup_{t \in [1,n]} \norm[0]{v_n(t)}_{L^p_x} 
		\lesssim_{m,\alpha} \norm[0]{v_n(0)}_{\CC^\alpha}+\II_n(v_n)^{\frac{1}{2}}
		\leq \rho + (1+\theta)^{\frac{1}{2}}=: c_{\rho,\theta} > \rho.
	\end{equation}
	With~$\lambda \coloneqq  \lambda(\delta,\, \theta )$ again as in Lemma~\ref{lem:energy_bounds}, we then introduce the set
	\begin{equation*}
		\tilde{\CE}^{\mathtt{EP}}_{\rho,\,\delta,\, \theta }(\bar{k})
		\coloneqq  
		\{v \in C_{\bar{k}}\mcC^{\alpha}(\mbT^3): \ \norm[0]{v(0)}_{\CC^\alpha} \leq c_{\rho,\theta}, \ \norm[0]{v(\bar{k})}_{\mcC^{\alpha}} \geq \lambda\}. 
	\end{equation*}
	of \enquote{end-point excursions} starting in a ball of radius~$c_{\rho,\theta} > \rho$.
	We now  reduce ourselves to prove that there exists some~$\bar{k} \in \N$ large enough such that 
	\begin{equation}
		\iota_{\bar{k}} \coloneqq  \inf\{\II_{\bar{k}}(v): \ v \in \tilde{\CE}^{\mathtt{EP}}_{\rho,\,\delta,\, \theta }(\bar{k})\} > 0.
		\label{pf_lem_bound_prob_H_eq2}
	\end{equation}
	To see that~\eqref{pf_lem_bound_prob_H_eq2} leads to a contradiction, given any~$n \in \N$, denote by
	\begin{equation}
		v_{n\bar{k}}^{(m)}(t) \coloneqq  v_{n\bar{k}}\sVert[0]_{[m\bar{k},(m+1)\bar{k}]}(m\bar{k}+t), \quad t \in [0,\bar{k}], \quad m \in \{0,\ldots,n-1\},
		\label{lem:bound_prob_H:pf_v_concatenation}
	\end{equation}
	a reparametrisation of~$v_{n\bar{k}}\sVert[0]_{[m\bar{k},(m+1)\bar{k}]}$ that lies in~$C_{\bar{k}}\mcC^{\alpha}(\mbT^3)$; see Figure~\ref{fig:path_vnkbar} for a visualisation.
	Then, note that due to~\eqref{pf_lem_bound_prob_H_eq1} and because~$v_{n\bar{k}} \in \CE_{\rho,\delta,\theta}(n\bar{k})$, one has $v_{n\bar{k}}^{(m)} \in \tilde{\CE}^{\mathtt{EP}}_{\rho,\,\delta,\, \theta }(\bar{k})$ for each~$m \in \{ 0,\ldots,n-1 \}$ such that we find
	\begin{equation}
		\theta + 1 
		\geq 
		\II_{n\bar{k}}(v_{n\bar{k}})
		=
		\sum_{m=0}^{n-1} \II_{[m\bar{k},(m+1)\bar{k}]}\del[1]{v_{n\bar{k}}\sVert[0]_{[m\bar{k},(m+1)\bar{k}]}}
		=
		\sum_{m=0}^{n-1} \II_{\bar{k}}\del[1]{v_{n\bar{k}}^{(m)}}
		\geq 
		n \iota_{\bar{k}}
		\overset{\eqref{pf_lem_bound_prob_H_eq2}}{>} 0.
		\label{lem:bound_prob_H:pf_contradiction_pf}
	\end{equation}
	Since~$n \in \N$ can be chosen arbitrarily large we obtain a contradiction.

	\begin{figure}
		\begin{minipage}{.48\textwidth}
			\centering
			\subfloat{%
				\centering
				\includegraphics[scale=0.4]{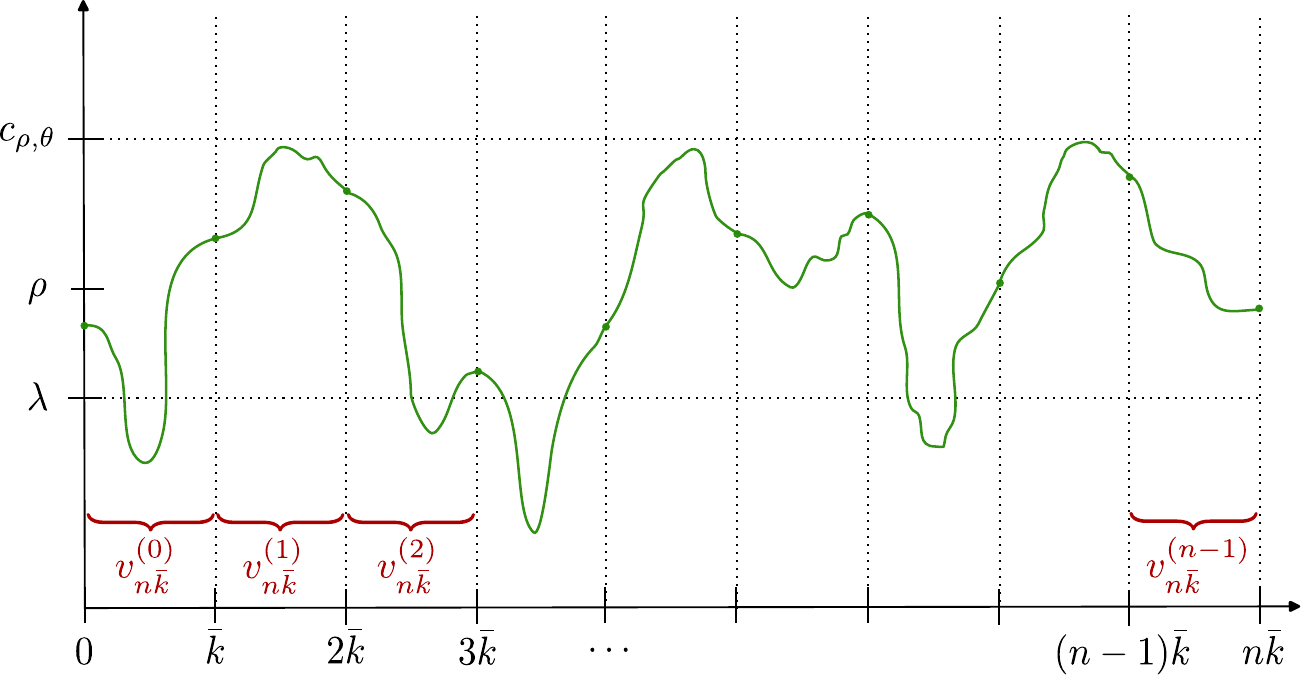}
			}
		\end{minipage}\hfill
		\hspace{7em}
		\begin{minipage}{.48\textwidth}
			\subfloat{%
				\centering
				\raisebox{0.5em}{
					\includegraphics[scale=0.6]{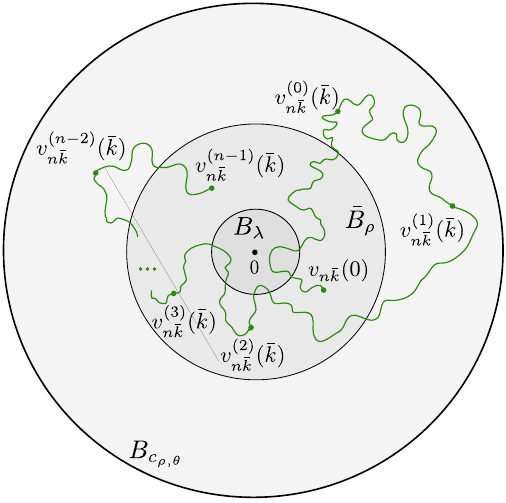}
				}
			}
		\end{minipage} 
		\captionsetup{format=plain, width=0.8\textwidth}
		\caption{\textbf{Visualisation of the path}~$\boldsymbol{v_{n\bar{k}}}$.} 
		\label{fig:path_vnkbar}
	\end{figure}
	
	We are therefore only required to show~\eqref{pf_lem_bound_prob_H_eq2}. By Theorem~\ref{th:nl_skeleton_gwp}, item ~\ref{it:nl_skeleton_global_decay}, there exists some time~$\bar{t} \geq 1$ such that 
	\begin{equation}
		\norm[0]{w_1^{0,\fz}(t)}_{\mcC^{\alpha}}\leq  \norm[0]{w_1^{0,\fz}(t)}_{L^p_x} \leq \frac{\lambda}{2}
		\quad \text{for all} \quad t \geq \bar{t},
		\label{pf_lem_bound_prob_H_eq3}
	\end{equation}
	where we have use the notation $w^{0,\fz}_1$ to indicate that the initial condition $\mfz$ holds at $t=1$, i.e. $w^{0,\fz}_1(1) =\mfz$. This trivially implies that, for any integer~$\bar{k} \geq \bar{t}$, we have
	\begin{equation}
		w_1^{0,\fz}\sVert[0]_{[1,\bar{k}]} \notin \left\{v \in C_{[1,\bar{k}]}\mcC^{\alpha}(\mbT^3): \ \norm[0]{v(\bar{k})}_{\CC^\alpha} \geq \lambda \right\}.
		\label{pf_lem_bound_prob_H_eq4}
	\end{equation}
	We claim that for any such~$\bar{k}$, we have~$\iota_{\bar{k}} > 0$. 
	Suppose otherwise, i.e. that $\iota_{\bar{k}} = 0$. In that case, there exists a minimising sequence
	\begin{equation*}
		\{\tilde{v}_n\}_{n \in \N} \subseteq \tilde{\CE}^{\mathtt{EP}}_{\rho,\,\delta,\, \theta }(\bar{k}), \quad
		\lim_{n \to \infty} \II_{\bar{k}}(\tilde{v}_n) = 0.
		\label{pf_lem_bound_prob_H_minseq}
	\end{equation*}
	For each~$n \in \N$, we have~$\norm[0]{\tilde{v}_n(0)}_{\mcC^\alpha} \leq c_{\rho,\theta}$ by definition of~$\tilde{\CE}^{\mathtt{EP}}_{\rho,\,\delta,\, \theta }(\bar{k})$ and, therefore, the same argument as in~\eqref{pf_lem_bound_prob_H_eq1} implies that
	\begin{equation*}
		\norm[0]{\tilde{v}_n(1)}_{L^p_x} 
		\leq c_{\rho,\theta} + \II_{\bar{k}}(\tilde{v}_n)^{\frac{1}{2}}
		\leq c_{\rho,\theta} + C
	\end{equation*}
	since~$\{\II_{\bar{k}}(\tilde{v}_n)\}_{n \in \N}$ is a convergent sequence and therefore bounded by some~$C < \infty$.
	As a consequence,~$\{{v}_n(1)\}_{n \in \N} \subseteq L^p(\mbT^3)$ is a bounded sequence and so by the compact embedding $L^p(\mbT^3)~\cembed~\mcC^{\alpha}(\mbT^3)$ (due to our choice of $p>6$) there exist a subsequence~$\{\tilde{v}_{n_j}(1)\}_{j \in \N}$ and~$\bar{\fz} \in \mcC^{\alpha}(\T^3)$ such that 
	\begin{equation*}
		\lim_{j \to \infty} \tilde{v}_{n_j}(1) = \bar{\fz} \quad \text{in} \quad \mcC^{\alpha}(\mbT^3)\
	\end{equation*}
	Since, by construction~$\II_{\bar{k}}(\tilde{v}_n) \to 0$ as~$n \to \infty$, the definition of~$\II_{\bar{k}}$ implies the existence of a sequence~$(h_n)_{n \in \N} \subseteq L^2_{[1,\bar{k}]}L^2(\mbT^3)$ such that 
	\begin{equation*}
		\tilde{v}_n |_{[1,\bar{k}]} = w^{h_n,\tilde{v}_n(1)}_1 \quad \text{and} \quad \lim_{n \to \infty} h_n = 0 \quad \text{in} \quad L^2_{[1,\bar{k}]}L^2(\mbT^3).
	\end{equation*}
	Therefore, by joint continuity of the solution map in the initial data and forcing, we have
	\begin{equation*}
		\lim_{j \to \infty} \tilde{v}_{n_j}\sVert[0]_{[1,\bar{k}]} 
		=
		w_1^{0,\bar{\fz}} 
		\quad
		\text{in} 
		\quad
		C_{[1,\bar{k}]}\mcC^{\alpha}(\mbT^3)
	\end{equation*}
	and then~$\norm[0]{w_1^{0,\bar{\fz}}(\bar{k})}_{\mcC^{\alpha}} \geq \lambda$ since~$\tilde{v}_n \in \tilde{\CE}^{\mathtt{EP}}_{\rho,\,\delta,\, \theta }(\bar{k})$.
	This contradicts~\eqref{pf_lem_bound_prob_H_eq4} and we indeed have~$\iota_{\bar{k}} > 0$.
	The claim in~\eqref{pf_lem_bound_prob_H_eq2} follows. 
	The proof is complete.
\end{proof}
The following proposition uses the findings of Lemma~\ref{lem:bound_prob_H} to establish an upper bound on the probability that the $\Phi^4_3$ dynamics remains inside the set~$\CE_{\rho,\delta,\theta}(\bar{n})$.
For technical reasons, proving local uniformity with respect to the initial condition in that bound requires a uniform LDP upper bound \emph{different from}---and in general \emph{not equivalent to}---that in Proposition~\ref{prop:uniform_ldp}, cf. Remark~\ref{rmk:locally_uniform_ldp}.
In our situation, however, we are able to show the required bound (see Corollary~\ref{cor:DZULDP} in the Appendix) essentially because the solution map associated to singular stochastic PDEs is \emph{locally Lipschitz continuous} in the initial condition.
\begin{proposition}[Scenario~$\RN{2}$] \label{prop:bound_prob_H}
	Let $\rho,\, \delta,\, \theta >0$ and~$\bar{n}\coloneqq \bar{n}(\rho,\,\delta,\, \theta ) \in \N$ be given by Lemma~\ref{lem:bound_prob_H}. Then, given~$\gamma > 0$, there exists an~$\eps_1 > 0$ such that
	\begin{equation*}
		\sup_{\fy \in \bar{B}_\rho} \P\del[1]{\bar{u}^\fy_\eps \in \CE_{\rho,\delta,\theta}(\bar{n})} 
		\leq
		\exp\del[3]{-\frac{\theta-\nicefrac{\gamma}{2}}{\eps^2}}
		\quad \text{for all }\,\,\eps \leq \eps_1.
	\end{equation*}
\end{proposition}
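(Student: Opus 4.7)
The plan is to apply the locally uniform LDP upper bound from Corollary~\ref{cor:DZULDP} to the (closed) set $\CE_{\rho,\delta,\theta}(\bar{n}) \subseteq C_{\bar{n}}\mcC^{\alpha}(\mbT^3)$ and then feed in the energy lower bound from Lemma~\ref{lem:bound_prob_H} to extract the required exponential decay.

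The first step I would carry out is a purely topological check: $\CE_{\rho,\delta,\theta}(\bar{n})$ is closed in $C_{\bar{n}}\mcC^{\alpha}(\mbT^3)$. By Definition~\ref{def:excursion} it is the intersection, over $j \in \{0,1,\ldots,\bar{n}\}$, of the preimages under the continuous evaluation map $v \mapsto v(j)$ of the closed set $\bar{B}_\rho$ (for $j=0$) and of the closed sets $\{\fz \in \mcC^{\alpha}(\mbT^3): \|\fz\|_{\mcC^{\alpha}} \geq \lambda\}$ (for $j \geq 1$). Next, I would invoke the locally uniform LDP upper bound of Corollary~\ref{cor:DZULDP}, applied to this closed set uniformly over initial data in $\bar{B}_\rho$, to obtain
\begin{equation*}
	\limsup_{\eps \to 0} \eps^2 \ln \sup_{\fy \in \bar{B}_\rho} \P\del[1]{\bar{u}^\fy_\eps \in \CE_{\rho,\delta,\theta}(\bar{n})}
	\leq
	-\inf_{\fy \in \bar{B}_\rho} \inf_{v \in \CE_{\rho,\delta,\theta}(\bar{n})} \II^\fy_{\bar{n}}(v).
\end{equation*}
The second display of Lemma~\ref{lem:bound_prob_H} asserts that the right-hand side is strictly less than $-\theta$. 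Given $\gamma > 0$, the definition of $\limsup$ then yields an $\eps_1 = \eps_1(\rho,\delta,\theta,\gamma) > 0$ such that for every $\eps \leq \eps_1$,
\begin{equation*}
	\eps^2 \ln \sup_{\fy \in \bar{B}_\rho} \P\del[1]{\bar{u}^\fy_\eps \in \CE_{\rho,\delta,\theta}(\bar{n})}
	\leq
	-\theta + \frac{\gamma}{2},
\end{equation*}
which is the claim after exponentiating.

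The hard part is not in the execution of either of the above steps but in the input one is allowed to use: one needs a genuine \emph{local uniformity in the initial condition} in the LDP upper bound. A standard Freidlin--Wentzell formulation provides uniform \emph{lower} bounds over compact sets of initial data but only a pointwise \emph{upper} bound, whereas the argument above really needs the supremum over $\fy \in \bar{B}_\rho$ to sit inside the probability. This strengthening is precisely what is isolated in Corollary~\ref{cor:DZULDP}, and, as pointed out in the discussion preceding the proposition, it ultimately hinges on the local Lipschitz dependence on the initial datum of the solution map associated to~\eqref{eq:phi43_formal_SPDE}, a model-specific regularity-structures input recalled in Appendix~\ref{sec:technical_proofs}. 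Once that uniform upper bound is taken as a black box, the remainder of the proof reduces to the topological check and the substitution sketched above.
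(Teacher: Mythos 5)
Your overall plan is the right one and matches the paper's in spirit, but there is a genuine gap in the way you invoke the uniform upper bound. Corollary~\ref{cor:DZULDP} provides the DZULDP upper bound only for initial conditions ranging over a \emph{compact} subset of $\mcC^{\alpha}(\mbT^3)$. You apply it with $A = \bar{B}_\rho$, but the closed ball $\bar{B}_\rho \subset \mcC^{\alpha}(\mbT^3)$ is not compact, since $\mcC^{\alpha}(\mbT^3)$ is infinite-dimensional. So the display
\begin{equation*}
	\limsup_{\eps \to 0} \eps^2 \ln \sup_{\fy \in \bar{B}_\rho} \P\del[1]{\bar{u}^\fy_\eps \in \CE_{\rho,\delta,\theta}(\bar{n})}
	\leq
	-\inf_{\fy \in \bar{B}_\rho} \inf_{v \in \CE_{\rho,\delta,\theta}(\bar{n})} \II^\fy_{\bar{n}}(v)
\end{equation*}
does not follow from the corollary as stated, and the passage you describe as a ``black box'' is exactly the place where the argument breaks.

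The paper bridges this gap with a two-step reduction to a compact index set before applying Corollary~\ref{cor:DZULDP}. First, it invokes the strong Feller property of the $\Phi^4_3$ semigroup (from the cited work of Hairer and Mattingly) to observe that
\begin{equation*}
	\fy \mapsto \P\del[1]{\bar{u}^\fy_\eps \in \CE_{\rho,\delta,\theta}(\bar{n})}
\end{equation*}
is a continuous function of the initial condition $\fy$ at each fixed $\eps$. Second, since $H^1(\mbT^3)$ is dense in $\mcC^{\alpha}(\mbT^3)$, this continuity allows one to replace the supremum over $\bar{B}_\rho$ by a supremum over $\bar{B}_\rho \cap H^1(\mbT^3)$, and the compact embedding of $H^1(\mbT^3)$ into $\mcC^{\alpha}(\mbT^3)$ is what makes the relevant index set compact and thus admissible for Corollary~\ref{cor:DZULDP}. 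Your observation that $\CE_{\rho,\delta,\theta}(\bar{n})$ is closed and your use of Lemma~\ref{lem:bound_prob_H} are both correct and match the paper; what is missing is this intermediate reduction, without which the uniformity over $\bar{B}_\rho$ is not justified.
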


\begin{proof}
	Observe that the map	
	\begin{equation*}
		\CC^\alpha(\mbT^3) \to [0,1], \quad \fy \mapsto \P\del[2]{\bar{u}^\fy_\eps \in \CE_{\rho,\delta,\theta}(\bar{n})}
	\end{equation*}
	is continuous by the strong Feller property of the semigroup generated by~$\bar{u}_\eps$, see~\cite{hairer-mattingly}. 
	Since $H^1(\T^3)$ is dense in~$\CC^\alpha(\T^3)$, we have 
	\begin{equation*}
		\sup_{\fy \in \bar{B}_\rho} \P\del[2]{\bar{u}^\fy_\eps \in \CE_{\rho,\delta,\theta}(\bar{n})}
		=
		\sup_{\fy \in \bar{B}_\rho \cap H^1(\T^3)} \P\del[2]{\bar{u}^\fy_\eps \in \CE_{\rho,\delta,\theta}(\bar{n})} \,.
	\end{equation*}
	Note that $\bar{B}_\rho$ is closed and~$H^1(\mbT^3)$ is compact in~$\CC^\alpha(\mbT^3)$, so $\bar{B}_\rho \cap H^1(\T^3)$ is compact in~$\CC^\alpha(\mbT^3)$ as well.
	As a consequence, the DZULDP upper bound from Corollary~\ref{cor:DZULDP} applies to the closed set~$\CE_{\rho,\delta,\theta}(\bar{n})$ and reads
	\begin{equation*}
		\limsup_{\eps \to 0} \eps ^2 \log \sup_{\fy \in \bar{B}_\rho} \P\del[1]{\bar{u}^\fy_\eps \in \CE_{\rho,\delta,\theta}(\bar{n})} 
		\leq 
		-\inf_{\fy \in \bar{B}_\rho} \inf_{v \in \CE_{\rho,\delta,\theta}(\bar{n})} \II_T^\fz(v)
		< - \theta 
	\end{equation*}
	where the last estimate uses Lemma~\ref{lem:bound_prob_H}.
	The claim follows.
\end{proof}
\subsection[Control when the Dynamic Leaves $\CE_{\rho,\delta,\theta}$]{Control when the Dynamic Leaves $\boldsymbol{\CE_{\rho,\delta,\theta}}$}\label{sec:control_outside_excursion}
We first require the following version of Lemma~\ref{lem:energy_bounds} and record its specialisation to~$\hat{u}^\fy_\eps$.
\begin{lemma} \label{lem:prob_energy_bounds}
	Let~$\delta,\, \theta>0$ and $\lambda \coloneqq  \lambda(\delta,\,\theta)>0$ be as in Lemma~\ref{lem:energy_bounds}. Then, recalling the notation
	\begin{equation}
		\II^\mfy_{t}[\theta] = \left\{ v \in C_t\mcC^{\alpha}(\mbT^3)\,:\, v(0) = \mfy,\quad \msI_{t}(v)\leq \theta\right\},
		\label{lem:prob_energy_bounds_eq0}
	\end{equation}
	there exists some~$\bar{T} > 0$ such that for any~$\mfy\in \bar{B}_\lambda \subset \mcC^{\alpha}(\mbT^3)$ and~$t \geq \bar{T}$ one has
	\begin{equs}
		\thinspace
		& \left\{v \in C_t \CC^\alpha: \, v(0) = \fy, \, \operatorname{dist}_{\mcC^{\alpha}}\del[1]{v(t),\msV[\theta]} \geq \delta\right\} \\
		\subseteq \ &  
		\left\{v \in C_t \CC^\alpha: \, v(0) = \fy, \, \operatorname{dist}_{C_t \mcC^{\alpha}}\del[1]{v, \II^\mfy_{t}[\theta]} \geq \frac{\delta}{2} \right\}.
		\label{coro:energy_bounds:claim}
	\end{equs}
	In particular, we have 
	\begin{equation}
		\P\del[1]{\operatorname{dist}_{\mcC^{\alpha}}\del[1]{\bar{u}^\fy_\eps(t),\msV[\theta]} \geq \delta}
		\leq 
		\P\del[3]{\operatorname{dist}_{C_t \mcC^{\alpha}}\del[1]{\bar{u}^\fy_\eps, \II^\fy_{t}[\theta]} \geq \frac{\delta}{2}}.
		\label{coro:energy_bounds:eq1}
	\end{equation}  
\end{lemma}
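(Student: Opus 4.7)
The plan is to prove the set inclusion~\eqref{coro:energy_bounds:claim} by contraposition using Lemma~\ref{lem:energy_bounds}, and then to deduce the probability bound~\eqref{coro:energy_bounds:eq1} by monotonicity. Concretely, I would fix $\bar{T} = \bar{T}(\delta,\theta)$ equal to the horizon provided by Lemma~\ref{lem:energy_bounds} and argue as follows: suppose $v \in C_t\mathcal{C}^\alpha$ satisfies $v(0) = \mathfrak{y} \in \bar{B}_\lambda$ together with $\operatorname{dist}_{C_t \mathcal{C}^\alpha}(v, \msI^\mathfrak{y}_t[\theta]) < \delta/2$. Then there exists a reference path $\tilde{v} \in \msI^\mathfrak{y}_t[\theta]$ with $\|v - \tilde{v}\|_{C_t\mathcal{C}^\alpha} < \delta/2$; since $\tilde{v}(0) = \mathfrak{y} \in \bar{B}_\lambda$ and $\msI_t(\tilde{v}) \leq \theta$, this path is automatically an element of the larger set $\msI^{\bar{B}_\lambda}_t[\theta]$ appearing in Lemma~\ref{lem:energy_bounds}.

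Applying Lemma~\ref{lem:energy_bounds} to $\tilde{v}$ at time $t \geq \bar{T}$ produces some $\mathfrak{z}^\star \in \msV[\theta]$ with $\|\tilde{v}(t) - \mathfrak{z}^\star\|_{\mathcal{C}^\alpha} < \delta/2$. A single triangle inequality then gives
\begin{equation*}
\|v(t) - \mathfrak{z}^\star\|_{\mathcal{C}^\alpha}
\leq \|v(t) - \tilde{v}(t)\|_{\mathcal{C}^\alpha} + \|\tilde{v}(t) - \mathfrak{z}^\star\|_{\mathcal{C}^\alpha}
< \frac{\delta}{2} + \frac{\delta}{2} = \delta,
\end{equation*}
hence $\operatorname{dist}_{\mathcal{C}^\alpha}(v(t), \msV[\theta]) < \delta$. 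Taking the contrapositive proves~\eqref{coro:energy_bounds:claim}.

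For the probability bound~\eqref{coro:energy_bounds:eq1}, I would simply specialise the inclusion to $v = \bar{u}^\mathfrak{y}_\eps$, which by definition has $\bar{u}^\mathfrak{y}_\eps(0) = \mathfrak{y} \in \bar{B}_\lambda$ almost surely, and then apply monotonicity of $\mathbb{P}$ under the set inclusion; there is nothing further to verify.

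I do not expect any real obstacle here: the statement is essentially a deterministic, topological corollary of Lemma~\ref{lem:energy_bounds}, using only the triangle inequality in $\mathcal{C}^\alpha$ and the trivial observation that fixing the initial datum to $\mathfrak{y} \in \bar{B}_\lambda$ only shrinks the set $\msI^{\bar{B}_\lambda}_t[\theta]$ to $\msI^\mathfrak{y}_t[\theta]$. The only point worth double-checking is that the constants $\lambda$ and $\bar{T}$ produced by Lemma~\ref{lem:energy_bounds} depend on $(\delta,\theta)$ alone and not on $\mathfrak{y}$, so that~\eqref{coro:energy_bounds:eq1} genuinely holds uniformly in $\mathfrak{y} \in \bar{B}_\lambda$, as needed for the Markov-property argument driving Scenario~III in the subsequent proof of the LDP upper bound.
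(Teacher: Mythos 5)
Your proposal is correct and follows essentially the same approach as the paper's own proof: both rely on the set inclusion $\msI^\mfy_t[\theta] \subseteq \msI^{\bar{B}_\lambda}_t[\theta]$, an application of Lemma~\ref{lem:energy_bounds} to a near-minimiser $\tilde{v}$, and a triangle inequality in $\mcC^\alpha$; the only cosmetic difference is that you argue by direct contraposition while the paper frames the identical content as a proof by contradiction. Your closing remark about checking the $\mfy$-uniformity of $\lambda$ and $\bar{T}$ is well placed, and indeed holds since Lemma~\ref{lem:energy_bounds} produces these constants from $(\delta,\theta)$ alone.
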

\begin{proof}
	Let~$L$ denote the set on the LHS of~\eqref{coro:energy_bounds:claim} and~$R$ the set on its RHS.
	The statement in~\eqref{coro:energy_bounds:eq1} then simply reads
	\begin{equation*}
		\P\del[1]{\bar{u}^\fy_\eps \in L} \, \leq  \, \P\del[1]{\bar{u}^\fy_\eps \in R}
	\end{equation*}
	and thus follows from~\eqref{coro:energy_bounds:claim} by monotonicity of probability measures with respect to set inclusion.
	
	In order to prove~\eqref{coro:energy_bounds:claim}, let~$v \in L$. 
	Taking the complement on both sides of~\eqref{lem:energy_bounds:eq} we see that we must therefore have
	\begin{equation*}
		v \notin \II^{\bar{B}_\lambda}_t[\theta] \quad \text{for all} \quad t \geq \bar{T}.
	\end{equation*}
	In words, any~$v \in L$ cannot be an element in any of the sets~$\II^{\bar{B}_\lambda}_t[\theta]$,~$t \geq \bar{T}$. 
	However, it is not a priori excluded that there exists a~$v \in L$ and some~$t \geq \bar{T}$ such that~$v$ is \emph{arbitrarily close} to the set~$\II^{\bar{B}_\lambda}_t[\theta]$.
	
	In order to rule that out, assume for a contradiction that~$\operatorname{dist}_{C_t \mcC^{\alpha}}\del[1]{v, \II^{\bar{B}_\lambda}_t[\theta]} < \nicefrac{\delta}{2}$ for some $t \geq \bar{T}$. 
	Then we may find~$\tilde{v} \in \II^{\bar{B}_\lambda}_t[\theta]$ such that
	\begin{equation*}
		\norm[0]{v(t) - \tilde{v}(t)}_{\mcC^{\alpha}}
		\leq
		\norm[0]{v - \tilde{v}}_{C_t \mcC^{\alpha}}
		< \frac{\delta}{2}.
	\end{equation*}
	However, Lemma~\ref{lem:energy_bounds} applied to~$\tilde{v} \in \II^{\bar{B}_\lambda}_t[\theta]$ shows that~$\operatorname{dist}_{\mcC^{\alpha}}(\tilde{v}(t),\msV[\theta]) < \nicefrac{\delta}{2}$. 
	That is, there exists some~$\fz \in \msV[\theta]$ such that~$\norm[0]{\tilde{v}(t) - \fz}_{\mcC^{\alpha}} < \nicefrac{\delta}{2}$.
	As a consequence, the estimate
	\begin{equation*}
		\norm[0]{v(t) - \fz}_{\mcC^{\alpha}}
		\leq 
		\norm[0]{v(t) - \tilde{v}(t)}_{\mcC^{\alpha}} + \norm[0]{\tilde{v}(t) - \fz}_{\mcC^{\alpha}}
		< \frac{\delta}{2} + \frac{\delta}{2}
		=
		\delta
	\end{equation*} 
	contradicts our assumption that~$v \in L$ and so we actually have 
	\begin{equation*}
		\operatorname{dist}_{C_t \mcC^{\alpha}}\del[1]{v, \II^{\bar{B}_\lambda}_t[\theta]} \geq \nicefrac{\delta}{2}.
	\end{equation*}
	Finally, note that~$\fy \in \bar{B}_\lambda$ implies~$\II^{\fy}_{t}[\theta] \subseteq \II^{\bar{B}_\lambda}_t[\theta]$ which, together with the previous estimate, leads to the lower bound
	\begin{equation*}
		\operatorname{dist}_{C_t \mcC^{\alpha}}\del[1]{v, \II^\fy_t[\theta]}
		\geq
		\operatorname{dist}_{C_t \mcC^{\alpha}}\del[1]{v, \II^{\bar{B}_\lambda}_t[\theta]}
		\geq \frac{\delta}{2}.
	\end{equation*}
	Since~$v \in L$ by definition also entails that~$v(0) = \fy$, we arrive at the conclusion that~$v \in R$. 
	This establishes the claim in~\eqref{coro:energy_bounds:claim}.
\end{proof}

\begin{proposition}[Scenario~$\RN{3}$]\label{prop:ldp_upper_outside_H}
	Let $\delta,\,\gamma,\, \rho,\, \theta>0$ and $\bar{n}\in \mbN$. Then, there exists a $t>0$ and $\eps_0(t,\bar{n},\delta,\,\theta,\gamma)\coloneqq  \eps_0>0$ such that 
	\begin{equation*}
		\sup_{\mfy \in \bar{B}_\rho}\P\del[2]{\operatorname{dist}_{\mcC^{\alpha}}(\bar{u}^\fy_\eps(t),\msV[\theta])\geq \delta, \ \bar{u}^\fy_\eps \in \CE_{\rho,\delta,\theta}(\bar{n})^{\texttt{c}}}   \leq \bar{n}	\exp\del[3]{-\frac{\theta-\nicefrac{\gamma}{2}}{\eps^2}}, \quad \text{for all}\,\,\, \eps\in (0,\eps_0).
	\end{equation*}
\end{proposition}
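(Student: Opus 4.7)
The plan is to decompose the event according to the first integer time at which the path $\bar{u}^\fy_\eps$ enters $\bar{B}_\lambda$, apply the Markov property to restart from that point, and then combine Lemma~\ref{lem:prob_energy_bounds} with a uniform-in-initial-datum LDP upper bound.

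Let $\lambda \coloneqq \lambda(\delta,\theta)$ and $\bar{T} \coloneqq \bar{T}(\delta,\theta)$ be as given by Lemma~\ref{lem:prob_energy_bounds}, and set $t \coloneqq \bar{T} + \bar{n}$; cf. Remark~\ref{rmk:choice_t_nbar}. Introduce the first entrance time
\[
\tau \coloneqq \inf\bigl\{j \in \{1,\dots,\bar{n}\} : \|\bar{u}^\fy_\eps(j)\|_{\mcC^\alpha} < \lambda\bigr\},
\]
with the convention $\inf \emptyset = +\infty$. On the event $\{\bar{u}^\fy_\eps \in \CE_{\rho,\delta,\theta}(\bar{n})^{\texttt{c}}\}$ (and since $\fy \in \bar{B}_\rho$), the stopping time $\tau$ is at most $\bar{n}$, so a union bound together with the Markov property yields
\[
\P(\cdots) \,\leq\, \sum_{j=1}^{\bar{n}} \sup_{\fz \in \bar{B}_\lambda} \Psi_{\eps,j}(\fz),
\quad \text{where} \quad
\Psi_{\eps,j}(\fz) \coloneqq \P\bigl(\operatorname{dist}_{\mcC^\alpha}(\bar{u}^\fz_\eps(t-j),\msV[\theta]) \geq \delta\bigr).
\]
Since $t - j \geq \bar{T}$ for each $j \in \{1,\dots,\bar{n}\}$, Lemma~\ref{lem:prob_energy_bounds} applied to initial data in $\bar{B}_\lambda$ gives
\[
\Psi_{\eps,j}(\fz) \,\leq\, \P\bigl(\operatorname{dist}_{C_{t-j}\mcC^\alpha}(\bar{u}^\fz_\eps,\II^\fz_{t-j}[\theta]) \geq \tfrac{\delta}{2}\bigr), \qquad \fz \in \bar{B}_\lambda.
\]

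Thus the problem reduces to verifying, for all $\eps$ smaller than some $\eps_0 > 0$ and uniformly in $j \in \{1,\dots,\bar{n}\}$, the bound $\sup_{\fz \in \bar{B}_\lambda} \Psi_{\eps,j}(\fz) \leq \exp(-(\theta - \nicefrac{\gamma}{2})/\eps^2)$. Mirroring the argument in the proof of Proposition~\ref{prop:bound_prob_H}, the strong Feller property of the Markov semigroup generated by $\bar{u}_\eps$ (see~\cite{hairer-mattingly}) allows one to replace the supremum over $\bar{B}_\lambda$ by the supremum over the set $\bar{B}_\lambda \cap H^1(\mbT^3)$, which is compact in $\mcC^\alpha$ via $H^1 \cembed \mcC^\alpha$. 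On this compact set of initial data, the uniform LDP upper bound of Corollary~\ref{cor:DZULDP} applies to the closed target set $\{v : \operatorname{dist}_{C_s \mcC^\alpha}(v, \II^\fz_s[\theta]) \geq \delta/2\}$. Since membership in this set forces $v \notin \II^\fz_s[\theta]$ and hence $\II^\fz_s(v) > \theta$, goodness and lower semicontinuity of $\II_s$, combined with a compactness/subsequence argument analogous to that used in Lemma~\ref{lem:bound_prob_H}, yield a \emph{strict} lower bound $> \theta$ on the infimum of $\II_{t-j}^\fz$ over the above closed set, uniformly in $\fz \in \bar{B}_\lambda \cap H^1$ and $j \in \{1,\dots,\bar{n}\}$ (which is a finite set). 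Choosing $\eps_0$ small enough accordingly and summing over the $\bar{n}$ terms produces the claimed prefactor $\bar{n}$.

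The main technical obstacle lies in the uniform-in-initial-datum LDP bound of the final step: the closed target set $\II^\fz_s[\theta]$ itself depends on $\fz$, so one must leverage joint continuity of the skeleton equation's solution map with respect to both driver and initial datum, together with the compactness supplied by $H^1 \cembed \mcC^\alpha$, to upgrade the pointwise strict inequality $\II_s^\fz(v) > \theta$ to a strictly uniform one. Once this has been established, the Markov decomposition and Lemma~\ref{lem:prob_energy_bounds} combine seamlessly to deliver the conclusion.
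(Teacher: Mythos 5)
Your decomposition by first entrance time, application of the Markov property, the choice $t=\bar{T}+\bar{n}$, and the use of Lemma~\ref{lem:prob_energy_bounds} all match the paper's proof precisely. Where you diverge is the very last step, and there the argument goes astray.

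You write that you will apply Corollary~\ref{cor:DZULDP} to the ``closed target set'' $\{v : \operatorname{dist}_{C_s\mcC^\alpha}(v,\II^\fz_s[\theta]) \geq \delta/2\}$. But that corollary is a statement about a \emph{fixed} closed set $F \subseteq C_T\mcC^\alpha(\T^3)$, whose conclusion reads $\limsup_{\eps\to 0}\eps^2\log\sup_{\fy\in A}\P(\bar{u}^\fy_\eps\in F) \leq -\inf_{\fy\in A}\inf_{v\in F}\II^\fy_T(v)$. Here the target set depends on the initial condition $\fz$ through $\II^\fz_s[\theta]$, so the DZULDP as stated does not apply. You notice this yourself and propose a further compactness/joint-continuity argument to repair it, but that extra machinery is unnecessary and would require its own careful (and nontrivial) proof. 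The cleaner resolution, and the one the paper takes, is that Proposition~\ref{prop:uniform_ldp}~\ref{prop:uniform_ldp_3} (the locally uniform FWULDP upper bound) is already formulated exactly in terms of the $\fz$-dependent quantity $\operatorname{dist}_{C_T\mcC^\alpha}(\bar{u}^\fz_\eps,\II^\fz_T[\theta])$, with an $\eps_0$ that depends only on $T,\delta,\gamma,\theta$ (not on $\fz$). Applying~\eqref{prop:uniform_ldp_ub_eq} directly, and taking $\eps_0 := \min\{\eps_0(t-j,\delta,\gamma,\theta) : j\in\{1,\dots,\bar{n}\}\}$ over the finite collection of time horizons, immediately delivers $\sup_{\fz\in\bar{B}_\lambda}\Psi_{\eps,j}(\fz) \leq \exp(-(\theta-\nicefrac{\gamma}{2})/\eps^2)$ uniformly in $j$. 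No strong Feller reduction, no passage to $H^1$, and no DZULDP are needed in this proposition: those were required in Proposition~\ref{prop:bound_prob_H} precisely because the target set $\CE_{\rho,\delta,\theta}(\bar{n})$ there is fixed and independent of the starting point, which is the regime the DZULDP is designed for, whereas the FWULDP is tailored to the $\fz$-dependent sub-level set that appears here.
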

\begin{proof}
	We first note that using the definition of $\CE_{\rho,\delta,\theta}(\bar{n})$, for all $\mfy \in \bar{B}_\rho$ it holds that, 
	\begin{equs}[pf:ldp_up:outside_H:eq1]
		\thinspace 
		\P&\del[2]{\operatorname{dist}_{\mcC^{\alpha}}(\bar{u}^\fy_\eps(t),\msV[\theta])\geq \delta, \ \bar{u}^\fy_\eps \in \CE_{\rho,\delta,\theta}(\bar{n})^{\texttt{c}}} \\
		&	\leq \ 
		\P\del[4]{\thinspace\bigcup_{j=1}^{\bar{n}} 
			\left\{ 
			\operatorname{dist}_{\mcC^{\alpha}}(\bar{u}^\fy_\eps(t)\msV[\theta])\geq \delta, \ \bar{u}^\fy_\eps(j) \in \bar{B}_\lambda
			\right\}
		}
	\end{equs}
	Using the fact that $t\mapsto \bar{u}^\fy_\eps(t)$ is a Markov process~(see for example~\cite[Coro.~1.3]{hairer_matetski_18_discretisation}) we may restart the equation at any time~$j \in \{ 1,\ldots,\bar{n}\}$ when~$\hat{u}^\fy_\eps(j) \in \bar{B}_\lambda$
	to find that for any $t >\bar{n}$, which will be chosen later, 
	\begin{equation}	
		\P\del[4]{\thinspace\bigcup_{j=1}^{\bar{n}} 
			\left\{ 
			\operatorname{dist}_{\mcC^{\alpha}}(\bar{u}^\fy_\eps(t),\msV[\theta])\geq \delta, \ \bar{u}^\fy_\eps(j) \in \bar{B}_\lambda
			\right\}
		}
		\leq 
		\sum_{j=1}^{\bar{n}} \sup_{\fz \in \bar{B}_\lambda} 
		\P\del[2]{\operatorname{dist}_{\mcC^{\alpha}}(\bar{u}^\fz_\eps(t-j),\msV[\theta])\geq \delta};
		\label{pf:ldp_up:outside_H:eq2}
	\end{equation}
	see Figure~\ref{fig_Markov_property} for a graphical representation of this restarting procedure.

	\begin{figure}
		\subfloat[]{%
			\centering
			\includegraphics[scale=0.7]{Images/scenario_3.pdf} 
			\label{fig_5_left}
		}
		\tikzstyle{smallblock3bold} = [rectangle, draw, text width=6em, text centered, rounded corners, minimum height=1em]
		\tikzstyle{line} = [draw, -latex']
		\begin{tikzpicture}[node distance=0.5cm, auto]
			\node (init) {};
			\node (L) at (-1,2) {};
			\node [right=0.5cm of L] (R) {};
			\draw [line] (L) edge node[fill=white, above=0.35cm, pos=0.8]{} (R) ;
		\end{tikzpicture}
		\hspace{-2em}
		\subfloat[]{%
			\centering
			\includegraphics[scale=0.7]{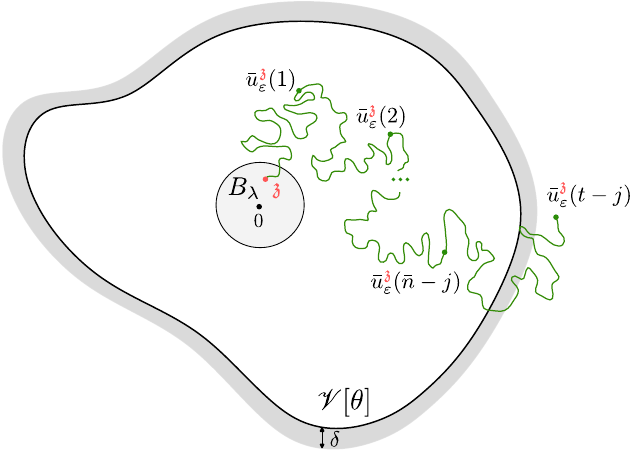} 
			\label{fig_5_right}
		}
		\captionsetup{format=plain, width=0.8\textwidth}
		\caption{\textbf{Application of the Markov property.} \\[0.5em]
			\protect\subref{fig_5_left} The figure shows a realisation of~$\bar{u}_\eps^\fy$ that hits the ball~$B_\lambda$ at time~$j$, at which point the Markov property is applied. \\[0.5em]
			\protect\subref{fig_5_right} The figure presents a realisation of the restarted dynamics~$\bar{u}_\eps^\fz$ after the probability has additionally been maximised over all possible points~$\fz \in B_\lambda$ that~$\bar{u}^\fy_\eps(j)$ could have hit.
		}
		\label{fig_Markov_property}
	\end{figure}
	Now recalling Lemma~\ref{lem:prob_energy_bounds} and the $\bar{T}\coloneqq \bar{T}(\delta,\,\theta)>0$ defined therein, we choose $t= \bar{T}+\bar{n}$ so that applying~\eqref{coro:energy_bounds:eq1} we obtain the bound
	\begin{align}	
		\thinspace
		& \sum_{j=1}^{\bar{n}} \sup_{\fz \in \bar{B}_\lambda} 
		\P\del[2]{\operatorname{dist}_{\mcC^{\alpha}}(\bar{u}^\fz_\eps(t-j),\msV[\theta])\geq \delta} \notag\\ 
		\leq & \ \bar{n}\max_{j= 1,\ldots,\bar{n}}  \sup_{\fz \in \bar{B}_\lambda} 	\P\del[2]{\operatorname{dist}_{\mcC^{\alpha}}(\bar{u}^\fz_\eps(t-j),\msV[\theta])\geq \delta} \label{pf:ldp_up:outside_H:eq3} \\
		\leq & \ \bar{n} 	\max_{j= 1,\ldots,\bar{n}} \sup_{\fz \in \bar{B}_\lambda}  \P\del[3]{\operatorname{dist}_{C_{t-j} \mcC^{\alpha}}\del[1]{\bar{u}^\fz_\eps, \II^\fy_{t-j}[\theta]} \geq \frac{\delta}{2}} \notag
	\end{align}
	We now recall the locally uniform LDP satisfied by the process $\hat{u}^\fy_\eps$ as given by Proposition~\ref{prop:uniform_ldp}. Using the notation of that proposition, given any $\gamma>0$ and $\delta,\,\theta>0$ as above, let us set
	\begin{equation*}
		\eps_0 
		\coloneqq  \eps_0(t,\bar{n},\delta, \gamma,\theta) 
		\coloneqq  \min\left\{\eps(t-j,\delta, \gamma,\theta): \ j \in \{1,\ldots,\bar{n} \} \right\},
	\end{equation*}
	so that applying Proposition~\ref{prop:uniform_ldp},~item \ref{prop:uniform_ldp_3}, we find, for any $\gamma>0$,
	\begin{equation}\label{pf:ldp_up:outside_H:final}
		\max_{j= 1,\ldots,\bar{n}} \sup_{\fz \in \bar{B}_\lambda} \P\del[3]{\operatorname{dist}_{C_{t-j} \mcC^{\alpha}}\del[1]{\bar{u}^\fz_\eps, \II^\fz_{t-j}[\theta]} \geq \frac{\delta}{2}} \leq \exp\del[3]{-\frac{\theta-\nicefrac{\gamma}{2}}{\eps^2}},\quad \text{for all}\quad \eps<\eps_0.
	\end{equation}
	Note that the respective right hand sides of \eqref{pf:ldp_up:outside_H:eq2}, \eqref{pf:ldp_up:outside_H:eq3} and \eqref{pf:ldp_up:outside_H:final} do not depend on~$\fy \in \bar{B}_\rho$ any more;
	combined with~\eqref{pf:ldp_up:outside_H:eq1}, they establish the claim and conclude the proof.
\end{proof}

\subsection{Proof of the LDP Upper Bound} \label{sec:ldp_upper_bound_proof}

We are now in a position to prove the LDP upper bound claimed in Theorem~\ref{thm:ldp_upper_bound}.

\begin{proof}[Proof of Theorem~\ref{thm:ldp_upper_bound}]
	First, by linearity of the integral, invariance of the measure $\mu_{\eps}$ for the dynamic, renormalised process, $u_{\eps}$ and additivity of measures, for any $\rho>0,\, t>0$, which will be further restricted later on, and $\eps>0$ , it holds that,
	\begin{align}
		\thinspace 
		& \mu_\eps\del[1]{\fz \in \mcC^{\alpha}(\mbT^3): \ \operatorname{dist}_{\mcC^{\alpha}}(\fz,\msV[\theta])\geq \delta} \notag\\
		= \ & 
		\int_{\mcC^{\alpha}} \P\del[2]{\operatorname{dist}_{\mcC^{\alpha}}(\bar{u}^\fy_\eps(t),\msV[\theta])\geq \delta} \mu_\eps(\dif \fy) \notag\\
		= \ & 
		\int_{B_\rho^{\texttt{c}}} \P\del[2]{\operatorname{dist}_{\mcC^{\alpha}}(\bar{u}^\fy_\eps(t),\msV[\theta])\geq \delta} \mu_\eps(\dif \fy) \notag\\
		& +
		\int_{B_\rho} \P\del[2]{\operatorname{dist}_{\mcC^{\alpha}}(\bar{u}^\fy_\eps(t),\msV[\theta])\geq \delta} \mu_\eps(\dif \fy).
		\label{pf:ldp_ub:split} \\
		=\ & 	\int_{B_\rho^{\texttt{c}}} \P\del[2]{\operatorname{dist}_{\mcC^{\alpha}}(\bar{u}^\fy_\eps(t),\msV[\theta])\geq \delta} \mu_\eps(\dif \fy) \notag\\
		&
		+ \int_{B_\rho} \P \del[2]{\operatorname{dist}_{\mcC^{\alpha}}(\bar{u}^\fy_\eps(t),\msV[\theta])\geq \delta, \ u^\fy_\eps \in \CE_{\rho,\delta,\theta}(\bar{n})}\mu_\eps(\dif \fy)\notag\\
		&+ \int_{B_\rho} \P\del[2]{\operatorname{dist}_{\mcC^{\alpha}}(\bar{u}^\fy_\eps(t),\msV[\theta])\geq \delta, \ u^\fy_\eps \in \CE_{\rho,\delta,\theta}(\bar{n})^{\texttt{c}}}\mu_\eps(\dif \fy) \notag\\
		\eqqcolon & \ (\RN{1}) + (\RN{2}) + (\RN{3}). \notag
	\end{align}
	In the above we recall the notation $\bar{u}^\fy_\eps$ for the solution to the infinitely renormalised dynamics, \eqref{eq:phi43_rigorous_SPDE} with $u^\fy_\eps(0) =\fy$.  Treating the first term enforces our choice of $\rho$. We apply Proposition~\ref{prop:measure_tail_bounds} with~$\rho = \rho(\theta) > 0$ and~$K > 0$ as obtained therein, to see that for any $\eps>0$,
	\begin{equation}\label{eq:ldp_upper_step1}
		(\RN{1}) = \int_{B_\rho^{\texttt{c}}} \P\del[2]{\operatorname{dist}_{\mcC^\alpha}(\bar{u}^\fy_\eps(t),\msV[\theta])\geq \delta} \mu_\eps(\dif \fy)
		\leq
		\mu_\eps(B_\rho^{\texttt{c}})
		\leq K 
		\exp\del[3]{-\frac{\theta}{\eps^2}}
	\end{equation}
	Concerning the second term, we apply Proposition~\ref{prop:bound_prob_H} to see that there exists an~$\bar{n}\coloneqq  \bar{n}(\rho,\,\delta,\, \theta ) \in \N$ and~$\eps_1(\gamma,\delta,\, \theta )\coloneqq  \eps_1 > 0$ such that for all $\eps\in (0,\eps_1)$ and $t>0$,
	\begin{equs}[eq:ldp_upper_step2]
		(\RN{2}) =&\int_{B_\rho}  \P\del[2]{\operatorname{dist}_{\mcC^\alpha}(\bar{u}^\fy_\eps(t),\msV[\theta])\geq \delta, \ \bar{u}^\fy_\eps \in \mcE_{\rho,\,\delta,\, \theta }(\bar{n})} \mu_\eps(\dif \fy)  \\
		\leq &\sup_{\fy \in \bar{B}_\rho} \P\del[2]{\bar{u}^\fy_\eps \in \CE_{\rho,\delta,\theta}(\bar{n})}\\
		\leq &
		\exp\del[3]{-\frac{\theta-\nicefrac{\gamma}{2}}{\eps^2}}.
	\end{equs}
	Finally, the third term is treated using Proposition~\ref{prop:ldp_upper_outside_H} to see that for $\bar{n}\coloneqq  \bar{n}(\rho,\,\delta,\, \theta ) \in \N$ as above, there exists a $t\coloneqq t(\bar{n})>0$ (see also Remark~\ref{rmk:choice_t_nbar}) and $\eps_2(t,\bar{n},\delta,\,\theta,\gamma)\coloneqq  \eps_2>0$ such that for all $\eps\in (0,\eps_2)$,
	\begin{equs}[eq:ldp_upper_step3]
		(\operatorname{III})
		= &\int_{B_\rho} \P\del[2]{\operatorname{dist}_{\mcC^{\alpha}}(\bar{u}^\fy_\eps(t),\msV[\theta])\geq \delta, \ \bar{u}^\fy_\eps \in \CE_{\rho,\delta,\theta}(\bar{n})^{\texttt{c}}}\mu_\eps(\dif \fy)\\
		\leq  & \ \sup_{\mfy \in \bar{B}_\rho}\P\del[2]{\operatorname{dist}_{\mcC^{\alpha}}(\bar{u}^\fy_\eps(t),\msV[\theta])\geq \delta, \ \bar{u}^\fy_\eps \in \CE_{\rho,\delta,\theta}(\bar{n})^{\texttt{c}}}\\
		\leq & \ \bar{n}	\exp\del[3]{-\frac{\theta-\nicefrac{\gamma}{2}}{\eps^2}}.
	\end{equs}
	Combining \eqref{eq:ldp_upper_step1}, \eqref{eq:ldp_upper_step2} and \eqref{eq:ldp_upper_step3} (along with the requisite choices of $\rho,\, t>0$) we see that for all $\eps \in (0,\eps_1\wedge \eps_2\wedge 1)$ it holds that
	\begin{equation*}
		\mu_\eps\del[1]{\fz \in \mcC^\alpha(\mbT^3): \ \operatorname{dist}_{\mcC^{\alpha}}(\fz,\msV[\theta])\geq \delta}
		\leq 
		\exp\del[3]{-\frac{\theta}{\eps^2}}
		+
		(1 + \bar{n})
		\exp\del[3]{-\frac{\theta - \nicefrac{\gamma}{2}}{\eps^2}}.
	\end{equation*} 
	This concludes the proof upon restricting $\eps \in (0,\eps_0)$ with $\eps_0<\eps_1\wedge \eps_2\wedge 1$ sufficiently small and making use of the identity $\msV = 2\msS$ proved by Theorem~\ref{th:V_equal_2S}.
\end{proof}

\appendix
\section{The Skeleton Equation}\label{sec:skeleton_equation}
We collect some results concerning well-posedness and controllability of the skeleton equation associated to \eqref{eq:phi43_formal_SPDE},
\begin{equation}\label{eq:nl_skeleton_equation}
	\begin{cases}
		\partial_t w^{h,\fz} - \Delta w^{h,\fz} = -(w^{h,\fz})^3-m^2 w^{h,\fz} + h,&\\
		w^{h,\fz}\tzero = \fz,
	\end{cases}
\end{equation}
where $h\in L^2_TL^2_x$ is an element of the Cameron--Martin space of the white noise. Many (but not all) results contained in the appendix can surely be found in disparate, alternative sources. However, we find it beneficial to collect them here together along with (at least) sketched proofs of these facts.
\subsection{Well-Posedness and A Priori Bounds}

We begin with the following local well-posedness result for \eqref{eq:nl_skeleton_equation}.
\begin{proposition}[Local Well-Posedness of the Skeleton Equation]\label{prop:nl_skeleton_LWP}
	Let $h\in L^2(\mbR_+\times \mbT^3)$, $\alpha \in (-\frac{2}{3},-\frac{1}{2})$ and $\fz \in \mcC^{\alpha}(\mbT^3)$. Then, there exists a $\bar{T}\coloneqq  \bar{T}(\alpha,\|\fz\|_{\mcC^\alpha},\|h\|_{L^2_T L^2_x})\in \mbR\cup\{+\infty\}$ such that a unique, mild solution, $w^{h,\fz}$, to \eqref{eq:nl_skeleton_equation} exists on $[0,\bar{T})$. In addition, 
	\begin{itemize}
		\item it either holds that
		\begin{equation}\label{eq:skeleton_eq_maximal}
			\bar{T}=\infty\quad \text{or}\quad \lim_{t\nearrow \bar{T}}\|w^{h,\fz}(t)\|_{\mcC^{\alpha}} =\infty,
		\end{equation}
		\item for $p\coloneqq  \frac{3}{1+\alpha}\in (6,9)$ there exists a weight, $\eta\coloneqq \eta (\alpha)\in (-\frac{5\alpha}{12},-\frac{\alpha}{3})$
		an exponent $\delta\coloneqq \delta(\eta,\alpha) >0$ and a finite constant $C_0\coloneqq  C_0(\eta,\alpha)>0$ such that 
		\begin{equation}\label{eq:nl_skeleton_Lp_growth}
			\sup_{t \in (0,\bar{T})} (t\wedge1)^{\eta} \|w^{h,\fz}(t)\|_{L^p_x} \leq C_0\,(T\wedge 1)^\delta (\|\fz\|_{\mcC^{\alpha}}+ \|h\|_{L^2_{\bar{T}}L^2_x}),
		\end{equation}
		\item for $\eta \in \left(-\frac{5\alpha}{12},-\frac{\alpha}{3}\right)$ as above, there exists a constant $C_1\coloneqq  C_1(\alpha,\eta)$ and $\delta\coloneqq \delta(\alpha,\eta)>0$ such that
		\begin{equation}\label{eq:nl_skeleton_H1_growth}
			\sup_{t \in (0,\bar{T})} (t\wedge 1)^{1+\eta}\|w^{h,\fz}(t)\|_{H^1_x} \leq C_1(T\wedge 1)^\delta(\|\fz\|_{\mcC^{\alpha}}+ \|h\|_{L^2_{\bar{T}}L^2_x}).
		\end{equation}
		\item if $ \fz \in H^1(\mbT^3)$ then in place of \eqref{eq:nl_skeleton_Lp_growth} and \eqref{eq:nl_skeleton_H1_growth} one has
		\begin{equation}\label{eq:nl_skeleton_H1_growth_reg_initial}
			\sup_{t\in [0,\bar{T})} \|w^{h,\fz}(t)\|_{H^1_x} \lesssim \left(\|\fz\|_{H^1_x} + \|h\|_{L^2_{\bar{T}}L^2_x}\right).
		\end{equation}
	\end{itemize}
\end{proposition}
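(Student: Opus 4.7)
The plan is to set up a Banach fixed-point argument for \eqref{eq:nl_skeleton_equation} in the weighted path space $C_{\eta;T}L^p(\mbT^3)$ with $p = 3/(1+\alpha) \in (6,9)$ and weight $\eta$ in the stated range. First I would rewrite the equation in mild form
$$w(t) = e^{t(\Delta-m^2)}\fz - \int_0^t e^{(t-s)(\Delta-m^2)} w(s)^3\,\dd s + \int_0^t e^{(t-s)(\Delta-m^2)} h(s)\,\dd s$$
and estimate each of the three terms in the weighted norm. For the initial-data term, the key is first to apply the Besov embedding \eqref{eq:besov_integrabillity_embed} to send $\mcC^\alpha = \mcB^\alpha_{\infty,\infty}$ into $\mcB^{1+2\alpha}_{p,\infty}$ (which uses precisely $3/p = 1+\alpha$) and only then apply heat smoothing into $L^p$; this produces a milder singularity at $t=0$ than the direct estimate would and is what allows the admissible window of $\eta$ to sit below $-\alpha/2$. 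The forcing integral is bounded by Cauchy--Schwarz in time together with $L^2 \to L^p$ heat smoothing. The cubic contribution uses $w^3(s) \in L^{p/3}$ with weight $s^{-3\eta}$, $L^{p/3}\to L^p$ smoothing of order $(t-s)^{-3/p}$, and a Beta-function integration, yielding a bound of order $t^{1-3/p-3\eta}\|w\|^3_{C_{\eta;T}L^p}$. The range $\eta \in (-5\alpha/12,-\alpha/3)$ is calibrated precisely so that, after multiplication by the weight $t^\eta$, each of these three contributions produces a strictly positive power of $T$; a standard contraction on a ball of radius $\simeq \|\fz\|_{\mcC^\alpha} + \|h\|_{L^2_TL^2_x}$ then yields a unique mild solution as well as the bound \eqref{eq:nl_skeleton_Lp_growth} with the desired gain $(T\wedge 1)^\delta$.

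The blow-up alternative \eqref{eq:skeleton_eq_maximal} follows from the usual continuation principle: if $\bar T < \infty$ and $\limsup_{t\nearrow \bar T}\|w(t)\|_{\mcC^\alpha} < \infty$, then one would re-apply local existence at some $t_\star < \bar T$ with $\|w(t_\star)\|_{\mcC^\alpha}$ uniformly bounded, extending the solution past $\bar T$ and contradicting maximality. The $H^1$ estimate \eqref{eq:nl_skeleton_H1_growth} is then obtained by a second bootstrap: once $w \in C_{\eta;T}L^p$ is known, the cubic $w^3$ lies in $C_{3\eta;T}L^{p/3}$, and a further Duhamel estimate using heat smoothing from $L^{p/3}$ into $H^1$ (feasible because $3/p < 1$) upgrades the regularity at the cost of one additional unit of time singularity, which is compatible with the target weight $1+\eta$. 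The initial-data contribution to the $H^1$ norm is controlled directly by heat smoothing from $\mcC^\alpha$ (again composed with a Besov embedding step) and fits under the same weight.

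For the stronger bound \eqref{eq:nl_skeleton_H1_growth_reg_initial} with $\fz \in H^1(\mbT^3)$, I would drop the weighted space and instead use the energy method. Testing \eqref{eq:nl_skeleton_equation} against $\partial_t w$ and applying the Hilbert-space chain rule \eqref{eq:hilbert_chain} yields the identity
$$\|\partial_t w(t)\|_{L^2_x}^2 + \frac{\dd}{\dd t}\msS(w(t)) = \langle h(t),\partial_t w(t)\rangle_{L^2_x;L^2_x},$$
with $\msS$ the $\Phi^4_3$ action from \eqref{eq:phi43_action}. Young's inequality on the right-hand side and integration in time then give
$$\msS(w(t)) + \tfrac{1}{2}\int_0^t \|\partial_s w\|_{L^2_x}^2\,\dd s \leq \msS(\fz) + \tfrac{1}{2}\|h\|_{L^2_TL^2_x}^2,$$
and \eqref{eq:nl_skeleton_H1_growth_reg_initial} follows from the coercivity $\msS(w) \gtrsim \|w\|_{H^1_x}^2$ produced by the $\|\nabla w\|_{L^2_x}^2$ and $m^2\|w\|_{L^2_x}^2$ parts of $\msS$, combined with Sobolev embedding $H^1 \embed L^4$ to control $\msS(\fz)$. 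The hardest step is the power-counting in the contraction argument: the admissible window for $\eta$ is narrow, and four competing constraints (Beta-integrability of the cubic integral, absorption of the initial-data singularity by the weight, integrability of the forcing term under Cauchy--Schwarz, and consistency of the $H^1$-upgrade bootstrap) must all hold simultaneously. Once this balancing is set up correctly the rest of the argument is routine.
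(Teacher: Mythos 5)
Your overall strategy matches the paper's: a Banach fixed point in the weighted path space $C_{\eta;T}L^p(\mbT^3)$ for the mild formulation, with the three Duhamel contributions estimated separately, then a continuation argument for the blow-up alternative and a bootstrap into $H^1$. Two steps of your plan, however, contain genuine errors.

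First, the embedding you invoke to treat the initial-data term, $\mcC^\alpha=\mcB^\alpha_{\infty,\infty}\hookrightarrow\mcB^{1+2\alpha}_{p,\infty}$ with $3/p=1+\alpha$, goes the wrong way. For $\alpha\in(-\nicefrac{2}{3},-\nicefrac{1}{2})$ one has $1+2\alpha>\alpha$, so $\mcB^{1+2\alpha}_{p,\infty}$ is a \emph{strictly smaller} (more regular) space; the Besov embedding~\eqref{eq:besov_integrabillity_embed} applied with $p$ and $p'=\infty$ and loss $3/p=1+\alpha$ in regularity gives precisely the reverse inclusion $\mcB^{1+2\alpha}_{p,\infty}\hookrightarrow\mcC^\alpha$. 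Starting from $\mcC^\alpha$ on the torus, the best you can do is drop the integrability index for free, $\mcB^\alpha_{\infty,\infty}\hookrightarrow\mcB^\alpha_{p,\infty}$, and then any further Besov embedding can only \emph{lose} regularity. You should re-examine the resulting singularity exponent for $\|e^{t(\Delta-m^2)}\fz\|_{L^p_x}$ and the admissible window for $\eta$ accordingly — this step is where the power counting is genuinely tight, and the inequality cannot be invoked in the direction you wrote it.

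Second, for the unweighted $H^1$ bound~\eqref{eq:nl_skeleton_H1_growth_reg_initial} with $\fz\in H^1(\mbT^3)$, your energy method tests against $\partial_t w$ and integrates $\frac{\dd}{\dd t}\msS(w(t))$ — this is a legitimate and in fact cleaner route than the paper's Duhamel \enquote{similar arguments}, but your choice of test function does not deliver the stated \emph{linear} bound. After Young's inequality and coercivity you obtain $\|w(t)\|^2_{H^1_x}\lesssim\msS(\fz)+\|h\|^2_{L^2_TL^2_x}$, and since $\msS(\fz)\simeq\|\nabla\fz\|^2_{L^2_x}+m^2\|\fz\|^2_{L^2_x}+\|\fz\|^4_{L^4_x}$ contains a \emph{quartic} term in $\|\fz\|_{H^1_x}$, this yields only $\|w(t)\|_{H^1_x}\lesssim\|\fz\|_{H^1_x}+\|\fz\|^2_{H^1_x}+\|h\|_{L^2_TL^2_x}$. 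To get the linear estimate stated, test instead against $w$ (giving $L^2$ control) and against $-\Delta w$ (giving $\dot H^1$ control); the key is that $\langle w^3,-\Delta w\rangle_{L^2_x}=3\int_{\mbT^3}w^2|\nabla w|^2\,\dd x\geq 0$, so the cubic term can simply be discarded and one obtains $\|\nabla w(t)\|^2_{L^2_x}\leq\|\nabla\fz\|^2_{L^2_x}+\|h\|^2_{L^2_TL^2_x}$ and the analogous $L^2$ bound, linear in the data.
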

\begin{proof}
	We only treat the case $\mfz \in \mcC^{\alpha}_x$ in detail, since the estimate \eqref{eq:nl_skeleton_H1_growth_reg_initial} is easily obtained by similar arguments. For $T>0$ and $p=\frac{3}{1+\alpha}$ 
	as above define the set
	\begin{equation*}
		\mfB_T \coloneqq  \left\{ w \in C_T\mcC^{\alpha}(\mbT^3)\,:\,  \|w\|_{C_{\eta;T}L^p_x} \leq 1 \right\},
	\end{equation*}
	along with the map $\Psi$ defined such that for all $w \in C_TL^p(\mbT^3)$ and $t\in [0,T]$,
	\begin{equation}\label{eq:nl_skel_sol_map_def}
		\Psi(w)(t) \coloneqq  e^{t (\Delta -m^2)} \fz - \int_0^{t} e^{(t -s)(\Delta -m^2)} w(s)^3 \dd s  + \int_0^{t} e^{(t -s)(\Delta -m^2)} h(s) \dd s. 
	\end{equation}
	We first show that there exists some $T_*\in (0,1)$ such that $\Psi$ maps $\mfB_{T_*} $ to itself. 
	Applying the Ornstein--Uhlenbeck semi-group estimate \eqref{eq:ou_reg} where appropriate along with the embedding \eqref{eq:besov_reg_embed}, for any $w\in \mfB_T$ and $t\in (0,T]$, we find,
	\begin{equs}
		\thinspace
		& \|\Psi(w)(t)\|_{L^p_x} \\
		\leq & \ \|e^{t(\Delta-m^2)}\fz \|_{L^p_x} + \int_0^t \|e^{(t-s)(\Delta-m^2)}w(s)^3 \|_{L^p_x}\dd s+\int_0^t \|e^{(t-s)(\Delta -m^2)} h(s)\|_{L^p_x}\dd s\\
		\lesssim_\alpha & \  t^{\frac{5\alpha}{12}} \|\fz \|_{\CB^{\nicefrac{5\alpha }{6}}_{p,1}} + \int_0^t (t-s)^{-\frac{3}{2}\left(\frac{3}{p}-\frac{1}{p}\right)}\|w(s)^3\|_{L^{\nicefrac{p}{3}}_x}\dd s +  \int_0^t (t-s)^{-\frac{3}{2}\left(\frac{1}{2}-\frac{1}{p}\right)}\|h(s)\|_{L^2}\dd s\\
		\leq & \ t^{\frac{5\alpha}{12}} \|\fz \|_{\CC^{\alpha}}+ \int_0^t (t-s)^{-\frac{3}{p}}\|w(s)\|^3_{L^{p}_x}\dd s+ \left(\int_0^t (t-s)^{-\frac{3p-6}{2p}}\dd s \right)^{\frac{1}{2}}\|h\|_{L^2_tL^2_x}\\
		\leq & \ t^{\frac{5\alpha}{12}}\|\fz \|_{\CC^{\alpha}} + \|w\|^3_{C_{\eta_0;t}L^p_x}\int_0^t (t-s)^{-\frac{3}{p}}s^{-3\eta_0}\dd s + \|h\|_{L^2_t L^2_x }t^{\frac{1}{2}- \frac{3p-6}{4p}}.
	\end{equs}
	Therefore, for any $T>0$, with $\eta_0$ as above, 
	\begin{equation}\label{eq:nl_Psi_apriori}
		\|\Psi(w)\|_{C_{\eta_0;T}L^p_x} \lesssim_\alpha T^{\eta_0 +\frac{5\alpha}{12}} \|\fz \|_{\CC^\alpha} + T^{1-\frac{3}{p}-2\eta_0}\|w\|_{C_{\eta_0;T}L^p_x} +T^{\frac{1}{2}- \frac{3p-6}{4p}+\eta_0}\|h\|_{L^2_T L^2_x} .
	\end{equation}
	Note that due to the restrictions placed on $\alpha,\, p$ and $\eta_0$ all the exponents in \eqref{eq:nl_Psi_apriori} are positive.
	Hence, noting that continuity of $t\mapsto \|\Psi(w)(t)\|_{L^p_x}$ can be easily shown, there exists a $T_* \in (0,1)$ such that $\Psi(w)\in \mfB_{T_*}$. In order to show that $\Psi$ is a contraction on $\mfB_{T_{**}}$, for $T_{**} \in (0,T_*]$, let $w\neq v \in \mfB_T$, so that for any $t\in (0,T]$, 
	\begin{align*}
		\|\Psi(w(t) - \Psi(v)(t)\|_{L^p} &\leq \int_0^t \|e^{(t-s)(\Delta+m^2)} (w(s)^3 - v(s)^3)\|_{L^p_x}\dd s \\
		&\lesssim_p  \int_0^t (t-s)^{-\frac{3}{2}\left(\frac{3}{p}-\frac{1}{p}\right)}\|w(s)^3-v(s)^3\|_{L^{p/3}_x}\dd s\\
		&\lesssim  \int_0^t (t-s)^{-\frac{3}{p}}\left(\|w(s)\|^{2}_{L^p_x} \vee \|v(s)\|^2_{L^p_x}\right) \|w(s)-v(s)\|_{L^{p}_x}\dd s\\
		&\leq  \left(\|w\|^2_{C_{\eta_0;T}L^p_x}\vee  \|v\|^2_{C_{\eta_0;T}L^p_x}\right)\|u-v\|_{C_{\eta_0;T}L^p_x}\int_0^t (t-s)^{-\frac{3}{p}}s^{-3\eta_0}\dd s\\
		&\leq \|w-v\|_{C_{\eta_0;T}L^p_x} T^{1-\frac{3}{p}-3\eta_0},
	\end{align*}
	where the last inequality follows from the assumptions on $p$ and $\eta_0$ as well as the fact that $w,\,v \in \mfB_T$. Therefore, choosing $T_{**} \in [0,T_*]$ sufficiently small, we see that $\Psi$ is a contraction mapping from $\mfB_{T_{**}}$ to itself; hence, by Banach's fixed point theorem, a unique fixed point, $w^{h,\fz}$, exists in $\mfB_{T_{**}}$. Uniqueness in all of $C_{\eta;T_{**}}L^p(\mbT^3)$ can be shown by similar steps, see for example the last steps in the proof of \cite[Thm.~3.4]{chevyrev_hambly_mayorcas_22_stoch}.
	
	To show that we may extend the solution beyond $T_{**}$ we use the semi-group property of the solution, i.e. for $t_0 \in [0,T_{**}]$ and any $t \in (0,\bar{T}-t_0)$, one has,
	\begin{equs}
		w^{h,\fz}(t_0+t) 
		& = e^{t(\Delta-m^2)} w^{h,\fz}(t_0) - \int_{t_0}^{t_0+t}e^{(t_0+t-s)(\Delta-m^2)} w^{h,\fz}(s)^3 \dd s \\
		& \quad + \int_{t_0}^{t_0+t} e^{(t_0+t-s)(\Delta-m^2)} h(s) \dd s,
	\end{equs}
	It is then easy to see that we may restart the equation, obtaining local existence and uniqueness of solutions in a similar manner as above, for as long as $\|w^{h,\fz}(t)\|_{\mcC{^\alpha}}<\infty$, proving \eqref{eq:skeleton_eq_maximal}
	
	In order to obtain the growth estimates \eqref{eq:nl_skeleton_Lp_growth} and \eqref{eq:nl_skeleton_H1_growth}, let $T \in [0,T_{**}]$ and we first return to \eqref{eq:nl_Psi_apriori}, now applied to the fixed point found above, on the interval $[0,T_{**})$. Using the fact that $T_{**}<1$, we find that there exists a $\delta\coloneqq \delta(\alpha,\eta_0)>0$ such that for any $T\in (0,T_{**}]$,
	\begin{equation*}
		(1-T^{1-\frac{3}{p}-2\eta_0})\|w^{h,\fz}\|_{C_{\eta_0;T}L^p_x} \lesssim T_{**}^\delta \left(\|\fz\|_{\CC^{\alpha}} +\|h\|_{L^2_{T_{**}} L^2_x}\right),
	\end{equation*}
	from which it follows that for any $T\in [0,T_{**}/2)$ there exists a $C\coloneqq  C(\eta_0,\alpha)$ such that,
	\begin{equation*}
		\|w^{h,\fz}\|_{C_{\eta_0;{T}}L^p_x} \lesssim  T^\delta_{**}\left(  \|\fz\|_{\CC^{\alpha}} +\|h\|_{L^2_{T} L^2_x}\right).
	\end{equation*}
	The extension to all $T\in (0,\bar{T})$ follows from the same re-starting procedure as above, repeating the same argument on intervals of length $1/2$ or less; this concludes the proof of \eqref{eq:nl_skeleton_Lp_growth}.
	
	To obtain \eqref{eq:nl_skeleton_H1_growth}, we first pass to a direct Fourier representation in the additive final term of  \eqref{eq:nl_skel_sol_map_def} and show that it enjoys $H^1(\mbT^3)$ regularity, continuously in time. For any $t\in (0,\bar{T})$,
	\begin{align}
		\left\| \int_0^{t} e^{(t -s)(\Delta -m^2)} h(s) \dd s  \right\|^2_{H^1_x} &= \sum_{k\in \mbZ^3}  \left|\,\left(1+|k|^2\right)^{\frac{1}{2}}\int_0^t e^{-(t-s)\left(|k|^2+m^2\right)}\langle h(s),e_k\rangle \,\dd s\,\right|^2 \notag\\
		&\leq  \sum_{k\in \mbZ^3}  \,\left(1+|k|^2\right)\int_0^t e^{-2(t-s)\left(|k|^2+m^2\right)}\dd s \int_0^t|\langle h(s),e_k\rangle |^2\,\dd s \notag \\
		&=\sum_{k\in \mbZ^3}  \,\frac{\left(1+|k|^2\right)}{2\left(m^2+|k|^2\right)} \left(1-e^{-2t\left(|k|^2+m^2\right)}\right) \int_0^t|\langle h(s),e_k\rangle |^2\,\dd s \notag\\
		&\lesssim_{\bar{T},m} \sum_{k\in \mbZ^3}\int_0^t|\langle h(s),e_k\rangle |^2\,\dd s  = \|h\|^2_{L^2_TL^2_x} \label{eq:SHE_H1_reg}.
	\end{align}
	Continuity in time of the left-hand side is easily shown.
	Then, again applying \eqref{eq:ou_reg}, along with the embedding $L^{p}(\mbT^3) \hookrightarrow L^6(\mbT^3)\hookrightarrow L^2(\mbT^3)$ and the estimate \eqref{eq:SHE_H1_reg}, for any $t_0 \in (0,\nicefrac{T_{**}}{2})$ and $t\in (0,t_0]$ it holds that
	\begin{align*}
		\|w^{h,\fz}(t_0+t)\|_{H_x^{1}} \leq & \, \|e^{t(\Delta-m^2)}u(t_0)\|_{H^1_x} + \int_{t_0}^{t_0+t} \|e^{(t+t_0-s)(\Delta-m^2)} w^{h,\fz}(s)^3 \|_{H^1_x}\dd s\\
		& +  \left\|\int_{t_0}^{t_0+t} e^{(t+t_0-s)(\Delta-m^2)} h(s)\dd s \right\|_{H^1_x} \\
		\lesssim & \,t^{-\frac{1}{2}}\|w^{h,\fz}(t_0)\|_{L^2_x} + \int_{t_0}^{t_0+t} (t+t_0 -s)^{-\frac{1}{2}}\|w^{h,\fz}(s)^3\|_{L^2_x} \dd s \\
		&+ \left\|\int_{t_0}^{t_0+t} e^{(t+t_0-s)(\Delta-m^2)} h(s)\dd s \right\|_{H^1_x}\\
		\lesssim_{\bar{T},m} & \, t^{-\frac{1}{2}}\|w^{h,\fz}(t_0)\|_{L^p_x} + t^{\frac{1}{2}}t_0^{-3\eta_0}\|w^{h,\fz}\|^3_{C_{\eta_0;T_{**}}L^p_x}+  \|h\|_{L^2_{\bar{T}}L^2_x}.
	\end{align*}
	Since $\|w^{h,\fz}\|_{C_{\eta_0;T_{**}}L^p_x}\leq 1$, multiplying both sides by $t_0^{\eta_0} t$ and taking suprema we find,
	\begin{align*}
		\sup_{t_0 \in (0,\nicefrac{T_{**}}{2})}	 \sup_{t \in (0,t_0]}  t_0^{\eta_0} t \|w^{h,\fz}(t_0+t)\|_{H^1_x} 
		\lesssim_{\bar{T},m} & \left(T_{**}^{\frac{1}{2}}+T_{**}^{\frac{3}{2}-2\eta_0} \right) \|w^{h,\fz}\|_{C_{\eta_0;T_{**}}}  +\|h\|_{L^2_{\bar{T}}L^2_x}  
	\end{align*}
	Therefore applying \eqref{eq:nl_skeleton_Lp_growth} and consolidating the relevant estimates, there exists a $\delta\coloneqq  \delta(\eta_0)>0$ such that
	\begin{align*}
		\sup_{t\in (0,\nicefrac{T_{**}}{2})}	  t^{1+\eta_0} \|w^{h,\fz}(t)\|_{H^1_x} \lesssim \, T_{**}^\delta \left(\|\fz\|_{\mcC^{\alpha}}+ \|h\|_{L^2_{\bar{T}}L^2_x}\right).
	\end{align*}
	Iterating the same arguments on successive time intervals as previously, we then obtain,
	\begin{equation}\label{eq:nl_skeleton_H2gamma_growth}
		\sup_{t\in (0,\bar{T})} (t\wedge 1)^{1+\eta_0}\|w^{h,\fz}(t)\|_{H^{1}_x} \lesssim (\bar{T}\wedge 1)^\delta\left(\|\fz\|_{\CC^\beta} +\|h\|_{L^2_{\bar{T}} L^2_x}\right)
	\end{equation}
	which concludes the proof of \eqref{eq:nl_skeleton_H1_growth}.
\end{proof}

The following proposition establishes an a priori estimate on the growth of certain $L^p(\mbT^3)$ norms of weak solutions to \eqref{eq:nl_skeleton_equation}; in combination with the maximal time of existence of \eqref{eq:skeleton_eq_maximal} and standard Besov embeddings, we will establish global well-posedness of \eqref{eq:nl_skeleton_equation}, this argument will be concluded in Theorem~\ref{th:nl_skeleton_gwp} below.
\begin{proposition} \label{prop:nl_skeleton_Lp_apriori}
	Let $T\in(0,\infty]$ and $w$ be a mild solution to \eqref{eq:nl_skeleton_equation} on $[0,T]$. Then, let $t_0\in [0,T)$ be such that $w(t_0) \in L^p(\mbT^3$ and $w \in C((t_0,T);H^1(\mbT^3))$ and for any $p> 1$ and $t\in (0,T-t_0]$ it holds that
	\begin{equation}\label{eq:nl_skeleton_Lp_identity}
		\begin{aligned}
			\frac{1}{p}\|w(t_0+t)\|^p_{L^p_x} = &\,\frac{1}{p}\|w(t_0)\|^{p}_{L^p_x} -\frac{2(p-1)}{p}\int_{t_0}^{t_0+t} \|\nabla (w(s)^{p/2})\|_{L^2_x}\dd s \\
			&-\int_{t_0}^{t_0+t} \langle w(s)^3+m^2 w(s),w(s)^{p-1}\rangle\dd s + \int_{t_0}^{t_0+t} \langle h(s),w(s)^{p-1}\rangle\dd s.
		\end{aligned}
	\end{equation}
	Then, for $p\geq 1$ there exists a constant $C\coloneqq  C(p,m) >0$ and an exponent $k(p)\in (1,2]$ such that for all $t\in (0,T-t_0]$, 
	\begin{equation}\label{eq:nl_skeleton_Lp_global}
		\|w(t_0+t)\|^p_{L^p_x} \lesssim C \,e^{-\frac{m^2}{2} t}\|w(t_0)\|^p_{L^p_x}  + \frac{1}{m^{2-k(p)}}\left(1-e^{-\frac{m^2}{2-k(p)} t}\right)^{\frac{2-k(p)}{2}}\|h\|_{L^2_{[t_0,t_0+t]}L^2_x}^{k(p)}.
	\end{equation}
\end{proposition}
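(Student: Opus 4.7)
}

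The plan is to derive the identity~\eqref{eq:nl_skeleton_Lp_identity} by a standard testing argument and then obtain~\eqref{eq:nl_skeleton_Lp_global} via a differential inequality plus Gr\"onwall. First I would establish the identity by (formally) testing the equation~\eqref{eq:nl_skeleton_equation} against $w(s)^{p-1}$ on the time interval $[t_0, t_0+t]$. The hypothesis $w(t_0) \in L^p(\mbT^3)$ and $w \in C((t_0,T); H^1(\mbT^3))$ together with the Sobolev embedding $H^1(\mbT^3) \hookrightarrow L^6(\mbT^3)$ guarantee enough regularity for the duality pairings appearing on the right--hand side to be well defined (for $p$ large one uses that classical parabolic regularisation provides extra smoothness away from $t_0$). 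To justify the chain rule rigorously, I would approximate $w$ by $e^{\delta \Delta} w$ and pass to the limit as $\delta \to 0^+$, using the $H^1$--continuity in time and the integrability of the drift and forcing terms. The Laplacian contribution becomes, after integration by parts on $\mbT^3$ and the pointwise identity $\nabla(w^{p/2}) = \tfrac{p}{2} w^{p/2 - 1} \nabla w$,
\begin{equation*}
	\langle \Delta w, w^{p-1} \rangle_{L^2_x}
	= -(p-1) \int_{\mbT^3} w^{p-2} |\nabla w|^2 \,\dd x
	= -\tfrac{4(p-1)}{p^2} \| \nabla(w^{p/2}) \|_{L^2_x}^2,
\end{equation*}
which, together with the cubic, mass, and forcing terms, gives~\eqref{eq:nl_skeleton_Lp_identity}.

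Next I would pass from the identity to the estimate~\eqref{eq:nl_skeleton_Lp_global}. Since the term $\int \langle w^3, w^{p-1} \rangle \dd s = \int \| w(s) \|_{L^{p+2}_x}^{p+2} \dd s \geq 0$ and the gradient term is also non-negative (with the correct sign), both can be dropped to obtain, in differential form,
\begin{equation*}
	\tfrac{1}{p} \tfrac{\dd}{\dd s} \|w(s)\|_{L^p_x}^p
	\leq -m^2 \|w(s)\|_{L^p_x}^p + \langle h(s), w(s)^{p-1} \rangle_{L^2_x}.
\end{equation*}
The forcing term is then treated with H\"older's inequality, $|\langle h, w^{p-1}\rangle| \leq \|h\|_{L^2_x} \|w\|_{L^{2(p-1)}_x}^{p-1}$, followed by Young's inequality with a conjugate pair $(k(p), k(p)')$ chosen so that part of the mass term $-m^2 \|w\|_{L^p_x}^p$ (specifically, a fraction $\tfrac{m^2}{2}$) absorbs the resulting power of $\|w\|_{L^{2(p-1)}_x}$; for $p = 2$ this is the classical $ab \leq \tfrac{a^2}{2c} + \tfrac{c b^2}{2}$ with $k(2) = 2$, and for $p > 2$ one interpolates $L^{2(p-1)}$ between $L^p$ and (if needed) the dropped $L^{p+2}$ norm, yielding a smaller $k(p) \in (1, 2)$.

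After absorption, this leaves
\begin{equation*}
	\tfrac{\dd}{\dd s} \|w(s)\|_{L^p_x}^p
	\leq -\tfrac{p m^2}{2} \|w(s)\|_{L^p_x}^p + C_{p, m} \|h(s)\|_{L^2_x}^{k(p)}.
\end{equation*}
Gr\"onwall's lemma then produces an exponential decay factor $e^{-\frac{m^2}{2} t}\|w(t_0)\|^p_{L^p_x}$ (up to absorbing the $p$ into the constant) plus a Duhamel integral $\int_{t_0}^{t_0+t} e^{-\frac{p m^2}{2}(t_0 + t - s)} \|h(s)\|_{L^2_x}^{k(p)} \dd s$. I would apply H\"older's inequality to the latter with conjugate exponents $\tfrac{2}{k(p)}$ and $\tfrac{2}{2-k(p)}$, which separates off $\|h\|_{L^2_{[t_0, t_0 + t]}L^2_x}^{k(p)}$ and leaves the explicit time integral $\int_{t_0}^{t_0+t} e^{-\frac{m^2}{2-k(p)}(t_0+t-s)} \dd s$ to evaluate, producing precisely the factor $\tfrac{1}{m^{2-k(p)}} (1 - e^{-\frac{m^2}{2-k(p)} t})^{(2-k(p))/2}$ in~\eqref{eq:nl_skeleton_Lp_global}.

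The main obstacle I would expect is the rigorous justification of the $L^p$ chain rule in Step~1: since $w$ is only mild and its regularity is reported on an open interval $(t_0, T)$, one must either bootstrap using the parabolic smoothing from Proposition~\ref{prop:nl_skeleton_LWP} (to guarantee $w(t)\in L^{p+2}(\mbT^3)$ for $t > t_0$, so that all pointwise products make sense) or truncate the non-linearity and mollify in space, passing to the limit at the end; a further minor technicality is handling the boundary time $s = t_0$ in the time-integrated version, which can be dealt with by integrating from $t_0 + \delta$ and sending $\delta \to 0^+$ using lower semicontinuity of the $L^p$ norm and the continuity of $t \mapsto w(t)$ into $L^p(\mbT^3)$ at $t_0$. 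Identifying the sharp $k(p)$ for all $p \geq 1$ is a secondary nuisance but is handled by a direct Young's inequality computation once the preceding analysis is in place.
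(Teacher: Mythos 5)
Your derivation of the identity~\eqref{eq:nl_skeleton_Lp_identity} by testing against $w^{p-1}$, with a mollification argument to justify the chain rule and Proposition~\ref{prop:nl_skeleton_LWP} supplying the parabolic smoothing needed to make sense of the pointwise products for $t>t_0$, matches the paper's (implicit) approach; the paper in fact works with even $p \geq 2$ to sidestep the modulus issues you allude to and recovers general $p$ afterwards by ordering of the $L^p(\mbT^3)$ norms.

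The gap is in your absorption of the forcing pairing. After H\"older you are left with $\|h\|_{L^2_x}\,\|w\|_{L^{2(p-1)}_x}^{p-1}$, and you propose to drop \emph{both} the gradient term and the cubic term and then absorb this either into the mass term $-m^2\|w\|_{L^p_x}^p$ alone or, for $p>2$, by interpolating $L^{2(p-1)}$ between $L^p$ and $L^{p+2}$. That interpolation exists only when $p < 2(p-1) < p+2$, i.e.\ for $2 < p < 4$; once $p \geq 4$ one has $2(p-1) \geq p+2$ and $\|w\|_{L^{2(p-1)}_x}$ is \emph{not} controlled by $\|w\|_{L^p_x}$ and $\|w\|_{L^{p+2}_x}$, so the scheme breaks. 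This is not a corner case: the proposition is invoked later (see the proof of Theorem~\ref{th:nl_skeleton_gwp}) for $p$ large, e.g.\ $p>6$, precisely to get $L^p(\mbT^3)\hookrightarrow\mcC^\alpha(\mbT^3)$ for $\alpha\in(-\nicefrac{2}{3},-\nicefrac{1}{2})$. The paper instead \emph{retains} the negative gradient term $-\tfrac{2(p-1)}{p}\int_{t_0}^{t_0+t}\|\nabla(w^{p/2})\|_{L^2_x}^2\,\dd s$ through the absorption step and applies the Sobolev inequality on the torus to $w^{p/2}$, with exponent $q=\tfrac{4(p-1)}{p}\leq 6$, to bound $\|w^{p-1}\|_{L^2_x}$ by a (sub-quadratic) power of $\|w^{p/2}\|_{L^2_x}+\|\nabla(w^{p/2})\|_{L^2_x}$. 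Young's inequality then splits this into a piece absorbed by the mass term, a piece absorbed by the gradient term, and a power $\|h\|_{L^2_x}^{k(p)}$; only \emph{after} this absorption are the gradient and cubic terms discarded and the ODE comparison performed. That device works uniformly in $p$, and it is the missing ingredient in your plan. Your subsequent Gr\"onwall/Duhamel step and the H\"older split of $\int e^{-c(t-s)}\|h(s)\|^{k(p)}_{L^2_x}\,\dd s$ coincide with the paper's final lines.
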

\begin{proof}
	Choosing $p\geq 2$ to be even in \eqref{eq:nl_skeleton_Lp_identity} we first find the identity,
	\begin{equation}\label{eq:p_even_skel_energy}
		\begin{aligned}
			\frac{1}{p}\|w(t_0+t)\|^p_{L^p_x} = & \frac{1}{p}\|w(t_0)\|^{p}_{L^p_x} -\frac{2(p-1)}{p}\int_{t_0}^{t_0+t} \|\nabla (w(s)^{p/2})\|_{L^2_x}\dd s -\int_{t_0}^{t_0+t} \| w(s)^{p+2}\|_{L^1_x}\dd s\\
			&-m^2 \int_{t_0}^{t_0+t} \|w(s)^p\|_{L^1_x}\dd s + \int_{t_0}^{t_0+t} \langle h(s),w(s)^{p-1}\rangle\dd s.
		\end{aligned}
	\end{equation}
	The only term to be estimated is the last one which has no given sign. Applying H\"older's inequality directly gives,
	\begin{equation}\label{eq:holder_applied}
		|\langle h(s),w(s)^{p-1}\rangle|\leq \|h(s)\|_{L^2_x}\|w(s)^{p-1}\|_{L^2_x}.
	\end{equation}
	We recall the Sobolev inequality for non-mean free functions $f:\mbT^3\to \mbR$, see for example \cite[Cor.~2.2]{benyi_oh_13_sobolev_torus},
	\begin{equation}\label{eq:sobolev_ineq_torus}
		\frac{k}{3}\geq \frac{1}{r}-\frac{1}{q}\quad \text{then}\quad \|f\|_{L^q_x}\lesssim \|f\|_{W^{k,r}_x}.  
	\end{equation}
	Choosing $k=1$ and $r=2$, we may apply \eqref{eq:sobolev_ineq_torus} to $w(s)^{p/2}$ with $q=\frac{4p-4}{p}$ and using the equivalence of finite dimensional norms we obtain
	\begin{align}
		\|w(s)^{p-1}\|_{L^2_x} = \|w(s)^{p/2}\|^{\frac{p}{8p-8}}_{L^{(4p-4)/p}_x} \lesssim \|w(s)^{p/2}\|^{\frac{p}{8p-8}}_{H^1_x} \lesssim &\, \|w(s)^{p/2}\|^{\frac{p}{8p-8}}_{L^2_x} + \|\nabla (w(s)^{p/2})\|^{\frac{p}{8p-8}}_{L^2_x}\notag \\
		=& \|w(s)^p\|^{\frac{p}{16p-16}}_{L^1_x} + \|\nabla (w(s)^{p/2})\|^{\frac{p}{8p-8}}_{L^2_x}. \label{eq:sobolev_applied}
	\end{align}
	Combining \eqref{eq:holder_applied} with \eqref{eq:sobolev_applied} and the Cauchy--Schwarz inequality gives,
	\begin{equation*}
		|\langle h(s),w(s)^{p-1}\rangle| \lesssim \|h(s)\|_{L^2_x}\left(\|w(s)^p\|^{\frac{p}{16p-16}}_{L^1_x} + \|\nabla (w(s)^{p/2})\|^{\frac{p}{8p-8}}_{L^2_x}\right).
	\end{equation*}
	Applying Young's product inequality (in the form $ab \leq \frac{1}{\theta_1\eps}a^{\theta_1}+\frac{\eps}{\theta_2}b^{\theta_2}$) and noting that for $p\geq 2$ one has
	\begin{equation*}
		\frac{p}{16p-16} \vee \frac{p}{4p-4}\leq 2,
	\end{equation*}
	we see that there exist constants $k\coloneqq k(p) \in (1,2]$ and $C\coloneqq C(p,m)>0$ such that, 
	\begin{equation*}
		|\langle h(s),w(s)^{p-1}\rangle|  \lesssim \frac{C}{p} \|h(s)\|^{k(p)}_{L^2_x} + \frac{(p-1)m^2}{p}\|w(s)^p\|_{L^1_x} + \frac{p-1}{p}\|\nabla(w(s)^{p/2})\|_{L^2_x}.
	\end{equation*}
	Introducing this bound into \eqref{eq:p_even_skel_energy} shows that, 
	\begin{equation}\label{eq:skel_absorbed_energy}
		\begin{aligned}
			\|w(t_0+t)\|^p_{L^p_x} \leq &\,\|w(t_0)\|^p_{L^p_x} -(p-1)\int_{t_0}^{t_0+t} \|\nabla (w(s)^{p/2})\|_{L^2_x}\dd s -p\int_{t_0}^{t_0+t} \| w(s)^{p+2}\|_{L^{1}_x}\dd s\\
			&-m^2 \int_{t_0}^{t_0+t} \|w(s)^p\|_{L^1_x}\dd s +\int_{t_0}^{t_0+t} \|h(s)\|^{k(p)}_{L^2_x}\dd s.
		\end{aligned}
	\end{equation}
	Simply bounding the second and third negative terms on the right hand side by zero, we find a comparison with the ODE, written in suggestive notation,
	\begin{equation*}
		\frac{\dd}{\dd t} x(t_0+t) = -\frac{m^2}{2} x(t)  +  h(t)^{k}, \quad x(t_0+t)\big|_{t=0} =x(t_0),
	\end{equation*}
	which has the explicit solution, 
	\begin{align*}
		x(t_0+t) =&\, e^{-\frac{m^2}{2} t}x(t_0) + \int_{t_0}^{t_0+t}  e^{-\frac{m^2}{2}(t_0+t-s)}h(s)^{k} \dd s.
	\end{align*}
	Thus, by a suitable comparison, we obtain the chain of estimates,
	\begin{equs}
		\thinspace
		& \|w(t_0+t)\|^p_{L^p_x} \\
		\leq & \, e^{-\frac{m^2}{2} t}\|w(t_0)\|^p_{L^p_x}  +\int_{t_0}^{t_0+t} e^{-\frac{m^2}{2} (t_0+t-s)}\|h(s)\|^{k(p)}_{L^2_x}\dd s\\
		\leq & \, e^{-\frac{m^2}{2} t}\|w(t_0)\|^p_{L^p_x}  +\left(\int_{t_0}^{t_0+t} e^{-\frac{m^2}{2-k(p)} (t_0+t-s)}\dd s\right)^{\frac{2-k(p)}{2}}\left(\int_{t_0}^{t_0+t} \|h(s)\|^{2}_{L^2_x}\dd s\right)^{\frac{k(p)}{2}}\\
		=&\, e^{-\frac{m^2}{2} t}\|w(t_0)\|^p_{L^p_x}  + \left(\frac{2-k(p)}{m^2}\left(1-e^{-\frac{m^2}{2-k(p)} t}\right)\right)^{\frac{2-k(p)}{2}}\|h\|_{L^2_{[t_0,t_0+t]}L^2_x}^{k(p)}\\
		\lesssim &\, e^{-\frac{m^2}{2} t}\|w(t_0)\|^p_{L^p_x}  + \frac{1}{m^{2-k(p)}}\left(1-e^{-\frac{m^2}{2-k(p)} t}\right)^{\frac{2-k(p)}{2}}\|h\|_{L^2_{[t_0,t_0+t]}L^2_x}^{k(p)}
	\end{equs}
	we obtain the claimed estimate \eqref{eq:nl_skeleton_Lp_global} for all even integers $p\geq 2$. The estimate for any $p\geq 2$ follows from the ordering of the $L^p(\mbT^3)$ spaces. 
\end{proof}
\begin{remark}\label{rem:nl_skeleton_CDFI}
	Instead of ignoring the effect of the negative non-linear term in the above proof, one could instead apply Jensen's inequality to obtain,
	\begin{equation*}
		\|w\|^p_{L^p_x} \lesssim 	\|w^{p+2}\|^{\frac{p}{p+2}}_{L^1_x}
	\end{equation*}
	which combined with \eqref{eq:skel_absorbed_energy} and formally differentiating in time, gives,
	\begin{equation*}
		\frac{\dd}{\dd t}\|w(t)\|^p_{L^p_x} + c_1 \left(\|w(t)\|^p_{L^p_x}\right)^{\frac{p+2}{p}}\lesssim   \|h(t)\|^{k(p)}_{L^2_x},\quad \text{for all}\,\, t\in (t_0,T),
	\end{equation*}
	for the same $k(p)\in (1,2]$ as above. Then, applying \cite[Lem.~3.8]{tsatoulis_weber_18_spectral} (see also \cite[Lem.~7.3]{mourrat_weber_infinity}) one finds a stronger bound, which is independent of the initial data, of the following form
	\begin{equation*}
		\sup_{t\in (t_0,T)}\|w(t)\|_{L^p} \lesssim_p \max\left\{\frac{1}{\sqrt{t_0}} ,\|h\|^{\frac{k(p)}{p+2}}_{L^2_{[t_0,T]}L^2_x} \right\}.
	\end{equation*}
	It follows in particular, that for any $p \geq 2$ and $\alpha \in (-\frac{2}{3},-\frac{1}{2})$ as in Proposition~\ref{prop:nl_skeleton_LWP} one has
	\begin{equation}\label{eq:nl_skeleton_Lp_CDFI}
		\sup_{\fz \in \mcC^{\alpha}(\mbT^3)} \sup_{t\in (t_0,T)}	\|w(t)\|_{L^p} \lesssim_p \max\left\{\frac{1}{\sqrt{t_0}} ,\|h\|^{\frac{k(p)}{p+2}}_{L^2_{[t_0,T]}L^2_x} \right\}.
	\end{equation}
	Since \eqref{eq:nl_skeleton_Lp_CDFI} is independent of the initial data it is strictly stronger than the estimate derived in Proposition~\ref{prop:nl_skeleton_Lp_apriori}. However, the proof of \eqref{eq:nl_skeleton_Lp_CDFI} relies on the strongly damping cubic term in the $\Phi^4$ equation and since many other quantum field theories to which this method may conceivably be applied (e.g. sine-Gordon and Yang--Mills theories) do not contain an analogous term we only make use of the weaker estimate \eqref{eq:nl_skeleton_Lp_global}. This estimate is both sufficient for our purposes and likely to be replicable for other suitable theories. One advantage to note of the strategy to prove \eqref{eq:nl_skeleton_Lp_CDFI} is that, unlike the proof of Proposition~\ref{prop:nl_skeleton_Lp_apriori}, it does not require a positive mass $m^2>0$ in the equation.
\end{remark}
\begin{corollary}\label{cor:nl_skeleton_Lp_decay}
	Let $\mfz\in \mcC^\alpha(\mbT^3)$, $h\in L^2(\mbR_+\times \mbT^3)$ and $w^{h,\mfz}:[0,\infty) \times \mbT^3 \to \mbR$ be a weak solution to \eqref{eq:nl_skeleton_equation} such that \eqref{eq:nl_skeleton_Lp_identity} holds for some $p >1$. Then, for the same $p\geq 1$ one has
	\begin{equation*}
		\lim_{t \rightarrow \infty} \|w^{h,\mfz}(t)\|_{L^p_x}=0.
	\end{equation*}
\end{corollary}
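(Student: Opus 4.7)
The plan is to leverage the a priori estimate~\eqref{eq:nl_skeleton_Lp_global} from Proposition~\ref{prop:nl_skeleton_Lp_apriori} by shifting the base point $t_0$ far to the right, where the remaining $L^2$-mass of the forcing $h$ is arbitrarily small. Concretely, since $h \in L^2(\mbR_+ \x \mbT^3)$, absolute continuity of the integral gives, for any $\gamma > 0$, a time $t_\gamma > 0$ such that
\begin{equation*}
	\|h\|_{L^2_{[t_\gamma,\infty)}L^2_x}^{k(p)} < \gamma.
\end{equation*}

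First, I would verify that the hypotheses of Proposition~\ref{prop:nl_skeleton_Lp_apriori} are met at the base point $t_0 = t_\gamma$. By Proposition~\ref{prop:nl_skeleton_LWP}, eqs.~\eqref{eq:nl_skeleton_Lp_growth}--\eqref{eq:nl_skeleton_H1_growth}, and the global well-posedness stated in Theorem~\ref{th:nl_skeleton_gwp}, one has $w^{h,\mfz}(t_\gamma) \in L^{p_0}(\mbT^3) \cap H^1(\mbT^3)$ for $p_0 = 3/(1+\alpha) \in (6,9)$, together with $w^{h,\mfz} \in C((t_\gamma,\infty);H^1(\mbT^3))$. Since $L^{p_0}(\mbT^3) \embed L^p(\mbT^3)$ for any $p \in [1,p_0]$, and the energy identity~\eqref{eq:nl_skeleton_Lp_identity} is postulated to hold for the particular $p > 1$ appearing in the statement (this is the assumption of the corollary), the a priori bound~\eqref{eq:nl_skeleton_Lp_global} applies on $[t_\gamma, t_\gamma + t]$ for every $t > 0$. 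For exponents $p \in [1,2]$ not directly produced by Proposition~\ref{prop:nl_skeleton_Lp_apriori}, one can reduce to the case $p = 2$ via the embedding $L^2(\mbT^3) \embed L^p(\mbT^3)$.

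Inserting the choice $t_0 = t_\gamma$ into~\eqref{eq:nl_skeleton_Lp_global} and using the monotonicity of the time interval in the forcing norm, there exist constants $C,\,C' > 0$ (depending on $p$ and $m$) such that for all $t > 0$,
\begin{equation*}
	\|w^{h,\mfz}(t_\gamma + t)\|^p_{L^p_x}
	\leq C\, e^{-\frac{m^2}{2}t}\|w^{h,\mfz}(t_\gamma)\|^p_{L^p_x}
	+ C'\,\|h\|^{k(p)}_{L^2_{[t_\gamma,\infty)}L^2_x}
	\leq C\, e^{-\frac{m^2}{2}t} M_\gamma + C'\,\gamma,
\end{equation*}
where $M_\gamma := \|w^{h,\mfz}(t_\gamma)\|^p_{L^p_x} < \infty$ is fixed. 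Sending $t \to \infty$ yields $\limsup_{s \to \infty}\|w^{h,\mfz}(s)\|^p_{L^p_x} \leq C'\gamma$. Since $\gamma > 0$ was arbitrary, the claim follows.

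The only genuine point of care, which I would single out as the main technical issue, is that the a priori bound~\eqref{eq:nl_skeleton_Lp_global} is non-trivial only because it decouples the memory of the initial datum (exponentially damped by the positive mass $m^2 > 0$) from the current energy injected by $h$, which is measured by the tail norm $\|h\|_{L^2_{[t_0,t_0+t]}L^2_x}$. The tail argument above would fail if $h$ were merely in $L^2_{\mathrm{loc}}$, and it would also fail without the positive mass term; so the proof genuinely uses both the Cameron--Martin hypothesis $h \in L^2(\mbR_+ \x \mbT^3)$ and the strict positivity of $m^2$. No controllability or gradient-flow structure beyond the estimate of Proposition~\ref{prop:nl_skeleton_Lp_apriori} is required.
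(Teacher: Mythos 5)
Your proof is correct and follows essentially the same route as the paper: both apply the a priori estimate~\eqref{eq:nl_skeleton_Lp_global} with the base point $t_0$ shifted far to the right and exploit the tail decay $\|h\|_{L^2_{[t_0,\infty)}L^2_x}\to 0$. Your $\limsup$-and-$\gamma$-arbitrary bookkeeping is in fact a more careful formulation than the paper's informal "joint limit" step, and your explicit verification that the hypotheses of Proposition~\ref{prop:nl_skeleton_Lp_apriori} hold at $t_0 = t_\gamma$ is a welcome addition the paper leaves implicit.
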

\begin{proof}
	The claim follows directly from the a priori estimate \eqref{eq:nl_skeleton_Lp_growth}. Since $w^{h,\mfz}$ is a global weak solution, we may take the joint limit in \eqref{eq:nl_skeleton_Lp_identity} to give,
	\begin{equation*}
		\lim_{t \to \infty} \|w^{h,\mfz}(t)\|^p_{L^p_x} = \lim_{t_0 \to \infty} \lim_{t\to \infty} \|w^{h,\mfz}(t)\|^p_{L^p_x} = \frac{1}{m^{2-k(p)}} \,\lim_{t_0 \to \infty}  \|h\|^{k(p)}_{L^2_{[t_0,\infty]}L^2_x} =0.
	\end{equation*}
\end{proof}
The following a priori lemma demonstrates that decay to zero at $t=+\infty$ of a sufficiently high $L^p(\mbT^3)$ norm of the solution implies decay in the $H^1(\mbT^3)$ norm. Note that the proof also makes use of the assumed positive mass $m^2>0$.
\begin{lemma}\label{lem:nl_skeleton_upgrade_decay} \label{lem_decay_nonlin_eq_H1}
	Let~$\mfz \in \mcC^{\alpha}(\mbT^3)$, $h \in L^2(\mbR_+\times \mbT^3)$, $\alpha\in \left(-\frac{2}{3},-\frac{1}{2}\right)$,~$q\coloneqq  -\frac{1}{\alpha} \in \left(\frac{3}{2},2\right)$,  and $w^{h,\mfz}:\mbR_+\times \mbT^3 \to \mbR$ be a mild solution to \eqref{eq:nl_skeleton_equation} on $\mbR_+\times \mbT^3$ such that $\lim_{t\to \infty}\|w^{h,\mfz}(t)\|_{L^{3q}_x} = 0$. 
	Then it also holds that,
	\begin{equation}\label{eq:nl_skeleton_H1_decay}
		\lim_{t \to \infty} \|w^{h,\mfz}(t)\|_{H^1_x} =0.
	\end{equation}
\end{lemma}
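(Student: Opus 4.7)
\noindent
The plan is to use the mild formulation restarted at a late time $T_0$, combined with the smoothing properties of the Ornstein--Uhlenbeck semigroup and the decay hypothesis in $L^{3q}$.

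\noindent
First, by the semigroup property, for any $T_0>0$ and $t>T_0$ one has
\begin{equation*}
w^{h,\mfz}(t) = e^{(t-T_0)(\Delta-m^2)} w^{h,\mfz}(T_0) - \int_{T_0}^t e^{(t-s)(\Delta-m^2)} w^{h,\mfz}(s)^3 \dd s + \int_{T_0}^t e^{(t-s)(\Delta-m^2)} h(s) \dd s.
\end{equation*}
Fix $\eta>0$. By hypothesis we can choose $T_0\geq 1$ such that $\sup_{s\geq T_0} \|w^{h,\mfz}(s)\|_{L^{3q}_x}\leq \eta$, and, since $h\in L^2(\mbR_+\times\mbT^3)$, we may additionally arrange $\|h\|_{L^2_{[T_0,\infty)}L^2_x}\leq \eta$.

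\noindent
I would bound the three contributions separately, exploiting throughout that $m^2>0$ gives exponential damping. For the linear part, Plancherel yields the contraction estimate
\begin{equation*}
\|e^{(t-T_0)(\Delta-m^2)} w^{h,\mfz}(T_0)\|_{H^1_x} \leq e^{-m^2(t-T_0)} \|w^{h,\mfz}(T_0)\|_{H^1_x},
\end{equation*}
and since $\|w^{h,\mfz}(T_0)\|_{H^1_x}$ is finite by~\eqref{eq:nl_skeleton_H1_growth}, this term tends to zero as $t\to\infty$. For the forcing term, the same computation as in~\eqref{eq:SHE_H1_reg} but integrated on $[T_0,t]$ rather than $[0,t]$ gives
\begin{equation*}
\left\| \int_{T_0}^t e^{(t-s)(\Delta-m^2)} h(s)\dd s\right\|_{H^1_x}^2 \lesssim_m \|h\|_{L^2_{[T_0,t]}L^2_x}^2 \leq \eta^2.
\end{equation*}

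\noindent
The only nontrivial term is the cubic one. Since $\alpha=-1/q$ and $q\in(\tfrac{3}{2},2)$, Sobolev embedding on $\mbT^3$ (or equivalently the Besov embedding~\eqref{eq:besov_integrabillity_embed}) yields $L^q(\mbT^3)\hookrightarrow H^{-s}(\mbT^3)$ with $s\coloneqq 3(\tfrac{1}{q}-\tfrac{1}{2})\in(0,\tfrac{1}{2})$. Combined with the Plancherel-based smoothing bound
\begin{equation*}
\|e^{t(\Delta-m^2)} f\|_{H^1_x} \lesssim e^{-m^2 t} t^{-(1+s)/2}\|f\|_{H^{-s}_x},
\end{equation*}
and the pointwise identity $\|w^{h,\mfz}(s)^3\|_{L^q_x} = \|w^{h,\mfz}(s)\|_{L^{3q}_x}^3$, one obtains
\begin{equation*}
\int_{T_0}^t \|e^{(t-s)(\Delta-m^2)} w^{h,\mfz}(s)^3\|_{H^1_x}\dd s \lesssim \eta^3 \int_0^\infty e^{-m^2 r} r^{-(1+s)/2}\dd r \lesssim_m \eta^3,
\end{equation*}
where the integral converges precisely because $(1+s)/2\in(\tfrac{1}{2},\tfrac{3}{4})<1$. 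Putting everything together, for all $t$ sufficiently large one has $\|w^{h,\mfz}(t)\|_{H^1_x}\lesssim \eta+\eta^3$, and letting $\eta\to 0$ proves~\eqref{eq:nl_skeleton_H1_decay}.

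\noindent
The only genuine subtlety is the smoothing step $L^q\to H^1$: the Besov estimate~\eqref{eq:ou_reg} as stated in the paper is formulated for $p'\geq p$, whereas here we go from $q<2$ to $2$. This is resolved by first using the Sobolev embedding $L^q\hookrightarrow H^{-s}$ to drop into the Hilbert scale, after which the Plancherel smoothing estimate within $B^{\bullet}_{2,2}$ is immediate and requires no further work.
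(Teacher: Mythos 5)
Your proof is correct and takes essentially the same approach as the paper's: restart the mild formula at a late time, use the smoothing of the Ornstein--Uhlenbeck semigroup to absorb the cubic term (whose $L^q$ norm decays because it equals $\|w\|_{L^{3q}_x}^3$), and handle the forcing via the $L^2_t H^1_x$ estimate. The only differences are organizational (you fix $T_0$ and send $t\to\infty$, the paper fixes the window length and sends $t_0\to\infty$; you estimate the linear term in $H^1$ while the paper does it in $L^{3q}$), together with your explicit detour through $L^q(\mbT^3)\hookrightarrow H^{-s}(\mbT^3)$ followed by Plancherel smoothing $H^{-s}\to H^1$, which is a clean way to justify the smoothing step that the paper invokes more informally — just note that the correct form of your Plancherel bound is $\lesssim e^{-m^2 t}(t\wedge 1)^{-(1+s)/2}\|f\|_{H^{-s}_x}$, which still yields a convergent $\int_0^\infty e^{-m^2 r}(r\wedge 1)^{-(1+s)/2}\dd r$ since $(1+s)/2<1$.
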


\begin{proof}
	Recall that for any $t_0\geq 0$ and $t>0$, by the semi-group property of the solution, we have
	\begin{equs}
		w^{h,\mfz}(t_0+t)
		& = e^{t(\Delta-m^2)} w^{h,\mfz}(t_0) \\ 
		& \quad  -\int_{t_0}^{t_0+t} e^{(t_0+t-s)(\Delta-m^2)}w^{h,\mfz}(s)^3\dd s + \int_{t_0}^{t_0+t} e^{(t_0+t-s)(\Delta-m^2)} h(s) \dd s.
	\end{equs}
	Hence, applying \eqref{eq:ou_reg}
	along with \eqref{eq:SHE_H1_reg}, we obtain the chain of  estimates
	\begin{equs}
		\thinspace &
		\|w^{h,\mfz}(t_0+t)\|_{H^1_x} \\
		\leq & \, e^{- tm^2} \|e^{t\Delta}w^{h,\mfz}(t_0)\|_{H^1_x} 
		+ \int_{t_0}^{t_0+t} \|e^{(t_0+t-s)(\Delta-m^2)} w^{h,\mfz}(s)^3\|_{H^1_x} \dd s \\ 
		& + \, \left\|\int_{t_0}^{t_0+t} e^{(t_0+t-s)(\Delta-m^2)} h(s)\, \dd s\right\|_{H^1_x}\\
		\lesssim  \, & e^{-tm^2} (t\wedge 1)^{-\frac{3-q}{4q}}\|w^{h,\mfz}(t_0)\|_{L^{3q}_x}  \\
		& +e^{-(t_0+t)m^2} \int_{t_0}^{t_0+t} e^{s m^2} ((t_0+t-s)\wedge 1)^{-\frac{3-q}{4q}} \|w^{h,\mfz}(s)\|_{L^{3q}_x} \dd s+ \|h\|_{L^2_{[t_0,t_0+t]}L^2_x}\\
		\lesssim   \,& e^{-tm^2} (t\wedge 1)^{-\frac{3-q}{4q}}\|w^{h,\mfz}(t_0)\|_{L^{3q}_x}  + e^{-tm^2} t^{1-\frac{3-q}{4q}}\sup_{s \in [t_0,t_0+t]}\|w^{h,\mfz}(s)\|_{L^{3q}_x}  + \|h\|_{L^2_{[t_0,+\infty)}L^2_x}.
	\end{equs}
	Therefore, since for any fixed $t>0$ one has
	\begin{equation*}
		\lim_{s\to +\infty} \|w^{h,\mfz}(s)\|_{H^1_x} = \lim_{t_0 \to +\infty}\|w^{h,\mfz}(t_0+t)\|_{H^1_x},
	\end{equation*}
	using the assumption that $\lim_{t\to\infty} \|w^{h,\mfz}(t)\|_{L^{3q}_x}=0$, it holds that
	\begin{equation*}
		\lim_{s\to \infty} \|w^{h,\mfz}(s)\|_{H^1_x}  \lesssim \lim_{t_0 \to +\infty}  \|h\|_{L^2_{[t_0,+\infty)}L^2_x} =0.
	\end{equation*}
\end{proof}
\begin{theorem}[Global Well-Posedness of the Skeleton Equation]\label{th:nl_skeleton_gwp}
	Let $\alpha \in (-\frac{2}{3},-\frac{1}{2})$, $\fz \in \mcC^{\alpha}(\mbT^3)$ and $h\in L^2(\mbR_+\times \mbT^3)$. Then, there exists a unique solution $w^{h,\mfz}:(0,\infty]\to H^1(\mbT^3)$ to \eqref{eq:nl_skeleton_equation}. Furthermore,
	\begin{enumerate}[label=(\roman*)]
		\item \label{it:nl_skeleton_regularity} for any $T>0$ and $t_0\in (0,T]$ (where if $\mfz \in H^1(\mbT^3)$ we may allow $t_0=0$) 
		\begin{equation}\label{eq:nl_skeleton_global_bound}
			\|w^{h,\mfz}\|_{H^1_{[t_0,T]}H^{-1}_x} + \|w^{h,\mfz}\|_{C_{[t_0,T]}H^1_x}
			<\infty.
		\end{equation}
		
		\item  \label{it:nl_skeleton_global_decay} for any $p\geq 2$, it holds that 
		\begin{equation}\label{eq:nl_skeleton_global_decay}
			\lim_{t\to \infty} \|w^{h,\mfz}(t)\|_{\mcC^{\alpha}} = \lim_{t\to \infty}\|w^{h,\mfz}(t)\|_{L^p_x} = \lim_{t\to \infty}\|w^{h,\mfz}(t)\|_{H^1_x} =0,
		\end{equation}
		\item
		\label{it:nl_skeleton_quant_bounds}	for any $p\geq 2$, there exists an $\eta \in \left(-\frac{5\alpha}{12},-\frac{\alpha}{3}\right)$ such that for all $\mfz \in \mcC^{\alpha}(\mbT^3)$, $h\in L^2(\mbR_+\times \mbT^3)$ and $T\in (0,\infty)$,
		\begin{equation}\label{eq:nl_skeleton_global_increment_bound_2}
			\sup_{t \in (0,T)} (t\wedge1)^{\eta} \|w^{h,\fz}(t)\|_{L^p_x} \lesssim_{m,p,\alpha} \|\fz\|_{\mcC^{\alpha}}+ \|h\|_{L^2_{T}L^2_x}.
		\end{equation}
		while, if $\mfz = \in H^1(\mbT^3)$, we have
		\begin{equation}\label{eq:nl_skeleton_global_increment_bound_0_initial_2}
			\sup_{t\in [0,T)}\|w^{h,\fz}(t)\|_{L^p_x} \lesssim_{m,p} \|\mfz\|_{H^1_x} + \|h\|_{L^2_{T}  L^2_x}^2.
		\end{equation}
	\end{enumerate}
\end{theorem}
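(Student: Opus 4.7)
The strategy is to upgrade the local theory of Proposition~\ref{prop:nl_skeleton_LWP} to a global one by combining the blow-up criterion~\eqref{eq:skeleton_eq_maximal} with the global $L^p$ a priori estimate of Proposition~\ref{prop:nl_skeleton_Lp_apriori}. The local theory already gives a unique maximal solution $w^{h,\mfz}$ on $[0,\bar{T})$; since blow-up at $\bar{T} < \infty$ can only manifest as $\|w^{h,\mfz}(t)\|_{\mcC^\alpha}\to\infty$, it suffices to produce an a priori bound on this quantity that is uniform on compact subintervals of $[0,\bar{T})$.

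To this end, fix any $t_0 \in (0, \bar{T})$. By~\eqref{eq:nl_skeleton_Lp_growth} we have $w^{h,\mfz}(t_0) \in L^p(\mbT^3)$ for $p = \tfrac{3}{1+\alpha} \in (6,9)$, and by~\eqref{eq:nl_skeleton_H1_growth} the solution has the $H^1$-time regularity on $[t_0,t']\subset(0,\bar{T})$ required by Proposition~\ref{prop:nl_skeleton_Lp_apriori}; hence~\eqref{eq:nl_skeleton_Lp_global} yields
\begin{equation*}
    \sup_{t\in [t_0,\bar{T})}\|w^{h,\mfz}(t)\|_{L^p_x} \lesssim_{m,p} \|w^{h,\mfz}(t_0)\|_{L^p_x} + 1 + \|h\|_{L^2_{\mbR_+}L^2_x}.
\end{equation*}
The chain $L^p(\mbT^3) \embed \mcB^{-\eps}_{p,\infty}(\mbT^3) \embed \mcB^{-\eps-3/p}_{\infty,\infty}(\mbT^3) = \mcC^{-\eps-3/p}(\mbT^3) \embed \mcC^\alpha(\mbT^3)$, valid via~\eqref{eq:besov_Lp_embed} and~\eqref{eq:besov_integrabillity_embed} for small $\eps>0$ once $p>-3/\alpha$, then shows that $\|w^{h,\mfz}\|_{C_{[t_0,\bar{T})}\mcC^\alpha}$ is finite, precluding blow-up and forcing $\bar{T}=\infty$. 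Uniqueness of the global solution follows by iterating the local uniqueness on successive compact intervals.

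Item~\ref{it:nl_skeleton_quant_bounds} is obtained by treating the short-time regime $(0, 1\wedge T]$ directly via~\eqref{eq:nl_skeleton_Lp_growth}, and the long-time regime $[1\wedge T, T)$ via the display above applied with $t_0 = \tfrac{1}{2}(1\wedge T)$, in which $\|w^{h,\mfz}(t_0)\|_{L^p_x}$ is in turn controlled by~\eqref{eq:nl_skeleton_Lp_growth}; the $H^1$-initial-datum variant is entirely analogous, using~\eqref{eq:nl_skeleton_H1_growth_reg_initial} in place of~\eqref{eq:nl_skeleton_Lp_growth} and permitting $t_0 = 0$. For item~\ref{it:nl_skeleton_regularity}, the $C_{[t_0,T]}H^1_x$ component is~\eqref{eq:nl_skeleton_H1_growth} (resp.~\eqref{eq:nl_skeleton_H1_growth_reg_initial}); the $H^1_{[t_0,T]}H^{-1}_x$ component is read off from $\partial_t w = \Delta w - w^3 - m^2 w + h$ by noting that each right-hand term belongs to $L^2_{[t_0,T]}H^{-1}_x$, using $H^1 \embed L^6$ to place $w^3 \in L^2 \embed H^{-1}$. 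For item~\ref{it:nl_skeleton_global_decay}, Corollary~\ref{cor:nl_skeleton_Lp_decay} yields $L^p$ decay for every even $p \geq 2$, from which all $p \geq 2$ follow by interpolation; Lemma~\ref{lem:nl_skeleton_upgrade_decay} then upgrades the $L^{3q}$ decay (for $q = -1/\alpha \in (3/2,2)$) to $H^1$ decay, and the $\mcC^\alpha$ decay is immediate from the embedding in~\eqref{eq:H1_to_L3q_to_Calpha_embed}.

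The principal technical point is the verification that the energy identity~\eqref{eq:nl_skeleton_Lp_identity} underpinning Proposition~\ref{prop:nl_skeleton_Lp_apriori} is genuinely applicable to the maximal solution: although the $H^1$-time continuity on $(0,\bar{T})$ supplied by~\eqref{eq:nl_skeleton_H1_growth} is the crucial ingredient, promoting it to the distributional chain rule~\eqref{eq:hilbert_chain} for $\|w\|_{L^p}^p$ requires a standard but careful mollification argument, exploiting that $w^3 \cdot w^{p-1} = w^{p+2} \in L^1(\mbT^3)$ via Sobolev embedding. Everything else amounts to routine bookkeeping of constants across the short-time/long-time transition.
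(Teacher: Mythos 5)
Your argument is correct and follows essentially the same route as the paper's own proof: both upgrade local well-posedness (Proposition~\ref{prop:nl_skeleton_LWP}) to global existence via the $L^p$ a priori bound of Proposition~\ref{prop:nl_skeleton_Lp_apriori} and the blow-up criterion~\eqref{eq:skeleton_eq_maximal}, then obtain decay from Corollary~\ref{cor:nl_skeleton_Lp_decay} and Lemma~\ref{lem:nl_skeleton_upgrade_decay}, and deduce items~\ref{it:nl_skeleton_regularity}--\ref{it:nl_skeleton_quant_bounds} from the local estimates extended globally. You also correctly flag the one genuinely delicate point — justifying the $L^p$ energy identity~\eqref{eq:nl_skeleton_Lp_identity} for the mild solution — which the paper handles by appealing to the variational construction of weak solutions (Pr\'ev\^ot--R\"ockner/Pardoux) and you via a mollification argument; these are equivalent ways of discharging the same obligation.
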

\begin{proof}
	Firstly, applying Lemma~\ref{prop:nl_skeleton_LWP} we see that there exists a unique, mild solution $w^{h,\mfz} \in C((0,\bar{T}];H^1(\mbT^3))$ for some $\bar{T} \in (0,1)$. Furthermore, this solution can be extended so long as $\|w^{h,\mfz}(t)\|_{\mcC^{\alpha}}<\infty$. Since one has $w^{h,\mfz}(t_0)\in H^1(\mbT^3)$ for all $t_0 \in (0,\bar{T}]$, one may argue using uniqueness of the mild solution in this regularity class and construction of weak solutions via the variational method (see for example \cite{prevot_rockner_07_concise,pardoux_21_SPDE}) to see that $t\mapsto w^{h,\mfz}(t)$ satisfies \eqref{eq:nl_skeleton_Lp_identity} on any interval $[t_0,\bar{T}]$ with $t_0 \in (0,\bar{T})$. As a result, we may apply Proposition~\ref{prop:nl_skeleton_Lp_apriori} for $p\geq 2$ sufficiently large we see that 
	\begin{equation*}
		\sup_{t\in [t_0,\bar{T}]} \|w^{h,\mfz}(t)\|_{\mcC^{\alpha}} <\infty.
	\end{equation*}
	So repeating the same arguments, we may extend the solution indefinitely. Uniqueness follows from similar arguments as in the proof of Proposition~\ref{prop:nl_skeleton_LWP} and the estimate \eqref{eq:nl_skeleton_global_bound}, for $\mfz \in \mcC^{\alpha}(\mbT^3)\setminus H^1(\mbT^3)$ follows directly. To see that we may take $t_0 =0$ in the case $\mfz \in H^1(\mbT^3)$ we make use of \eqref{eq:nl_skeleton_H1_growth_reg_initial}.
	
	To show the decay of solutions as in \eqref{eq:nl_skeleton_global_decay} it suffices to combine Corollary~\ref{cor:nl_skeleton_Lp_decay}, Lemma~\ref{lem:nl_skeleton_upgrade_decay} along with the embedding $L^{p}(\mbT^3)\hookrightarrow\mcC^{\alpha}(\mbT^3)$ for $p>1$ sufficiently large.

	The estimates \eqref{eq:nl_skeleton_global_increment_bound_2} and \eqref{eq:nl_skeleton_global_increment_bound_0_initial_2} follow from their counterparts given by Proposition~\ref{prop:nl_skeleton_LWP} but now extended to any $T\in (0,\infty)$ using the fact that $\sup_{t\geq 0} \|w^{h,\fz}(t)\|_{\mcC^{\alpha}}<\infty$, as shown in the proceeding paragraph.
\end{proof}
\subsection{Global Controllability}\label{sec:skeleton_controllability}
In this section, we study \emph{controllability} of the solution~$w^{h,0}$ to \eqref{eq:nl_skeleton_equation} with~$0$ initial data.
We first consider the linearized, skeleton equation, we consider the solution to
\begin{equation}\label{eq:lin_skeleton_equation}
	\begin{cases}
		\partial_t z^h - \Delta z^h = - m^2 z^h + h,&\\
		z^h\tzero = 0.
	\end{cases}
\end{equation}
for $h\in L^2_T L^2(\mbT^3)$.
\begin{lemma}\label{lem:lin_skeleton_reg}
	Let $T>0$, $m^2>0$, $h\in L^2_TL^2$ and $z^h$ solve \eqref{eq:lin_skeleton_equation}. Then there exists a constant $C\coloneqq  C(T,m)>0$ such that
	\begin{equation}\label{eq:lin_skeleton_growth}
		\|z^h\|_{C_TH^1_x} \, \vee \,\,	\|z^h\|_{L^2_TH^2_x} \leq C \|h\|_{L^2_TL^2_x}.
	\end{equation}
\end{lemma}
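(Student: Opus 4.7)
The plan is straightforward: equation~\eqref{eq:lin_skeleton_equation} is linear with additive $L^2$ forcing, and both bounds will follow from standard energy estimates exploiting the regularising effect of the heat semigroup together with the strict positivity of the mass $m^2 > 0$. I would not invoke the full Da Prato--Kwapie\'n type maximal regularity machinery; a direct Fourier-space computation together with a single energy identity suffices.

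First I would establish the $C_T H^1_x$ bound directly in Fourier space. The explicit representation gives
\[
\hat z^h(t,k) = \int_0^t e^{-(4\pi^2|k|^2 + m^2)(t-s)} \hat h(s,k)\,\dd s,
\]
and an application of Cauchy--Schwarz in the time variable yields
\[
|\hat z^h(t,k)|^2 \le \frac{1 - e^{-2(4\pi^2|k|^2+m^2)t}}{2(4\pi^2|k|^2+m^2)} \int_0^T |\hat h(s,k)|^2\, \dd s
\,\lesssim_{m}\, \frac{1}{1+|k|^2} \int_0^T |\hat h(s,k)|^2\, \dd s.
\]
Multiplying by $1 + |k|^2$ and summing over $k \in \mbZ^3$ gives $\|z^h(t)\|_{H^1_x}^2 \lesssim_m \|h\|_{L^2_T L^2_x}^2$, uniformly in $t \in [0,T]$; continuity in time is immediate from dominated convergence applied to the Fourier representation.

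For the $L^2_T H^2_x$ bound I would test the equation against $-\Delta z^h$---justified at this regularity by the Fourier representation, which places $z^h$ in $C_T H^1_x \cap H^1_T H^{-1}_x$, so the chain rule~\eqref{eq:hilbert_chain} applies---to obtain
\[
\frac{1}{2}\frac{\dd}{\dd t}\|\nabla z^h\|_{L^2_x}^2 + \|\Delta z^h\|_{L^2_x}^2 + m^2 \|\nabla z^h\|_{L^2_x}^2 = \langle h, -\Delta z^h\rangle_{L^2_x}.
\]
Applying Young's inequality to absorb $\tfrac{1}{2}\|\Delta z^h\|_{L^2_x}^2$ into the left-hand side and then integrating on $[0,T]$ with zero initial data yields $\|\Delta z^h\|_{L^2_T L^2_x}^2 \lesssim \|h\|_{L^2_T L^2_x}^2$. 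Combined with the $C_T H^1_x$ bound from the previous step (which also controls $\|z^h\|_{L^2_T L^2_x}$ via $\|z^h\|_{L^2_T L^2_x}^2 \le T \|z^h\|_{C_T L^2_x}^2$), this gives the desired $L^2_T H^2_x$ estimate through the equivalence $\|f\|_{H^2_x}^2 \simeq \|f\|_{L^2_x}^2 + \|\Delta f\|_{L^2_x}^2$ on $\mbT^3$.

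There is no genuine obstacle here: the argument is classical and the only mildly delicate point is the justification of the integration by parts, which is immediate from the Fourier representation (or, alternatively, via a standard Galerkin approximation scheme). The resulting constant $C$ depends on $T$ and $m$ only through these elementary estimates.
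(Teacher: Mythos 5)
Your $C_T H^1_x$ bound follows the paper's Fourier argument verbatim (Cauchy--Schwarz in time on the explicit representation of each mode, then sum), so there is nothing to discuss there. For the $L^2_T H^2_x$ bound you take a genuinely different route: the paper stays in Fourier space and applies Young's convolution inequality to the temporal convolution $\left(1+|k|^2\right)e^{-(\,\cdot\,)(|k|^2+m^2)}\ast \langle h(\,\cdot\,),e_k\rangle$, whereas you test the equation against $-\Delta z^h$ and run a classical parabolic energy estimate. Both are correct, and your energy identity and absorption are fine; in fact your version does not even use $m^2>0$ for this step, which is a (minor) gain. One caveat on your justification: the regularity $z^h \in C_T H^1_x \cap H^1_T H^{-1}_x$ supplied by the Fourier representation and the equation only licenses the chain rule~\eqref{eq:hilbert_chain} for $\tfrac{\dd}{\dd t}\|z^h\|_{L^2_x}^2$, not for $\tfrac{\dd}{\dd t}\|\nabla z^h\|_{L^2_x}^2$ --- the latter needs $z^h \in L^2_T H^2_x$ and $\partial_t z^h \in L^2_T L^2_x$, which is precisely what you are in the middle of proving. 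So the phrase ``immediate from the Fourier representation'' overstates matters slightly; the Galerkin (equivalently, Fourier truncation) route you mention as an alternative is the one that actually closes the argument, and it is worth making that the primary justification rather than the fallback. The paper's Fourier/Young approach sidesteps this issue entirely, which is why it is the shorter proof; yours generalises more readily to non-constant-coefficient operators, which is a trade-off worth being aware of.
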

\begin{proof}
	The proof is straightforward and standard so we only present a sketch. Firstly, for any $t\in [0,T]$ and $\alpha\in [0,1]$, using the Fourier representation of the heat semi-group, one directly has
	\begin{align*}
		\|z^h(t)\|^2_{H^\alpha_x} &= \sum_{k\in \mbZ^3}  \left|\,\left(1+|k|^2\right)^{\frac{\alpha}{2}}\int_0^t e^{-(t-s)\left(|k|^2+m^2\right)}\langle h(s),e_k\rangle \,\dd s\,\right|^2\\
		&\leq  \sum_{k\in \mbZ^3}  \,\left(1+|k|^2\right)^{\alpha}\int_0^t e^{-2(t-s)\left(|k|^2+m^2\right)}\dd s \int_0^t|\langle h(s),e_k\rangle |^2\,\dd s\\
		&=\sum_{k\in \mbZ^3}  \,\frac{\left(1+|k|^2\right)^{\alpha}}{2\left(m^2+|k|^2\right)} \left(1-e^{-2t\left(|k|^2+m^2\right)}\right) \int_0^t|\langle h(s),e_k\rangle |^2\,\dd s\\
		&\lesssim{T,m} \|h\|^2_{L^2_TL^2_x}
	\end{align*}
	Hence, taking $\alpha =1$ gives $z^h\in L^\infty_T H^1_x$. However, it is readily checked that $t\mapsto \|z_t\|_{H^1_x}$ is continuous which proves the first estimate of \eqref{eq:lin_skeleton_growth}. To prove the second, take $\alpha=2$, apply Young's convolution inequality and arguing a posteriori that one may exchange summation and integration, one finds
	\begin{align*}
		\|z^h\|^2_{L^2_T H^2_x}=	\int_0^T \|z^h(t)\|^2_{H^2_x} \dd t &= \int_0^T  \sum_{k\in \mbN^3}  \bigg|\,\left(1+|k|^{2}\right)\int_0^t e^{-(t-s)(|k|^2+m^2)}\langle h(s),e_k\rangle \,\dd s\,\bigg|^2 \dd t\\
		&=  \sum_{k\in \mbN^3} \left\|\,\left(1+|k|^2\right) e^{-(\,\cdot\,)(|k|^2+m^2)}\ast \langle h(\,\cdot\,),e_k\rangle\right\|^2_{L^2_T}\\
		&\leq  \sum_{k\in \mbN^3} \left|\int_0^T \left(1+|k|^{2}\right) e^{-(t-s)(|k|^2+m^2)} \dd s\,\right|^2 \left\| \langle h(\,\cdot\,),e_k\rangle\right\|^2_{L^2_T}\\
		&= \sum_{k\in \mbN^3} \left(1-e^{-t(|k|^2+m^2)} \right)^2 \,\left\| \langle h(\,\cdot\,),e_k\rangle\right\|^2_{L^2_T}\\
		&\lesssim_{T,m} \|h\|^2_{L^2_TL^2_x}.
	\end{align*}
\end{proof}
To show controllability of the non-linear equation, we require some finer properties of the linear solution map.
\begin{lemma} \label{lem:ctrl_lineq}
	For $T > 0$ and $m^2>0$, defining the linear solution map~$\msL_T: L^2_TL^2(\mbT^3) \to H^1(\T^3)$, given by~$\msL_T h \coloneqq  z^h(T)$, defines a linear, bounded, compact and surjective operator for which there exists a $C\coloneqq C(T,m)>0$ such that
	\begin{equation}\label{eq:lin_sol_lwr_bnd}
		\inf_{\|\Psi\|_{H^{-1}_x\setminus\{0\}}} \frac{\|\msL_T^* \Psi\|_{L^2_TL^2_x}}{\|\Psi\|_{H^{-1}_x}} \geq C>0.
	\end{equation}
\end{lemma}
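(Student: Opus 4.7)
Linearity of $\msL_T$ is immediate from linearity of equation~\eqref{eq:lin_skeleton_equation}, and boundedness is the statement of Lemma~\ref{lem:lin_skeleton_reg} evaluated at $t=T$: $\|\msL_T h\|_{H^1_x} = \|z^h(T)\|_{H^1_x}\leq\|z^h\|_{C_TH^1_x}\leq C\|h\|_{L^2_TL^2_x}$. My first substantive step would be to promote this boundedness to a mapping into a slightly better space. Using the Fourier representation
\begin{equation*}
\|\msL_T h\|_{H^s_x}^2
= \sum_{k\in\mbZ^3} (1+|k|^2)^s \Big| \int_0^T e^{-(T-s)(|k|^2+m^2)} \hat{h}(s,k)\,\dd s\Big|^2,
\end{equation*}
Cauchy--Schwarz in time, together with $\int_0^T e^{-2(T-s)(|k|^2+m^2)}\dd s = \tfrac{1-e^{-2T(|k|^2+m^2)}}{2(|k|^2+m^2)}\lesssim_{T,m} (1+|k|^2)^{-1}$, yields $\|\msL_T h\|_{H^s_x}\lesssim_{T,m,s}\|h\|_{L^2_TL^2_x}$ for any $s<2$. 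Choosing $s\in(1,2)$ and invoking the compact Sobolev embedding $H^s(\mbT^3)\cembed H^1(\mbT^3)$ shows $\msL_T$ is a compact operator into $H^1(\mbT^3)$.

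Next I would compute $\msL_T^*$ explicitly and reduce surjectivity to the bound~\eqref{eq:lin_sol_lwr_bnd}. For $\Psi\in H^{-1}(\mbT^3)$ and $h\in L^2_TL^2(\mbT^3)$, self-adjointness of $e^{t(\Delta-m^2)}$ (it is a real Fourier multiplier) and the smoothing property~\eqref{eq:ou_reg} (which guarantees $e^{(T-s)(\Delta-m^2)}\Psi\in L^2_x$ for all $s<T$) allow one to identify
\begin{equation*}
\scal{\msL_T h,\Psi}_{H^1_x;H^{-1}_x} = \int_0^T \scal{h(s),\,e^{(T-s)(\Delta-m^2)}\Psi}_{L^2_x}\dd s,
\end{equation*}
so that $(\msL_T^*\Psi)(s) = e^{(T-s)(\Delta-m^2)}\Psi$.

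The core computation is then the lower bound. Parseval and the explicit time integral give
\begin{equation*}
\|\msL_T^*\Psi\|_{L^2_TL^2_x}^2
= \sum_{k\in\mbZ^3} |\hat{\Psi}(k)|^2 \frac{1-e^{-2T(|k|^2+m^2)}}{2(|k|^2+m^2)}
\geq \frac{1-e^{-2Tm^2}}{2} \sum_{k\in\mbZ^3}\frac{|\hat{\Psi}(k)|^2}{|k|^2+m^2}.
\end{equation*}
Since $|k|^2+m^2\leq (1+m^2)(1+|k|^2)$, the right-hand side is bounded below by $\frac{1-e^{-2Tm^2}}{2(1+m^2)}\|\Psi\|_{H^{-1}_x}^2$, which gives~\eqref{eq:lin_sol_lwr_bnd} with explicit constant $C=C(T,m)>0$ (crucially positive because $m>0$ and $T>0$).

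Finally, I would deduce surjectivity from the bound just obtained via the standard closed-range / Banach theorem duality in Hilbert spaces: an estimate of the form $\|\msL_T^*\Psi\|\geq C\|\Psi\|$ simultaneously shows that $\msL_T^*$ is injective and that its range is closed, and these are together equivalent to $\msL_T$ being surjective onto $H^1(\mbT^3)$. The only step requiring any real care is the interpretation of $\msL_T^*\Psi$ for $\Psi\in H^{-1}(\mbT^3)$ since $\Psi$ is a priori only a distribution; this is cleanly handled by working in Fourier variables on the torus from the outset. All other steps amount to direct Fourier computations and an application of Aubin--Lions-type compactness.
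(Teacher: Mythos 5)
Your computation of $\msL_T^*$, the Fourier-side lower bound, and the deduction of surjectivity from~\eqref{eq:lin_sol_lwr_bnd} via the closed range theorem are all correct and follow essentially the same route as the paper's own proof; the only difference is cosmetic (your constant uses $|k|^2+m^2\leq(1+m^2)(1+|k|^2)$, the paper's uses a slightly different but equivalent manipulation).

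However, your compactness argument contains a genuine gap. Cauchy--Schwarz in time gives
\begin{equation*}
\abs[1]{\hat{z}^h(T,k)}^2
\leq
\del[3]{\int_0^T e^{-2(T-r)(|k|^2+m^2)}\dd r}\int_0^T \abs[0]{\hat{h}(r,k)}^2 \dd r
= \frac{1-e^{-2T(|k|^2+m^2)}}{2(|k|^2+m^2)}\int_0^T \abs[0]{\hat{h}(r,k)}^2 \dd r,
\end{equation*}
so that the multiplier you gain in $k$ is $(|k|^2+m^2)^{-1}$, not $(|k|^2+m^2)^{-2}$. Therefore
\begin{equation*}
\norm[0]{\msL_T h}_{H^s_x}^2
\leq
\sum_{k\in\mbZ^3}\frac{(1+|k|^2)^s}{2(|k|^2+m^2)}\int_0^T\abs[0]{\hat{h}(r,k)}^2\dd r
\lesssim_{m}\sup_{k}(1+|k|^2)^{s-1}\;\norm[0]{h}^2_{L^2_TL^2_x},
\end{equation*}
which is finite only for $s\leq 1$, not for any $s<2$ as you claimed. (The paper's own Lemma~\ref{lem:lin_skeleton_reg} obtains $z^h\in L^2_TH^2(\mbT^3)$ via Young's \emph{convolution} inequality with an $L^1_T$-norm on the kernel, which does give $(|k|^2+m^2)^{-1}$ per unit of $L^2_T$-norm of $\hat h$, but this is an estimate averaged in time and does not transfer to the single endpoint $t=T$.) So the improved Sobolev regularity at $t=T$ that your compactness argument relies on is not available.

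In fact, the compactness claim in the lemma is \emph{false}, and worse, it is internally incompatible with the surjectivity claim: a bounded linear map between two infinite-dimensional Banach spaces cannot be simultaneously surjective and compact, since by the open mapping theorem a surjection sends the open unit ball onto a set with nonempty interior, whereas a compact operator sends it onto a relatively compact set, which in infinite dimensions has empty interior. Concretely, taking $h_N(t,x)\propto (|k_N|^2+m^2)^{1/2}e^{-(T-t)(|k_N|^2+m^2)}e_{k_N}(x)$ with $|k_N|\to\infty$ gives a bounded sequence in $L^2_TL^2(\mbT^3)$ whose images $\msL_T h_N$ have $H^1$-norm uniformly bounded below and are mutually orthogonal, so no subsequence converges. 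This error is also present in the paper (and you should have flagged the internal contradiction). The good news is that compactness is not actually needed downstream: the boundedness of the pseudo-inverse and the estimate~\eqref{eq:lin_pseud_inv_bnd} in Lemma~\ref{lem:ctrl_lineq_pseudoinv} follow directly from surjectivity and the coercivity of $\msL_T\msL_T^*:H^{-1}(\mbT^3)\to H^1(\mbT^3)$ guaranteed by~\eqref{eq:lin_sol_lwr_bnd}, via the explicit formula $\msL_T^+ = \msL_T^*(\msL_T\msL_T^*)^{-1}$, without approximating by finite-rank operators.
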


\begin{proof}
	Linearity and boundedness from $L^2_TL^2(\mbT^3) \to H^1(\T^3)$ follows directly from Lemma~\ref{lem:lin_skeleton_reg}. To demonstrate compactness, it suffices to argue that since $z^h \in L^2_T H^2(\mbT3)$, up to possibly redefining the image on a set of measure zero, we in fact have $\msL_T h \in H^2(\mbT^3) \cembed H^1(\mbT^3)$.
	Thus it only remains to check \eqref{eq:lin_sol_lwr_bnd}, which by the \emph{closed range theorem} (c.f.~\cite[Thm.~$4.13$]{rudin_func_ana}) implies surjectivity of the map.
	Since the operator~$(\Delta - m^2)$ is self-adjoint we see that
	\begin{equation*}
		(\msL_T^* \Psi)(s) = e^{(T-s)(\Delta - m^2)^*} \Psi = e^{(T-s)(\Delta - m^2)} \Psi.
	\end{equation*}
	Recalling that we write~$(e_k)_{k \in \Z^3}$ for the Fourier basis of~$L^2(\T^3)$ we set~$\Psi_k \coloneqq  \scal{\Psi,e_k}_{H^{-1}_x \x H^1_x}$ so that, for some $\tilde{C}\coloneqq \tilde{C}(T)>0$, we find
	\begin{equs}
		\norm[0]{\msL_T^* \Psi}_{L^2_TL^2_x}^2
		& =
		\int_0^T \norm[0]{e^{(T-s)(\Delta-m^2)} \Psi}_{L^2_x}^2 \dif s
		=
		\sum_{k \in \Z^3} \Psi_k^2 \int_0^T e^{-2(T-s) \del[0]{\abs[0]{k}^2 + m^2}} \dif s \\
		& =
		\frac{1}{2}  \sum_{k \in \Z^3} \sbr[1]{1 - e^{-2T\del[0]{\abs[0]{k}^2 + m^2}}} \frac{\Psi_k^2}{\del[0]{\abs[0]{k}^2 + m^2}} 
		\geq 
		\frac{1-e^{-2Tm^2}}{2} \sum_{k \in \Z^3} \frac{\Psi_k^2}{\del[0]{\abs[0]{k}^2 + m^2}} \\
		& \geq 
		\frac{1-e^{-2Tm^2}}{2m^2}  \norm[0]{\Psi}_{H_x^{-1}}^2.
	\end{equs}
	So that \eqref{eq:lin_sol_lwr_bnd} holds with
	\begin{equation*}
		C \coloneqq  \frac{\sqrt{1-e^{-2Tm^2}}}{\sqrt{2}m}.
	\end{equation*}
\end{proof}

Observe that while the operator~$\msL_T$ is surjective onto $H^1(\mbT^3)$ it is clearly not injective.  However, we may define its \emph{pseudo}-inverse~$\msL^+_T$, as described by the following lemma.
\begin{lemma} \label{lem:ctrl_lineq_pseudoinv}
	Let~$T > 0$ and $m^2>0$. Then, there exists a bounded, linear map
	\begin{equation*}
		\msL^+ : H^1(\mbT^3)\setminus\{0\} \to L^2_T L^2(\mbT^3),
	\end{equation*}
	such that
	\begin{itemize}
		\item for all $\phi \in H^1(\mbT^3)\setminus \{0\}$
		\begin{equation*}
			\msL_T \msL_T^+ \phi = \phi,
		\end{equation*}
		\item given any $\phi \in H^1(\mbT^3)\setminus \{0\}$, for all $g \in L^2_TL^2(\mbT^3)$ such that $\msL_T g = \phi$ it holds that
		\begin{equation*}
			\langle \msL^+_T \phi - g,\msL^+_T \phi \rangle_{L^2_TL^2_x} =0.
		\end{equation*}
	\end{itemize}
	Furthermore, recalling the constant $C\coloneqq C(T,m)>0$ from \eqref{eq:lin_sol_lwr_bnd}, one has
	\begin{equation}\label{eq:lin_pseud_inv_bnd}
		\sup_{\phi \in H^1_x \setminus\{0\}} \frac{\|\msL^+_T \phi\|_{L^2_TL^2_x}}{\|\phi\|_{H^1_x}} \leq \frac{1}{C}
	\end{equation}
\end{lemma}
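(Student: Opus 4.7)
\smallskip

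My plan is to construct $\msL_T^+$ as the Moore--Penrose pseudo-inverse via the adjoint, exploiting the coercivity~\eqref{eq:lin_sol_lwr_bnd} established in Lemma~\ref{lem:ctrl_lineq}. The key observation is that, for the bounded linear operator $\msL_T: L^2_T L^2(\mbT^3) \to H^1(\mbT^3)$ with adjoint $\msL_T^*: H^{-1}(\mbT^3) \to L^2_T L^2(\mbT^3)$, the composition $\msL_T \msL_T^*: H^{-1}(\mbT^3) \to H^1(\mbT^3)$ is coercive. Indeed, for any $\Psi \in H^{-1}(\mbT^3)\setminus\{0\}$, one has
\begin{equation*}
\scal{\msL_T \msL_T^* \Psi, \Psi}_{H^1_x; H^{-1}_x}
= \|\msL_T^* \Psi\|_{L^2_T L^2_x}^2
\geq C^2 \|\Psi\|_{H^{-1}_x}^2,
\end{equation*}
where $C$ is the constant from~\eqref{eq:lin_sol_lwr_bnd}. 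By the Lax--Milgram theorem (applied in the duality pairing of $H^1_x$ and $H^{-1}_x$), it follows that $\msL_T \msL_T^*$ is a bounded isomorphism from $H^{-1}(\mbT^3)$ onto $H^1(\mbT^3)$, so we can define
\begin{equation*}
\msL_T^+ \varphi \coloneqq \msL_T^* \bigl(\msL_T \msL_T^*\bigr)^{-1} \varphi, \qquad \varphi \in H^1(\mbT^3)\setminus\{0\},
\end{equation*}
which is plainly bounded and linear.

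Next, I would verify the two asserted properties. The identity $\msL_T \msL_T^+ \varphi = \varphi$ is immediate by construction. For the orthogonality property, let $\varphi \in H^1(\mbT^3)\setminus\{0\}$, let $g \in L^2_T L^2(\mbT^3)$ be any preimage with $\msL_T g = \varphi$, and set $\Psi \coloneqq (\msL_T \msL_T^*)^{-1} \varphi$ so that $\msL_T^+ \varphi = \msL_T^* \Psi$. Using the adjoint identity $\scal{g, \msL_T^* \Psi}_{L^2_T L^2_x} = \scal{\msL_T g, \Psi}_{H^1_x; H^{-1}_x} = \scal{\varphi, \Psi}_{H^1_x; H^{-1}_x}$ together with $\scal{\msL_T^* \Psi, \msL_T^* \Psi}_{L^2_T L^2_x} = \scal{\msL_T \msL_T^* \Psi, \Psi}_{H^1_x; H^{-1}_x} = \scal{\varphi, \Psi}_{H^1_x; H^{-1}_x}$, one immediately obtains
\begin{equation*}
\scal{\msL_T^+ \varphi - g, \msL_T^+ \varphi}_{L^2_T L^2_x}
= \scal{\varphi, \Psi}_{H^1_x; H^{-1}_x} - \scal{\varphi, \Psi}_{H^1_x; H^{-1}_x} = 0.
\end{equation*}

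Finally, the bound~\eqref{eq:lin_pseud_inv_bnd} follows from the same ingredients. Writing again $\Psi = (\msL_T \msL_T^*)^{-1} \varphi$, one computes
\begin{equation*}
\|\msL_T^+ \varphi\|_{L^2_T L^2_x}^2
= \scal{\msL_T \msL_T^* \Psi, \Psi}_{H^1_x; H^{-1}_x}
= \scal{\varphi, \Psi}_{H^1_x; H^{-1}_x}
\leq \|\varphi\|_{H^1_x} \|\Psi\|_{H^{-1}_x},
\end{equation*}
and the coercivity~\eqref{eq:lin_sol_lwr_bnd} applied in the form $\|\Psi\|_{H^{-1}_x} \leq C^{-1} \|\msL_T^* \Psi\|_{L^2_T L^2_x} = C^{-1} \|\msL_T^+ \varphi\|_{L^2_T L^2_x}$ gives $\|\msL_T^+ \varphi\|_{L^2_T L^2_x}^2 \leq C^{-1} \|\varphi\|_{H^1_x} \|\msL_T^+ \varphi\|_{L^2_T L^2_x}$, yielding the claim after dividing. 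There is no real obstacle here: essentially the entire proof is packaged into the coercivity bound~\eqref{eq:lin_sol_lwr_bnd}, and the construction is the standard Hilbert-space pseudo-inverse recipe, so the only thing requiring care is to consistently track the duality pairing $\scal{\,\cdot\,,\,\cdot\,}_{H^1_x; H^{-1}_x}$ versus the $L^2_T L^2$ inner product when invoking the adjoint.
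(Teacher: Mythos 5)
Your proof is correct, and it takes a cleaner route than the paper's. The paper's argument is a sketch that invokes abstract pseudo-inverse theory (existence via closed range), approximates the compact operator $\msL_T$ by finite-rank operators with uniformly positive smallest eigenvalue, and then realises $\msL_T^+$ as the regularised limit $\lim_{\iota\to 0}(\msL_T\msL_T^* + \iota\,\mbI)^{-1}\msL_T^*$, passing the norm bound through that limit. You instead observe that the coercivity estimate~\eqref{eq:lin_sol_lwr_bnd} translates directly into coercivity of the bilinear form $(\Psi_1,\Psi_2)\mapsto\scal{\msL_T\msL_T^*\Psi_1,\Psi_2}_{H^1_x;H^{-1}_x}$ on $H^{-1}(\mbT^3)$, so Lax--Milgram gives that $\msL_T\msL_T^*\colon H^{-1}(\mbT^3)\to H^1(\mbT^3)$ is a bounded isomorphism, and you can then define $\msL_T^+ = \msL_T^*(\msL_T\msL_T^*)^{-1}$ without any regularisation or approximation. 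The two properties and the bound follow by short, self-contained adjoint computations in the correct duality pairing. What this buys is that you never need compactness of $\msL_T$ or to justify a strong operator limit, and the constant in~\eqref{eq:lin_pseud_inv_bnd} drops out directly from coercivity rather than being tracked along an approximating sequence; your version is also more careful than the paper about the distinction between the $H^1$--$H^{-1}$ duality pairing and the $L^2_TL^2$ inner product, which the paper's formula with $\iota\,\mbI_{H^1_x}$ glosses over.

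One cosmetic remark: the lemma states the domain as $H^1(\mbT^3)\setminus\{0\}$, but since your $\msL_T^+$ is manifestly linear on all of $H^1(\mbT^3)$, you may as well define it there and note $\msL_T^+0=0$; the exclusion of the origin is only relevant when writing the supremum in~\eqref{eq:lin_pseud_inv_bnd}.
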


\begin{proof}
	First note that since in general, one may also define the pseudo-inverse of an operator as selecting the pre-image of any element of smallest norm, the existence of a pseudo-inverse fails if and only if the operator itself does not have closed range. In our case, since we have shown $\msL_T :L^2_T L^2(\mbT^3) \to H^1(\mbT^3)$ to be surjective, its range is trivially closed. To obtain the estimate, \eqref{eq:lin_pseud_inv_bnd}, one uses the fact that $\msL_T$ is a compact operator to approximate it by finite rank operators and the lower bound \eqref{eq:lin_sol_lwr_bnd} to see that this approximations can be chosen to have a uniformly positive, smallest eigenvalue. Using this approximation, one also derives the identity,
	\begin{equation}\label{eq:MP_inverse_rep}
		\msL^+_T = \lim_{\iota \to 0} \left(\msL_T\msL_T^* + \iota \mbI_{H^1_x}\right)^{-1} \msL_T^*,
	\end{equation}
	and sees that \eqref{eq:lin_pseud_inv_bnd} holds uniformly along the approximating sequence. Since the sequence converges in the strong topology, we obtain \eqref{eq:lin_pseud_inv_bnd} in the limit.
\end{proof}
\paragraph*{The non-linear equation.} \label{sec_controllability_nonlin_eq}
We employ the results obtained in Lemmas~\ref{lem:ctrl_lineq} \& \ref{lem:ctrl_lineq_pseudoinv} to obtain local, exact, controllability of the non-linear, skeleton dynamics, \eqref{eq:nl_skeleton_equation}. We begin with a straightforward preliminary lemma.
\begin{lemma} \label{lem_estimate_nonlineq_hitpoint}
	Let~$T > 0$, ~$\phi \in H^1(\mbT^3)$ and $\msL^+_T\phi$ be the pseudo-inverse of $\msL_T$ as defined in Lemma~\ref{lem:ctrl_lineq_pseudoinv}.  Then, for $w^{\msL^+_T\phi,0}$ solving,
	\begin{equation}\label{eq:nl_skeleton_equation_from_0}
		\begin{cases}
			\partial_t w^{\msL^+_T \phi,0} - \Delta w^{\msL_T^+,0} = -(w^{\msL_T^+,0})^3-m^2 w^{\msL_T^+,0} + \msL_T^+ \phi,&\\
			w^{\msL_T^+,0}\tzero = 0,
		\end{cases}
	\end{equation}
	it holds that
	\begin{equation*}
		\|w^{\msL^+_T\phi,0}\|_{C_TL^6_x}\lesssim \|\phi\|_{H^1_x}.
	\end{equation*}
\end{lemma}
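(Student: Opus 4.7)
The plan is to bound $w := w^{\msL_T^+ \phi, 0}$ in $C_T H^1(\mbT^3)$ and then conclude via the three-dimensional Sobolev embedding $H^1(\mbT^3) \hookrightarrow L^6(\mbT^3)$, together with the estimate $\|\msL_T^+ \phi\|_{L^2_T L^2_x} \lesssim \|\phi\|_{H^1_x}$ obtained in Lemma~\ref{lem:ctrl_lineq_pseudoinv}. The key observation is that the defocusing cubic nonlinearity behaves favourably in the $H^1$ energy estimate, so that the $H^1$-norm of $w$ is controlled linearly by the $L^2_T L^2_x$-norm of the forcing, exactly as for the linear equation studied in Lemma~\ref{lem:lin_skeleton_reg}.

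Concretely, I would test the PDE satisfied by $w$ with $-\Delta w$. The time derivative contributes $\tfrac{1}{2}\tfrac{d}{dt}\|\nabla w\|_{L^2_x}^2$; the Laplacian contributes $-\|\Delta w\|_{L^2_x}^2$; the cubic term, after an integration by parts, produces the sign-definite contribution $-3 \int_{\mbT^3} w^2 |\nabla w|^2 \, dx \leq 0$; the mass term gives $-m^2 \|\nabla w\|_{L^2_x}^2$; and the forcing term $\int_{\mbT^3} \msL_T^+ \phi \cdot (-\Delta w)\, dx$ is absorbed via Young's inequality. This yields
\begin{equation*}
\frac{1}{2}\frac{d}{dt}\|\nabla w(t)\|_{L^2_x}^2 + \frac{1}{2}\|\Delta w(t)\|_{L^2_x}^2 + m^2 \|\nabla w(t)\|_{L^2_x}^2 \leq \frac{1}{2}\|\msL_T^+ \phi(t)\|_{L^2_x}^2.
\end{equation*}
Integrating on $[0,t]$ and using $w(0) = 0$ gives $\|\nabla w\|_{C_T L^2_x}^2 \lesssim \|\msL_T^+ \phi\|_{L^2_T L^2_x}^2$. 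Combining with the analogous simpler estimate obtained by testing with $w$ itself, I obtain $\|w\|_{C_T H^1_x} \lesssim \|\msL_T^+ \phi\|_{L^2_T L^2_x}$. Invoking \eqref{eq:lin_pseud_inv_bnd} from Lemma~\ref{lem:ctrl_lineq_pseudoinv} and the Sobolev embedding $H^1(\mbT^3) \hookrightarrow L^6(\mbT^3)$ then produces the claim.

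The main technical point is the rigorous justification of the chain rule for $\|\nabla w(t)\|_{L^2_x}^2$: Theorem~\ref{th:nl_skeleton_gwp}\;\ref{it:nl_skeleton_regularity} provides $w \in C_T H^1_x \cap H^1_T H^{-1}_x$, which is a priori insufficient to directly apply an identity of the form \eqref{eq:hilbert_chain} to $\nabla w$ (one would want, e.g., $\nabla w \in L^2_T H^1_x$ and $\nabla \partial_t w \in L^2_T H^{-1}_x$). This is handled by a standard approximation argument: one regularises the forcing via $h_\varepsilon := \rho_\varepsilon * \msL_T^+ \phi$, solves the regularised skeleton equation to obtain $w_\varepsilon \in C_T H^2_x \cap L^2_T H^3_x$ (which is smooth enough for the formal manipulations to be justified), derives the energy inequality above with $\|h_\varepsilon\|_{L^2_T L^2_x} \leq \|\msL_T^+ \phi\|_{L^2_T L^2_x}$, and then passes to the limit $\varepsilon \to 0$ using the continuous dependence of the skeleton solution map on the forcing (which itself follows from the contraction argument in the proof of Proposition~\ref{prop:nl_skeleton_LWP} together with the global a priori bounds of Theorem~\ref{th:nl_skeleton_gwp}). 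The resulting bound is preserved in the limit by lower semi-continuity of the norms involved.
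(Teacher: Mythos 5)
Your proposal is correct, but it follows a genuinely different path than the paper. The paper's proof of this lemma is a one-line citation: it invokes the global $L^p$ a priori bound~\eqref{eq:nl_skeleton_global_increment_bound_0_initial_2} from Theorem~\ref{th:nl_skeleton_gwp} (itself derived in Proposition~\ref{prop:nl_skeleton_Lp_apriori} by testing the equation against~$w^{p-1}$, where the $L^p$-energy argument produces sign-definite terms from~$-w^3$ and~$-m^2 w$), combined with the pseudo-inverse bound~\eqref{eq:lin_pseud_inv_bnd}. You instead re-derive an $H^1$ energy estimate from scratch by testing against~$-\Delta w$, noting that the defocusing cubic contributes~$-3\int w^2|\nabla w|^2 \le 0$, and then pass through the Sobolev embedding~$H^1 \hookrightarrow L^6$. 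Both routes are valid; yours is more self-contained and, incidentally, yields a clean linear dependence on~$\|\phi\|_{H^1_x}$, whereas the cited estimate~\eqref{eq:nl_skeleton_global_increment_bound_0_initial_2} as printed carries a squared $\|h\|_{L^2_T L^2_x}$ factor. You are also right to flag that testing with~$-\Delta w$ is not directly justified at the regularity~$C_TH^1 \cap H^1_TH^{-1}$ provided by Theorem~\ref{th:nl_skeleton_gwp}\;\ref{it:nl_skeleton_regularity}; the mollification-of-the-forcing argument you sketch, combined with continuity of the solution map in the forcing (Lemma~\ref{lem:can_lift_cameron_martin_fcts}) and lower semi-continuity of the~$H^1$ norm, resolves this. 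A small remark: you are effectively re-establishing a global-in-time version of the local estimate~\eqref{eq:nl_skeleton_H1_growth_reg_initial} from Proposition~\ref{prop:nl_skeleton_LWP}, which the paper had derived by mild-solution/Duhamel techniques rather than energy methods; noting this would let you shorten your argument by citing that estimate together with the global existence from Theorem~\ref{th:nl_skeleton_gwp} and iterating, instead of running the full energy estimate with regularisation.
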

\begin{proof}
	This is a direct consequence of Theorem~\ref{th:nl_skeleton_gwp} and Lemma~\ref{lem:ctrl_lineq_pseudoinv}, in particular the inequalities~\eqref{eq:nl_skeleton_global_increment_bound_0_initial_2} and \eqref{eq:lin_pseud_inv_bnd}.
\end{proof}
We are now ready to prove global, exact controllability of the skeleton equation.
\begin{proposition}[Global Exact Controllability of~$w^{h,0}$] \label{prop:nl_skeleton_control}
	For any $T>0$ and $\phi \in  H^1(\mbT^3)$ there exists an $h(\phi)\in L^2_{T}L^2(\mbT^3)$ such that $w^{h,0}(T)= \phi$, where $w^{h,0}$ solves \eqref{eq:nl_skeleton_equation} with $\mfz=0$.
\end{proposition}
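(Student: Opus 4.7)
}

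The plan is to reduce the claim to the linear controllability results already established in Lemmas~\ref{lem:ctrl_lineq} and~\ref{lem:ctrl_lineq_pseudoinv}, by exploiting that the cubic non-linearity only produces an $L^2_T L^2(\mbT^3)$ forcing whenever the state lives in $C_T H^1(\mbT^3)$. The key observation is that, instead of solving a fixed-point problem, we can explicitly compensate for the non-linearity by adding $z^3$ to the linear control.

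More precisely, given $\phi \in H^1(\mbT^3)$, first I would set $g \coloneqq \msL^+_T \phi \in L^2_T L^2(\mbT^3)$ and denote by $z$ the corresponding linear solution, i.e.\ $z \coloneqq z^g$ solves \eqref{eq:lin_skeleton_equation} with driver $g$ and satisfies $z(0) = 0$ and $z(T) = \msL_T g = \msL_T \msL_T^+ \phi = \phi$ by Lemma~\ref{lem:ctrl_lineq_pseudoinv}. By Lemma~\ref{lem:lin_skeleton_reg}, $z \in C_T H^1(\mbT^3) \cap L^2_T H^2(\mbT^3)$, and using the equation itself one additionally sees $\partial_t z \in L^2_T L^2(\mbT^3)$. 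The Sobolev embedding $H^1(\mbT^3) \hookrightarrow L^6(\mbT^3)$ then gives $z^3 \in C_T L^2(\mbT^3) \hookrightarrow L^2_T L^2(\mbT^3)$.

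The second step is to define
\begin{equation*}
	h \coloneqq g + z^3 \in L^2_T L^2(\mbT^3).
\end{equation*}
A direct manipulation of the equation satisfied by $z$ gives
\begin{equation*}
	\partial_t z - \Delta z
	= -m^2 z + g
	= -m^2 z - z^3 + (g + z^3)
	= -z^3 - m^2 z + h,
	\qquad z(0) = 0,
\end{equation*}
so that $z$ is a solution of the non-linear skeleton equation \eqref{eq:nl_skeleton_equation} with driver $h$ and zero initial condition. The regularity $z \in C_T H^1(\mbT^3)$ places it within the uniqueness class provided by Theorem~\ref{th:nl_skeleton_gwp}\;\ref{it:nl_skeleton_regularity}, so we conclude $z = w^{h,0}$ and in particular
\begin{equation*}
	w^{h,0}(T) = z(T) = \phi.
\end{equation*}

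Since the construction is explicit, there is no genuine hard step; the only points that require some care are verifying that $g = \msL^+_T \phi$ indeed produces the regularity $z \in C_T H^1 \cap L^2_T H^2$ needed to control $z^3$ in $L^2_T L^2$ (this follows directly from the parabolic smoothing in Lemma~\ref{lem:lin_skeleton_reg} together with the bound \eqref{eq:lin_pseud_inv_bnd}), and appealing to uniqueness in the appropriate class so that $z$ can be identified with $w^{h,0}$. Note also that in the special case $\phi = 0$, one may take $h \equiv 0$ (in which case $z \equiv 0$) and the statement is trivial; the pseudo-inverse $\msL^+_T$ is defined on $H^1(\mbT^3)\setminus\{0\}$ in Lemma~\ref{lem:ctrl_lineq_pseudoinv}, so this edge case must be treated separately but poses no difficulty.
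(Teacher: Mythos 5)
Your proof is correct, and it takes a genuinely different and substantially shorter route than the paper's.

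The paper proves global exact controllability via the global inverse function theorem: it defines the map $f_T(\phi) = w^{\msL_T^+ \phi,0}(T)$ on $H^1(\mbT^3)$, verifies that $f_T$ is a $C^1$-Fréchet map, that its Fréchet derivative is bijective at every $\phi$, and that $f_T$ is proper (using a compact-plus-homeomorphism decomposition and an a priori coercivity estimate). This is a fairly heavy toolbox to deploy. Your argument sidesteps all of this by directly \emph{constructing} the control: since the linearly controlled solution $z = z^{\msL_T^+ \phi}$ lands in $C_T H^1(\mbT^3)$, the Sobolev embedding $H^1(\mbT^3) \hookrightarrow L^6(\mbT^3)$ forces $z^3 \in C_T L^2(\mbT^3) \subset L^2_T L^2(\mbT^3)$, so $h := \msL_T^+ \phi + z^3$ is an admissible Cameron--Martin control; the algebraic identity $\partial_t z - \Delta z = -z^3 - m^2 z + h$ then exhibits $z$ as a mild solution of the non-linear skeleton equation, and uniqueness (Theorem~\ref{th:nl_skeleton_gwp}) identifies $z = w^{h,0}$, giving $w^{h,0}(T) = z(T) = \phi$. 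What the paper's approach buys in exchange for its length is a slightly more structured control, $h(\phi) = \msL_T^+ f_T^{-1}(\phi)$, which lies in the range of $\msL_T^+$; but the proposition as stated only asks for existence of \emph{some} $h(\phi) \in L^2_T L^2(\mbT^3)$, and your direct construction delivers that with far less machinery. The two minor technical points worth stating explicitly in a final write-up are: (a) uniqueness of the mild solution of \eqref{eq:nl_skeleton_equation} in the class $C_T H^1(\mbT^3)$ with $H^1$ initial data, which is what lets you identify $z$ with $w^{h,0}$ and which follows from a Grönwall argument in $C_T L^6(\mbT^3)$ together with estimate~\eqref{eq:nl_skeleton_H1_growth_reg_initial}; and (b) the trivial case $\phi = 0$, which you already flag.
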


\begin{proof}
	For~$T > 0$, using item~\ref{it:nl_skeleton_regularity} of~Theorem~\ref{th:nl_skeleton_gwp} we define the family of maps
	\begin{equation}
		H^1(\mbT^3)	\ni	\phi \mapsto 
		f_T(\phi)\coloneqq w^{\msL_T^+ \phi,0}(T) \in H^1(\mbT^3),
		\label{prop_ctrl_nonlineq_pf_Upsilon}
	\end{equation}
	where~$w^{\msL_T^+ \phi,0}$ solves~\eqref{eq:nl_skeleton_equation_from_0} on $[0,T]$; note that here we use the fact that $\msL^+_T$ is a bounded, linear operator.
	We will apply the global, inverse function theorem to show that there exists a map
	\begin{equation*}
		H^1(\mbT^3) \ni \phi \mapsto f^{-1}_T(\phi) \in H^1(\mbT^3),
	\end{equation*}
	such that choosing $h(\phi)\coloneqq \msL^+_Tf^{-1}_T(\phi)$ we have
	\begin{equation*}
		w^{h(\phi)}(T) = \phi
	\end{equation*}
	as required. A local version of the inverse function theorem can be found as \cite[Thm.~4.F]{zeidler_applied_fa}. A global version is given as \cite[Prob.~4.12]{zeidler_applied_fa} (see also \cite[Prob.~4.13]{zeidler_applied_fa}). We are required to show first $f_T :H^1(\mbT^3)\to H^1(\mbT^3)$ is a $C^1$-Frech\'et map and then that
	\begin{enumerate}[label=(\roman*)]
		\item $D_\phi f_T(\phi) \coloneqq D_\phi w^{\msL_T^+ \phi}[\,\cdot\,](T) :H^1(\mbT^3)\to H^1(\mbT^3)$ is bijective for all $\phi \in H^1(\mbT^3)$, \label{it:f_derivative_bijective}
		\item $f_T(\phi) :H^1(\mbT^3)\to H^1(\mbT^3)$ is \emph{proper} (that is if $K\subset H^1(\mbT^3)$ is compact, then $f^{-1}(K)\subset H^{1}(\mbT^3)$ is compact).  \label{it:f_proper}
	\end{enumerate}
	To show that $f_T$ is $C^1$, first let~$\phi \in H^1(\mbT^3)$,~$t \in [0,T]$ and  then write
	\begin{equation}\label{eq:w_STphi_mild}
		w^{\msL_T^+ \phi,0}(t)
		= -\int_0^t e^{(t-s)(\Delta - m^2)} \left(w^{\msL_T^+ \phi,0}(s)\right)^3 \dif s 
		+ 
		z^{\msL_T^+ \phi}(t),
	\end{equation}
	where $z^{\msL_T^+ \phi}$ solves \eqref{eq:lin_skeleton_equation} with $h= \msL_T^+ \phi$. Since the map $\phi \mapsto z^{\msL_T^+ \phi}$ is linear and bounded, we have
	\begin{equation*}
		D_\phi z^{\msL_T^+ \phi} [\psi] = z^{S_T\psi},\quad \text{for all}\quad \psi \in H^1(\mbT^3).
	\end{equation*}
	Then, it is readily checked, for example applying the implicit function theorem, \cite[Thm.~2.3]{ambrosetti_prodi_93}, that the map $\phi \to w^{\msL_T^+ \phi}$ is F\'rechet differentiable (see for example the arguments proceeding \cite[Prop.~5.1]{tsatoulis_weber_18_spectral} or the proof of \cite[Lem.~5.11]{chevyrev_hambly_mayorcas_22_stoch} for details of a similar argument). Then, applying the standard rules of calculus, and equality of Fr\'echet and Gateaux derivatives in the case that the former exists, one find that for all $\psi \in H^1(\mbT^3)$, the derivative satisfies, for all $t\in [0,T]$,
	\begin{equation}\label{eq:w_STphi_deriative}
		D_\phi w^{\msL_T^+ \phi,0}[\psi](t) = 3 \int_0^t e^{(t-s)(\Delta-m^2)} \left(w^{\msL_T^+ \phi,0}(s)\right)^2 D_\phi w^{\msL_T^+ \phi,0}[\psi](s) \dd s + z^{S_T\psi}.
	\end{equation}
	Similar estimates as for the linear and non-linear skeleton equations, \eqref{eq:lin_skeleton_equation} \& \eqref{eq:nl_skeleton_equation} allow for control on $D_\phi w^{\msL_T^+ \phi}[\psi] \in C_T H^1(\mbT^3)$, hence showing that $\phi \mapsto f_T(\phi ) = w^{\msL_T^+,0}$ is a $C^1$-Frech\'et map.
	
	To show Point~\ref{it:f_derivative_bijective}, we first observe that by definition $z^{S_T\psi}(T)=\psi$, so that we have
	\begin{equation}
		D_\phi w^{\msL_T^+ \phi,0}[\psi](T) = 3 \int_0^T e^{(T-s)(\Delta-m^2)} \left(w^{\msL_T^+ \phi,0}(s)\right)^2 D_\phi w^{\msL_T^+ \phi,0}[\psi](s) \dd s +\psi,
	\end{equation}
	and hence, computing somewhat formally,
	\begin{equation*}
		D_\phi w^{\msL_T^+ \phi,0}[\psi](T) =\psi \exp\left(3 \int_0^T e^{(T-s)(\Delta-m^2)} \left(w^{\msL_T^+ \phi,0}(s)\right)^2  \dd s\right).
	\end{equation*}
	From this expression it is clear that $D_\phi f_T(\phi)[\,\cdot\,] = D_\phi w^{\msL_T^+ \phi,0}[\,\cdot\,](T) :H^1(\mbT^3)\to H^1(\mbT^3)$ is a global, $C^1$-diffeomorphism. 
	
	To check Point~\ref{it:f_proper} we appeal to \cite[Ex.~3]{zeidler_applied_fa}; to wit, $f$ is \emph{proper} if 
	\begin{equation}\label{eq:f_infinity_growth}
		\lim_{\|\phi\|_{H^1_x} \to \infty} \|f(\phi)\|_{H^1_x} = \lim_{\|\phi\|_{H^1_x} \to \infty} \|w^{\msL_T^+ \phi}(T)\|_{H^1_x} =+\infty.
	\end{equation}
	and we can write
	\begin{equation}\label{eq:f_decompose}
		f_T(\phi) = f_1(\phi) + f_2(\phi),
	\end{equation}
	where $f_1:H^1(\mbT^3)\to H^1(\mbT^3)$ is a \emph{compact} and $f_2 : H^1(\mbT^3)\to H^1(\mbT^3)$ is an \emph{homeomorphism}.
	
	To show the decomposition \eqref{eq:f_decompose} it suffices to use \eqref{eq:w_STphi_mild} and again use the fact that $z^{\msL_T^+ \phi}(T)=\phi$ to see that
	\begin{align}\label{eq:w_STphi_mild_at_T}
		f_T(\phi) = w^{\msL_T^+ \phi}(T) 
		=& -\int_0^Te^{(T-s)(\Delta-m^2)} \left(w^{\msL_T^+ \phi}(s)\right)^3 \dd s + \phi.
	\end{align}
	So that we may set $f_2(\phi)= \phi$, which as the identity is clearly a homeomorphism. To show that the integral operator
	\begin{equation*}
		H^1(\mbT^3)\ni \phi \mapsto f_1(\phi) \coloneqq -\int_0^Te^{(T-s)(\Delta-m^2)} \left(w^{\msL_T^+ \phi}(s)\right)^3 \dd s\in H^1(\mbT^3)
	\end{equation*}
	is compact, we note that if $\phi \in H^1(\mbT^3)$, then by Theorem~\ref{th:nl_skeleton_gwp}, one has $s\mapsto w^{\msL_T^+ \phi}(s) \in C_T L^{3q}(\mbT^3)$, for some $q\in [1,2)$. Hence,
	\begin{align*}
		\norm[3]{\int_0^T e^{(T-s)(\Delta-m^2)} \left(w^{\msL_T^+ \phi}(s)\right)^3 \dd s \,}_{H^{1+\eps}_x} 
		\lesssim \int_0^T (t-s)^{-\frac{1+\eps}{2} -\frac{3}{2}\left(\frac{1}{3q}-\frac{1}{2}\right)} \|w^{\msL_T^+ \phi}(s)\|^3_{L^{3q}_x} \dd s\\
		\lesssim T^{1-\frac{1+\eps}{2} -\frac{3(2-3q)}{12q}} \sup_{s\in [0,T]} \|w^{\msL_T^+ \phi}(s)\|_{H^1_x} <\infty.
	\end{align*}
	Since $H^{1+\eps}(\mbT^3)\cembed H^1(\mbT^3)$ we have shown that the integral operator is a compact map from $H^1(\mbT^3)$ to itself.
	
	To show the limit \eqref{eq:f_infinity_growth}, since $w^{\msL_T^+ \phi,0} \in C_TH^1(\mbT^3)$ (see Theorem~\ref{th:nl_skeleton_gwp}) by continuity in time it is enough to show that
	\begin{equation*}
		\lim_{\|\phi\|_{H^1_x}\to \infty}\sup_{t\in [0,T]}\|w^{\msL_T^+ \phi}(t)\|_{H^1_x} =\infty.
	\end{equation*}
	Therefore, assume for a contradiction, that there exists a sequence $\phi^n$ and a $C>0$ such that
	\begin{equation*}
		\|\phi^n\|_{H^1_x} \to \infty\quad \text{and}\quad \sup_{t\in [0,T]} \|w^{\msL_T^+ \phi^n,0}(t)\|_{H^1_x} \leq C<\infty.
	\end{equation*}
	Simply rearranging \eqref{eq:w_STphi_mild_at_T} and applying similar estimates as above with $q=1$ and $\eps=0$
	\begin{align*}
		\|\phi^n\|_{H^1_x} = &\norm[3]{ w^{\msL_T^+ \phi^n,0}(T) +\int_{0}^{T} e^{(t-s)(\Delta-m^2)}(w^{\msL_T^+ \phi^n,0}(s))^3  \dd s \,}_{H^1_x}\\
		\leq \, & \|w^{\msL_T^+ \phi^n,0}(T)\|_{H^1_x} + T^{\frac{3}{4}} \left(\sup_{s\in [0,T]}\|w^{\msL_T^+ \phi^n,0}(s)\|^3_{H^1_x}\right)\\ 
		\leq & \, C + T^{\frac{3}{4}} C^3 <\infty.
	\end{align*}
	Since the right hand side is uniform in $n$ this contradicts our assumption, showing that the limit \eqref{eq:f_infinity_growth} holds and the proof is complete.
\end{proof}

\section{The Dynamical $\boldsymbol{\Phi^4_3}$ Equation on the Torus} \label{sec:technical_proofs}

In this appendix, we collect various properties of the~$\Phi^4_3$ equation on the 3D torus. We begin with a recap of the local and global well-posedness theory for the $\Phi^4_3$ dynamics, as given by the theory of regularity structures, c.f. Section~\ref{app:sol_theory}. As discussed in the introduction, the appeal to regularity structures is essentially incidental. However, from our perspective the theory presents two advantages. Firstly, prior work has already obtained a large deviation principle for the dynamic $\Phi^4_3$ model using the theory of regularity structures, \cite{hairer_weber_ldp}. Secondly, the theory of regularity structures is largely systematised, giving a pathwise, local well-posedness for a large range of models at once, opening up the possibility of approaches similar to ours for many, interesting EQFT models.

In Section~\ref{sec:uniform_ldp}, we extend the main result of \cite{hairer_weber_ldp} to obtain a locally uniform large deviation principle, leveraging continuity of the solution map and the large deviation principle obtained on model space therein.  The locally uniform LDP is required in the proof of our main result, see the proofs of Theorem~\ref{thm:ldp_lower_bound}, Proposition~\ref{prop:bound_prob_H} and Proposition~\ref{prop:ldp_upper_outside_H}. Finally, in Section~\ref{sec:tail_bounds}, we recap results on ergodicity for the $\Phi^4_3$ dynamics, identification of the unique invariant measure with the $\Phi^4_3$ EQFT measure and verification of the appropriate Osterwalder--Schrader axioms. In that section we also present  suitable tail bounds on the invariant measure $\mu_{\eps}$ which are required in our main proofs.
\subsection[Local and Global Solution Theory for the~$\Phi^4_3$ Equation]{Local and Global Solution Theory for the~$\boldsymbol{\Phi^4_3}$ Equation} \label{app:sol_theory}

We first focus on the \emph{local} solution theory of the periodic $\Phi^4_3$ dynamics, presented in the language of regularity structures. We recall that the dynamic equation with smooth noise reads,
\begin{equation}
	\label{eq:phi43_SPDE_mollified_unrenormalised_app}
	\partial_t u_{\eps,\kappa}^\fz - \Delta u^\fz_{\eps,\kappa}
	= - \del[0]{u^\fz_{\eps,\kappa}}^3 - m^2 u^\fz_{\eps,\kappa} + \eps \xi_\kappa, \quad
	u^\fz_{\eps,\kappa}(0,\,\cdot\,) = \fz.
\end{equation}
Since the driving noise~$\xi_\kappa = \xi * \rho_\kappa$ denotes a space-time white noise~$\xi$ convolved with a mollifier~$\rho_\kappa$, this equation is classically well-posed at any scale~$\kappa > 0$. 
For~$\kappa  = 0$, however, the driving noise~$\xi$ is a distribution of (parabolic) H\"older--Besov regularity~$\alpha - 2 < -\frac{d+2}{2}$; hence, by Schauder theory, the solution to the corresponding linear equation has regularity~$\alpha < -\frac{d-2}{2}$. As soon as~$d \geq 2$, this is strictly negative and the cubic term in the non-linear equation becomes, a priori,  ill-defined.

In fact, already in dimension~$d = 2$ and for fixed~$\eps > 0$, it has been proved by Hairer, Ryser, and Weber~\cite{hairer-ryser-weber} that~$u_{\eps,\kappa}^\fz$ converges to~$0$ in a space of distributions in probability as~$\kappa \to 0$. If we denote by~$\Phi_{\tiny \text{clas}}(\fz,\eps \xi_\kappa) \coloneq u_{\eps,\kappa}^\fz$ the classical solution map to~\eqref{eq:phi43_SPDE_mollified_unrenormalised_app}, this immediately implies that~$\Phi_{\tiny \text{clas}}(\fz,\,\cdot\,)$ is discontinuous for~$d = 2$; simple power counting suggests that the situation is no better in higher dimensions.
The theory of regularity structures~\cite{hairer_rs}\footnote{For an introduction to the theory, see the two expository articles~\cite{hairer_intro, hairer_rs_dynamical_phi43}, the second of which is specialised to the~$\Phi^4_3$ equation. Since the case~$d = 2$ is analogous, but easier, we focus on~$d = 3$ from here on.} takes the following steps to overcome that problem:
\begin{itemize}
	\item Let~$\zeta$ be a smooth driving noise, for example~$\zeta = \xi_\kappa$ for~$\kappa > 0$. In a first step, one constructs a finite number of non-linear objects from~$\zeta$ which are indexed by \emph{trees}.
	The collection of these objects is called a (minimal) \emph{model}~$\Pi^\zeta$, see Definition~\ref{def:model} below for details.
	\item On the space of (minimal) models~$\MMm$, a \emph{non-linear} metric space with distance~$d_{\MMm}$, one defines a solution map~$\Phi(\fz, \cdot)$ in a way that is consistent with the classical solution theory, i.e. such that~$\Phi_{\tiny \text{clas}}(\fz,\zeta) = \Phi(\fz,\Pi^\zeta)$.
	This solution map~$\Phi$ is locally Lipschitz \emph{continuous} in both arguments, see Definition~\ref{def:solution_map} and Lemma~\ref{lem_uniform_lipschitz} below.
	\item As discussed above, one does not expect the naive models~$\Pi^{\eps\xi_\kappa}$ to converge as~$\kappa \to 0$.
	However, in the case of the periodic $\Phi^4_3$ dynamics, a relatively simple renormalisation procedure maps~$\Pi^{\eps\xi_\kappa} \rightsquigarrow \bar{\Pi}^{\eps\xi_\kappa}$ through two families of diverging constants~$C^{(1)}_{\eps,\kappa}$ and~$C^{(2)}_{\eps,\kappa}$ (see Definition~\ref{def:bphz_lift}). There exists a particular choice of such constants for which~$\bar{\Pi}^{\eps \xi_\kappa}$ converges to a limiting model~$\bar{\Pi}^\eps$ as~$\kappa \to 0$. This model is called the~\emph{BPHZ model}, see Proposition~\ref{prop:bphz_model_noise_intensity}.
	\item In order for the renormalisation procedure to be meaningful, one needs to associate~$\bar{u}_{\eps,\kappa}^\fz := \Phi(\fz,\bar{\Pi}^{\eps\xi_\kappa})$ to an actual equation.
	In fact, it can be shown that~$\bar{u}_{\eps,\kappa}^\fz$ solves~\eqref{eq:phi43_SPDE_mollified_renormalised}, which reads
	\begin{equation*} 
		\partial_t \bar{u}^\fz_{\eps,\kappa} - \Delta \bar{u}^\fz_{\eps,\kappa}
		= - \del[0]{\bar{u}^\fz_{\eps,\kappa}}^3 - \del[1]{m^2  -3 \eps^2 C_{\kappa}^{(1)} + 9 \eps^4 C_{\kappa}^{(2)}} \bar{u}^\fz_{\eps,\kappa} + \eps \xi_\kappa, \quad
		\bar{u}^\fz_{\eps,\kappa}(0,\,\cdot\,) = \fz.
	\end{equation*}
\end{itemize}
Finally, all these steps justify the following definition:

\begin{definition}[Rigorous meaning of~$\bar{u}_\eps^\fz$] \label{def:phi43_rigorous_SPDE}
	For~$\eps \in [0,1]$ and~$\fz \in \CC^\alpha(\mbT^3)$ with~$\alpha  \in (-\nicefrac{2}{3},-\nicefrac{1}{2})$,  
	we define
	\begin{equation} \label{eq:phi43_rigorous_SPDE}
		\bar{u}_{\eps}^\fz \coloneqq \Phi(\fz,\bar{\Pi}^\eps)
		= \lim_{\kappa \to 0} \Phi(\fz,\bar{\Pi}^{\eps\xi_\kappa}) \quad \text{in prob.} \
		in~ C_T\CC^\alpha(\mbT^3).
	\end{equation}
\end{definition}
We emphasise that~$\bar{u}^\fz_\eps$ does \emph{not} solve a stochastic PDE itself;
in line with the previous definition, the equation in~\eqref{eq:phi43_formal_SPDE} only represents the~\emph{formal} limit as~$\kappa \to 0$ of the stochastic PDEs for~$\bar{u}^\fz_{\eps,\kappa}$, see~\eqref{eq:phi43_SPDE_mollified_renormalised}.

\subsubsection{Trees, Models, and the Solution Map}	

In this section, we properly introduce the abstract objects alluded to in the outline above.
Particular examples of the regularity structure associated to the~$\Phi^4_3$ equation have been presented in numerous other works, originally in \cite[Sec.~9.2 and~9.4]{hairer_rs}, summarised in by~\cite[Sec.~2]{hairer_weber_ldp}, as well as \cite{hairer_rs_dynamical_phi43, hairer_matetski_18_discretisation}. As a result, we do not present all the required details here but refer the interested reader to the above works.
The main object to introduce is the set of \emph{trees} or \emph{symbols}, denoted by~$\CW_-$, of negative degree which do not contain polynomial factors. These are summarised in the following table:
\setcounter{table}{0}
\renewcommand{\thetable}{\arabic{table}}
{\small
	\begin{table}[h]
		\centering
		\renewcommand{\arraystretch}{1.6}
		\begin{tabular}{cccccccc} 	
			\toprule
			\centering
			$\tau$ & \centering $\<xi>$ & \centering $\<1>$ & \centering $\<2>$ 
			&  \centering $\<3>$ 
			&  \centering $\<22>$ & \centering $\<31>$ & \centering $\<32>$ \tabularnewline
			\midrule
			\centering $ \ \abs{\tau} \ $ & \centering $\ -\frac{5}{2} - \kappa \ $ & \centering $\ - \frac{1}{2} - \kappa \ $ & \centering $ \ -1 - 2\kappa \ $ 
			& \centering $ \ -\frac{3}{2} - 3\kappa\ $ 
			& \centering $ \ -4\kappa \ $ & \centering $ \ -4\kappa \ $ & \centering $ \ -\frac{1}{2} - 5\kappa \ $
			\tabularnewline
			\midrule
			\centering $ \ n_\tau \ $ & \centering $\ 1 \ $ & \centering $\ 1 \ $ 
			& \centering $ \ 2 \ $ 
			& \centering $ \ 3 \ $ & \centering $ \ 4 \ $ & \centering $ \ 4 \ $ & \centering $ \ 5 \ $ \tabularnewline
			\bottomrule
		\end{tabular}
		\caption{\label{table_trees} \hspace{-0.5em} List of trees~$\tau \in \CW_-$, their degree~$\abs[0]{\tau}$, and the number of leaves~$n_\tau$.}
	\end{table}
} 

\noindent	
We also set~$\CT_- := \spn \CW_-$ and fix a compact domain~$D \coloneqq I \times \mbT^3$ for some interval~$I \subseteq \R$.
One may think of~$I$ as the interval~$I_T \coloneqq [0,T]$ for some horizon~$T > 0$. 
For technical reasons, however, we are required, at various points, to enlarge the interval~$I$, see Remarks~\ref{rmk:recentering_maps} and~\ref{rmk:single_test_fct}.
The precise length of ~$I$ is essentially unimportant, so long as it contains~$[-2,T+2]$.
We write~$D_T$ instead of~$D$ when we wish to highlight its dependence on~$T$.
The following definition is taken from~\cite[p.~11]{hairer_weber_ldp}.

\begin{definition}[Minimal model space] \label{def:model}
	For any smooth map
	$\Pi: D \to \CL(\CT_-,C^\infty(D;\R))$
	we define 
	\begin{equs}[][def:model:hom_model_norm_trees]
		\threebars \Pi \tau\threebars := 
		\begin{cases}
			\sup_{\lambda \in (0,1]} \sup_{\varphi \in \CB_4} \sup_{z \in D} \thinspace \lambda^{-\thinspace\abs{\tau}} \abs[1]{\scal{\Pi_z \tau,\varphi_z^{\lambda}}}_{L^2} & \quad \text{if} \quad \tau \in \CW_-\setminus\{\<xi>, \<1>\}, \\[0.5em]
			\sup_{s \in \R}\sup_{\lambda \in (0,1]} \sup_{\varphi \in \CB_4} \sup_{z \in D} \thinspace \lambda^{-\thinspace\abs[0]{\<xi>}} \abs[1]{\scal{\boldsymbol{1}_{t \geq s}\Pi_z \<wn>,\varphi_z^{\lambda}}_{L^2}} & \quad \text{if} \quad \tau = \<xi>, \\[0.5em]
			\sup_{\lambda \in (0,1]} \sup_{\eta \in \CB_3} \sup_{z \in D} \thinspace \lambda^{-\thinspace\abs[0]{\<1>}} \abs[1]{\scal{\Pi_z\<1>(t,\cdot),\eta_x^{\lambda}}_{L^2}} & \quad \text{if} \quad \tau = \<1>,
		\end{cases}
	\end{equs}
	where
	\begin{itemize}
		\item the collections of test functions~$\CB_d$ for $d=3,\,4$ are given by 
		\begin{equation*}
			\CB_d := \left\{\psi \in C^3(\R^d;\R): \ \norm{\psi}_{C^3} \leq 1, \ \supp \, (\psi) \subseteq B(0,1)\right\}, \quad d \in \{3,4\},
		\end{equation*}
		\item the rescaled, shifted test functions~$\varphi_z^\lambda$ resp.~$\eta_x^\lambda$ are defined by
		\begin{equation*}
			\varphi_z^\lambda(\bar{z}) 
			\coloneq \lambda^{-5} \varphi(\lambda^{-2}(t - \bar{t}), \lambda^{-1}(x - \bar{x})), \quad
			\eta_x^\lambda(\bar{x}) 
			\coloneq \lambda^{-3} \eta(\lambda^{-1}(x - \bar{x}))
		\end{equation*}
		for~$z = (t,x)$~and~$\bar{z} = (\bar{t}, \bar{x})$ in~$D$.
	\end{itemize}
	The (inhomogeneous, minimal) \emph{model norm}~$\threebars \cdot \threebars$ and the distance~$d_{\MMm}$ are then given by 
	\begin{equation}
		\threebars \Pi\threebars := 
		\max_{\tau \in \CW_-} \threebars\Pi \tau\threebars, \quad
		d_{\MMm}(\Pi,\tilde{\Pi}) := \sup_{\tau \in \CW_-} \sup_{\lambda \in (0,1]} \sup_{\varphi \in \CB_4} \sup_{z \in D} \lambda^{\abs[0]{\tau}} \abs[0]{\scal{\Pi_z \tau - \bar{\Pi}_z \tau, \varphi^{\lambda}_z}}.
		\label{def:model_distance}
	\end{equation}
	and the space~$\MMm$ of (minimal, admissible) models is defined to be the completion of smooth maps~$\Pi$, as above, under the distance~$d_{\MMm}$. We also impose the algebraic constraint~\cite[Eq.~2.9]{hairer_weber_ldp}, the admissibility conditions~\cite[Eq.~2.10\thinspace (a-c)]{hairer_weber_ldp}
	and define the~\emph{homogeneous} model norm~$\barnorm{\Pi}$ by
	\begin{equation*}
		\barnorm{\Pi} := \max_{\tau \in \CW_-} \barnorm{\Pi \tau}, \quad
		\text{where}\quad \barnorm{\Pi \tau} := \threebars \Pi \tau \threebars^{\frac{1}{n_\tau}} 
		\quad
		\text{for}
		\quad
		\tau \in \CW_-.
	\end{equation*}
\end{definition}
As before, we write~$\MMm_T$ as well as $\threebars \cdot \threebars_T$ and~$\barnorm{\cdot}_T$ whenever we wish to highlight their dependence on~$T$.
The following lemma is immediate from the definitions.
\begin{lemma} \label{lem:conv_models}
	Convergence w.r.t.~$\threebars \cdot \threebars$ and~$\barnorm{\cdot}$ in~$\MMm$ are equivalent.
\end{lemma}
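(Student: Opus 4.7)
The strategy rests on two elementary observations: $\CW_-$ is a \emph{finite} set, and for any fixed integer $n \geq 1$, the map $x \mapsto x^{1/n}$ is a continuous bijection of $[0,\infty)$ to itself sending $0$ to $0$. The work is therefore almost entirely bookkeeping, not analysis.

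First I would translate the two notions of convergence into statements about the per-tree pseudo-distances
\[
d_\tau(\Pi,\tilde\Pi) \coloneq \sup_{\lambda \in (0,1]}\sup_{\varphi \in \CB_4}\sup_{z \in D} \lambda^{-|\tau|}\abs[0]{\scal{\Pi_z\tau - \tilde\Pi_z\tau,\varphi_z^\lambda}}_{L^2}, \qquad \tau \in \CW_-\setminus\{\<xi>,\<1>\},
\]
with the obvious modifications from Definition~\ref{def:model} for $\tau = \<xi>$ and $\tau = \<1>$. Since the norm $\threebars \cdot \threebars$ is defined tree-by-tree and is linear in~$\Pi$, convergence $\threebars \Pi^{(k)} - \Pi\threebars \to 0$ is equivalent to $\max_{\tau \in \CW_-} d_\tau(\Pi^{(k)},\Pi) \to 0$. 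Similarly, since $\barnorm{\Pi\tau - \tilde\Pi\tau} = \threebars \Pi\tau - \tilde\Pi\tau\threebars^{1/n_\tau}$ by definition, convergence in $\barnorm{\cdot}$ amounts to $\max_{\tau \in \CW_-} d_\tau(\Pi^{(k)},\Pi)^{1/n_\tau} \to 0$.

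The conclusion is then immediate: since $\CW_-$ is finite, the maximum of a finite family of nonnegative sequences converges to $0$ if and only if each member does, so it suffices to show $d_\tau(\Pi^{(k)},\Pi)\to 0 \iff d_\tau(\Pi^{(k)},\Pi)^{1/n_\tau}\to 0$ for each fixed $\tau$. The forward implication follows from continuity of the root, and the reverse from raising both sides to the $n_\tau$-th power. Table~\ref{table_trees} shows that $n_\tau \in \{1,\ldots,5\}$ is bounded, which is all that is required to ensure neither operation is degenerate.

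I do not expect any real obstacle here; the only minor subtlety is the need to interpret ``convergence with respect to a norm'' on the non-linear space $\MMm$ as convergence of the corresponding tree-wise distance, after which the lemma reduces entirely to the elementary equivalence just described.
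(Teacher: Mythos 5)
Your argument is correct and is essentially the one the paper leaves implicit when it labels the lemma ``immediate from the definitions'': finiteness of $\CW_-$, boundedness of the exponents $n_\tau\in\{1,\dots,5\}$, and bicontinuity of $x\mapsto x^{1/n_\tau}$ at $0$ are precisely the ingredients needed, and you have identified them cleanly. One small terminological nit: $\threebars\cdot\threebars$ is a norm, not a linear map, so what you actually use is that $\Pi\mapsto\Pi\tau$ is linear, so that $(\Pi^{(k)}-\Pi)\tau=\Pi^{(k)}\tau-\Pi\tau$ — the rest of the bookkeeping then goes through exactly as you describe.
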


\begin{remark}[Recentering maps~$F$] \label{rmk:recentering_maps}
	The usual definition of a model, minimal or otherwise, is a pair of maps $(\Pi,F)$ with $\Pi$ as above and $F$ a \emph{recentring map}, see Remark~\ref{rem:model_extension} below. In the case of minimal models however, the implicit algebraic and admissibility conditions in De\-fi\-ni\-tion~\ref{def:model} guarantee that one can canonically define the action of the recentring maps~$F_z\tau$ for~$z \in D$ on the symbols~$\tau \in \CW_-$; further details are presented in~\cite[Sec.~2.3]{hairer_weber_ldp}.
	For the requisite analytical bounds to be satisfied, one will have to suitably enlarge the interval~$I$, see \cite[Remarks~2.3 and~2.4]{hairer_weber_ldp} and the discussion below their Remark~2.6.
\end{remark}

\begin{remark}[Choice of test function] \label{rmk:single_test_fct}
	As an immediate consequence of~\cite[Prop.~13.1 and~13.2]{caravenna_zambotti_2020}, one can equivalently state Definition~\ref{def:model} using only a \emph{single} test function~$\varphi$ (resp.~$\eta$), at the expense of suitably enlarging the domain~$D$. 
	Henceforth, in view of Proposition~\ref{prop:cdfi_model} below, we choose~$\varphi := \Psi$ where~$\Psi$ is introduced in~\cite[p.~$4$]{moinat_weber_20_phi43Loc}. 
	Since we do not explicitly refer to the properties of~$\Psi$, we do not give details of its construction here. 
\end{remark}

Definition~\ref{def:model} of the space of~\emph{minimal} models~$\MMm$ is very similar to that of the model space~$\MM$ given in~\cite[Def.~2.17]{hairer_rs}.
The following remark establishes a connection between both notions.

\begin{remark}[Minimal models and  models]\label{rem:model_extension}
	By~\cite[Thm.~2.10]{hairer_weber_ldp}, 
	\begin{itemize}
		\item any element~$\Pi \in \MMm$ can be \emph{uniquely} extended to a bona fide admissible model~$\bz = (\tilde{\Pi},\tilde{F}) \in \MM$ such that~$\Pi_z \tau = \tilde{\Pi}_z \tau$ for any~$z \in D$ and~$\tau \in \CW_-$;
		\item the map~$\EE: \MMm \to \MM, \ \Pi \mapsto~\bz = (\tilde{\Pi},\tilde{F})$ is locally Lipschitz continuous.
	\end{itemize}
	This is reminiscent of Lyons' extension theorem in rough paths theory.
	Conversely, if we choose \mbox{$\bz = (\tilde{\Pi},\tilde{F}) \in \MM$} and set~$\Pi_z := \tilde{\Pi}_z\sVert[0]_{\CW_-}$ for~$z \in D$, we obtain an element~$\Pi \in \MMm$.	
\end{remark}
We work with minimal admissible models since they do not act on polynomials and therefore transform \emph{homogeneously} when rescaling the noise~$\xi \rightsquigarrow \eps \xi$, see Lemma~\ref{lem:homog_model_norm} below. 
This is also the reason why we work with the \emph{homogeneous} model norm~$\barnorm{\cdot}_T$ rather than the inhomogeneous model norm~$\threebars \cdot \threebars_T$.
Both of these facts will be crucial in Sections~\ref{sec:uniform_ldp} and~\ref{sec:tail_bounds} below.

\begin{definition}[Renormalised Canonical lift] \label{def:bphz_lift}
	Let~$G$ denote the Green's function of the heat kernel~$(\partial_t - \Delta)$ on~$\R_+ \times \mbT^3$ with decomposition 
	\begin{equation}
		G = K + R
		\label{eq:kernel_decomposition}
	\end{equation} 
	where~$R$ is smooth and bounded and~$K$ satisfies~\cite[Assumption~5.1, 5.3, and 5.4]{hairer_rs}.  
	For~$\zeta \in C^\infty(D)$, arbitrary constants~$C_\zeta^{(1)}, C_\zeta^{(2)} \in \R$ and points $z,\,\bar{z} \in D$, we then set 
	\vspace{0.5em}
	\begin{itemize}
		\begin{minipage}{0.5\linewidth}
			\item $\bar{\Pi}^{\zeta}_z \<wn> \coloneqq \zeta$
			\item $\bar{\Pi}^\zeta_z \<1> \coloneqq K * \zeta$
			\item $\bar{\Pi}^\zeta_z \<2> \coloneqq \del[1]{\bar{\Pi}^\zeta_z \<1>}^2 - C_\zeta^{(1)}$
			\item $\bar{\Pi}^\zeta_z \<3> \coloneqq \del[1]{\bar{\Pi}^\zeta_z \<1>}^3 - 3 C_\zeta^{(1)} \, \bar{\Pi}^\zeta_z \<1>$
			\item $\bar{\Pi}^\zeta_z \<20> (\bar{z})\coloneqq (K * \bar{\Pi}^\zeta_z \<2>)(\bar{z}) - (K * \bar{\Pi}^\zeta_z \<2>)(z)  $
		\end{minipage}\hspace{0.2em}
		\begin{minipage}{0.5\linewidth}
			\item $\bar{\Pi}^\zeta_z \<30>(\bar{z}) \coloneqq \del[1]{K * \bar{\Pi}^\zeta_z \<3>}(\bar{z}) - \del[1]{K * \bar{\Pi}^\zeta_z \<3>}(z)$
			\item $\bar{\Pi}^\zeta_z \<31> \coloneqq \bar{\Pi}^\zeta_z \<30> \; \bar{\Pi}^\zeta_z \<1>$
			\item $\bar{\Pi}^\zeta_z \<22> \coloneqq \bar{\Pi}^\zeta_z \<20> \; \bar{\Pi}^\zeta_z \<2> - C_\zeta^{(2)}$
			\item $\bar{\Pi}^\zeta_z \<32> \coloneqq \bar{\Pi}^\zeta_z \<30> \; \bar{\Pi}^\zeta_z \<2> - 3 C_\zeta^{(2)} \bar{\Pi}^\zeta_z \<1>$
		\end{minipage}
	\end{itemize}
	For any choice of constants~$C_\zeta^{(i)}$, this defines an element~$\bar{\Pi}^\zeta \in \MMm$, see~\cite{hairer_weber_ldp}, more specifically their comments before Theorem~2.13 (incl. references).
	\begin{itemize}
		\item For~$C_\zeta^{(1)} = C_\zeta^{(2)} = 0$, we write~$\Pi^\zeta$ instead of~$\bar{\Pi}^\zeta$ and refer to it as the \emph{canonical lift} of~$\zeta$ to~$\MMm$.
		\item 	In the specific case~$\zeta = \eps\xi_\kappa$, we write~$C^{(i)}_{\eps,\kappa} := C^{(i)}_{\eps\xi_\kappa}$ and~$C^{(i)}_\kappa := C^{(i)}_{1,\kappa}$ for~$i = 1,2$ and call~$\bar{\Pi}^{\eps \xi_\kappa}$ the \emph{renormalised} canonical lift of~$\eps \xi_\kappa$.
	\end{itemize}
\end{definition}
The next lemma is a special case of~\cite[Prop.~2]{schoenbauer} and extends the definition of the canonical lift~$\Pi^\bullet$ to Cameron--Martin functions~$h \in L^2_T L^2(\mbT^3)$, for any~$T > 0$.
\begin{lemma}[Canonical lift of~Cameron--Martin functions] \label{lem:can_lift_cameron_martin_fcts}
	Let~$T > 0$. The map
	\begin{equation*}
		\CC^\infty(D_T) \ni \zeta \mapsto \Pi^\zeta \in \MMm_T
	\end{equation*}
	extends to a continuous map from~$L^2_T L^2(\mbT^3)$ into~$\MMm$. In addition, it is locally Lipschitz continuous in the sense that, for any~$M > 0$, 
	uniformly over all~$h_1, h_2 \in L^2_T L^2(\mbT^3)$ with~$\norm[0]{h_1}_{L^2 L^2_x} \vee \norm[0]{h_2}_{L^2 L^2_x} \leq M$, it holds that
	\begin{equation}
		\label{lem:can_lift_cameron_martin_fcts:lipschitz}
		d_{\MMm}(\Pi^{h_1},\Pi^{h_2}) \lesssim_{T,M} \norm[0]{h_1 - h_2}_{L^2_T L^2_x}.
	\end{equation}
	For~$w^{h,\fz}$ with~$h \in L^2_T L^2(\mbT^3)$ and~$\fz \in \CC^\alpha(\mbT^3)$ as in~\eqref{eq:nl_skeleton_equation}, we also have~$\Phi(\fz,\Pi^h) = w^{h,\fz}$. 
\end{lemma}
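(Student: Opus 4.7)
My plan is to follow the strategy indicated by the reference to Schönbauer's Proposition~2, namely to exploit the polynomial structure of the canonical lift. Since we use $C_\zeta^{(1)} = C_\zeta^{(2)} = 0$ in Definition~\ref{def:bphz_lift}, every object $\Pi^\zeta_z \tau$ for $\tau \in \CW_-$ is a multilinear/polynomial expression in the ingredients $\zeta$ and $K*\zeta$ (together with recentering shifts by $z$). Therefore it suffices to interpret each of these formulas directly when $h \in L^2_T L^2(\mbT^3)$ and verify the analytic bounds required by Definition~\ref{def:model}. For such $h$, Lemma~\ref{lem:lin_skeleton_reg} ensures $K*h \in C_T H^1(\mbT^3) \cap L^2_T H^2(\mbT^3)$, and by the Sobolev embedding $H^1(\mbT^3) \hookrightarrow L^6(\mbT^3)$, all pointwise products of $K*h$ with itself up to order three are well-defined functions with controlled $L^p$ norms. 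This will provide a direct, unambiguous definition of $\Pi^h \in \MMm_T$ for all $h \in L^2_T L^2$.

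The core analytical step is the tree-by-tree estimate giving~\eqref{lem:can_lift_cameron_martin_fcts:lipschitz}. For each $\tau \in \CW_-$, I will write $\Pi^{h_1}_z\tau - \Pi^{h_2}_z\tau$ using standard telescoping identities (e.g.\ $a^2-b^2=(a-b)(a+b)$, $a^3-b^3=(a-b)(a^2+ab+b^2)$ and their counterparts with additional convolved factors), so that the difference is a sum of terms each of which contains exactly one factor of the form $K*(h_1-h_2)$ or $h_1-h_2$. For each such term I will estimate the localised pairing $\langle \Pi^{h_1}_z\tau-\Pi^{h_2}_z\tau,\varphi_z^\lambda\rangle$ by combining Hölder's inequality with the parabolic Schauder/Sobolev bounds for $K * h$ coming from Lemma~\ref{lem:lin_skeleton_reg}, and by tracking the correct scaling $\lambda^{-|\tau|}$ via the regularity gained through $K$. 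The remaining factors are bounded by $\|h_i\|_{L^2_TL^2}$, so each contribution is controlled by $P(\|h_1\|_{L^2L^2},\|h_2\|_{L^2L^2})\,\|h_1-h_2\|_{L^2L^2}$ for a polynomial $P$ of degree $n_\tau - 1$, yielding the locally Lipschitz estimate on $\bar B_M \subset L^2_TL^2$. I expect the trees $\<31>$, $\<22>$, and $\<32>$ to be the main obstacle, since they involve a recentered convolution multiplied by a monomial in $K*h$; however, since with vanishing renormalisation constants these expressions are genuine pointwise products of functions, no Wick-type cancellation is needed and the bound reduces to a bookkeeping exercise in Besov/Lebesgue norms.

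Once the Lipschitz estimate is established on the dense subspace of smooth $h$'s, the extension of $h \mapsto \Pi^h$ to all of $L^2_T L^2$ follows by completeness of $(\MMm_T, d_{\MMm})$, and Lemma~\ref{lem:conv_models} identifies the resulting object with the one defined directly by the formulas of Definition~\ref{def:bphz_lift}. To prove the identity $\Phi(\fz,\Pi^h)=w^{h,\fz}$, I will first establish it for smooth $h$, where by the consistency of regularity structures with classical calculus $\Phi(\fz,\Pi^h)$ coincides with the classical mild solution to~\eqref{eq:nl_skeleton_equation}, hence with $w^{h,\fz}$. The general case then follows by continuity of both sides: $\Phi$ is locally Lipschitz in the model argument (cf.\ Definition~\ref{def:solution_map} and the continuity statement referred to in the excerpt), and $h \mapsto w^{h,\fz}$ is continuous from $L^2_TL^2$ into $C_T\mcC^\alpha(\mbT^3)$ by Theorem~\ref{th:nl_skeleton_gwp} together with the a priori bounds of Proposition~\ref{prop:nl_skeleton_LWP} applied to differences. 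Combining these continuities with the previously established continuity of $h\mapsto \Pi^h$ gives the identity on the full space $L^2_TL^2(\mbT^3)$.
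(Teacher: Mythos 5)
The paper gives no proof of this lemma; it simply refers the reader to~\cite[Prop.~2]{schoenbauer} and moves on. Your sketch is a correct, self-contained account of what such a proof involves: with vanishing renormalisation constants the canonical lift is a multilinear expression in $h$ and $K*h$; parabolic smoothing together with $H^1(\mbT^3)\hookrightarrow L^6(\mbT^3)$ makes every $\Pi^h_z\tau$ a bona fide function lying in a Lebesgue class whose localised pairings $\langle\cdot,\varphi_z^\lambda\rangle$ satisfy the required $\lambda^{-|\tau|}$ scaling; telescoping identities localise $\Pi^{h_1}_z\tau-\Pi^{h_2}_z\tau$ onto a single factor of $h_1-h_2$, yielding the Lipschitz bound on $\bar B_M\subset L^2_TL^2$; and density, completeness of $\MMm_T$, and joint continuity of the solution map close the argument, including the identification $\Phi(\fz,\Pi^h)=w^{h,\fz}$. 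This matches the mechanism of the cited result.

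Two points deserve more care than your sketch affords them, though neither is a gap. First, Lemma~\ref{lem:lin_skeleton_reg} concerns the massive Ornstein--Uhlenbeck semigroup, whereas the canonical lift in Definition~\ref{def:bphz_lift} uses the massless kernel~$K$ from the decomposition~$G=K+R$; since $R$ is smooth and bounded the regularity gain transfers, but one should say so rather than invoke the lemma verbatim. Second, the trees $\<31>$, $\<22>$, $\<32>$ have degree close to zero, so the model bound hinges on a positive H\"older estimate for the recentered convolution $(K*\cdot)(\bar z)-(K*\cdot)(z)$ combined with the negative-degree second factor via a product estimate at precisely the right Besov scaling; this is where the actual analytic labour of a self-contained proof lives, rather than a routine ``bookkeeping exercise'', even though you are right that no probabilistic cancellation is needed.
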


The following proposition provides us with a specific choice of constants~$C_{\eps,\kappa}^{(i)}$ which then remain fixed for the rest of the appendix. 

\begin{proposition}[BPHZ model] \label{prop:bphz_model_noise_intensity}
	Let~$\eps \in [0,1]$. 
	There exists a choice of constants~$C_{\kappa}^{(1)}$ and~$C_\kappa^{(2)}$ such that
	\begin{enumerate}[label=(\roman*)]
		\item \label{prop:bphz_model_noise_intensity:i} $C_{\eps,\kappa}^{(1)} = \eps^{2}\, C_{\kappa}^{(1)}$ and $C_{\eps,\kappa}^{(2)} = \eps^{4}\, C_{\kappa}^{(2)}$ for any~$\kappa > 0$, and
		\item \label{prop:bphz_model_noise_intensity:ii} the corresponding sequence~$(\bar{\Pi}^{\eps \xi_\kappa})_{\kappa > 0}$ converges in probability in~$(\MMm,d_{\MMm})$ to a limiting model~$\bar{\Pi}^{\eps}$.
	\end{enumerate}
	We call~$\bar{\Pi}^{\eps}$ the (minimal) \emph{BPHZ model} (with noise intensity~$\eps$) and also write~$\bar{\Pi} := \bar{\Pi}^1$.
\end{proposition}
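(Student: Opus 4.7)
The plan is to identify the renormalisation constants $C^{(1)}_\kappa$ and $C^{(2)}_\kappa$ via the standard BPHZ prescription, verify the scaling identity~\ref{prop:bphz_model_noise_intensity:i} by direct inspection of Definition~\ref{def:bphz_lift}, and then deduce the convergence in~\ref{prop:bphz_model_noise_intensity:ii} from the BPHZ convergence theorem of Chandra--Hairer specialised to the finite symbol set $\CW_-$ of the $\Phi^4_3$ regularity structure.

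First I would set $C^{(1)}_\kappa \coloneqq \mathbb{E}\bigl[\bigl(\bar{\Pi}^{\xi_\kappa}_z \<1>(z)\bigr)^2\bigr]$, which is independent of $z$ by stationarity of $\xi_\kappa$ and, via the kernel decomposition~\eqref{eq:kernel_decomposition}, equals the explicit convolution $(K \ast (\rho_\kappa \ast \rho_\kappa) \ast K)(0)$, diverging like $\kappa^{-1}$ as $\kappa \to 0$. The constant $C^{(2)}_\kappa$ would be chosen analogously so that the expectation of $\bar{\Pi}^{\xi_\kappa}_z \<22>$, tested against a fixed normalising cutoff, vanishes; this is a logarithmically divergent four-fold kernel integral. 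With these choices fixed, for~\ref{prop:bphz_model_noise_intensity:i} I observe that the \emph{un}renormalised canonical lift satisfies $\Pi^{\eps\xi_\kappa}_z \tau = \eps^{n_\tau} \Pi^{\xi_\kappa}_z \tau$ by multilinearity in the noise. The renormalised objects inherit the same homogeneity if and only if the subtracted counterterms do, and reading Definition~\ref{def:bphz_lift} line by line forces $C^{(1)}_{\eps,\kappa} = \eps^2 C^{(1)}_\kappa$ (from the lines defining $\<2>$ and $\<3>$) and $C^{(2)}_{\eps,\kappa} = \eps^4 C^{(2)}_\kappa$ (from the lines defining $\<22>$ and $\<32>$); a short induction through $\CW_-$ ordered by $n_\tau$ then confirms $\bar{\Pi}^{\eps\xi_\kappa}_z \tau = \eps^{n_\tau}\bar{\Pi}^{\xi_\kappa}_z \tau$ for every symbol.

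For~\ref{prop:bphz_model_noise_intensity:ii}, by~\ref{prop:bphz_model_noise_intensity:i} and linearity it suffices to prove $\bar{\Pi}^{\xi_\kappa} \to \bar{\Pi}^1$ in probability in $(\MMm, d_{\MMm})$ as $\kappa \to 0$; the candidate limit $\bar{\Pi}^\eps$ is then obtained by multiplying each tree component by $\eps^{n_\tau}$, and convergence in $d_{\MMm}$ follows from the homogeneity since the distance~\eqref{def:model_distance} is linear in the model on each tree. This $\kappa \to 0$ convergence is precisely the content of the Chandra--Hairer BPHZ theorem~\cite{chandra_hairer}, as exposited for $\Phi^4_3$ in~\cite{hairer_rs_dynamical_phi43}. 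Because $\CW_-$ contains only seven elements, one could equivalently give a bare-hands proof: each $\scal{\bar{\Pi}^{\xi_\kappa}_z \tau, \varphi_z^\lambda}$ belongs to an inhomogeneous Wiener chaos of order at most $n_\tau$, so by hypercontractivity uniform second-moment bounds suffice, which reduce after Wick contraction to a finite list of Feynman-graph integrals that converge uniformly in $\lambda \in (0,1]$ with the required scaling $\lambda^{|\tau|}$. A Kolmogorov-type $L^p(\Omega)$ argument then upgrades pointwise moment convergence to convergence of $d_{\MMm}(\bar{\Pi}^{\xi_\kappa},\bar{\Pi}^1)$, and a diagonal subsequence delivers convergence in probability.

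The hard part is the analysis of the tree $\<22>$, where the renormalisation by $C^{(2)}_\kappa$ must simultaneously defuse the sub-divergence already encoded in each Wick square $\<2>$ and the overall logarithmic divergence of pairing two Wick squares against each other; this nested-divergence structure is exactly what the BPHZ forest formula is designed to handle, and managing the two scales concurrently rather than sequentially is what gives the argument its technical weight. Once this estimate is in place, the symbols $\<31>$ and $\<32>$ introduce no fundamentally new divergent subgraph and are treated by the same mechanism at one additional level of algebraic complexity, while the remaining, positive-degree symbols are bounded trivially after Wick expansion.
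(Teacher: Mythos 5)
Your proposal is correct and follows essentially the same route as the paper: defining the renormalisation constants explicitly via the BPHZ prescription (agreeing with~\cite[Eq.~(10.33), (10.41)]{hairer_rs}), noting that item~\ref{prop:bphz_model_noise_intensity:i} is immediate from multilinearity of the canonical lift in the noise, and citing the BPHZ convergence theorem of~\cite{chandra_hairer} (of which~\cite[Thm.~10.7]{hairer_rs} is the $\Phi^4_3$-specific case) for item~\ref{prop:bphz_model_noise_intensity:ii}. The paper's proof is terser, deferring entirely to these citations, while you flesh out what the constants look like, the reduction to $\eps=1$ via homogeneity of $d_{\MMm}$, and the underlying Wiener-chaos/Kolmogorov argument — all of which is consistent with the content of the cited references.
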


\begin{proof} 
	Let us define the constants~$C_{\eps,\kappa}^{(1)}$ and~$C_{\eps,\kappa}^{(2)}$ as in~\cite[Eq.~(10.33)]{hairer_rs} and~\cite[Eq.~(10.41)]{hairer_rs}, respectively;
	the claim in~\ref{prop:bphz_model_noise_intensity:i} is immediate from that definition. It also follows from a more general formula for the BPHZ renormalisation constants, see~\cite[Rmk.~2.23]{rs_renorm} and the references therein.
	The assertion in~\ref{prop:bphz_model_noise_intensity:ii} is the content of~\cite[Thm.~10.7]{hairer_rs} (which, in turn, is a special case of the more general results in~\cite{chandra_hairer} or~\cite{hairer_steele_bphz}). 
\end{proof}

\begin{lemma}[Homogeneity of~$\bar{\Pi}^\eps$] \label{lem:homog_model_norm}
	Let~$T > 0$. For any~$z \in D_T$ and any~$\tau \in \CW_-$, we have~$\bar{\Pi}^{\eps}_z \tau = \eps^{n_\tau} \bar{\Pi}_z \tau$. 
	As a consequence, we also have~$\barnorm{\bar{\Pi}^\eps}_T = \eps \barnorm{\bar{\Pi}}_T$.
\end{lemma}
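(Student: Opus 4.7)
The plan is to establish the identity $\bar{\Pi}^{\eps}_z \tau = \eps^{n_\tau} \bar{\Pi}_z \tau$ first at the level of the mollified approximations $\bar{\Pi}^{\eps \xi_\kappa}$ for every fixed $\kappa > 0$ by a direct inductive verification along the list of trees in Definition~\ref{def:bphz_lift}, and then pass to the limit $\kappa \to 0$ using Proposition~\ref{prop:bphz_model_noise_intensity}~\ref{prop:bphz_model_noise_intensity:ii}. The norm statement will then be immediate from the definition of $\barnorm{\cdot}$.

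At fixed $\kappa > 0$ the verification is performed tree-by-tree in the order $\<wn>, \<1>, \<2>, \<3>, \<20>, \<30>, \<31>, \<22>, \<32>$. For $\<wn>$ and $\<1>$ the claim is trivial since the noise rescales linearly and $K \ast (\cdot)$ is linear; this matches $n_\tau = 1$. The key observation for the remaining trees is that, by Proposition~\ref{prop:bphz_model_noise_intensity}~\ref{prop:bphz_model_noise_intensity:i}, the renormalisation constants scale exactly as $C_{\eps,\kappa}^{(1)} = \eps^2 C_\kappa^{(1)}$ and $C_{\eps,\kappa}^{(2)} = \eps^4 C_\kappa^{(2)}$, so that in each recursive formula both the product term and the subtracted counterterm carry exactly the same power of $\eps$. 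For instance, for $\<3>$ one finds $(\bar{\Pi}^{\eps\xi_\kappa}_z \<1>)^3 = \eps^3 (\bar{\Pi}^{\xi_\kappa}_z \<1>)^3$ and $3 C^{(1)}_{\eps,\kappa} \bar{\Pi}^{\eps\xi_\kappa}_z \<1> = 3\eps^2 C^{(1)}_\kappa \cdot \eps\, \bar{\Pi}^{\xi_\kappa}_z \<1> = \eps^3 \cdot 3 C^{(1)}_\kappa \bar{\Pi}^{\xi_\kappa}_z \<1>$. The trees $\<20>$ and $\<30>$ inherit the scaling from $\<2>$ and $\<3>$ by linearity of the (recentred) convolution with $K$, and for the compound trees $\<31>, \<22>, \<32>$ the same bookkeeping applies with $C^{(2)}$. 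As a representative case, in $\<32>$ both $\bar{\Pi}^{\eps\xi_\kappa}_z \<30> \bar{\Pi}^{\eps\xi_\kappa}_z \<2>$ and $3 C^{(2)}_{\eps,\kappa} \bar{\Pi}^{\eps\xi_\kappa}_z \<1>$ equal $\eps^5$ times their $\eps = 1$ counterparts, which is precisely $n_{\<32>}$.

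Having established the identity for every $\kappa > 0$, I would then pass to the limit $\kappa \to 0$. Pairing both sides against any admissible rescaled test function $\varphi^\lambda_z$, convergence of the mollified BPHZ models in $(\MMm, d_{\MMm})$ from Proposition~\ref{prop:bphz_model_noise_intensity}~\ref{prop:bphz_model_noise_intensity:ii} (together with Lemma~\ref{lem:conv_models}) implies that $\scal{\bar{\Pi}^{\eps\xi_\kappa}_z \tau, \varphi^\lambda_z}$ converges in probability to $\scal{\bar{\Pi}^\eps_z \tau, \varphi^\lambda_z}$, and similarly for the unscaled model. Extracting a common almost-sure subsequence, the equality of random variables $\scal{\bar{\Pi}^{\eps\xi_\kappa}_z \tau, \varphi^\lambda_z} = \eps^{n_\tau} \scal{\bar{\Pi}^{\xi_\kappa}_z \tau, \varphi^\lambda_z}$ passes to the limit. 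Since this holds for all admissible $(\lambda,\varphi,z)$, we conclude $\bar{\Pi}^\eps_z \tau = \eps^{n_\tau} \bar{\Pi}_z \tau$ in $\MMm$ as required.

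For the norm identity, linearity of the $L^2$ pairing in the definition~\eqref{def:model:hom_model_norm_trees} and the tree-wise identity just proved directly give $\threebars \bar{\Pi}^\eps \tau\threebars_T = \eps^{n_\tau} \threebars \bar{\Pi} \tau \threebars_T$ for each $\tau \in \CW_-$. Taking $n_\tau$-th roots yields
\begin{equation*}
\barnorm{\bar{\Pi}^\eps \tau}_T
= \threebars \bar{\Pi}^\eps \tau\threebras_T^{1/n_\tau}
= \eps \, \threebars \bar{\Pi} \tau \threebars_T^{1/n_\tau}
= \eps \, \barnorm{\bar{\Pi}\tau}_T,
\end{equation*}
and maximising over $\tau \in \CW_-$ concludes the proof. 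The only (mild) obstacle is purely notational: keeping track of which power of $\eps$ each term in each recursive definition contributes. Once one notes that, by design, $n_\tau$ equals the number of leaves of $\tau$ and the renormalisation constants scale as $\eps^{2i}$ against trees with $2i$ leaves, the verification is entirely mechanical.
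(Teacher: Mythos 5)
Your proof is correct and takes essentially the same approach as the paper: reduce to the mollified models $\bar{\Pi}^{\eps\xi_\kappa}$ at fixed $\kappa > 0$, use the scaling $C^{(i)}_{\eps,\kappa} = \eps^{2i} C^{(i)}_\kappa$ from Proposition~\ref{prop:bphz_model_noise_intensity}~\ref{prop:bphz_model_noise_intensity:i} to verify the tree-by-tree homogeneity from Definitions~\ref{def:model} and~\ref{def:bphz_lift}, and pass to the limit $\kappa \to 0$. You fill in details the paper declares \enquote{immediate}; aside from a harmless typo in your final display (\texttt{\textbackslash threebras} should be \texttt{\textbackslash threebars}), the argument is sound.
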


\begin{proof}
	By definition of~$\bar{\Pi}^{\eps}$, it suffices to check that the claims hold for~$\bar{\Pi}^{\eps \xi_\kappa}$ with~$\kappa > 0$ fixed and~$C_{\eps,\kappa}^{(i)}$ as in Proposition~\ref{prop:bphz_model_noise_intensity}\ref{prop:bphz_model_noise_intensity:i}.
	In that case, the claims follow immediately from Definitions~\ref{def:model} and~\ref{def:bphz_lift}.
\end{proof}

\begin{lemma}[Fernique] \label{lem:fernique_bphz}
	For any~$T > 0$, there exists some~$\Lambda_T > 0$ such that 
	\begin{equation*}
		K_T \coloneqq  \E\sbr[0]{\exp\del[1]{\Lambda_T \barnorm{\bar{\Pi}}_T^2}} < \infty.
		\label{lem:fernique_bphz:cst}
	\end{equation*}
\end{lemma}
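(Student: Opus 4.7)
The plan rests on two classical facts about the BPHZ model: each component $\bar{\Pi}_z \tau$ for $\tau \in \CW_-$ lies in the inhomogeneous Wiener chaos of order at most $n_\tau$, and Nelson's hypercontractivity provides equivalence of $L^p$ norms for elements in a fixed Wiener chaos. The homogeneous normalisation $\barnorm{\bar{\Pi}\tau} = \threebars\bar{\Pi}\tau\threebars^{1/n_\tau}$ is chosen precisely so that the chaos order $n_\tau$ is absorbed, yielding a \emph{uniform} sub-Gaussian behaviour across all $\tau$, from which Fernique integrability of the square follows by the standard Taylor-expansion argument.

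Since $\CW_-$ is finite and $\barnorm{\bar{\Pi}}_T^2 = \max_{\tau \in \CW_-} \barnorm{\bar{\Pi}\tau}_T^2$, the estimate $\exp(\max_i x_i) \leq \sum_i \exp(x_i)$ reduces the task to showing, for each $\tau \in \CW_-$ separately, that $\E\sbr[0]{\exp(\Lambda \barnorm{\bar{\Pi}\tau}_T^2)} < \infty$ for some $\Lambda > 0$. First I would establish the pointwise variance bound
\begin{equation*}
\E\sbr[1]{\abs[1]{\scal[1]{\bar{\Pi}_z\tau,\varphi_z^\lambda}}^2}^{1/2} \lesssim_T \lambda^{\abs[0]{\tau}},
\end{equation*}
uniformly over $z \in D_T$, $\lambda \in (0,1]$ and $\varphi \in \CB_4$; this is the standard second-moment estimate that underlies the convergence in Proposition~\ref{prop:bphz_model_noise_intensity}. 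Nelson's hypercontractivity then upgrades it to $\E\sbr[0]{\abs[0]{\scal{\bar{\Pi}_z\tau,\varphi_z^\lambda}}^p}^{1/p} \lesssim_T (p-1)^{n_\tau/2}\, \lambda^{\abs[0]{\tau}}$ for all $p \geq 2$. A Kolmogorov--Chentsov-type chaining argument over the parameters $(z,\lambda)$, combined with the reduction to a single test function afforded by Remark~\ref{rmk:single_test_fct}, then lifts these pointwise bounds to a moment bound on the supremum,
\begin{equation*}
\E\sbr[0]{\threebars\bar{\Pi}\tau\threebars_T^p}^{1/p} \leq C_T\, p^{n_\tau/2}, \qquad p \geq 1.
\end{equation*}

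Rewriting this in terms of $\barnorm{\bar{\Pi}\tau}_T$ by choosing the moment $q \coloneqq p n_\tau$ gives
\begin{equation*}
\E\sbr[0]{\barnorm{\bar{\Pi}\tau}_T^q}^{1/q} = \E\sbr[0]{\threebars\bar{\Pi}\tau\threebars_T^{q/n_\tau}}^{1/q} \leq C_T^{1/n_\tau}\,(q/n_\tau)^{1/2} \leq C_T'\,\sqrt{q},
\end{equation*}
which is the defining sub-Gaussian moment condition. Taylor-expanding the exponential and invoking Stirling's formula yields $\E\sbr[0]{\exp(\Lambda \barnorm{\bar{\Pi}\tau}_T^2)} \leq \sum_{k \geq 0} \Lambda^k (2e (C_T')^2)^k < \infty$ for any $\Lambda < 1/(2e(C_T')^2)$. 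Taking $\Lambda_T \coloneqq \min_{\tau \in \CW_-} \Lambda_{T,\tau}$ then closes the argument via the reduction described above.

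The main technical obstacle is the Kolmogorov lifting in the second step: while the pointwise hypercontractivity estimates are immediate, passing from them to a uniform moment bound on the supremum over scales $\lambda \in (0,1]$ and base points $z \in D_T$ of all rescaled pairings requires a careful dyadic chaining. This is, however, essentially the same argument underlying the convergence result of Proposition~\ref{prop:bphz_model_noise_intensity}, so the requisite technical work can largely be imported from the construction of the BPHZ model in \cite{hairer_rs} or \cite{chandra_hairer}.
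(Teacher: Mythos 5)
Your proposal is correct, but it takes a more hands-on route than the paper does at the key step. The paper simply observes that for each $\tau \in \CW_-$ the inhomogeneous norm $\threebars\bar{\Pi}\tau\threebars_T = \barnorm{\bar{\Pi}\tau}_T^{n_\tau}$ is a (supremum of absolute values of) $n_\tau$-th order Wiener chaos elements, invokes the fact that Wiener chaoses are closed under convergence in probability to pass from the smoothed to the limiting model, and then cites as a known result that such a quantity, raised to the power $1/n_\tau$, has Gaussian tails, i.e.\ $\E[\exp(\Lambda_T^\tau \barnorm{\bar{\Pi}\tau}_T^2)] < \infty$. What you are doing is \emph{rebuilding} that classical fact from scratch: second-moment bounds for the pairings, Nelson hypercontractivity to upgrade to $p$-moments growing like $p^{n_\tau/2}$, a Kolmogorov--Chentsov chaining over $(z,\lambda)$ (using the reduction to a single test function) to transfer this to the supremum, and then Taylor--Stirling to deduce the exponential bound. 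That is the correct mechanism, and you are right that it is exactly the machinery already needed for the convergence of the BPHZ model, but it substantially replicates work that the paper treats as a black box. The other difference is in the final combination across $\tau$: you use $\exp(\max_\tau x_\tau) \le \sum_\tau \exp(x_\tau)$ and take $\Lambda_T = \min_\tau \Lambda_{T,\tau}$, while the paper bounds $\max_\tau \barnorm{\bar{\Pi}\tau}^2 \le \sum_\tau \barnorm{\bar{\Pi}\tau}^2$, writes the exponential of the sum as a product, and applies H\"older with exponents $\sum_\tau 1/p_\tau = 1$. Your combination step is arguably more direct and does not need H\"older, though of course both work since $\CW_-$ is finite. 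In short: same underlying idea (fixed chaos order plus homogeneous normalisation gives Gaussian tails), but you re-prove the Gaussian-tail ingredient instead of citing it, which makes your proof longer and more technical without changing its substance.
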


\begin{proof}
	It is immediate to see that, for~$\kappa > 0$ and~$\tau \in \CW_-$, the norm~$\barnorm{\bar{\Pi}^{\xi_\kappa} \tau}_T^{n_\tau}$ is an element in the $n_\tau$-th inhomogeneous Wiener chaos.
	Since all Wiener chaoses are closed under convergence in probability~\cite{schreiber_1969}, the norm~$\barnorm{\bar{\Pi} \tau}_T^{n_\tau}$ lives in the $n_\tau$-th inhomogeneous Wiener chaos as well; for elements therein, it is well known that there exists some~$\Lambda_T^\tau > 0$ such that
	\begin{equation*}
		\E\sbr[0]{\exp\del[1]{\Lambda_T^\tau \barnorm{\bar{\Pi} \tau}_T^2}} < \infty.
	\end{equation*}
	For each~$\tau \in \CW_-$, we then choose~$p_\tau \in (1,\infty)$ such that~$\sum_{\tau \in \CW_-} \frac{1}{p_\tau} = 1$ and set
	\begin{equation*}
		\Lambda_T := \min_{\tau \in \CW_-} \frac{\Lambda_T^{\tau}}{p_\tau}.
	\end{equation*}	
	Since
	\begin{equation*}
		\barnorm{\bar{\Pi}}_T^2 = \max_{\tau \in \CW_-} \barnorm{\bar{\Pi} \tau}^2 
		\leq \sum_{\tau \in \CW_-} \barnorm{\bar{\Pi} \tau}^2
	\end{equation*}
	Hölder's inequality implies
	\begin{align*}
		K_T
		& \leq 
		\E\sbr[4]{\prod_{\tau \in \CW_-} \exp\del[1]{\Lambda_T \barnorm{\bar{\Pi} \tau}^2}}
		\leq 
		\prod_{\tau \in \CW_-} 
		\norm[1]{\exp\del[1]{\Lambda_T \barnorm{\bar{\Pi} \tau}^2}}_{L^{p_\tau}(\P)} \\
		& =
		\prod_{\tau \in \CW_-} 
		\E\sbr[1]{\exp\del[1]{\Lambda_T p_\tau \barnorm{\bar{\Pi} \tau}^2}}^{\frac{1}{p_\tau}}
		\leq
		\prod_{\tau \in \CW_-} 
		\E\sbr[1]{\exp\del[1]{\Lambda_T^\tau \barnorm{\bar{\Pi} \tau}^2}}^{\frac{1}{p_\tau}}
		< \infty.
	\end{align*}
\end{proof}

Finally, let us give some details regarding the solution map~$\Phi$.
For later reference, we already present the next two statements for \emph{arbitrary}~$T > 0$ since we know that~$\bar{u}_\eps^\fz$ exists forever, a fact that we recapitulate in the next subsection, see Proposition~\ref{prop:cdfi_model} and Corollary~\ref{coro:global_existence}.

\begin{definition}[Solution map] \label{def:solution_map}
	For any~$T > 0$, the solution map~$\Phi: \CC^{\alpha}(\mbT^3) \x \MMm_T \to C_T\CC^{\alpha}(\mbT^3)$ is defined as~$\Phi(\mfz,\Pi) := \CS_A(-m^2,\mfz,\Pi)$ where~$\CS_A$ is given in~\cite[Thm.~2.12]{hairer_weber_ldp}.
\end{definition}

The following lemma immediately follows from global existence and the joint local Lipschitz continuity of the solution map, see~\cite[Lem.~7.5 and Thm.~7.8]{hairer_rs} as well as~(the proof of)~\cite[Thm.~2.12]{hairer_weber_ldp} and the references therein.
\begin{lemma} \label{lem_uniform_lipschitz}
	For all~$R > 0$ and~$T > 0$, there exists some~$L_{R,T} > 0$ such that for all $\Pi, \tilde{\Pi} \in \MMm_T$ with~$\threebars \Pi \threebars_T \vee \threebars \tilde{\Pi} \threebars_T \leq R$ and $\fz, \tilde{\fz} \in \CC^\alpha(\mbT^3)$ with~$\norm[0]{\fz}_{\CC^\alpha_x} \vee \norm[0]{\tilde{\fz}}_{\CC^\alpha_x} \leq R$, it holds that
	\begin{equation} \label{lem_uniform_lipschitz_eq_IC}
		\norm[0]{\Phi(\fz, \Pi) - \Phi(\tilde{\fz}, \tilde{\Pi})}_{C_T \CC^\alpha_x} \leq L_{R,T}  \,
		\del[2]{\norm[0]{\fz - \tilde{\fz}}_{\CC^\alpha_x} + d_{\MMm}(\Pi,\tilde{\Pi})} \,.
	\end{equation}
	In particular, we have
	\begin{equation}
		\sup_{\fz \in \bar{B}_R(0)} \norm[0]{\Phi(\fz, \Pi) - \Phi(\fz, \tilde{\Pi})}_{C_T \CC^\alpha_x} \leq L_{R,T}  \; d_{\MMm}(\Pi,\tilde{\Pi})
		\label{lem_uniform_lipschitz_eq} \,.
	\end{equation}
\end{lemma}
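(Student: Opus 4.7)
My plan is to combine the \emph{local} joint Lipschitz continuity of the solution map (proved in Hairer's original work via a Picard iteration in a suitable space of modelled distributions) with the \emph{a priori} global-in-time bounds for the dynamic $\Phi^4_3$ equation, then iterate. Specifically, \cite[Thm.~7.8 and Lem.~7.5]{hairer_rs}, as reformulated in \cite[Thm.~2.12]{hairer_weber_ldp}, provide a $\tau_\star = \tau_\star(R) > 0$ and a constant $L_\star = L_\star(R) > 0$ such that for all admissible models with $\threebars\Pi\threebars_{\tau_\star}\vee\threebars\tilde\Pi\threebars_{\tau_\star} \leq R$ and initial data of $\mcC^\alpha$-norm at most $R$, the solution map exists on $[0,\tau_\star]$ and satisfies~\eqref{lem_uniform_lipschitz_eq_IC} with $T$ replaced by $\tau_\star$ and $L_{R,T}$ replaced by $L_\star$.

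Next, I would upgrade the $\mcC^\alpha$-bound on the initial data to all times $t \in [0,T]$. By the global existence theory for the dynamical $\Phi^4_3$ equation (recorded later in this appendix as Proposition~\ref{prop:cdfi_model} and Corollary~\ref{coro:global_existence}, based on the a priori estimates of Moinat--Weber~\cite{moinat_weber_20_phi43Loc}), there is a monotone function $M = M(R,T)$ such that for every $\Pi \in \MMm_T$ with $\threebars\Pi\threebars_T \leq R$ and every $\fz$ with $\|\fz\|_{\mcC^\alpha_x} \leq R$, one has
\begin{equation*}
\sup_{t\in[0,T]} \|\Phi(\fz,\Pi)(t)\|_{\mcC^\alpha_x} \leq M(R,T).
\end{equation*}
Setting $R_\star := M(R,T) \vee R$ ensures that, at every time of the form $k\tau_\star \in [0,T]$, both solutions (evaluated at $\fz,\Pi$ and at $\tilde\fz,\tilde\Pi$) lie in $\bar B_{R_\star} \subset \mcC^\alpha(\mbT^3)$, so that the local result can be restarted from these times without leaving its domain of validity.

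I would then concatenate. Writing $N := \lceil T/\tau_\star \rceil$ and applying the local Lipschitz estimate inductively on the intervals $[k\tau_\star,(k+1)\tau_\star] \cap [0,T]$, using the semi-group property $\Phi(\fz,\Pi)|_{[k\tau_\star,\cdot]} = \Phi(\Phi(\fz,\Pi)(k\tau_\star),\Pi_{\cdot+k\tau_\star})$ and the fact that translating a model in time does not increase its norm, one obtains
\begin{equation*}
\|\Phi(\fz,\Pi)-\Phi(\tilde\fz,\tilde\Pi)\|_{C_{(k+1)\tau_\star\wedge T}\mcC^\alpha_x} \leq (L_\star+1)\|\Phi(\fz,\Pi)-\Phi(\tilde\fz,\tilde\Pi)\|_{C_{k\tau_\star}\mcC^\alpha_x} + L_\star\,d_{\MMm}(\Pi,\tilde\Pi),
\end{equation*}
so that a discrete Gronwall iteration yields~\eqref{lem_uniform_lipschitz_eq_IC} with $L_{R,T} := (L_\star+1)^N L_\star$. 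The estimate~\eqref{lem_uniform_lipschitz_eq} is then the special case $\fz=\tilde\fz$.

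The main (and essentially only) technical obstacle is ensuring that the local Lipschitz constant can indeed be restarted uniformly on each subinterval. This is where the global a priori bound $M(R,T)$ is essential: without it, the solution could, a priori, grow so that after one step the hypothesis $\|\,\cdot\,\|_{\mcC^\alpha_x} \leq R$ of the local theorem fails, and the induction breaks. Once the boundedness of the $\mcC^\alpha$-trajectory is in hand, the rest is a clean iteration, and the appeal to Remark~\ref{rem:model_extension} (the lift $\EE: \MMm \to \MM$ is locally Lipschitz) lets us invoke Hairer's results, which are stated for the larger model space $\MM$, directly on $\MMm$.
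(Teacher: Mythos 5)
Your proposal is correct and takes precisely the route the paper intends: the paper gives no details itself, stating only that the lemma \enquote{immediately follows from global existence and the joint local Lipschitz continuity of the solution map} with citations to \cite[Lem.~7.5, Thm.~7.8]{hairer_rs} and \cite[Thm.~2.12]{hairer_weber_ldp}, and your argument (local Lipschitz estimate from Hairer/Hairer--Weber, deterministic a~priori bound on the $\mcC^\alpha$-trajectory via the Moinat--Weber CDFI estimate, concatenation along intervals of fixed length, lift $\EE:\MMm\to\MM$ from Remark~\ref{rem:model_extension} to pass between minimal and full models) spells out exactly this. One small imprecision to clean up: having introduced~$R_\star := M(R,T)\vee R$ to control the trajectory at restart times, the step length and local Lipschitz constant in the iteration should be taken as~$\tau_\star(R_\star)$ and~$L_\star(R_\star)$ rather than~$\tau_\star(R)$ and~$L_\star(R)$; since~$R_\star$ depends only on~$R$ and~$T$ this is a cosmetic change, but as written the restarted local step is not within the hypothesis of the quoted local theorem.
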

\subsubsection{Coming Down from Infinity} \label{sec:cdfi}
In the previous subsection, we have sketched the \emph{local} solution theory for the~$\Phi^4_3$ equation using regularity structures; in this subsection, we will recap the strategy to show that this solution actually exists~\emph{globally} in time. 
This follows from the so-called \emph{coming down from infinity}~(CDFI) estimate which was first obtained by Mourrat and Weber~\cite{mourrat_weber_infinity} for the~$\Phi^4_3$ equation on the $3$D torus based on paraproduct estimates and paracontrolled calculus in the sense of~\cite{gip}.
In recent years, the same result has been obtained by Moinat and Weber~\cite{moinat_weber_20_phi43Loc} (see also~\cite{moinat_weber_20_RD}) in a setting that is closer to regularity structures, allowing for a streamlined proof which has then been generalised by Chandra, Moinat, and Weber~\cite{chandra_moinat_weber_23} to the case of the~$\Phi^4_{4-\iota}$ equation when~$\iota > 0$.

The following proposition presents a version of the CDFI estimate that is essentially equivalent to~\cite[Thm.~2.1]{moinat_weber_20_phi43Loc} but phrased in the language of the previous subsection;
this will be useful in Sections~\ref{sec:uniform_ldp} and~\ref{sec:tail_bounds} below.

\begin{proposition} \label{prop:cdfi_model}
	Let~$\fz \in \CC^{\alpha}(\T^3)$ and~$\eps \in [0,1]$. We denote by~$\<1b>^\eps$ the solution to the additive noise, stochastic heat equation, with noise intensity~$\eps$  and~$0$ initial condition, i.e.
	\begin{equation}
		(\partial_t - \Delta) \<1b>^{\eps} = \eps \xi, \quad \<1b>^\eps (0,\cdot) = 0.
		\label{eq:she_tree}
	\end{equation}
	Furthermore, set~$v_\eps^\fz := u_\eps^\fz - \<1b>^\eps$ with~$u_\eps^\fz$ as given by~\eqref{eq:phi43_rigorous_SPDE}.
	Then, for any~$t \in (0,1)$, the bound 
	\begin{equation}
		\sup_{(s,x) \in (t,1) \x \T^3} \abs[0]{v_\eps^\fz(s,x)} \lesssim \frac{1}{t^2} \vee \del[1]{\eps\barnorm{\bar{\Pi}}}^{\frac{2}{1 - 2\kappa}}
		\label{prop:cdfi_model_estimate}
	\end{equation}
	holds uniformly over all initial conditions~$\fz \in \CC^{\alpha}(\T^3)$, where~$\bar{\Pi}$ denotes the (minimal) BPHZ~model.
\end{proposition}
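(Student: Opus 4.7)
The plan is to derive the estimate from the interior coming-down-from-infinity bound of Moinat and Weber~\cite[Thm.~2.1]{moinat_weber_20_phi43Loc}, combined with the $\eps$-homogeneity of the minimal BPHZ model established in Lemma~\ref{lem:homog_model_norm}. The starting observation is that $v_\eps^\fz = u_\eps^\fz - \<1b>^\eps$ formally solves a cubic reaction--diffusion equation whose driving data, after Wick-type cancellations, are exactly the renormalised stochastic objects encoded by the trees in $\CW_-$ assembled into the minimal BPHZ model $\bar{\Pi}^\eps$ of Proposition~\ref{prop:bphz_model_noise_intensity}. Since $u_\eps^\fz = \lim_{\kappa\to 0} \bar{u}_{\eps,\kappa}^\fz$ in $C_T\mcC^\alpha(\mbT^3)$ by Definition~\ref{def:phi43_rigorous_SPDE}, it suffices to establish the bound for each mollified analogue of $v_\eps^\fz$ uniformly in $\kappa > 0$ and then pass to the limit using the convergence guaranteed by Proposition~\ref{prop:bphz_model_noise_intensity}\ref{prop:bphz_model_noise_intensity:ii}.

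Applied to the mollified difference, with the particular test function $\Psi$ fixed in Remark~\ref{rmk:single_test_fct}, the purely interior estimate of Moinat--Weber delivers a pointwise bound of the form
\begin{equation*}
    \sup_{(s,x)\in (t,1)\x \mbT^3} \abs[0]{v_\eps^\fz(s,x)} \;\lesssim\; \frac{1}{t^2} \,\vee\, \max_{\tau \in \CW_-} \threebars \bar{\Pi}^\eps \tau \threebars^{\frac{2}{n_\tau(1-2\kappa)}},
\end{equation*}
where the Schauder-type exponent $2/(n_\tau(1-2\kappa))$ reflects the parabolic scaling dimension $\abs[0]{\tau}$ of each tree and $n_\tau$ is the number of noise instances it contains. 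Crucially, the estimate is interior, hence automatically uniform in $\fz \in \mcC^\alpha(\mbT^3)$. Recalling from Definition~\ref{def:model} that the homogeneous model norm is $\barnorm{\bar{\Pi}^\eps} = \max_\tau \threebars \bar{\Pi}^\eps \tau \threebars^{1/n_\tau}$, the right-hand side rewrites as $t^{-2} \vee \barnorm{\bar{\Pi}^\eps}^{2/(1-2\kappa)}$, and Lemma~\ref{lem:homog_model_norm} then yields $\barnorm{\bar{\Pi}^\eps} = \eps\,\barnorm{\bar{\Pi}}$, producing the stated bound.

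The main technical obstacle will be aligning the dictionary between the minimal-admissible-model formalism of Definition~\ref{def:model} and the local stochastic-object bookkeeping of~\cite{moinat_weber_20_phi43Loc}: their estimate is phrased in terms of pointwise seminorms of an explicit list of renormalised trees tested against localised bump functions, whereas ours is organised via the abstract evaluations $\bar{\Pi}^\eps_z \tau$ of Definition~\ref{def:bphz_lift}. The identification is supplied by the extension map $\EE$ of Remark~\ref{rem:model_extension}, which promotes the minimal admissible model to a bona-fide admissible model, together with the compatibility of the single test function $\Psi$ with their local norms granted by Remark~\ref{rmk:single_test_fct}. Once this correspondence is in place, both the damping rate $t^{-2}$--which comes from the cubic reactive term--and the exponent $2/(1-2\kappa)$--which tracks the worst parabolic regularity deficit within $\CW_-$--are inherited from their argument with no further probabilistic input beyond Lemma~\ref{lem:fernique_bphz}.
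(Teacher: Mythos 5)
Your high-level structure is right: reduce to the mollified setting and pass to the limit, invoke the Moinat--Weber coming-down-from-infinity estimate, and use the $\eps$-homogeneity of Lemma~\ref{lem:homog_model_norm} to peel off the $\eps$-dependence. The paper's proof starts the same way, except it reduces to $\eps = 1$ at the outset rather than carrying $\eps$ along, which is a cosmetic difference.

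However, the proposed bridge between the two formalisms is not the right tool and hides the only genuinely nontrivial part of the proof. You suggest the identification between the model evaluations $\bar{\Pi}^\eps_z\tau$ of Definition~\ref{def:bphz_lift} and the Moinat--Weber seminorms $[\sigma]_{\abs{\sigma}}$ is supplied by the extension map $\EE$ of Remark~\ref{rem:model_extension} together with the single-test-function reduction of Remark~\ref{rmk:single_test_fct}. But these do not address the actual mismatch: Moinat--Weber build their trees $\sigma \in L$ by convolving with the \emph{full} Green's function $G$ of the heat operator, whereas the BPHZ model $\bar{\Pi}$ acts on symbols built from the \emph{truncated, compactly supported} kernel $K$ from the decomposition $G = K + R$ in~\eqref{eq:kernel_decomposition}. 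So $[\sigma]_{\abs{\sigma}}$ is not a norm of any $\bar{\Pi}\tau$; the paper's proof has to expand each $\sigma$ multilinearly in the decomposition $G = K + R$, pair the various cross-terms with the correct pieces of the renormalisation counterterms $C_\kappa^{(1)}, C_\kappa^{(2)}$ (so that, for instance, part of the $3C_\kappa^{(2)}\<1b>_\kappa$ counterterm gets attached to the $\<3kkk0k>_\kappa\<2kk>_\kappa$ contribution and the rest to the $\<3kkr0k>_\kappa\<2kk>_\kappa$ contribution), and then bound each resulting term against a product of model norms $\threebars \bar{\Pi}^{\xi_\kappa}\tau\threebars$ whose leaf counts sum to $n_\sigma$. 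Only then does the homogeneous model norm $\barnorm{\bar{\Pi}}$ appear. The extension map $\EE$ promotes a minimal model to a full admissible model on the same kernel $K$; it does nothing about $R$, and the test-function reduction is likewise orthogonal to the $G$-versus-$K$ issue. Without the explicit kernel-decomposition and counterterm-matching argument, the step from Moinat--Weber's estimate to the form on your right-hand side (involving $\threebars\bar{\Pi}^\eps\tau\threebars$) is asserted rather than proved, and this is precisely what the paper's Proposition~\ref{prop:cdfi_model} actually establishes.
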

\begin{proof}
	It suffices to prove the claim when~$\eps = 1$; the case of general~$\eps \in [0,1]$ follows from Lemma~\ref{lem:homog_model_norm}.
	When $\eps =1$ the claimed statement of~\eqref{prop:cdfi_model_estimate} is equivalent to the content of~\cite[Thm.~2.1]{moinat_weber_20_phi43Loc} upon making the change
	\begin{equation*}
		\barnorm{\bar{\Pi}} \, \rightsquigarrow \, \max\left\{[\sigma]^{\frac{1}{n_\sigma}}_{\abs[0]{\sigma}}: \, \sigma \in L\right\}
		\label{prop:cdfi_model:pf_eq1}
	\end{equation*}
	for the following list of trees: 
	\begin{equation*}
		L := \{\<1b>, \, \<2b>, \, \<2b>_x, \, \<20b>, \, \<30b>, \, \<22b>, \, \<31b>, \, \<32b>\}.
	\end{equation*}
	These trees are closely related to those in~$\CW_-$, see table~\ref{table_trees}, but---despite superficial appearances---they are~\emph{not} identical.\footnote{The connection between the symbols in~$L$ and~$\CW_-$ is discussed in detail in~\cite[Sec.~3]{moinat_weber_20_phi43Loc}.}
	We therefore colour them in black rather than blue; however, we emphasise that the trees~$\sigma \in L$ will \emph{only} play a role within this proof. 
	The quantities~$[\sigma]_{\abs[0]{\sigma}}$ are appropriate norms, which we detail below, and~$n_\sigma$ denotes the number of leaves.
	
	In order to establish our claim, it therefore suffices to obtain the estimate
	\begin{equation}
		[\sigma]_{\abs[0]{\sigma}}^{\frac{1}{n_\sigma}} 
		\lesssim
		\barnorm{\bar{\Pi}} \quad \text{for all} \quad \sigma \in L.
		\label{prop:cdfi_model:pf_eq2}
	\end{equation}
	To this end, we recall the construction of the trees ~$\sigma \in L$ from~\cite[Sec.~2.2]{moinat_weber_20_phi43Loc}.
	First, consider a smooth noise~$\zeta$ which we may think of as~$\zeta = \xi_\kappa$.
	Since all estimates obtained therein  are stable in the limit~$\kappa \to 0$ and~since $\barnorm{\bar{\Pi}^{\xi_\kappa}} \to  \barnorm{\bar{\Pi}}$ (c.f. Proposition~\ref{prop:bphz_model_noise_intensity}), it suffices to prove the estimate~\eqref{prop:cdfi_model:pf_eq2} with both sides smoothed out at level~$\kappa>0$.
	Since we fixed $\eps =1$, we write~$\<1b>_\kappa$ for the solution to~\ref{eq:she_tree}, with~$\xi$ replaced by~$\xi_\kappa$ and then set
	\begin{equation}
		\<2b>_\kappa \coloneqq \<1b>_\kappa^2 - C_\kappa^{(1)}, \quad 
		\<3b>_\kappa \coloneqq \<1b>_\kappa^3 - 3 C_\kappa^{(1)}\<1b>_\kappa, \quad
		\<20b>_\kappa \coloneqq G * \<2b>_\kappa, \quad
		\<30b>_\kappa \coloneqq G * \<3b>_\kappa
		\label{prop:cdfi_model:trees_1}
	\end{equation}
	where~$G$ denotes the Green's function of the heat operator.
	Additionally, as in~\cite[Eq.~(2.16--2.19)]{moinat_weber_20_phi43Loc}, let~$\abs[0]{\<2>_x} := \abs[0]{\<2>} + 1$ and, with~$\varphi$ as in Remark~\ref{rmk:single_test_fct}, set
	\begin{equs}[][prop:cdfi_model:pf_eq3]
		\sbr[1]{\<2b>_{x,\kappa}}_{\abs[0]{\<2s>_x}} & \coloneqq 
		\sup_{z \in (0,1) \x \T^3} \sup_{\lambda < 1} \lambda^{-\abs[0]{\<2s>_x}} \abs[3]{\int (\bar{z} - z)\, \<2b>_\kappa(\bar{z}) \, \varphi_z^\lambda(\bar{z})\dif \bar{z}}, 
		\\
		\sbr[1]{\<22b>_\kappa}_{\abs[0]{\<22s>}} & \coloneqq 
		\sup_{z \in (0,1) \x \T^3} \sup_{\lambda < 1} \lambda^{-\abs[0]{\<22s>}} \abs[3]{\int \del[2]{\del[1]{\<20b>_\kappa(\bar{z}) - \<20b>_\kappa(z)}\<2b>_\kappa(\bar{z}) - C_\kappa^{(2)}}\varphi_z^\lambda(\bar{z})\dif \bar{z}}, 
		\\
		\sbr[1]{\<31b>_\kappa}_{\abs[0]{\<31s>}} & \coloneqq 
		\sup_{z \in (0,1) \x \T^3} \sup_{\lambda < 1} \lambda^{-\abs[0]{\<31s>}} \abs[3]{\int \del[2]{\del[1]{\<30b>_\kappa(\bar{z}) - \<30b>_\kappa(z)}\<1b>_\kappa(\bar{z})}\varphi_z^\lambda(\bar{z})\dif \bar{z}}  ,
		\\
		\sbr[1]{\<32b>_\kappa}_{\abs[0]{\<32s>}} 
		& \coloneqq \sup_{z \in (0,1) \x \T^3} \sup_{\lambda < 1} \lambda^{-\abs[0]{\<32s>}} \abs[3]{\int \del[2]{\del[1]{\<30b>_\kappa(\bar{z}) - \<30b>_\kappa(z)}\<2b>_\kappa(\bar{z}) - 3C_\kappa^{(2)} \<1b>_\kappa(\bar{z})}\varphi_z^\lambda(\bar{z})\dif \bar{z}}. 
	\end{equs}
	The norm~$[\sigma_\kappa]_{\abs[0]{\sigma}}$ for all the symbols~$\sigma \in L$ that do not appear in~\eqref{prop:cdfi_model:pf_eq3} is simply the corresponding (Hölder-) norm on~$\CC^{\abs[0]{\sigma}}((0,1) \x \T^3)$; the exception is~$\<1b>_\kappa$, in which case we use the slightly stronger norm
	\begin{equation*}
		[\<1b>_\kappa]_{\abs[0]{\<1s>}} := \sup_{t \in [0,1]}  \norm[0]{\<1b>_\kappa(t,\cdot)}_{\CC^\alpha(\T^3)}.
	\end{equation*}
	The remainder of the proof consists of associating each~$\sigma \in L$ with a corresponding tree\footnote{The action of any \emph{admissible} model, such as~$\bar{\Pi}$, on the symbols~$\<2> \textcolor{symbols}{X}, \<20>$ and~$\<30>$ is canonically defined from the action on~$\<2>$ resp.~$\<3>$; this follows from the admissibility conditions that are implicit in Definition~\ref{def:model}.
		Therefore, we have not added these symbols to the \emph{minimal} list~$\CW_-$.}~$\tau \in \CW_-$, and vice versa.
	
	Let us explain in detail how one obtains the estimate~$\sbr[0]{\sigma_\kappa}_{\abs[0]{\sigma}}  \lesssim \barnorm{\bar{\Pi}^{\xi_\kappa}}^{n_\sigma}$ for the most complicated tree~$\sigma = \<32b>_\kappa$.
	We graphically encode the decomposition~$G = K + R$ given in~\eqref{eq:kernel_decomposition} as 
	\begin{equation*}
		\<G> = \<K> + \<R>\,, \quad \<G>\, \coloneqq G\,, \quad  \<K> \coloneqq K\,, \quad \<R> \coloneqq R\,,
		\label{eq:kernel_decomposition_graphical}
	\end{equation*} 
	and accordingly have
	\begin{align*}
		\<1b>_\kappa(y) & = \<1k>_\kappa(y) + \<1r>_\kappa(y), \qquad
		\<2b>_\kappa(y) = \<2kk>_\kappa(y) + 2\<2kr>_\kappa(y) + \<2rr>_\kappa(y), \\[0.5em]
		\<30b>_\kappa(y) &
		= \<30k>_\kappa(y) + \<30r>_\kappa(y) \\[0.5em]
		& = 
		\del[1]{\<3kkk0k>_\kappa(y) + 3 \<3kkr0k>_\kappa(y) + 3 \<3krr0k>_\kappa(y) +  \<3rrr0k>_\kappa(y)} 
		+  
		\del[1]{\<3kkk0r>_\kappa(y) + 3 \<3kkr0r>_\kappa(y) + 3 \<3krr0r>_\kappa(y) +  \<3rrr0r>_\kappa(y)}. 
	\end{align*}
	For any of these terms we introduce the shorthand notation~$A_{z,\bar{z}} := A({\bar{z}}) - A(z)$
	and use the previous decomposition to express the trees within the integral defining~$[\<32b>_\kappa]_{\abs[0]{\<32s>}}$ as follows:\footnote{For notational purposes, we have also joined roots in the case of \emph{non-singular} products, e.g.~$\<2kr>_\kappa := \<1k>_\kappa \<1r>_\kappa$, as opposed to~\eqref{prop:cdfi_model:trees_1} where we need to renormalise the product. Similarly, $\<3kkr0k>_\kappa := G*\<3kkr>_\kappa = G * \del[0]{\<2kk>_\kappa \<1r>_\kappa} = G * \del[0]{\sbr[0]{(\<1k>_\kappa)^2 - C_\kappa^{(1)}}\<1r>_\kappa}$.} 
	\begin{equs}
		\thinspace & \,
		\del[1]{\<30b>_\kappa(\bar{z}) - \<30b>_\kappa(z)}\<2b>_\kappa(\bar{z}) - 3C_\kappa^{(2)} \<1b>_\kappa(\bar{z}) \\[0.5em]
		= \ & \,
		\del[2]{\del[1]{\<3kkk0k>_\kappa + 3 \<3kkr0k>_\kappa + 3 \<3krr0k>_\kappa +  \<3rrr0k>_\kappa} 
			+ 
			\del[1]{\<3kkk0r>_\kappa + 3 \<3kkr0r>_\kappa + 3 \<3krr0r>_\kappa +  \<3rrr0r>_\kappa}}_{z,\bar{z}}
		\del[1]{\<2kk>_\kappa + 2\<2kr>_\kappa + \<2rr>_\kappa}(\bar{z}) \\
		& - 3 C_\kappa^{(2)} \del[1]{\<1k>_\kappa + \<1r>_\kappa}(\bar{z}) \\[0.5em]
		= \ & \ 
		\del[2]{\del[1]{\<3kkk0k>_\kappa}_{z,\bar{z}} \<2kk>_\kappa(\bar{z}) - 3 C_\kappa^{(2)} \<1k>_\kappa(\bar{z})}
		+
		\del[2]{3 \del[1]{\<3kkr0k>_\kappa}_{z,\bar{z}} \<2kk>_\kappa(\bar{z}) - 3 C_\kappa^{(2)} \<1r>_\kappa(\bar{z})}
		+
		(\ldots)
	\end{equs}
	Note that we have suitably paired the two summands above with the corresponding counterterms in order to relate them to~$\bar{\Pi}^{\xi_\kappa}_z \<32>(\bar{z})$ and~$\bar{\Pi}^{\xi_\kappa}_z \<22>(\bar{z})$ from Definition~\ref{def:bphz_lift}, respectively.
	In total, making use of the simple estimates, 
	\begin{equation*}
		\norm[0]{R * \bar{\Pi}^{\xi_\kappa} \tau}_{L^\infty} \lesssim 
		\threebars \bar{\Pi}^{\xi_\kappa} \tau \threebars, \quad
		\threebars \bar{\Pi}^{\xi_\kappa} \<30> \threebars
		\lesssim
		\threebars \bar{\Pi}^{\xi_\kappa} \<3> \threebars
	\end{equation*}
	we find 
	\begin{equs}[][prop:cdfi_model:pf_eq4]
		\thinspace &
		\sbr[1]{\<32b>_\kappa}_{\abs[0]{\<32s>}} \\
		& \lesssim
		\threebars \bar{\Pi}^{\xi_\kappa} \<32> \threebars + \threebars \bar{\Pi}^{\xi_\kappa} \<22> \threebars \, \norm{\xi_\kappa}_{\CC^{\abs[0]{\<wns>}}}  
		+
		\threebars \bar{\Pi}^{\xi_\kappa} \<3> \threebars \del[1]{\, \norm{\xi_\kappa} \threebars \bar{\Pi}^{\xi_\kappa} \<1> \threebars + \norm{\xi_\kappa}_{\CC^{\abs[0]{\<wns>}}}^2} \\
		& \quad +  
		\del[2]{\threebars \bar{\Pi}^{\xi_\kappa} \<3> \threebars + \norm{\xi_\kappa} \threebars \bar{\Pi}^{\xi_\kappa} \<2> \threebars  + \norm{\xi_\kappa}^2 \threebars \bar{\Pi}^{\xi_\kappa} \<1> \threebars + \norm{\xi_\kappa}^3} 
		\del[2]{\, \threebars \bar{\Pi}^{\xi_\kappa} \<2> \threebars + \norm{\xi_\kappa} \threebars \bar{\Pi}^{\xi_\kappa} \<1> \threebars + \norm{\xi_\kappa}^2} \qquad
	\end{equs}	
	where we wrote~$\norm{\xi_\kappa} = \norm{\xi_\kappa}_{\CC^{\abs[0]{\<wns>}}} = \threebars \bar{\Pi}^{\xi_\kappa} \<wn> \threebars$ to lighten the notation.
	Since the total number of leaves in each product on the RHS of~\eqref{prop:cdfi_model:pf_eq4} equals~$n_{\<32s>} = 5$, we find that 
	\begin{equation} \label{prop:cdfi_model:pf_eq5}
		\sbr[1]{\<32b>_\kappa}_{\abs[0]{\<32s>}} 
		\lesssim
		\barnorm{\bar{\Pi}^{\xi_\kappa}}^5.
	\end{equation}
	This is exactly the claimed estimate~\eqref{prop:cdfi_model:pf_eq2} in the case~$\sigma_\kappa = \<32b>_\kappa$. 
	
	Using the same simple combinatorial argument, one can show that the following formula holds for the other trees in~$L$:
	
	\vspace{0.5em}
	\noindent
	\textit{%
		For each~$\sigma \in L\setminus\{\<32b>, \<2b>_x\}$, write\footnote{The operator~$\CI$ denotes the \enquote{abstract} integration operator associated to~$K$ within the theory of regularity structures, see~\cite[Def.~5.7]{hairer_rs}; however, we use the convention that~$\CI(\<wn>)^0 := \1$ and~$\CI \1 := \1$. Graphically,~$\CI$ is encoded by the blue straight line~$\<I>$.}~$\sigma = \CI\del[1]{\CI(\<wn>)^k} \CI(\<wn>)^m$ with~$k \in \{0,2\}$ and $m \in \{0,1,2,3\}$ such that~$n_\sigma = k + m$.
		We then have the estimate
		\begin{equs}[][prop:cdfi_model:pf_eq4]
			\thinspace
			\sbr[0]{\sigma_\kappa}_{\abs[0]{\sigma}}
			& 
			\lesssim
			\threebars \bar{\Pi}^{\xi_\kappa} \sigma \threebars \mathbf{1}_{k > 0}
			+
			\threebars \bar{\Pi}^{\xi_\kappa} \CI(\<wn>)^k \threebars \thinspace \del[3]{\sum_{\ell=1}^m \norm[0]{\xi_\kappa}^\ell \thinspace \threebars \bar{\Pi}^{\xi_\kappa} \CI(\<wn>)^{m-\ell}\threebars} \\
			& \quad +
			\del[3]{\sum_{r=0}^k \norm[0]{\xi_\kappa}^r \thinspace \threebars \bar{\Pi}^{\xi_\kappa} \CI(\<wn>)^{m-r} \threebars}
			\del[3]{\sum_{\ell=0}^m \norm[0]{\xi_\kappa}^\ell \thinspace \threebars\bar{\Pi}^{\xi_\kappa} \CI(\<wn>)^{m-\ell}\threebars} \\
		\end{equs} 
		where the norms~$\threebars\bar{\Pi}^{\xi_\kappa} \tau\threebars$ for~$\tau \in \CW_-$ were introduced in Definition~\ref{def:model}.
		For~$\sigma = \<2b>_x$, the estimate is of the same form as for~$\sigma = \<2b>$, only with a different implicit constant: The reason is that the distance~$\abs[0]{z - \bar{z}}$ is uniformly bounded for any~$z, \bar{z} \in (0,1) \x \T^3$. 
	}
	
	\vspace{0.5em}
	\noindent
	Using the bound~\eqref{prop:cdfi_model:pf_eq4}, the same argument as for~\eqref{prop:cdfi_model:pf_eq5} then implies the estimate
	\begin{equation*}
		\sbr[0]{\sigma_\kappa}_{\abs[0]{\sigma}}
		\lesssim
		\barnorm{\bar{\Pi}^{\xi_\kappa}}^{n_\sigma} \mathbf{1}_{k > 0} + \barnorm{\bar{\Pi}^{\xi_\kappa}}^k \barnorm{\bar{\Pi}^{\xi_\kappa}}^m
		+
		\barnorm{\bar{\Pi}^{\xi_\kappa}}^k \barnorm{\bar{\Pi}^{\xi_\kappa}}^m
		\simeq 
		\barnorm{\bar{\Pi}^{\xi_\kappa}}^{n_\sigma}
	\end{equation*}
	where we have used that~$k + m = n_\sigma$. 
	As argued before, sending~$\kappa \to 0$ concludes the proof.
\end{proof}
Since the estimate in \eqref{prop:cdfi_model_estimate} is independent of the initial condition, we immediately have the following corollary.

\begin{corollary}[Global existence] \label{coro:global_existence}
	For any~$\fz \in \CC^\alpha(\T^3)$ and~$\eps \geq 0$, the process~$\bar{u}^\fz_\eps$ defined in~\eqref{eq:phi43_rigorous_SPDE} almost surely is an element of~$C_T\CC^\alpha(\T^3)$ for any~$T > 0$.
\end{corollary}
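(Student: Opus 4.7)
The plan is to combine the local well-posedness provided by the regularity structures solution map with the uniform-in-initial-data CDFI estimate of Proposition~\ref{prop:cdfi_model} in order to rule out finite-time blow-up. First, I would invoke the standard local existence theorem built into the construction of $\Phi$: for almost every realisation of $\bar{\Pi}^\eps$, the solution map yields a continuous path $\bar{u}^\fz_\eps \in C([0,T^\star); \CC^\alpha(\T^3))$ for some maximal time $T^\star = T^\star(\fz,\bar{\Pi}^\eps,\omega) \in (0,\infty]$, with the usual blow-up alternative that if $T^\star < \infty$ then $\limsup_{t \nearrow T^\star} \|\bar{u}^\fz_\eps(t)\|_{\CC^\alpha} = +\infty$.

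Second, applying Proposition~\ref{prop:cdfi_model} on the random interval $(0,1 \wedge T^\star)$ yields, for every $t_0 \in (0, 1 \wedge T^\star)$,
\begin{equation*}
\sup_{(s,x) \in (t_0, 1 \wedge T^\star) \x \T^3} |v^\fz_\eps(s,x)|
\ \lesssim\ \frac{1}{t_0^2} \vee \bigl(\eps \barnorm{\bar{\Pi}}\bigr)^{\frac{2}{1-2\kappa}},
\end{equation*}
where $v^\fz_\eps = \bar{u}^\fz_\eps - \<1b>^\eps$ and the right-hand side is almost surely finite since $\bar{\Pi} \in \MMm$ a.s. (in particular by Lemma~\ref{lem:fernique_bphz}). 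Combined with the classical fact that $\<1b>^\eps \in C_T\CC^\alpha(\T^3)$ a.s.\ for every $T > 0$ and the continuous embedding $L^\infty(\T^3) \embed \CC^\alpha(\T^3)$ (which holds since $\alpha < 0$), this gives
\begin{equation*}
\sup_{s \in (t_0, 1 \wedge T^\star)} \|\bar{u}^\fz_\eps(s)\|_{\CC^\alpha} < +\infty \quad \text{a.s.}
\end{equation*}
Sending $t_0 \to 0$ shows that if $T^\star \leq 1$, then $\limsup_{s \nearrow T^\star}\|\bar{u}^\fz_\eps(s)\|_{\CC^\alpha}$ is almost surely finite, contradicting the blow-up alternative. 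Hence $T^\star > 1$ almost surely.

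Third, to extend past $t = 1$, I would iterate by restarting the equation. By time-translation invariance of the space-time white noise $\xi$, and hence of the (minimal) BPHZ model $\bar{\Pi}^\eps$ under the corresponding shift, the solution of the shifted problem with initial condition $\bar{u}^\fz_\eps(n/2) \in \CC^\alpha(\T^3)$ (which is well defined by the previous step) coincides with $\bar{u}^\fz_\eps$ on $[n/2, T^\star)$. Applying the CDFI estimate to each such shifted solution yields $T^\star > (n+2)/2$ almost surely, for every $n \in \N$, so that $T^\star = +\infty$ almost surely and $\bar{u}^\fz_\eps \in C_T \CC^\alpha(\T^3)$ for every $T > 0$. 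The only mild obstacle in the plan is the last step: one has to set up the time-translated model carefully, or equivalently re-run the proof of Proposition~\ref{prop:cdfi_model} on an arbitrary unit interval $[t_1, t_1 + 1]$ rather than on $[0,1]$; but this is purely a bookkeeping matter since the proof there is translation-invariant, and the resulting estimate depends only on a suitably translated version of $\barnorm{\bar{\Pi}}$ which is again a.s.\ finite.
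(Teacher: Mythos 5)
Your proposal is correct and fills in exactly the steps the paper leaves implicit: the paper's ``proof'' is the one-line remark that the CDFI estimate of Proposition~\ref{prop:cdfi_model} is independent of the initial condition, ``so we immediately have the following corollary,'' and your three-step argument (local existence with blow-up alternative, CDFI a priori bound to rule out blow-up before time $1$, iteration via time translation) is the standard way to make that remark rigorous. Two small cosmetic points: in your second step, sending $t_0 \to 0$ is unnecessary — a single fixed $t_0 \in (0, T^\star)$ already yields the contradiction with the blow-up alternative on $(t_0, 1\wedge T^\star)$; and in the iteration step you are right to flag that one must justify the a.s.\ finiteness of the time-shifted model norm (either via distributional invariance of $\xi$ together with Lemma~\ref{lem:fernique_bphz}, or by noting that the domain $D_T$ in Definition~\ref{def:model} can already be taken to cover $[0,T+2]$), but this is indeed routine bookkeeping and matches the paper's intent.
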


\subsection[Uniform LDP for the~$\Phi^4_3$ Equation]{Uniform LDP for the~$\boldsymbol{\Phi^4_3}$ Equation} \label{sec:uniform_ldp}

As recalled in the introduction, an LDP for the~$\Phi^4_3$ dynamics~$(\bar{u}_\eps^\fz)_{\eps > 0}$ was obtained in~\cite[Thm.~$4.4$]{hairer_weber_ldp} for~\emph{fixed} initial condition~$\fz$.
In this subsection, we prove a slightly stronger version of their result which is~\emph{locally uniform} in~$\fz$.
However, some care is needed in stating the uniform LDP since one finds various, inequivalent concepts in the literature. We refer to the survey by Salins's ~\cite{salins_uniform_ldp} and Remark~\ref{rmk:locally_uniform_ldp} below.
Recall:
\begin{itemize}
	\item $\bar{B}_R(0)$ denotes a closed ball in~$\CC^\alpha(\T^3)$ with radius~$R > 0$, centred at the origin, 
	\item the dynamics~$\bar{u}_\eps^\fz$ introduced in Definition~\ref{def:phi43_rigorous_SPDE},
	\item with the notation of Lemma~\ref{lem:can_lift_cameron_martin_fcts}, it holds that~$w^{h,\fz} = \Phi(\fz, \Pi^h)$,
	\item $\threebars \cdot \threebars$ and $\barnorm{\cdot}$ induce the same topology on~$\MMm$, see Lemma~\ref{lem:conv_models}.
\end{itemize}

The following proposition is the main result of this subsection.
\begin{proposition}[Locally uniform FWLDP for the dynamics] \label{prop:uniform_ldp}
	For each~$R, T > 0$, the family
	\begin{equation*}
		\{\bar{u}_\eps^\fz: \ \eps > 0, \ \fz \in \bar{B}_R(0)\} \subseteq C_T \CC^{\alpha}(\T^3)
	\end{equation*}
	satisfies a \emph{uniform FWLDP} with good rate function\footnote{If we write~$\II_T(v)$ for~$v \in C_T \CC^\alpha(\T^3)$ without specifying the initial condition as a superscript, by convention, we always mean~$\II_T^{v(0)}(v)$.}
	\begin{equs}[][eq:uniform_ldp_rf]
		\II_{T}^\fz(v)
		& \coloneqq \inf\left\{ \frac{1}{2} \norm[0]{h}_{L^2_TL^2_x}^2: \ h \in L^2_TL^2_x, \ \Phi(\fz, \Pi^h) = v \right\}\\
		& = \frac{1}{2}\int_{0}^{T} \int_{\T^3} \del[1]{\partial_t v - \Delta v - m^2 v + v^3}^2 \dif x \dif t
	\end{equs}
	with the understanding in the second line that~$\II^\fz_{T}(v) \coloneqq  +\infty$ if
	\begin{equation*}
		\partial_t v - \Delta v - m^2 v + v^3 \notin L^2_TL^2(\mbT^3) \quad \text{or} \quad
		v(0) \neq \fz.
	\end{equation*}
	That is:
	\begin{enumerate}[label=(\roman*)]
		\item \label{prop:uniform_ldp_1} For any~$\fz \in \mcC^{\alpha}(\T^3)$ and~$\theta > 0$, the sublevel set~$\II_T^\fz[\theta] \equiv \{\II_T^\fz \leq \theta\}$ is compact.
		\item \label{prop:uniform_ldp_2} For any~$\gamma,\, \delta, \theta > 0$ there exists some~$\eps_0^{\downarrow} \coloneqq  \eps_0^{\downarrow}(T,\delta, \gamma,\theta) > 0$ such that for all~$\fz \in \bar{B}_R(0)$, all $v \in \II^\fz_T[\theta]$, and all~$\eps \leq \eps_0^{\downarrow}$ it holds that
		\begin{equation}
			\P\del[1]{\thinspace\norm{\bar{u}^\fz_\eps - v}_{C_T \CC^{\alpha}_x} < \delta} \geq \exp\del[2]{-\frac{\II^\fz_T(v) + \gamma}{\eps^2}}. 
			\label{prop:uniform_ldp_lb_eq}
		\end{equation}
		\item \label{prop:uniform_ldp_3} For any~$\gamma,\, \delta, \theta > 0$ there exists some~$\eps_0^{\uparrow} \coloneqq  \eps_0^{\uparrow}(T,\delta, \gamma,\theta) > 0$ such that for all~$\fz \in \bar{B}_R(0)$ and all~$\eps \leq \eps^{\uparrow}_0$ it holds that
		\begin{equation}
			\P\del[1]{\thinspace\operatorname{dist}_{C_T \CC^{\alpha}_x}\del[0]{\bar{u}^\fz_\eps, \II_T^\fz[\theta]} \geq \delta} \leq \exp\del[2]{-\frac{\theta - \gamma}{\eps^2}}. 
			\label{prop:uniform_ldp_ub_eq}
		\end{equation}
	\end{enumerate}
	For convenience, we set~$\eps_0 \coloneqq  \eps_0^{\downarrow} \wedge \eps_0^{\uparrow}$.
\end{proposition}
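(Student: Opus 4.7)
The plan is to transfer the large deviation principle for the (minimal) BPHZ model $(\bar{\Pi}^\eps)_{\eps > 0}$ on $(\MMm, d_\MMm)$---which is essentially established in \cite{hairer_weber_ldp} with good rate function
\begin{equation*}
\fI(\Pi) \coloneqq \inf\bigl\{\tfrac{1}{2}\|h\|^2_{L^2_TL^2_x}: h \in L^2_TL^2_x, \ \Pi = \Pi^h\bigr\}
\end{equation*}
(with $\inf \emptyset = +\infty$)---through the solution map $\Phi$ via a contraction-principle argument. The crucial input is Lemma~\ref{lem_uniform_lipschitz}, which supplies joint local Lipschitz continuity of $\Phi$ with a constant $L_{R,T}$ that is \emph{uniform} over $\fz \in \bar{B}_R$; thus the same argument that for fixed $\fz$ yields the original Hairer--Weber FWLDP produces the locally uniform version once the relevant dependencies on $\fz$ are tracked.

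For point~\ref{prop:uniform_ldp_1}, compactness of $\II^\fz_T[\theta]$ in $C_T\CC^\alpha(\T^3)$ is inherited from the compactness of $\fI[\theta]$ in $\MMm$ (weak compactness of $L^2$-balls combined with Lemma~\ref{lem:can_lift_cameron_martin_fcts}) together with continuity of $\Phi(\fz,\cdot) : \MMm \to C_T\CC^\alpha(\T^3)$. For the lower bound~\ref{prop:uniform_ldp_2}, fix $v \in \II^\fz_T[\theta]$ and choose $h \in L^2_TL^2_x$ with $\Phi(\fz, \Pi^h) = v$ and $\tfrac{1}{2}\|h\|^2 \leq \II^\fz_T(v) + \gamma/4$. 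By Lemma~\ref{lem:can_lift_cameron_martin_fcts}, $\Pi^h$ lies in a ball of $\MMm$-radius $R' = R'(T,\theta,\gamma)$. On the event $\{\threebars\bar{\Pi}^\eps\threebars_T \leq R'\}$, Lemma~\ref{lem_uniform_lipschitz} gives
\begin{equation*}
\|\bar{u}_\eps^\fz - v\|_{C_T\CC^\alpha} \leq L_{R \vee R',T}\, d_{\MMm}(\bar{\Pi}^\eps, \Pi^h),
\end{equation*}
so the LDP lower bound for $\bar{\Pi}^\eps$ applied to the open ball $B_{\delta/L_{R\vee R',T}}(\Pi^h) \subset \MMm$ produces the claimed exponential estimate. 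The complementary event contributes a term rendered negligible by Lemma~\ref{lem:fernique_bphz} (after possibly enlarging $R'$).

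For the upper bound~\ref{prop:uniform_ldp_3}, the strategy is a two-step truncation. First, Lemma~\ref{lem:fernique_bphz} combined with the homogeneity $\barnorm{\bar{\Pi}^\eps}_T = \eps\barnorm{\bar{\Pi}}_T$ (Lemma~\ref{lem:homog_model_norm}) gives $\P(\barnorm{\bar{\Pi}^\eps}_T > M) \leq K_T \exp(-\Lambda_T M^2/\eps^2)$, which for $M = M(\theta,\gamma,\Lambda_T)$ large enough is bounded by $\exp(-(\theta-\gamma/2)/\eps^2)$. On the complementary event, letting $R''$ bound the $\MMm$-norms over $\fI[\theta]$, for any $\Pi^h \in \fI[\theta]$ Lemma~\ref{lem_uniform_lipschitz} yields
\begin{equation*}
\operatorname{dist}_{C_T\CC^\alpha}\bigl(\bar{u}_\eps^\fz, \II^\fz_T[\theta]\bigr) \leq L_{R \vee M \vee R'',T}\, d_\MMm(\bar{\Pi}^\eps, \Pi^h),
\end{equation*}
and taking the infimum over such $\Pi^h$ reduces matters to the LDP upper bound for $\bar{\Pi}^\eps$ applied to the closed set $\{\Pi : \operatorname{dist}_\MMm(\Pi, \fI[\theta]) \geq \delta/L_{R\vee M\vee R'',T}\}$.

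The principal obstacle---and the point which distinguishes the locally uniform statement from the pointwise one---is the careful ordering of choices: $M$ (and hence the Lipschitz constant in the previous display) must be fixed \emph{before} the LDP on $\MMm$ is invoked, so that the resulting $\eps$-threshold is independent of $\fz \in \bar{B}_R$. A parallel subtlety in~\ref{prop:uniform_ldp_2} is ensuring that the LDP lower bound on $\MMm$ holds uniformly over $\Pi^h$ in bounded sets; this uniformity can be extracted from the Cameron--Martin shift representation of $\bar{\Pi}^\eps + \Pi^h$ underlying the Hairer--Weber LDP and is ultimately a consequence of the Lipschitz continuity of $h \mapsto \Pi^h$ on bounded sets, recorded in Lemma~\ref{lem:can_lift_cameron_martin_fcts}.
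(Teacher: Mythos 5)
Your proposal matches the paper's argument in essence: split on the event that the rescaled homogeneous model norm stays below a threshold (handled by Fernique, Lemma~\ref{lem:fernique_bphz}, combined with homogeneity from Lemma~\ref{lem:homog_model_norm}), and on that event transfer the Hairer--Weber LDP on model space (Proposition~\ref{lem_ldp_models}) through the joint local Lipschitz continuity of the solution map (Lemma~\ref{lem_uniform_lipschitz}), whose constant depends on the ball radius but not on the particular initial condition $\fz \in \bar{B}_R$. This is exactly the strategy of the paper's proof, including the recognition that the ordering of choices (threshold for the model norm first, then the $\eps$-threshold coming from the model-space LDP) is what makes the bound uniform over $\fz$.

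Two minor remarks. First, there is no need to pick $h$ with $\frac{1}{2}\|h\|^2 \leq \II_T^\fz(v) + \gamma/4$: since the canonical lift $\Pi^h$ determines $h$ via $\Pi^h\<wn> = h$, the control achieving $\Phi(\fz,\Pi^h) = v$ is \emph{unique}, and one automatically has the identity $\frac{1}{2}\|h\|^2 = \II_T^\fz(v)$; the paper exploits this to write the rate function exactly, without approximation. Second, the step you compress as \enquote{after possibly enlarging $R'$} is where the paper is most careful: one must choose the truncation level $M$ simultaneously larger than (a bound on) $\barnorm{\Pi^h}$ and large enough that the Fernique tail is dominated by $\exp\left(-(\theta+\gamma/2)/\eps^2\right)$, so that after combining both contributions one lands at the claimed exponent $\II_T^\fz(v)+\gamma$ (respectively $\theta-\gamma$ for the upper bound). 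This is a genuine constraint linking $M$ to $\theta$, $\gamma$ and the Fernique constant, and it is what fixes $\eps_0$ uniformly; you flag it correctly but leave the bookkeeping implicit.
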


\begin{remark}[Variants of LDPs] \label{rmk:locally_uniform_ldp}
	We briefly comment on the most commonly employed variants of the uniform and non-uniform LDP as well as relations between them.
	\begin{enumerate}[label=(\roman*)]
		\item \label{rmk:locally_uniform_ldp:i} \textbf{Non-uniform LDPs}:
		There are two ways to phrase an LDP: One formulation due to Varadhan (which we use to present our main result, Theorem~\ref{th:main_result}) and another due to Freidlin and Wentzell.
		It is proved in~\cite[Chap.~$3$, Thm.~$3.3$]{freidlin-wentzell} that the statements concerning \emph{lower bounds} are equivalent in both formulations---but the equivalence of the respective \emph{upper bound} statements requires compact sublevel sets of the rate function.
		\item \label{rmk:locally_uniform_ldp:ii}  \textbf{Locally uniform LDPs}:
		Giving credit to Freidlin's and Wentzell's definition at the end of~\cite[Ch.~3, Sec.~3]{freidlin-wentzell}, Salins~\cite[Def.~2.1]{salins_uniform_ldp} calls the uniform LDP in the previous proposition a \enquote{FWULDP}---in contrast to a \enquote{DZULDP} à la Dembo and Zeitouni, see~\cite[Def.~2.2]{salins_uniform_ldp}
		---and there are counterexamples which show that a \enquote{FWULDP} does not generally imply a \enquote{DZULDP}, see~\cite[Prop.~3.4]{salins_uniform_ldp}.

		The \enquote{FWULDP} is sufficient for most but unfortunately not all proofs---the exception is given by Proposition~\ref{prop:bound_prob_H} in which we require the \enquote{DZULDP} upper bound.
		Luckily, however, we are able to show that, in our situation, the~\enquote{FWULDP} upper bound \emph{does imply} the required \enquote{DZULDP} upper bound, see Corollary~\ref{cor:DZULDP} below.
	\end{enumerate}
\end{remark}

\begin{remark}
	The LDP of~\cite{hairer_weber_ldp} actually allows for the possibility that the process~$\bar{u}^\fz_\eps$ blows up in finite time.
	However, as recapitulated in Appendix~\ref{app:sol_theory}, global existence of the renormalised~$\Phi^4_3$ dynamics has been established by now~(see Corollary~\ref{coro:global_existence} above), so the conditioning on times~$T > 0$ which are smaller than the explosion time of~$\bar{u}^\fz_\eps$ becomes superfluous;
	the previous proposition accounts for that.
\end{remark}

As in the proof of the non-uniform LDP given by \cite{hairer_weber_ldp}, our proof of Proposition~\ref{prop:uniform_ldp} will be based on the \emph{joint} local Lipschitz continuity of the solution map~$\Phi$, see Lemma~\ref{lem_uniform_lipschitz}.
The other main ingredient is the following LDP on minimal model space, see~\cite[Thm.~$4.3$]{hairer_weber_ldp}, which we present in the Freidlin--Wentzell formulation, cf.~Remark~\ref{rmk:locally_uniform_ldp}~\ref{rmk:locally_uniform_ldp:i}. 

\begin{proposition} \label{lem_ldp_models}
	For each~$T > 0$, the family~$\{\bar{\Pi}^\eps: \ \eps > 0\}$ satisfies an LDP on~$\MMm_T$ with good rate function 
	\begin{equation}
		\JJ_T(\Pi) \coloneqq  \inf\left\{ \frac{1}{2} \norm[0]{h}_{L^2_TL^2_x}^2: \ h \in L^2_TL^2_x, \ \Pi^h = \Pi \right\}.
		\label{lem_ldp_models_rf}
	\end{equation}
	That is:
	\begin{enumerate}[label=(\roman*)]
		\item For any~$\theta > 0$, the sublevel set~$\JJ_T[\theta] \equiv \{\Pi \in \MMm: \, \JJ_T(\Pi) \leq \theta\}$ is compact.
		\item For any~$\gamma,\, \delta, \theta > 0$ and~$\Pi \in \MMm_T$, there exists some~$\eps_0^{\CM,\downarrow} = \eps_0^{\CM,\downarrow}(T,\delta, \gamma,\theta) > 0$ such that for all~$\eps \leq \eps_0^{\CM,\downarrow}$ we have
		\begin{equation}
			\P\del[1]{\thinspace d_{\MMm}\del[1]{\bar{\Pi}^\eps, \Pi} < \delta} \geq \exp\del[2]{-\frac{\JJ_T(\Pi) + \gamma}{\eps^2}}. 
			\label{lem_ldp_models_lb_eq}
		\end{equation}
		\item For any~$\gamma,\, \delta, \theta > 0$ there exists some~$\eps_0^{\CM,\uparrow} = \eps_0^{\CM,\uparrow}(T,\delta, \gamma,\theta) > 0$ such that for all~$\eps \leq \eps_0^{\CM,\uparrow}$ we have
		\begin{equation}
			\P\del[1]{\thinspace\operatorname{dist}_{\MMm}\del[0]{\bar{\Pi}^\eps, \JJ_T[\theta]} \geq \delta} \leq \exp\del[2]{-\frac{\theta - \gamma}{\eps^2}}. 
			\label{lem_ldp_models_ub_eq}
		\end{equation}
	\end{enumerate}
	For convenience, we set~$\eps_0^{\CM} \coloneqq  \eps_0^{\CM,\downarrow} \wedge \eps_0^{\CM,\uparrow}$.
\end{proposition}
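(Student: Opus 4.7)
The strategy is to combine Schilder's theorem for the scaled white noise $\eps\xi$ with a contraction-type argument exploiting (i) the homogeneity $\barnorm{\bar\Pi^\eps}_T = \eps \barnorm{\bar\Pi}_T$ from Lemma~\ref{lem:homog_model_norm}, (ii) the Gaussian-type moment bound from Lemma~\ref{lem:fernique_bphz}, and (iii) the continuity of the canonical lift $h \mapsto \Pi^h$ from Lemma~\ref{lem:can_lift_cameron_martin_fcts}. Since $\MMm_T$ is a Polish space (as a completion) but the lifting $\xi \mapsto \bar\Pi$ is not a continuous function of the noise, the standard contraction principle does not apply directly and one must argue by hand.

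Goodness of the rate function is the easiest piece: $\JJ_T[\theta] = \{\Pi^h : \tfrac12\|h\|^2_{L^2_TL^2_x} \leq \theta\}$ is the image under the canonical lift of a weakly compact ball in $L^2_TL^2_x$. The components $\Pi^h\tau$ are multilinear functionals of $h$ built from finitely many iterated convolutions with the heat kernel, so weak convergence $h_n \rightharpoonup h$ combined with uniform bounds on $\|h_n\|_{L^2_TL^2_x}$ yields $\Pi^{h_n} \to \Pi^h$ in $(\MMm_T, d_{\MMm})$; this shows compactness of $\JJ_T[\theta]$.

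For exponential tightness, Lemma~\ref{lem:homog_model_norm} and Lemma~\ref{lem:fernique_bphz} immediately give
\begin{equation*}
\P\bigl(\barnorm{\bar\Pi^\eps}_T > M\bigr) = \P\bigl(\barnorm{\bar\Pi}_T > M/\eps\bigr) \leq K_T \exp\bigl(-\Lambda_T M^2/\eps^2\bigr).
\end{equation*}
Since balls in a slightly stronger model norm (defined via test functions of regularity $r > 3$, for which an analogous Fernique bound holds) are precompact in $(\MMm_T, d_{\MMm})$ by Arzel\`a--Ascoli, this Gaussian tail produces compact sets $K_\theta \subseteq \MMm_T$ with $\P(\bar\Pi^\eps \notin K_\theta) \leq \exp(-\theta/\eps^2)$ for $\eps$ small. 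The LDP upper bound \eqref{lem_ldp_models_ub_eq} then follows by combining exponential tightness with local upper bounds on small balls around each $\Pi \in K_\theta \cap \JJ_T[\theta+\gamma]^c$, obtained through a covering argument and Girsanov, while the lower bound \eqref{lem_ldp_models_lb_eq} follows from a Cameron--Martin shift: under $\frac{d\Q^{h/\eps}}{d\P} = \exp(\eps^{-1}\langle h,\xi\rangle_{L^2_TL^2_x} - \tfrac{1}{2\eps^2}\|h\|^2_{L^2_TL^2_x})$, the law of $\eps\xi$ equals that of $\eps\tilde\xi + h$ under $\P$; denoting by $\bar\Pi^{\eps,h}$ the BPHZ model built from this shifted noise (with the \emph{same} renormalisation constants $C_\kappa^{(i)}$), one has the local lower bound after establishing the key convergence
\begin{equation*}
\bar\Pi^{\eps,h} \longrightarrow \Pi^h \qquad \text{in probability in } (\MMm_T, d_{\MMm}) \text{ as } \eps \to 0.
\end{equation*}

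The main obstacle is precisely this last convergence. Expanding $\bar\Pi^{\eps,h}$ in powers of $\eps$ yields, at each tree $\tau \in \CW_-$, a sum of multilinear terms: the purely deterministic term equals $\Pi^h\tau$, the purely stochastic term equals $\bar\Pi^\eps\tau$ (which vanishes as $\eps \to 0$ by homogeneity), and the cross terms are genuine Wiener--It\^o integrals of order strictly less than $n_\tau$ with prefactors of positive powers of $\eps$. The BPHZ renormalisation constants $C_\kappa^{(i)}$ are designed precisely so that the cross terms have zero mean and bounded variance uniformly in $\kappa$, and a diagonal argument $\kappa = \kappa(\eps) \to 0$ together with hypercontractivity in fixed Wiener chaoses then upgrades convergence in probability to quantitative bounds sufficient to complete the local lower bound. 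Verifying the cancellations required for this statement model-by-model, and ensuring they are compatible with the Fernique-type moment bounds of Lemma~\ref{lem:fernique_bphz}, is the technical heart of the argument and is carried out in detail in \cite[Sec.~4]{hairer_weber_ldp}.
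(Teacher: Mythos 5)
The paper does not prove this proposition at all: it is quoted directly from \cite[Thm.~4.3]{hairer_weber_ldp}, and the only additional work done in the paper is to recast the Varadhan-type LDP of that reference in the Freidlin--Wentzell formulation, which is a purely formal translation justified by the goodness of the rate function via Remark~\ref{rmk:locally_uniform_ldp}~\ref{rmk:locally_uniform_ldp:i} and~\cite[Ch.~3, Thm.~3.3]{freidlin-wentzell}. You have instead reconstructed, at a sketch level, the internal proof strategy of Hairer and Weber, and your outline is broadly faithful to it: exponential tightness from the homogeneity of the homogeneous model norm (Lemma~\ref{lem:homog_model_norm}) together with the Fernique bound (Lemma~\ref{lem:fernique_bphz}); a Cameron--Martin shift for the lower bound; and the decisive technical input being the convergence of the shifted BPHZ model $\bar\Pi^{\eps,h}$ to the deterministic lift $\Pi^h$, which you correctly flag as the heart of the matter and correctly attribute to~\cite{hairer_weber_ldp}. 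Since your write-up defers the technical core to that reference, it is in substance equivalent to what the paper does, only more verbosely.

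Two small caveats worth keeping in mind if this sketch were to be promoted into a self-contained proof. First, your argument for compactness of $\JJ_T[\theta]$ relies on weak-to-strong continuity of $h\mapsto \Pi^h$ on bounded sets, which is not the statement of Lemma~\ref{lem:can_lift_cameron_martin_fcts} (that lemma gives Lipschitz continuity in the \emph{strong} $L^2_TL^2_x$ topology); the weak-to-strong statement is true, but requires separately exploiting the compactifying effect of the kernel convolutions at interior nodes and a compact Besov embedding at the noise leaves, in the spirit of the proof of Lemma~\ref{lem:hausdorff_convergence}. Second, precompactness of model-norm balls is obtained through compact embeddings between model spaces of different Hölder regularity (an interpolation/embedding argument at the level of the model distance~\eqref{def:model_distance}), rather than via a direct Arzel\`a--Ascoli invocation; the conclusion is the same but the mechanism is slightly different. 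Neither caveat affects the validity of your overall outline.
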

We will also need the following elementary observation, a consequence of sub-additivity of~probability measures.
\begin{lemma} \label{lem_bound_prob_sum}
	For any~$\delta \geq 0$ and~$\delta_1, \delta_2 \geq 0$ such that~$\delta_1 + \delta_2 = \delta$, we have
	\begin{equation*}
		\P(X_1 + X_2 \geq \delta)
		\leq
		\P(X_1 \geq \delta_1) + \P(X_2 \geq \delta_2).
		\label{lem_bound_prob_sum_eq}
	\end{equation*} 
\end{lemma}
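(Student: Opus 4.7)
The plan is to prove this by a straightforward contrapositive inclusion of events followed by one application of subadditivity of $\P$. The lemma is elementary, and the whole argument takes only a few lines; there is no substantive obstacle.

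First I would establish the key set-theoretic inclusion
\begin{equation*}
\{X_1 + X_2 \geq \delta\} \,\subseteq\, \{X_1 \geq \delta_1\} \cup \{X_2 \geq \delta_2\}.
\end{equation*}
To see this, I would argue by contraposition on the underlying sample space: on the event $\{X_1 < \delta_1\} \cap \{X_2 < \delta_2\}$, strict additivity gives $X_1 + X_2 < \delta_1 + \delta_2 = \delta$, so this event is disjoint from $\{X_1 + X_2 \geq \delta\}$. Passing to complements yields the displayed inclusion.

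Next, I would apply monotonicity and subadditivity (the union bound) of the probability measure $\P$:
\begin{equation*}
\P(X_1 + X_2 \geq \delta) \,\leq\, \P\del[1]{\{X_1 \geq \delta_1\} \cup \{X_2 \geq \delta_2\}} \,\leq\, \P(X_1 \geq \delta_1) + \P(X_2 \geq \delta_2),
\end{equation*}
which is precisely the claim. Note that the hypothesis $\delta_1, \delta_2 \geq 0$ is not actually used in the argument; what is essential is the identity $\delta_1 + \delta_2 = \delta$. The statement and proof extend verbatim to any finite sum $X_1 + \cdots + X_n$ with $\delta_1 + \cdots + \delta_n = \delta$, should this be required later in the text.
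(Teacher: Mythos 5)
Your proof is correct and is exactly the argument the paper has in mind: the paper states this lemma without proof, describing it only as ``a consequence of sub-additivity of probability measures,'' and your set-inclusion-plus-union-bound argument fills that in in the standard way. Your side remarks (that the non-negativity of $\delta_1,\delta_2$ is not actually used, and that the bound extends to finite sums) are both accurate and harmless.
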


Finally, we have gathered all the ingredients to give a proof of~Proposition~\ref{prop:uniform_ldp}.

\begin{proof}[Proof of Proposition~\ref{prop:uniform_ldp}] \label{prop:uniform_ldp_pf}
	We proceed in three steps that mirror the three claims.
	\vspace{-0.8em}
	\paragraph*{$\boldsymbol{\triangleright}$ \textbf{Proof of~\ref{prop:uniform_ldp_1}}.}
	The statement in~\ref{prop:uniform_ldp_1} deals with the case of \emph{fixed}~$\fz \in \mcC^{\alpha}(\T^3)$ and was proved by Hairer and Weber~\cite[Thm.~4.4]{hairer_weber_ldp}.
	\vspace{-0.8em}
	\paragraph*{$\boldsymbol{\triangleright}$ \textbf{Proof of~\ref{prop:uniform_ldp_2}}.}
	Observe that~$\II_T^\fz(v) < \infty$ implies that we can find a \emph{unique} (and \emph{deterministic}) $h \coloneqq h^v \in L^2_T L^2(\T^3)$ given by
	\begin{equation*}
		h = \partial_t v - \Delta v + v^3 + m^2 v
	\end{equation*}
	such that 
	\begin{equation}
		v 
		= w^{h,\fz} 
		\equiv \Phim(\fz,\Pi^h), 
		\quad \II_T^\fz(v) = \frac{1}{2} \norm[0]{h}_{L^2_T L^2_x}^2.
		\label{prop:uniform_ldp_pf_eq1}
	\end{equation}
	This follows immeditely from the definition of the rate function~$\II_T^\fz$ given in~\eqref{eq:uniform_ldp_rf} above. 
	We then choose~$\d_1,\d_2 > 0$ such that~$\delta = \delta_1 + \delta_2$ and apply Lemma~\ref{lem_bound_prob_sum} with 
	\begin{equation}
		X_1 \coloneqq  \norm[0]{\bar{u}^\fz_\eps - v}_{C_T\CC^\alpha_x} \mathbf{1}_{\eps \sbarnorm{\bbzm} < M_1}, \quad
		X_2 \coloneqq  \norm[0]{\bar{u}^\fz_\eps - v}_{C_T\CC^\alpha_x} \mathbf{1}_{\eps \sbarnorm{\bbzm} \geq M_1},
		\label{prop:uniform_ldp_pf_aux1}
	\end{equation} 
	where~$M_1 > 0$ is arbitrary for the moment; it will be chosen appropriately in~\eqref{prop:uniform_ldp_pf_lb_M} below.
	This leads to the following estimate:
	\begin{equs}[][prop:uniform_ldp_pf_lb_elem]
		\thinspace 
		&
		\P\del[1]{\norm[0]{\bar{u}_\eps^\fz - v}_{C_T\CC^\alpha_x} < \delta} \\
		= & \,
		1 - \P\del[1]{\norm[0]{\bar{u}_\eps^\fz - v}_{C_T\CC^\alpha_x} \geq \delta}  \\
		\geq  & \, 
		1 
		- 
		\P\del[1]{\norm[0]{\bar{u}_\eps^\fz - v}_{C_T\CC^\alpha_x} \1_{\eps \sbarnorm{\bbzm} \leq M_1} \geq \delta_1}
		-
		\P\del[1]{\norm[0]{\bar{u}_\eps^\fz - v}_{C_T\CC^\alpha_x} \1_{\eps \sbarnorm{\bbzm} > M_1} \geq \delta_2} \\
		=: & \,
		1 - P_1 - P_2.
	\end{equs}
	We start by estimating~$P_2$. Since~$\d_2 > 0$, we have the elementary inclusion
	\begin{equation*}
		\left\{\norm[0]{\bar{u}_\eps^\fz - v}_{C_T\CC^\alpha_x} \1_{\eps \sbarnorm{\bbzm} > M_1} \geq \delta_2\right\}
		\subseteq
		\{\eps \barnorm{\bbzm} > M_1\}
	\end{equation*}
	which, by sub-additivity of~$\P$ and Markov's inequality, leads to the estimate
	\begin{equation}
		P_2
		\equiv
		\P\del[1]{\norm[0]{\bar{u}_\eps^\fz - v}_{C_T\CC^\alpha_x} \1_{\eps \sbarnorm{\bbzm} > M_1} \geq \delta_2}
		\leq 
		\P\del[2]{\barnorm{\bbzm} > \frac{M_1}{\eps}}
		\leq
		K_T \exp\del[2]{-\frac{\Lambda_T M_1^2}{\eps^2}}
		\label{prop:uniform_ldp_pf_lb_P2}
	\end{equation}
	for any~$\eps > 0$ with~$K_T \in (0,\infty)$ and~$\Lambda_T > 0$ as in Lemma~\ref{lem:fernique_bphz}.
	For~$P_1$, we wish to apply Lemma~\ref{lem_uniform_lipschitz}, the joint local Lipschitz continuity of the solution map~$\Phim$.
	To this end, note that by Lemma~\ref{lem:homog_model_norm}, one has
	\begin{equation}
		\bar{u}^\fz_\eps = \Phim(\fz,\bbzmeps), \quad \barnorm{\bbzmeps} \, \1_{\eps \sbarnorm{\bbzm} \leq M_1} = \eps \barnorm{\bbzm} \, \1_{\eps \sbarnorm{\bbzm} \leq M_1} \leq M_1, 
		\label{prop:uniform_ldp_pf_lb_ueps}
	\end{equation}
	and we can choose~$M_1 > \barnorm{\Pi^{h^v}}$ since~$h^v$ is deterministic.
	By Lemma~\ref{lem_uniform_lipschitz} and Lemma~\ref{lem:conv_models}, there exists some~$\rho_1 \coloneqq \rho_1(\delta_1,M_1,T) > 0$ such that 
	\begin{align*} 
		P_1 
		& \equiv
		\P\del[1]{\norm[0]{\Phim(\fz,\bbzmeps) - \Phim(\fz,\Pi^{h^{v}})}_{C_T\CC^\alpha_x}  \1_{\sbarnorm{\bbzm^\eps} \leq M_1} \geq \delta_1} 
		\leq 
		\P\del[1]{d_{\MMm}(\bbzmeps,\Pi^{h^{v}}) \1_{\sbarnorm{\bbzmeps} \leq M_1} \geq \rho_1}. 
	\end{align*} 
	and the LDP lower bound~\eqref{lem_ldp_models_lb_eq} gives the estimate
	\begin{equation*}
		1 - P_1 
		\geq \P\del[1]{d_{\MMm}(\bbzmeps,\Pi^{h^{v}}) < \rho_1}
		\geq 
		\exp\del[2]{-\frac{\JJ_T(\Pi^{h^{v}}) + \nicefrac{\gamma}{2}}{\eps^2}}
	\end{equation*}
	for all~$\eps < \eps^{\CM,\downarrow}_0(T, \rho_1, \nicefrac{\gamma}{2},\theta)$. 
	Since
	\begin{equation*}
		\JJ_T(\Pi^{h^{v}})
		=
		\frac{1}{2} \norm[0]{h^v}_{L^2_T L^2_x}^2
		=
		\II_T^{\fz}(v),
	\end{equation*}
	the previous bound is equivalent to
	\begin{equation}
		1 - P_1 
		\geq 
		\exp\del[2]{-\frac{\II_T^\fz(v) + \nicefrac{\gamma}{2}}{\eps^2}}.
		\label{prop:uniform_ldp_pf_lb_P1}
	\end{equation}
	In summary, our estimates in~\eqref{prop:uniform_ldp_pf_lb_elem},~\eqref{prop:uniform_ldp_pf_lb_P2}, and~\eqref{prop:uniform_ldp_pf_lb_P1} lead to the inequality
	\begin{align*}
		\P\del[1]{\norm[0]{\bar{u}_\eps^\fz - v}_{C_T\CC^\alpha_x} < \delta}
		& \geq
		\exp\del[2]{-\frac{\II_T^\fz(v) + \nicefrac{\gamma}{2}}{\eps^2}}
		-
		K_T \exp\del[2]{-\frac{\Lambda_T M_1^2}{\eps^2}} \\
		& = 
		\exp\del[2]{-\frac{\II_T^\fz(v) + \nicefrac{\gamma}{2}}{\eps^2}}
		\del[3]{
			1
			-
			K_T \exp\del[2]{-\frac{\Lambda_T M_1^2 - \II_T^\fz(v) - \nicefrac{\gamma}{2}}{\eps^2}}
		}
	\end{align*}
	for all~$\eps < \eps^{\CM,\downarrow}_0(T, \rho_1, \nicefrac{\gamma}{2},\theta)$. 
	Finally, we observe that~$v \in \II_T^\fz[\theta]$ implies
	\begin{equation*}
		1
		-
		K_T \exp\del[2]{-\frac{\Lambda_T M_1^2 - \II_T^\fz(v) - \nicefrac{\gamma}{2}}{\eps^2}}
		\geq
		1
		-
		K_T \exp\del[2]{-\frac{\Lambda_T M_1^2 - \theta - \nicefrac{\gamma}{2}}{\eps^2}}
		=:
		E(\gamma, \theta, M_1, \eps)
	\end{equation*}
	and by choosing~$M_1 = M_1(h^v,\gamma,\theta)$ such that
	\begin{equation}
		M_1 > \barnorm{\Pi^{h^v}} \vee \sqrt{\frac{\theta + \nicefrac{\gamma}{2}}{\lambda}} 
		\label{prop:uniform_ldp_pf_lb_M}
	\end{equation}
	we can guarantee that~$E(\gamma, \theta, M_1, \eps)$ is monotonously increasing in~$\eps$.
	As a consequence, we can find~$\eps_0^\downarrow \leq \eps^{\CM,\downarrow}_0(T, \rho_1, \nicefrac{\gamma}{2},\theta)$ such that, for all~$\eps \leq \eps_0^\downarrow$, we have
	\begin{equation*}
		E(\gamma, \theta, M_1, \eps) \geq \exp\del[2]{-\frac{\nicefrac{\gamma}{2}}{\eps^2}}.
	\end{equation*}
	Altogether, this implies that the estimate
	\begin{equation*}
		\P\del[1]{\norm[0]{\bar{u}_\eps^\fz - v}_{C_T\CC^\alpha_x} < \delta}
		\geq 	
		\exp\del[2]{-\frac{\II_T^\fz(v) + \gamma}{\eps^2}}
	\end{equation*}
	holds for all~$\eps \leq \eps_0^\downarrow$ and thereby completes the proof of the lower bound.
	\vspace{-0.8em}
	\paragraph*{$\boldsymbol{\triangleright}$ \textbf{Proof of~\ref{prop:uniform_ldp_3}}.}
	Since the singleton~$\{\bbzmeps\}$ is closed and~$\JJ_T[\theta]$ is compact, we can find a~$\bzm \in \JJ_T[\theta]$ such that 
	\begin{equation*}
		\dist_{\MMm}(\bbzmeps,\JJ_T[\theta]) = d_{\MMm}(\bbzmeps,\bzm).
		\label{prop:uniform_ldp_pf_ub_eq1}
	\end{equation*}
	Note that~$\bzm$ is at minimal distance from the \emph{random} element~$\bbzmeps$ relative to the other elements in~$\JJ_T[\theta]$, i.e.~$\bzm$ itself is \emph{random}, too.
	From the proof of~\cite[Thm.~$4.4$]{hairer_weber_ldp}, we know that
	\begin{equation*}
		\{\Pi^h: \ h \in L^2_T L^2(\T^3)\} = \{\JJ_T < \infty\}.
	\end{equation*}
	Our choice of~$\bzm$, i.e. the fact that~$\JJ_T(\bzm) \leq \theta$, thus implies the existence of~$h^{\bzm} \in L^2_TL^2(\T^3)$ such that
	\begin{equation} \label{prop:uniform_ldp_pf_consistency_Pi}
		\bzm = \Pi^{h^{\bzm}}, \quad \frac{1}{2}\norm[0]{h^{\bzm}}_\CH^2 \equiv \JJ_T(\bzm) \leq \theta
	\end{equation}
	Again, note that this control~$h^{\bzm}$ is \emph{random} as well due to its dependence on the random model~$\bzm$.
	However, for each fixed \emph{realisation} of~$\bzm$, it is \emph{unique}:
	If we suppose that there were~$h_1,h_2 \in \CH$ such that~$\Pi^{h_1} = \bzm = \Pi^{h_2}$, then the definition of the canonical lift~$\Pi^{\bullet}$ implies that
	\begin{equation*}
		h_1 = \Pi^{h_1} \<xi> = \Pi^{h_2} \<xi> = h_2.
	\end{equation*}
	We then set~$v^{\bzm} \coloneqq  \Phim(\fz,\Pi) \equiv w^{h^{\Pi}, \fz}$ which, by the form of the rate function~$\II_T^\fz$ in~\eqref{eq:uniform_ldp_rf} above, implies that
	\begin{equation} \label{prop:uniform_ldp_pf_lb_energy_bound}
		\II_T^\fz\del[0]{v^{\bzm}} = \frac{1}{2} \norm[0]{h^{\bzm}}_{L^2_T L^2_x}^2 \leq \theta, 
	\end{equation}
	i.e.~$v^{\bzm} \in \II_T^\fz[\theta]$. 
	At the same time, we know that the map
	\begin{equation*}
		\Pi^{\bullet}: L^2_T L^2_x \to \MMm_T, \quad h \mapsto \Pi^h
	\end{equation*}
	is locally Lipschitz continuous w.r.t.~$\threebars \cdot \threebars_T$, see Lemma~\ref{lem:can_lift_cameron_martin_fcts} above; we denote the corresponding Lipschitz constant, which is implicit in~\eqref{lem:can_lift_cameron_martin_fcts:lipschitz}, by~$c_T$.
	Then, Lemma~\ref{lem:can_lift_cameron_martin_fcts} combined with~\eqref{prop:uniform_ldp_pf_consistency_Pi} and~\eqref{prop:uniform_ldp_pf_lb_energy_bound} implies that the bound~$\threebars \Pi \threebars_T \leq c_T \, \theta$  holds~uniformly over all realisations of~$\bzm$; by definition of~$\barnorm{\cdot}_T$, this implies the bound
	\begin{equation}
		\barnorm{\Pi}_T \leq A_T
		\label{prop:uniform_ldp_pf_ub_eq2}
	\end{equation}
	where~$A_T$ is some function of~$c_T\theta$.
	As in the proof of the lower bound, we choose~$\d_1,\d_2 > 0$ with~$\delta = \delta_1 + \delta_2$ and apply Lemma~\ref{lem_bound_prob_sum} with~$X_1$ and~$X_2$ as in~\eqref{prop:uniform_ldp_pf_aux1}, only with~$M_1$ replaced by~$M_2$; the latter is arbitrary for now and  will be chosen later.
	We obtain the estimate
	\begin{equs}
		\thinspace & 
		\P\del[1]{\dist_{C_T\CC^\alpha_x}(\bar{u}_\eps^\fz, \II_T^{\fz}[\theta]) \geq \delta} \\
		\leq & \
		\P\del[1]{\norm[0]{\bar{u}_\eps^\fz - v^{\bzm}}_{C_T\CC^\alpha_x} \geq \delta} \\
		\leq & \
		\P\del[1]{\norm[0]{\bar{u}_\eps^\fz - v^{\bzm}}_{C_T\CC^\alpha_x} \1_{\eps \sbarnorm{\bbzm} \leq M_2} \geq \delta_1}
		+
		\P\del[1]{\norm[0]{\bar{u}_\eps^\fz - v^{\bzm}}_{C_T\CC^\alpha_x} \1_{\eps \sbarnorm{\bbzm} > M_2} \geq \delta_2} \\
		=: & \  
		Q_1 + Q_2.
	\end{equs}
	The probability~$Q_2$ can be estimated exactly as~$P_2$ in~\eqref{prop:uniform_ldp_pf_lb_P2} above, which gives the bound
	\begin{equation*}
		Q_2
		\leq
		K \exp\del[2]{-\frac{\lambda M_2^2}{\eps^2}}
		\label{prop:uniform_ldp_pf_ub_Q2}.
	\end{equation*}
	Thanks to~\ref{prop:uniform_ldp_pf_ub_eq2} and, as before,~\eqref{prop:uniform_ldp_pf_lb_ueps}, we can apply Lemma~\ref{lem_uniform_lipschitz} to estimate~$Q_1$ similarly to~$P_1$ if we choose~$M_2 > A_T$.
	Then, by Lemma~\ref{lem_uniform_lipschitz}, there exists~$\beta_1 \coloneqq  \beta_1(\delta_1, M_2,T) > 0$ such that
	\begin{align*} 
		Q_1 
		& \leq 
		\P\del[1]{d_{\MMm}(\bbzmeps,\Pi) \1_{\sbarnorm{\bbzmeps} \leq M_2} \geq \beta_1} \\
		& \leq 
		\P\del[1]{d_{\MMm}(\bbzmeps,\Pi) \geq \beta_1}
		= 
		\P\del[1]{\dist_{\MMm}(\bbzmeps, \JJ_T[\theta]) \geq \beta_1}
		\leq
		\exp\del[2]{-\frac{\theta - \nicefrac{\gamma}{2}}{\eps^2}}
	\end{align*} 
	for all~$\eps \leq \eps_0^{\CM,\uparrow}(T, \beta_1, \nicefrac{\gamma}{2}, \theta)$.
	Note that we have additionally used the LDP upper bound~\eqref{prop:uniform_ldp_ub_eq}.
	
	One may now proceed similarly as in the proof of the lower bound, arrive at a similar constraint for~$M_2$ as for~$M_1$ in~\eqref{prop:uniform_ldp_pf_lb_M}, and finally choose an appropriate~$\eps_0^{\uparrow} \leq \eps_0^{\CM,\uparrow}(T,\beta_1,\nicefrac{\gamma}{2},\theta)$ such that the claim in~\eqref{prop:uniform_ldp_ub_eq} follows.
	We refrain from giving all the details.
\end{proof}
As discussed in Remark~\ref{rmk:locally_uniform_ldp} above, we require both forms of uniform LDPs, at least w.r.t. the upper bound. 
The following lemma establishes a key property on the sub-level sets of the dynamic rate function which will enable us to show that the FWULDP upper bound in Proposition~\ref{prop:uniform_ldp} implies a DZULDP upper bound, see Corollary~\ref{cor:DZULDP} below.

Before stating the lemma, let us introduce some notation. Given a set $A \subseteq C_T\mcC^\alpha(\mbT^3)$ and an element $w\in C_T\mcC^{\alpha}(\mbT^3)$, we set
\begin{equation*}
	d_{T;\alpha}(w,A) \coloneqq \inf_{v\in A} \|w-v\|_{C_T\mcC_x^{\alpha}}.
\end{equation*}
\begin{lemma}\label{lem:hausdorff_convergence}
	Let $\theta \geq 0$, $T\in (0,+\infty)$ and $\mfz \in \mcC^{\alpha}(\mbT^3)$. Then for any sequence $(\mfz_n)_{n\geq 0}\subset \mcC^{\alpha}(\mbT^3)$ such that $\lim_{n\to \infty}\|\mfz-\mfz_n\|_{\mcC^\alpha} = 0$ it holds that
	\begin{equation*}
		\lim_{n\to \infty} \max\left\{ \sup_{w \in \msI^{\mfz_n}_T[\theta]} d_{T;\alpha}(w,\msI^\mfz_T[\theta]), \,\sup_{v\in \msI^\mfz_T[\theta]}d_{T;\alpha}(v,\msI^{\mfz_n}_T[\theta]) \right\} =0.
	\end{equation*}
\end{lemma}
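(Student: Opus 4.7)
The plan is to reduce both suprema in the Hausdorff-type distance to a single uniform Lipschitz estimate for the skeleton solution map in the initial condition, taken uniformly over controls of $L^2$-cost at most $\theta$. First I would observe that, by the explicit quadratic expression for $\msI^\mfz_T$ in~\eqref{eq:uniform_ldp_rf} together with global well-posedness of the skeleton equation (Theorem~\ref{th:nl_skeleton_gwp}), for each fixed initial condition the map $v \mapsto h^v \coloneqq \partial_t v - \Delta v + v^3 + m^2 v$ identifies $\msI^\mfz_T[\theta]$ bijectively with the $L^2$-ball $\{h\in L^2_TL^2_x:\tfrac{1}{2}\|h\|_{L^2_TL^2_x}^2\leq \theta\}$ via $v = w^{h^v,\mfz}$, and the cost satisfies $\tfrac{1}{2}\|h^v\|_{L^2_TL^2_x}^2 = \msI^\mfz_T(v)\leq\theta$; the analogous identification holds with $\mfz_n$ in place of $\mfz$.

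Next I would obtain the key uniform Lipschitz bound. By Lemma~\ref{lem:can_lift_cameron_martin_fcts}, the canonical lift $h \mapsto \Pi^h$ maps the $L^2$-ball above into a bounded subset of $\MMm_T$, and the set $\{\mfz_n\}_{n\in\mbN}\cup\{\mfz\}$ is bounded in $\mcC^\alpha(\mbT^3)$ since $\mfz_n \to \mfz$. Applying Lemma~\ref{lem_uniform_lipschitz} to the pair $(\mfz,\Pi^h)$ and $(\mfz_n,\Pi^h)$ and using the identity $w^{h,\cdot} = \Phi(\cdot,\Pi^h)$ from Lemma~\ref{lem:can_lift_cameron_martin_fcts}, there exists a constant $L = L(\theta, T, \sup_n \|\mfz_n\|_{\mcC^\alpha}\vee\|\mfz\|_{\mcC^\alpha})$ such that
\begin{equation*}
\sup_{h:\, \frac{1}{2}\|h\|_{L^2_TL^2_x}^2 \leq \theta}\|w^{h,\mfz} - w^{h,\mfz_n}\|_{C_T\mcC^\alpha_x}\ \leq\ L\,\|\mfz-\mfz_n\|_{\mcC^\alpha}.
\end{equation*}
This is the only quantitative ingredient required; given it, both suprema are handled by the same transfer trick.

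With the bound in hand, the second supremum follows immediately: given $v = w^{h^v,\mfz} \in \msI^\mfz_T[\theta]$, the element $v_n \coloneqq w^{h^v,\mfz_n}$ lies in $\msI^{\mfz_n}_T[\theta]$ because the cost of $h^v$ is unchanged by the initial condition, so $d_{T;\alpha}(v,\msI^{\mfz_n}_T[\theta]) \leq \|v - v_n\|_{C_T\mcC^\alpha_x} \leq L\|\mfz - \mfz_n\|_{\mcC^\alpha}$, uniformly in $v$. The first supremum is completely symmetric: for $w = w^{h^w,\mfz_n} \in \msI^{\mfz_n}_T[\theta]$ the element $\tilde w \coloneqq w^{h^w,\mfz}$ belongs to $\msI^\mfz_T[\theta]$, and the same estimate applies. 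Taking the maximum and letting $n \to \infty$ concludes the argument. I do not anticipate a serious obstacle, since the heavy lifting is already done by Lemmas~\ref{lem:can_lift_cameron_martin_fcts} and~\ref{lem_uniform_lipschitz}; the only point I would verify carefully is that the constant from Lemma~\ref{lem_uniform_lipschitz} depends on the control $h$ only through the scalar $\|h\|_{L^2_TL^2_x}$, which is automatic because the inhomogeneous model norm $\threebars\Pi^h\threebars_T$ is itself bounded in terms of $\|h\|_{L^2_TL^2_x}$ alone.
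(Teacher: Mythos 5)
Your proposal is correct, and for the first supremum it is a genuine simplification over the paper's argument. The paper treats the two suprema asymmetrically: for the second it performs exactly your control-transport argument (fix $h$, swap the initial condition, invoke Lemma~\ref{lem_uniform_lipschitz}), but for the first it proves the stronger fact that any sequence $w^n \in \msI^{\mfz_n}_T[\theta]$ converges (along a subsequence) to some $v \in \msI^\mfz_T[\theta]$ by extracting a weak $L^2$ limit of the controls $h^n$, verifying compactness of the associated canonical models tree-by-tree, and closing with weak lower semicontinuity of the $L^2$-norm. Your observation that the transport argument is symmetric---the cost $\tfrac{1}{2}\|h^v\|_{L^2_TL^2_x}^2$ does not see the initial condition, so $h \mapsto w^{h,\cdot}$ maps $\msI^{\mfz_n}_T[\theta]$ onto $\msI^{\mfz}_T[\theta]$ and vice versa---removes the entire weak-compactness machinery and replaces both directions by the single uniform Lipschitz estimate you state, whose $h$-dependence you correctly collapse to $\|h\|_{L^2_TL^2_x}$ via Lemma~\ref{lem:can_lift_cameron_martin_fcts} (taking $h_2 = 0$ there bounds $\threebars\Pi^h\threebars_T$ on the cost-$\theta$ ball) and Lemma~\ref{lem:conv_models}. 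As a bonus, your argument actually yields a quantitative bound: the Hausdorff distance between the sublevel sets is $\lesssim_{R,T,\theta}\|\mfz-\mfz_n\|_{\mcC^\alpha}$, which is strictly more than the qualitative convergence the paper proves.
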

\begin{proof}
	We show that each term inside the maximum goes to zero individually. 
	
	To show the first, take any sequence $(w^n)_{n\geq 0}\subset C_T\mcC^{\alpha}(\mbT^3)$ where each $w^n \in \msI^{\mfz_n}_T[\theta]$. We will show that there exists $v\in \msI^\mfz_T[\theta]$ such that $\|w^n-v\|_{C_T\mcC^{\alpha}_x} \to 0$. Since, by assumption, $w^n \in \msI^{\mfz_n}_T[\theta]$ for each $n\geq 0$ there exists an $h^n \in L^2_TL^2_x$ such that $\frac{1}{2}\|h^n\|^2_{L^2_TL^2_x} \leq \theta$ and $w^n = w^{h_n,\fz_n}$. Since $L^2_TL^2(\mbT^3)$ is reflexive, there exists an $\bar{h} \in L^2_TL^2(\mbT^3)$ such that the sequence $(h^n)_{n\geq 0}$ converges with respect to the weak $L^2_TL^2(\mbT^3)$ topology to $\bar{h}$. In addition, one readily checks that\footnote{The notation $\mcC^{\alpha-2}_\text{par}([0,T]\times \mbT^3)$ indicates a space of $(\alpha-2)$-H\"older regular distributions over the cylinder $[0,T]\times \mbT^3$ equipped with the parabolic scaling $(2,1,1,1)$, i.e. derivatives in time cost twice as much as derivatives in space. This is the natural scale of spaces in which to measure the regularity of realisations of the space-time white noise. Since this notation is confined to the current proof we do not give a full definition, instead referring the reader to \cite[Def.~A.15]{martini_mayorcas_24_small_noise} and \cite[Def.~2.3 \& Thm.~2.7]{chandra_weber}.} $L^2_TL^2(\mbT^3) \cembed  \mcC^{\alpha-2}_{\text{par}}([0,T]\times \mbT^3)$ so that we also have $\lim_{n\to \infty}\|h^n-\bar{h}\|_{\mcC^{\alpha-2}_{\text{par}}}=0$. Then, it is a straightforward but tedious exercise to check that one can in fact build all the diagrams of Table \ref{table_trees} from $h^n$, each being uniformly bounded in a Banach space which compactly embeds into the native topology of each diagram in the model, in particular those induced by \eqref{def:model:hom_model_norm_trees}. To wit, consider the equivalent diagram to $\<2b>$, from Lemma~\ref{lem:lin_skeleton_reg} (and recalling the definition therein) one has
	\begin{equation*}
		z^n \coloneqq z^{h^n} \in C_TH^1 (\mbT^3) \embed C_T L^6(\mbT^3),
	\end{equation*}
	uniformly in $n$ and similarly, $z^{\bar{h}} \in C_TH^1 (\mbT^3) \embed C_T L^6(\mbT^3)$. Thus, by simple application of the product rule and the fact that the map $(f,g)\to f\times g$ is a continuous map from $L^{p_1}(\mbT^3)\times L^{p_2}(\mbT^3)\to L^{p_3}(\mbT^3)$ for $\frac{1}{p_3}= \frac{1}{p_2}+ \frac{1}{p_1}$, one finds that uniformly in $n\geq 0$
	\begin{equation*}
		(z^{n})^2 \in C_T W^{1,\nicefrac{3}{2}}(\mbT^3)\embed \mcB^{1}_{\nicefrac{3}{2},\infty}(\mbT^3) \embed  C_T \mcC^{-1}(\mbT^3) \cembed C_T\mcC^{2\alpha}(\mbT^3).
	\end{equation*}
	Similarly, $(z^{\bar{h}})^2 \in C_T \mcC^{-1}(\mbT^3) \cembed C_T\mcC^{2\alpha}(\mbT^3)$ and so we conclude that one also has
	\begin{equation*}
		\lim_{n\to \infty}\| (z^n)^2 - (z^{\bar{h}})^2\|_{C_T\mcC^{2\alpha}_x}=0.
	\end{equation*}
	We leave it as an exercise to the reader to check that a similar procedure holds for the remaining diagrams of Table \ref{table_trees}, only pointing out that for the higher regularity trees, it becomes necessary to use the recentring encoded in Definition~\ref{def:bphz_lift} (ignoring the renormalisation constants which are not needed here) along with the Morrey--Sobolev embedding.
	
	It follows from this discussion that can one build canonical models (in the sense of Definition~\ref{def:model})  $\Pi^n$ from $h^n$ and $\Pi$ from $\bar{h}$ which convergence in the appropriate topology so that we can in fact apply Lemma~\ref{lem_uniform_lipschitz} to see that there exists a $v\in C_T\mcC^{\alpha}(\mbT^3)$ such that $\lim_{n\to \infty} \|w^n-v\|_{C_T\mcC^{\alpha}_x} =0$ and in the sense of distributions,
	\begin{equation*}
		\partial_t v- \Delta v = - v^3 - m^2 v + \bar{h},\quad v\tzero = \mfz.
	\end{equation*}
	Finally, using the fact that the norm is weakly lower-semi-continuous in Hilbert spaces, one has
	\begin{equation*}
		\msI^\mfz_T(v) \leq \liminf_{n\to \infty} \msI^{\mfz_n}_T(w^n) = \frac{1}{2}\liminf_{n\to \infty} \|h^n\|^2_{L^2_TL^2_x} \leq \theta.
	\end{equation*}
	
	Showing that the second supremum converges is simpler. Given $v\in \msI^\mfz_T[\theta]$, we have the existence of an $h\in L^2_TL^2_x$ with $\frac{1}{2}\|h\|_{L^2_TL^2_x}\leq \theta$ such that $\msI^\mfz_T(v)= \frac{1}{2}\|h\|_{L^2_TL^2_x}$ and $v = w^{h,\fz}$. Therefore, we define a sequence $(w^n)_{n\geq 0}\subset C_T\mcC^{\alpha}(\mbT^3)$, each solving in the sense of distributions
	\begin{equation*}
		\partial_t w^n - \Delta w^n +m^2 w^n = - (w^n)^3 + h, \quad w^n\tzero = \mfz_n, 
	\end{equation*}
	i.e.~$w^n = w^{h_n,\fz_n}$.
	The fact that this sequence converges to $v$ follows again from continuity of the solution map (see Lemma~\ref{lem_uniform_lipschitz}). To see that each $w^n$ lives in the right sub-level set, we recall that by definition we have $\msI^{\mfz_n}_T(w^n) =\frac{1}{2}\|h\|^2_{L^2_TL^2_x} \leq \theta$. 
\end{proof}
Appealing to \cite[Thm.~2.7]{salins_uniform_ldp}  the above lemma shows that we also obtain a uniform LDP in the sense of Dembo--Zeitouni. Since the use of this form of the uniform LDP is confined to the proof of Proposition~\ref{prop:bound_prob_H} we do not give its full definition, instead referring to \cite[Def.~2.2]{salins_uniform_ldp}.
\begin{corollary}[Locally uniform DZLDP for the dynamics]\label{cor:DZULDP}
	Let~$T > 0$ and $\msA$ be the collection of all compact subsets of $\mcC^{\alpha}(\mbT^3)$. Then, for any $A\in \msA$, the family
	\begin{equation*}
		\{\bar{u}_\eps^\fz: \ \eps > 0, \ \fz \in A\} \subseteq C_T \CC^{\alpha}(\T^3)
	\end{equation*}
	satisfies a \emph{DZULDP} (see \cite[Def.~2.2]{salins_uniform_ldp}) with good rate function $\msI^\mfz_T$. In particular, for any $A\in \msA$ and $F \subseteq C_T\mcC^{\alpha}(\mbT^3)$ closed we have the upper bound
	\begin{equation*}
		\limsup_{\eps \to 0} \eps ^2 \log \sup_{\fy \in A} \P\del[1]{\bar{u}^\fy_\eps \in F} 
		\leq 
		-\inf_{\fy \in A} \inf_{v \in F} \II_T^\fz(v) 
	\end{equation*}
\end{corollary}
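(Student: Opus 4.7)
The plan is to invoke the equivalence result \cite[Thm.~2.7]{salins_uniform_ldp}, which states that on a set $A$ of initial conditions, a FWULDP (in the Freidlin--Wentzell sense) can be upgraded to a DZULDP provided that the assignment $\fy \mapsto \msI^\fy_T$ satisfies a suitable equicontinuity condition on sublevel sets, phrased as Hausdorff continuity of $\fy\mapsto \msI^\fy_T[\theta]$ uniformly over $\fy$ ranging in compact subsets of the initial data space. Proposition~\ref{prop:uniform_ldp} already furnishes the FWULDP on any ball $\bar B_R(0)\subset \mcC^\alpha(\mbT^3)$, and since an arbitrary compact $A\in\msA$ embeds into such a ball for some $R$, it remains only to verify the continuity hypothesis.

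The key input is Lemma~\ref{lem:hausdorff_convergence}, which establishes exactly that for any sequence $\mfz_n \to \mfz$ in $\mcC^\alpha(\mbT^3)$ and any threshold $\theta \geq 0$, the sublevel sets $\msI^{\mfz_n}_T[\theta]$ converge to $\msI^\mfz_T[\theta]$ in the Hausdorff distance on $C_T\mcC^\alpha(\mbT^3)$. This is the pointwise version of the equicontinuity requirement. To promote pointwise Hausdorff convergence along sequences to the uniform statement needed to apply Salins' theorem on the compact set $A$, I would use a standard compactness-contradiction argument: if uniform Hausdorff convergence failed, one could extract a sequence $(\mfz_n)\subset A$ and $\mfy_n\in A$ with $\|\mfy_n-\mfz_n\|_{\mcC^\alpha}\to 0$ but Hausdorff distance between the corresponding sublevel sets bounded below by some $\eta>0$; by compactness of $A$ we could pass to a common limit $\mfz \in A$ along a subsequence, contradicting Lemma~\ref{lem:hausdorff_convergence} applied to both sequences.

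With the Hausdorff continuity established uniformly over $A$, Salins' equivalence result \cite[Thm.~2.7]{salins_uniform_ldp} applies directly and converts the FWULDP of Proposition~\ref{prop:uniform_ldp} into the DZULDP. The displayed upper bound then follows as the standard DZULDP upper-bound statement applied to the closed set $F$.

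The main conceptual step in this argument is already encapsulated in Lemma~\ref{lem:hausdorff_convergence}; the remaining work is essentially bookkeeping. The only subtlety worth flagging is the compactness-contradiction upgrade from sequential to uniform Hausdorff continuity --- this is where the hypothesis $A\in\msA$ (i.e., compactness of $A$ in $\mcC^\alpha(\mbT^3)$) is used in an essential way, and is precisely why the DZULDP is stated over compact sets rather than over arbitrary bounded sets.
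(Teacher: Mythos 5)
Your proposal is correct and follows essentially the same route as the paper: both invoke Salins' Theorem~2.7 to upgrade the FWULDP of Proposition~\ref{prop:uniform_ldp} to a DZULDP, with Lemma~\ref{lem:hausdorff_convergence} supplying the requisite Hausdorff-type continuity of $\fz \mapsto \msI^\fz_T[\theta]$. The paper is terser---it simply cites Lemma~\ref{lem:hausdorff_convergence} as verifying Salins' Assumption~2.6~d) and notes that Assumptions~2.6~a) and~c) hold with $\mcE_0=\mcC^\alpha(\mbT^3)$---whereas you spell out the sequential-to-uniform compactness-contradiction argument that the paper leaves implicit.
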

\begin{proof}
	Lemma~\ref{lem:hausdorff_convergence} shows that \cite[Ass.~2.6 d)]{salins_uniform_ldp} is satisfied in our case. Point \cite[Ass.~2.6 a) \& c)]{salins_uniform_ldp} are satisfied with $\mcE_0 =\mcC^{\alpha}(\mbT^3)$. Finally, inspecting the proof we see that our FWULDP given by Proposition~\ref{prop:uniform_ldp} also holds if we restrict the family of measures therein as here, to initial data $\mfz\in A$ where $A\in \msA$ is any compact subset of $\mcC^{\alpha}(\mbT^3)$. Thus \cite[Thm.~2.7]{salins_uniform_ldp} applies and so we see that the FWULDP obtained by Proposition~\ref{prop:uniform_ldp} implies an equivalent DZULDP, see \cite[Def.~2.2]{salins_uniform_ldp}.
\end{proof}
\subsection[Tail Bounds for the~$\Phi^4_3$ Measure]{Tail Bounds for the~$\boldsymbol{\Phi^4_3}$ Measure} \label{sec:tail_bounds}

In this section, we recapitulate the strategy to \emph{rigorously} construct the measure~$\mu_\eps$, informally described in~\eqref{eq:phi43_formal_measure}, as the unique invariant measure of the Markov process~$\bar{u}^\fz_\eps$ given by Definition~\ref{def:phi43_rigorous_SPDE}.
Furthermore, we establish exponential tail bounds on~$\mu_\eps$ which are quantitative in $\eps >0$, see Proposition~\ref{prop:measure_tail_bounds} below, which play a key role in the proof of our main result, Theorem~\ref{th:main_result}.

To begin with, the \emph{existence} of an invariant measure for~$\bar{u}_\eps^\fz$ follows from the CDFI results, recovered in Subsection~\ref{sec:cdfi} by means of a Krylov--Bogoliubov argument, see~\cite{mourrat_weber_infinity} for details.
\emph{Uniform exponential convergence} in total variation distance to a \emph{unique} invariant measure~$\mu_\eps$ then follows from a combination of CDFI with the following results, see (the proof of)~\cite[Coro.~1.13]{hairer_schoenbauer_support}:

\begin{itemize}
	\item The \emph{strong Feller} property, obtained by Hairer and Mattingly in~\cite{hairer-mattingly}, which implies continuity of the transition probabilities associated with~$\bar{u}_\eps^\fz$ in total variation distance.
	\item The \emph{support theorem} of Hairer and Sch\"onbauer~\cite{hairer_schoenbauer_support}, which is generality applicable but specialised to~$\Phi^4_3$ in~\cite[Thm.~1.12]{hairer_schoenbauer_support}. In our case, this asserts that the law of~$\bar{u}_\eps^\fz$ has full support in $\mcC^{\alpha}(\mbT^3)$.
\end{itemize}
The following works connect the measure~$\mu_\eps$ as obtained via the above procedure to the programme of constructive QFT (CQFT):
\begin{itemize}
	\item In~\cite{hairer_matetski_18_discretisation}, Hairer and Matetski showed that, for small coupling strengths, the measure~$\mu_\eps$ is \emph{identical} to the one constructed via lattice approximations, see for example~\cite{feldman_74_finite_vol}.
	Note, however, that in the CQFT programme of the~1970's, the measure~$\mu_\eps$ had only been constructed for small coupling constants, hence the constraint in the previous statement;
	the SPDE techniques summarised earlier apply at~\emph{any coupling strength} and thus generalise the earlier CQFT constructions.
	\item The Osterwalder--Schrader axioms~\cite{osterwalder_schrader_I, osterwalder_schrader_II}, which connect a Euclidean QFT to its relativistic counterpart, were verified for~$\Phi^4_3$ by Gubinelli and Hofmanova in~\cite{gubinelli_hofmanova} using PDE techniques.
	\item Finally, Hairer and Steele~\cite{hairer_steele_22} showed that, in fact, the measure~$\mu_\eps$ (for fixed~$\eps > 0$) satisfies an exponential tail bound that is much stronger than the one required by the OS axioms.
\end{itemize}
The following proposition is the main result of this section: It provides an exponential tail bound for the measure~$\mu_\eps$ in the temperature parameter~$\eps$.

\begin{proposition}[Tail Bound] \label{prop:measure_tail_bounds} 
	For any~$\theta > 0$ there exists some $\rho \coloneqq \rho(\theta) > 0$ such that for all~$\eps~\in~(0,1]$ 
	\begin{equation}
		\mu_\eps\del[1]{B_\rho^{\texttt{c}}} \leq K \exp\del[3]{-\frac{\theta}{\eps^2}}.
		\label{prop:measure_tail_bounds:eq}
	\end{equation}
	where~$K \coloneqq K_{\nicefrac{1}{2}}$ is the constant from Lemma~\ref{lem:fernique_bphz}, the Fernique estimate.
\end{proposition}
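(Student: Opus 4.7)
The plan is to exploit invariance of $\mu_\eps$ under the dynamics, together with the CDFI estimate of Proposition~\ref{prop:cdfi_model}, whose strength is precisely that it is \emph{uniform} in the initial condition, and the Gaussian control on the BPHZ model provided by Lemma~\ref{lem:fernique_bphz}.

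First, I would invoke invariance of $\mu_\eps$ for $\bar{u}_\eps$ at some fixed intermediate time $s_0 \in (\tfrac12,1)$, writing
\begin{equation*}
    \mu_\eps\del[0]{B_\rho^{\texttt{c}}} = \int_{\mcC^\alpha(\mbT^3)} \P\del[1]{\norm[0]{\bar{u}^\fy_\eps(s_0)}_{\mcC^\alpha} > \rho} \,\mu_\eps(\dif \fy).
\end{equation*}
This reduces matters to a pointwise-in-$\fy$ tail estimate on $\norm[0]{\bar{u}^\fy_\eps(s_0)}_{\mcC^\alpha}$ that is uniform over all initial conditions. I would then use the decomposition $\bar{u}^\fy_\eps = v^\fy_\eps + \<1b>^\eps$ together with the CDFI bound of Proposition~\ref{prop:cdfi_model}, applied with $t = \tfrac12$, which yields
\begin{equation*}
    \sup_{s \in (\nicefrac12, 1)} \norm[0]{v^\fy_\eps(s)}_{L^\infty_x} \, \lesssim \, 1 \vee \del[1]{\eps \barnorm{\bar{\Pi}}}^{\frac{2}{1-2\kappa}}
\end{equation*}
uniformly in $\fy$. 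For the stochastic-heat part, the definition of the minimal model norm applied to the tree $\<1>$ combined with the homogeneity $\barnorm{\bar{\Pi}^\eps} = \eps \barnorm{\bar{\Pi}}$ from Lemma~\ref{lem:homog_model_norm} gives
\begin{equation*}
    \norm[0]{\<1b>^\eps(s_0)}_{\mcC^\alpha} \, \lesssim \, \barnorm{\bar{\Pi}^\eps} = \eps \barnorm{\bar{\Pi}}\,.
\end{equation*}
Using $L^\infty(\mbT^3) \embed \mcC^\alpha(\mbT^3)$ (valid since $\alpha < 0$), the two bounds combine to give the deterministic-in-$\fy$ estimate
\begin{equation*}
    \norm[0]{\bar{u}^\fy_\eps(s_0)}_{\mcC^\alpha} \, \leq \, C \del[2]{1 + \eps\barnorm{\bar{\Pi}} + \del[1]{\eps\barnorm{\bar{\Pi}}}^{\frac{2}{1-2\kappa}}},
\end{equation*}
uniformly in $\fy \in \mcC^\alpha(\mbT^3)$ and in $\eps \in (0,1]$.

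Given $\theta > 0$, I would now choose $M = M(\theta) \geq \sqrt{\theta/\Lambda_{\nicefrac12}}$ and then $\rho = \rho(\theta)$ large enough so that $C(1 + M + M^{2/(1-2\kappa)}) < \rho$; with that choice, the event $\{\norm[0]{\bar{u}^\fy_\eps(s_0)}_{\mcC^\alpha} > \rho\}$ is contained in $\{\eps\barnorm{\bar{\Pi}} > M\}$. The Chebyshev exponential inequality applied to the Fernique bound of Lemma~\ref{lem:fernique_bphz} then delivers
\begin{equation*}
    \P\del[1]{\eps\barnorm{\bar{\Pi}} > M} = \P\del[1]{\barnorm{\bar{\Pi}}^2 > M^2/\eps^2} \leq K_{\nicefrac12} \exp\del[3]{-\frac{\Lambda_{\nicefrac12} M^2}{\eps^2}} \leq K\exp\del[3]{-\frac{\theta}{\eps^2}},
\end{equation*}
uniformly in $\eps \in (0,1]$, which inserted back into the invariance identity yields the claim.

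The main technical obstacle is securing the uniformity of the estimate in the initial condition $\fy$; this is precisely what Proposition~\ref{prop:cdfi_model} is designed to deliver, and everything else is combinatorial bookkeeping. A secondary point is checking the $\eps$-homogeneity of all relevant quantities—that the BPHZ renormalisation constants scale correctly in $\eps$ (Proposition~\ref{prop:bphz_model_noise_intensity}\ref{prop:bphz_model_noise_intensity:i}) and that the model norm transforms homogeneously (Lemma~\ref{lem:homog_model_norm})—since this is what converts the $\eps$-independent Fernique bound into a quantitative small-noise estimate.
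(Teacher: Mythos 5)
Your proposal is correct and follows essentially the same approach as the paper's: invariance of $\mu_\eps$ to reduce to a uniform-in-initial-condition pointwise tail estimate, the CDFI bound of Proposition~\ref{prop:cdfi_model} combined with homogeneity of the model norm (Lemma~\ref{lem:homog_model_norm}), and the Fernique bound of Lemma~\ref{lem:fernique_bphz}. The paper packages the Chernoff step slightly differently—raising the bound to a power $q = 1-2\kappa$ so that the exponent on $\barnorm{\bar{\Pi}}$ becomes exactly $2$ before applying Markov's inequality to $\exp(\Lambda_t\,\cdot\,)$—whereas you observe directly that the tail event is contained in $\{\eps\barnorm{\bar{\Pi}} > M\}$ for a deterministic threshold $M = M(\theta)$; the content is identical.
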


\begin{remark} \label{rmk:quantitative_tail_bound}
	The previous tail bound could likely be strengthened if we were to track the dependence on the temperature~$\eps$ in the arguments of~\cite{hairer_steele_22}, resulting in a tighter dependence of~$\rho$ on~$\theta$.
	We do not need such precise information here and therefore do not investigate this assertion further.
\end{remark}

\begin{proof}[Proof of Proposition~\ref{prop:measure_tail_bounds}]
	Since the measure $\mu_{\eps}$ is invariant for the renormalised dynamics $t\mapsto u^\fz_{\eps}(t)$, with $\fz \sim \mu_{\eps}$, for any~$t \in [0,\infty)$ it holds that
	\begin{equation*}
		\mu_\eps(\bar{B}_\rho^{\mathtt{c}}) = \int_{\CC^\alpha(\T^3)} \P\del[1]{\bar{u}_\eps^\fz(t) \in \bar{B}_\rho^{\mathtt{c}}} \mu_\eps(\dif \fz).
	\end{equation*}
	Therefore, the claimed bound in~\eqref{prop:measure_tail_bounds:eq} is a consequence of the following lemma.
\end{proof}

\begin{lemma} \label{lem:tail_bound_dynamics}
	Let~$t \in (0,1)$. For each~$\theta > 0$, there exists a exists some~$\rho = \rho(t,\theta) > 0$ such that for all~$\eps \in (0,1]$,  the bound
	\begin{equation} \label{eq:outside_ball_dynamics}
		\P\del[1]{\norm[0]{\bar{u}_{\eps}^\fz(t)}_{\mcC^{\alpha}} \geq \rho}
		\leq K_t \exp\del[3]{-\frac{\theta}{\eps^2}}. 
	\end{equation}
	holds uniformly over all initial conditions~$\fz \in \CC^\alpha(\T^3)$ and~$K_t \in (0,\infty)$ denotes the constant in Lemma~\ref{lem:fernique_bphz}, the Fernique estimate.
\end{lemma}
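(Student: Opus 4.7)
The plan is to split $\bar u_\eps^\fz(t) = \eps \<1b>(t) + v_\eps^\fz(t)$ where $\<1b>$ solves the stochastic heat equation with noise intensity one and $v_\eps^\fz$ is the remainder covered by the CDFI estimate, and then control both terms by powers of the homogeneous model norm $\barnorm{\bar\Pi}$, which has Gaussian-type tails by Lemma~\ref{lem:fernique_bphz}.

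First, by the triangle inequality together with the continuous embedding $L^\infty(\mbT^3) \hookrightarrow \mcC^\alpha(\mbT^3)$ (since $\alpha < 0$, cf.~\eqref{eq:besov_Lp_embed} and~\eqref{eq:besov_reg_embed}), I would write
\[
\|\bar u_\eps^\fz(t)\|_{\mcC^\alpha} \leq \eps \|\<1b>(t)\|_{\mcC^\alpha} + C \|v_\eps^\fz(t)\|_{L^\infty}.
\]
The first summand is bounded by $\eps\,\barnorm{\bar\Pi}$ up to a constant depending only on $\alpha$ and $\kappa$: by Definition~\ref{def:model}, the quantity $\barnorm{\bar\Pi\,\<1>} = \threebars \bar\Pi\,\<1> \threebars$ tests $\bar\Pi_z\<1>(t,\cdot) = (K * \xi)(t,\cdot)$ against spatial rescaled test functions at every time $t$, which gives control on $\|\<1b>(t)\|_{\mcC^{|\<1>|}}$; for $\kappa > 0$ small enough that $\alpha < |\<1>| = -\tfrac{1}{2}-\kappa$, this embeds into $\mcC^\alpha$. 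The second summand is controlled uniformly in $\fz$ by Proposition~\ref{prop:cdfi_model}:
\[
\|v_\eps^\fz(t)\|_{L^\infty} \lesssim \frac{1}{t^{2}} \vee (\eps\,\barnorm{\bar\Pi})^{\frac{2}{1-2\kappa}}.
\]

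Combining these bounds, there exist constants $C_1, C_2 > 0$, independent of $\eps \in (0,1]$ and $\fz \in \mcC^\alpha(\mbT^3)$, such that
\[
\|\bar u_\eps^\fz(t)\|_{\mcC^\alpha} \leq C_1\,\eps\,\barnorm{\bar\Pi} + C_2\,t^{-2} + C_2\,(\eps\,\barnorm{\bar\Pi})^{\frac{2}{1-2\kappa}}.
\]
Given $\theta > 0$, let $\Lambda_1$ and $K_1$ be as in Lemma~\ref{lem:fernique_bphz} on the interval $[0,1]$. I would then pick $\rho = \rho(t,\theta)$ large enough that (i)~$\rho > 3 C_2 t^{-2}$, so the deterministic term contributes strictly less than $\rho/3$, and (ii)~either of the remaining two summands being $\geq \rho/3$ forces $\eps^2 \barnorm{\bar\Pi}^2 \geq \theta/\Lambda_1$; using $\eps \leq 1$ and that $2/(1-2\kappa) > 2$, it suffices to take $\rho$ of the order $\max\{t^{-2}, (\theta/\Lambda_1)^{1/2}, (\theta/\Lambda_1)^{1/(1-2\kappa)}\}$ up to absolute constants. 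For such $\rho$, Markov's inequality and the Fernique estimate give
\[
\P\bigl(\|\bar u_\eps^\fz(t)\|_{\mcC^\alpha} \geq \rho\bigr) \leq \P\bigl(\barnorm{\bar\Pi}^2 \geq \theta/(\Lambda_1 \eps^2)\bigr) \leq K_1\,\exp\bigl(-\theta/\eps^2\bigr),
\]
yielding the claim with $K_t := K_1$.

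The main obstacle is not analytic, but bookkeeping: ensuring that every constant appearing in the decomposition is independent of both $\fz$ and $\eps$. The uniformity in $\fz$ is exactly the strength of Proposition~\ref{prop:cdfi_model}, whose CDFI bound depends only on the driving model and the time $t$, not on the initial condition. The uniformity in $\eps$ is encoded in the homogeneity $\barnorm{\bar\Pi^\eps} = \eps\,\barnorm{\bar\Pi}$ from Lemma~\ref{lem:homog_model_norm}, which extracts a clean $\eps$-dependence and leaves only the deterministically rescaled model $\bar\Pi$ to be handled by Fernique.
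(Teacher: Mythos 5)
Your proof is correct and follows essentially the same route as the paper: decompose $\bar{u}_\eps^\fz = \<1b>^\eps + v_\eps^\fz$, control the linear part through the homogeneous model norm and the remainder via the coming-down-from-infinity bound (Proposition~\ref{prop:cdfi_model}), then apply Fernique and Chebyshev. The only cosmetic difference is bookkeeping: the paper raises both sides to the power $q = 1-2\kappa$ so that the stochastic heat equation contribution $\eps^q\|\<1b>(t)\|^q$ is absorbed into the CDFI term (using $\eps\leq 1$, $t<1$), whereas you keep the three summands separate and choose $\rho$ large enough that any one of them being of order $\rho$ already forces $\eps^2\barnorm{\bar\Pi}^2 \gtrsim \theta$; both manipulations are equivalent.
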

Let us highlight that it is \emph{only} within the proof of that lemma that we use the CDFI result of Moinat and Weber, Proposition~\ref{prop:cdfi_model}, which is specific for the~$\Phi^4_3$ equation and does not hold for other interesting models from QFT.
We direct the reader to the \enquote{proof strategy} paragraph of the introduction, specifically item~\ref{model_specific:1} on page~\pageref{model_specific:1}, where this issue has been extensively discussed.
\begin{proof}	
	By Proposition~\ref{prop:cdfi_model}, we have
	\begin{equation}  \label{lem:tail_bound_dynamics:pf_eq1}
		\norm[0]{\bar{u}^\fz_\eps(t)}^q_{\mcC^{\alpha}}
		\leq
		\norm[0]{v^\fz_{\eps}(t)}^q_{\mcC^{\alpha}} 
		+ 
		\norm[0]{\<1b>^{\eps}(t)}^q_{\mcC^{\alpha}}
		\lesssim
		\frac{1}{t^{2q}} + \eps^{\frac{2q}{1 - 2\kappa}} \del[1]{\barnorm{\bbzm}_t}^{\frac{2q}{1 - 2\kappa}} 
	\end{equation}
	where we have additionally used homogeneity of~~$\barnorm{\cdot}$ with respect to rescaling the noise~$\xi \rightsquigarrow \eps \xi$, recall Lemma~\ref{lem:homog_model_norm}.
	Choosing~$q = 1-2\kappa$ and denoting the implicit constant in~\eqref{lem:tail_bound_dynamics:pf_eq1} by~$C$, Markov's inequality implies the estimate
	\begin{align*}
		\P\del[1]{\norm[0]{\bar{u}_{\eps}^\fz(t)}_{\mcC^{\alpha}} \geq \rho}
		& = 
		\P\del[1]{\norm[0]{\bar{u}_{\eps}^\fz(t)}_{\mcC^{\alpha}}^{1-2\kappa} \geq \rho^{1-2\kappa}}
		\leq 
		\P\del[3]{\frac{1}{\eps^2 t^{2(1-2\kappa)}} +  \barnorm{\bar{\Pi}}_t^2 \geq \frac{\rho^{1 - 2\kappa}}{C\eps^{2}}} \\[0.5em]
		& \leq
		\frac{\exp\del[1]{\frac{\Lambda_t }{\eps^2 t^{2(1-2\kappa)}}} \E\sbr[1]{\exp\del[0]{\Lambda_t  \barnorm{\bar{\Pi}}_t^2}}}{\exp\del[1]{\Lambda_t  \rho^{1-2\kappa} C^{-1} \eps^{-2}}} \\
		& =
		K_t \exp\del[2]{-\Lambda_t  \sbr[2]{\frac{\rho^{1-2\kappa}}{C} - \frac{1}{t^{2(1-2\kappa)}}}\frac{1}{\eps^{2}}}
	\end{align*}
	where $\Lambda_t > 0$ and~$K_t \in (0,\infty)$ denote the constants in Lemma~\ref{lem:fernique_bphz}.
	The claim now follows by choosing~$\rho > 0$ such that 
	\begin{equation*}
		\Lambda_t  \sbr[2]{\frac{\rho^{1-2\kappa}}{C} - \frac{1}{t^{2(1-2\kappa)}}} \geq \theta.
	\end{equation*}
\end{proof}

\bibstyle{alpha}
\bibliography{bibfile.bib}
		
\subsection*{Acknowledgements}
Both authors thank Ilya Chevyrev for his hospitality and financial support during a research visit at the University of Edinburgh in April~2023 and for suggesting a strategy that lead to a significant simplification in the proof of Theorem~\ref{th:V_equal_2S}.
Both authors are grateful to Rui Bai who pointed out a gap in an earlier version of the proof of Proposition~\ref{prop:bound_prob_H} which has now been closed.
TK thanks Nils Berglund for many interesting discussions on the topic of this work and related problems.
AM additionally thanks the University of Warwick for financial and housing support during his visit in August~2023 and support from the DFG research unit FOR2402.
TK gratefully acknowledges financial support via Giuseppe Cannizzaro’s UKRI FL Fellowship \enquote{Large-scale universal behaviour of Random Interfaces and Stochastic Operators} MR/W008246/1.
		
		\vspace{3em}
		\noindent
		\begin{minipage}{.5\textwidth}
			\small
			\textbf{Tom Klose} \\
			University of Oxford \\ 
			Mathematical Institute \\
			Woodstock Road \\
			Oxford, OX2 6GG, United Kingdom \\
			{\it E-mail address:}
			{\tt tom.klose@maths.ox.ac.uk}
		\end{minipage}%
		\begin{minipage}{.5\textwidth}
			\small
			\textbf{Avi Mayorcas} \\
			University of Bath\\
			Department of Mathematical Sciences\\
			Bath, BA2 7AY, United Kingdom\\
			{\it E-mail address:}
			{\tt am2735@bath.ac.uk}
		\end{minipage}
															
\end{document}